\def\@tocline#1#2#3#4#5#6#7{\relax
  \ifnum #1>\c@tocdepth 
  \else
    \par \addpenalty\@secpenalty\addvspace{#2}%
    \begingroup \hyphenpenalty\@M
    \@ifempty{#4}{%
      \@tempdima\csname r@tocindent\number#1\endcsname\relax
    }{%
      \@tempdima#4\relax
    }%
    \parindent\z@ \leftskip#3\relax \advance\leftskip\@tempdima\relax
    \rightskip\@pnumwidth plus4em \parfillskip-\@pnumwidth
    #5\leavevmode\hskip-\@tempdima
      \ifcase #1
       \or\or \hskip 1em \or \hskip 2em \else \hskip 3em \fi%
      #6\nobreak\relax
    \dotfill\hbox to\@pnumwidth{\@tocpagenum{#7}}\par
    \nobreak
    \endgroup
  \fi}
 \numberwithin{equation}{section}
\def\bR{{\mathbb{R}}}
\def\NN{{\mathbb{N}}}
\def\cH{{\mathcal{H}}}
\def\cF{{\mathcal{F}}}
\def\mR{{\mathcal{R}}}
\def\mS{{\mathcal{S}}}
\def\PP{{\mathcal{P}}}
\def\cF{{\mathscr{F}}}
\def\cM{{\mathscr{M}}}
\def\cP{{\mathscr{P}}}
\def\ve{\varepsilon}
\def\vp{\varphi}
\renewcommand{\d}{{\partial}}
\def\bx{\bar x}
\def\by{\bar y}
\def\bz{\bar z}
\def\bxi{\bar{\xi}}
\def\wt{\widetilde}
\def\Rn1{\mathbb{R}^{n+1}}
\def\rrn{\mathbb{R}^{n+1}}
\def\rn{\mathbb{R}^{n}}
\def\oom{\overline \Omega} 					
\DeclareMathOperator{\diam}{diam}
\def\Cap{\textup{Cap}} 					
\def\Tan{\mathop\mathrm{Tan}} 					
\def\BMO{\mathop\mathrm{BMO}} 					
\def\VMO{\mathop\mathrm{VMO}} 					
\def\Lip{\mathop\mathrm{Lip}} 						
\def\lipp{\textup{Lip}_{01}}
\def\dim{\mathrm{dim}} 					
\def\dist{\textup{dist}} 						
\def\supp{\mathop\mathrm{supp}}					
\def\loc{\mathop\mathrm{loc}}						
\def\dt{\partial_t} 						
\DeclareMathOperator*{\esssup}{ess\,sup}			
\renewcommand{\div}{\mathop\mathrm{div }}			
\def\warrow{\rightharpoonup}								
\newcommand{\cnj}[1]{\overline{#1}}
\def\Xint#1{\mathchoice
{\XXint\displaystyle\textstyle{#1}}%
{\XXint\textstyle\scriptstyle{#1}}%
{\XXint\scriptstyle\scriptscriptstyle{#1}}%
{\XXint\scriptscriptstyle\scriptscriptstyle{#1}}%
\!\int}
\def\XXint#1#2#3{{\setbox0=\hbox{$#1{#2#3}{\int}$ }
\vcenter{\hbox{$#2#3$ }}\kern-.58\wd0}}
\def\avint{\Xint-}
\theoremstyle{plain}
\newtheorem{theorem}{Theorem}
\newtheorem{corollary}[theorem]{Corollary}
\newtheorem{lemma}[theorem]{Lemma}
\newtheorem{proposition}[theorem]{Proposition}
\theoremstyle{definition}
\newtheorem{example}[theorem]{Example}
\newtheorem{definition}[theorem]{Definition}
\newtheorem{remark}[theorem]{Remark}
\numberwithin{equation}{section}
\numberwithin{theorem}{section}
  \DeclareFontFamily{U}{mathb}{\hyphenchar\font45} 
\DeclareFontShape{U}{mathb}{m}{n}{
      <5> <6> <7> <8> <9> <10> gen * mathb
      <10.95> mathb10 <12> <14.4> <17.28> <20.74> <24.88> mathb12
      }{}
\DeclareSymbolFont{mathb}{U}{mathb}{m}{n}
\DeclareMathSymbol{\toitself}      {3}{mathb}{"FD}  
\def\HH{\mathcal{H}}
\newcommand{\vv}{\vspace{2mm}}
\newcommand{\vvv}{\vspace{4mm}}
\newcommand{\dv}{\mathop{\rm div}}
\def\R{\mathbb{R}}
\def\vphi{\varphi}
\def\om{\Omega}
\def\zz{\bar 0}
\def\hm{\omega}
  \newtheorem{main}{Theorem}
\begin{document}

\title[Blow-ups   of caloric  measure]{Blow-ups  of caloric measure in  time varying domains and applications to  two-phase problems}

\author[Mihalis Mourgoglou]{Mihalis Mourgoglou}
\address{Departamento de Matem\'aticas, Universidad del Pa\' is Vasco, Barrio Sarriena s/n 48940 Leioa, Spain and\\
Ikerbasque, Basque Foundation for Science, Bilbao, Spain.}
\email{michail.mourgoglou@ehu.eus}

\author{Carmelo Puliatti}
\address{Departamento de Matem\'aticas, Universidad del Pa\' is Vasco, Barrio Sarriena s/n 48940 Leioa, Spain. }
\email{carmelo.puliatti@ehu.eus}

\subjclass[2010]{35K05, 35B44, 28A75, 35B05, 31C45, 28A78, 28A33, 49Q22.}
\thanks{M.M. was supported  by IKERBASQUE and partially supported by the grant MTM-2017-82160-C2-2-P of the Ministerio de Econom\'ia y Competitividad (Spain), and by  IT-1247-19 (Basque Government). C.P. was supported by IT-1247-19 (Basque Government).}
\keywords{Heat equation, caloric measure, Green function,  time-varying domains, free boundary problems, capacity density condition, tangent measures, absolute continuity, triple points, heat potential theory, parabolic Reifenberg flat domains, optimal transport, Kantorovich-Rubinshtein norm.}

\newcommand{\mih}[1]{\marginpar{\color{red} \scriptsize \textbf{Mi:} #1}}
\newcommand{\car}[1]{\marginpar{\color{blue} \scriptsize \textbf{Carmelo:} #1}}
\maketitle

\begin{center}
{\it In memory of Professor Michel Marias (1953--2020)}
\end{center}

\begin{abstract}
We develop a method to study the  structure of the common part of the boundaries of   disjoint and possibly non-complementary  time-varying  domains in $\Rn1$, $n \geq 2$, at the  points of mutual absolute continuity of their respective caloric measures. Our set of techniques, which is based on parabolic tangent measures, allows us to tackle the following problems:
\begin{enumerate}
\item Let $\om_1$ and $\om_2$ be disjoint domains in $\Rn1$, $n \geq 2$, which are quasi-regular  for the heat equation and regular for the adjoint heat equation, and their complements satisfy a mild non-degeneracy hypothesis  on the set $E \subset \d\om_1 \cap \d\om_2$  of mutual absolute continuity of the associated caloric measures $\omega_i$ with poles at $\bar p_i=(p_i,t_i)\in\Omega_i$, $i=1,2$. Then, we obtain a parabolic analogue of the results of  Kenig, Preiss, and Toro, i.e.,   we show that the parabolic Hausdorff dimension of $\hm_1|_E$ is $n+1$ and  the tangent measures of $\hm_1$  at $\hm_1$-a.e. point of $E$ are equal to a constant multiple of the  parabolic $(n+1)$-Hausdorff measure restricted to hyperplanes  containing a line parallel to the time-axis.
\item If, additionally, $\hm_1|_E$ and $\hm_2|_E$ are doubling, $\log \frac{d\hm_2|_E}{d\hm_1|_E} \in \VMO(\hm_1|_E)$, and $E$ is relatively open in the support of $\omega_1$, then  their tangent measures at {\it every} point of $E$ are caloric measures associated with adjoint caloric polynomials. As a corollary we obtain that if $\om_1$ is a $\delta$-Reifenberg flat domain for $\delta$ small enough and $\om_2=\rrn \setminus \overline \om_1$, and  $\log \frac{d\hm_2}{d\hm_1} \in \VMO(\hm_1)$, then $\om_1 \cap \{t<t_2\}$ is vanishing Reifenberg flat. This generalizes results of   Kenig and Toro for the Laplacian.
\item Finally, we establish a parabolic version of a theorem of Tsirelson about triple-points for harmonic measure. Assuming that $\om_i$, $1 \leq i \leq 3$, are quasi-regular domains for both the heat  and  the adjoint heat equations, the set of points on $\cap_{i=1}^3  \d \om_i$, where the three caloric measures are mutually absolutely continuous has null caloric measure.
\end{enumerate}
 In the course of  proving our main theorems, we obtain new results on  heat potential  theory, parabolic geometric measure theory,  nodal sets of caloric functions, and  Optimal Transport, that may be of independent interest. 
\end{abstract}

\tableofcontents

\section{Introduction}

In this paper, we  study   two-phase problems for  harmonic measure associated with the heat equation, which is traditionally called  {\it caloric measure}. We consider two disjoint open sets $\om^+$ and $\om^-$   in $\rrn$, and $\hm^+$ and $\hm^-$  the associated caloric measures, so that their common boundary $\partial \Omega^+\cap \partial\Omega^-$ has positive $\hm^+$ measure. Our main goal is to study how mutual absolute continuity of the respective caloric measures provides information about the infinitesimal structure of $\partial \Omega^+\cap \partial\Omega^-$.

In the elliptic case, Bishop, Carleson, Garnett, and Jones studied in \cite{BCGJ89}  a two-phase problem for simply connected planar disjoint domains: they showed that the two harmonic measures are mutually singular if and only if the intersection of the respective sets of inner tangent points has zero one-dimensional Hausdorff measure.
More recently, in \cite{KPT09}, Kenig, Preiss, and Toro studied the higher-dimensional case, assuming that $\Omega^+$ and $\Omega^-=\Rn1\setminus \overline{\Omega^+}$ are both NTA-domains (as defined by Jerison and Kenig in \cite{JK82}). Combining  the blow-up analysis at points of mutual absolute continuity in \cite{KT06} with the theory of tangent measures along with the monotonicity formula of Alt, Caffarelli and Friedman (see \cite{ACF84}), they showed that mutual absolute continuity of the interior and exterior harmonic measure $\omega^+$ and $\omega^-$ implies that the harmonic measure has  Hausdorff dimension $n$ and that as we zoom in at $\hm^+$-a.e. point of the common boundary,   $\partial \Omega^\pm$ looks flatter and flatter. This blow-up technique was further improved by Azzam, the first named author, and Tolsa  \cite{AMT16} and the same authors along with Volberg in \cite{AMTV16}, who eventually showed that, without any further assumption on the domains, the harmonic measure restricted to the set of mutual absolute continuity is $n$-rectifiable.  The connection between  the Riesz transform and  $n$-rectifiability was an  element of major importance in the proofs of  \cite{AMT16} and \cite{AMTV16} that allowed to improve on the results of \cite{KPT09} from this perspective as well (since $n$-rectifiable measures have Hausdorff dimension $n$).

All the aforementioned  results describe {\it a.e.} phenomena, while it is interesting to investigate conditions that  ensure some nice limiting behavior of our blow-ups at  {\it every} point. In \cite{KT06}, Kenig and Toro considered $\om^+$  and $\om^-$  to be complementary $2$-sided NTA  domains and $\log\frac{d\hm^-}{d\hm^+} \in VMO(d\hm^+)$, and they showed that, in a sequence of arbitrarily small scales, the boundary around any point starts resembling  the zero set of a harmonic polynomial (instead of a hyperplane).  If the domains are $\delta$-Reifenberg flat, for $\delta$ small enough, then the conclusion improves to a hyperplane implying that the domains are Reifenberg flat with vanishing constant. They also proved that the same conclusion holds without the $\delta$-Reifenberg flatness assumption if we assume that $\Omega^+$ is two-sided NTA, $\partial \Omega^+$ is Ahlfors-David regular and the logarithms of the interior and exterior Poisson kernels are in $\VMO$ with respect to the surface measure on $\partial \Omega^+$. Badger went a step further in \cite{Bad11}, showing that those harmonic polynomials are always homogeneous, while later, in \cite{Bad13}, he explored the topological properties of sets where the boundary is approximated by zero sets of harmonic polynomials in this way. {Badger, Engelstein and Toro investigated in \cite{BET17} two-phase problems for the harmonic measure below the continuous threshold using \cite{KPT09}, \cite{Bad11}, \cite{Bad13}, and an interesting structure theorem for sets that are locally bilaterally well approximated by zero sets of harmonic polynomials. More recently, the geometry of the singular set of the boundary $\partial \Omega^\pm$ of complementary NTA domains such their caloric measures $\omega^\pm$ are mutually absolutely continuous and $\log (d\omega^-/d\omega^+)$ is H\"older continuous has been studied in the work of McCurdy \cite{McC19}.} In \cite{AM19}, Azzam and the first named author, among others, extended the results in \cite{KPT09}, \cite{KT06}, and \cite{Bad11} to general  domains and elliptic measures associated with uniformly elliptic operators in divergence form with merely bounded coefficients that are also in the Sarason's space of vanishing mean oscillation with respect to $\hm^+$. For relevant results for elliptic operators with  $W^{1,1}$-coefficients see \cite{TZ17}.

It is interesting to understand if similar phenomena occur also in the context of parabolic PDEs and, in particular, the heat equation.
The implementation of blow-up methods in the context of one-phase free boundary problems for the heat equation was initiated by Hofmann, Lewis, and Nystr\"om in \cite{HLN04}. They extended the work of Kenig and Toro showing that, in parabolic chord arc domains with vanishing constant, the logarithm of the density of caloric measure with respect to a certain projective Lebesgue measure (or else the Poisson kernel) is of vanishing mean oscillation, and  also obtained  a partial converse, which amounts to a one-phase problem in Reifenberg flat domains with parabolic uniformly rectifiable boundaries.  The full converse was proved by Engelstein in  \cite{Eng17}, where a key fact of his proof was a delicate classification of ``flat" blow-ups, which was an open problem in the parabolic setting. He also examined free boundary problems with conditions above the continuous threshold. This problem had already been solved by Nystr\"om  (see \cite{Nys06B} and \cite{Nys12}) in graph domains under the assumption that  the Green function is comparable with the distance function from the boundary.

Results analogous to those in \cite{KT06} for the heat equation were considered  by Nystr\"om in \cite{Nys06}. He proved that, if $\Omega$ and $\rrn \setminus \overline \om$ are  parabolic NTA domains with parabolic Ahlfors regular boundary and the logarithms of  the associated Poisson kernels  are of vanishing mean oscillation, then a portion of $\partial \Omega$ suitably away from the poles of the caloric measures is Reifenberg flat with vanishing constant. Furthermore, it was  shown in \cite{Nys06C} that if one drops the Ahlfors regularity hypothesis and, instead of parabolic NTA, they ask for the domains to be $\delta$-Reifenberg flat for $\delta>0$ small enough, then the same conclusion holds if one substitutes the assumption on the Poisson kernels with a vanishing mean oscillation hypothesis on the logarithm of the density $d\omega^-/d\omega^+$ of the caloric measures associated with the two domains. We remark that the proof  uses the construction of the Green function and caloric measure with pole at infinity. Finally, we refer to \cite{Eng17B} for an interesting geometric result for planar NTA domains  with an application to the previously discussed  one-phase  problem.


To the best of our knowledge, the present paper is the first one  that studies free boundary problems in so general domains.

\vvv

A proficient way to address the kind of questions we referred above is to analyze the fine structure of the boundary by ``zooming-in'' on  boundary points. There are two ways to do that. The first one is by looking at the Hausdorff convergence of rescaled copies of the support of a measure as we zoom in, for which we follow the framework of Badger and Lewis \cite{BL15}.

We define the  ball with respect to the parabolic norm \(
\|\by\| \coloneqq \max \{|y|, |s|^{1/2}\}
\)  centered at  $\bx \in \rrn$ with radius $r>0$  as
$$
C_r(\bx)=\{\by \in \rrn:\|\by-\bx\|<r\}.
$$ 
Note that $C_r(\bar x)$ is  a euclidean right circular cylinder centered at $\bx$ of height $2r^2$ and radius $r$. If $\bx=\bar 0$, we simply write  $C_r(\bar x)=C_r$ unless stated otherwise. We also need the following {\it time-backwards} and {\it time-forwards} versions of the parabolic ball:
\begin{equation*}
\begin{split}
C^-_r(x,t)&=\{(y,s) \in \rrn: y \in  B_r(x),\,\, t-r^2<s<t\} \\
C^+_r(x,t)&=\{(y,s) \in \rrn: y \in  B_r(x),\,\, t<s<t+r^2\}.
\end{split}
\end{equation*}

{A point   $\bx \in \rrn$ is denoted by 
$\bx=(x,t)=(x',x_n,t)\coloneqq(x_1, \dots, x_n ,t)$
and we also use the notation $\bar 0=(0,\ldots,0)$.
For $r>0$ and $\bx,\by\in \rrn$, we set
\[
\delta_r(\bx)\coloneqq (r x, r^2 t), \quad\textup{and}\quad
T_{\by, r}(\bx)\coloneqq \delta_{1/r}(\bx-\by).
\]
}
Let $A\subset\Rn1$ be a set and let $\mathscr S$ be a collection of subsets of $\Rn1$. Given $\bar x \in A$ and $r>0$ we define
\[
\Theta^{\mathscr S}_A(\bar x, r)=\inf_{S\in\mathscr S}\, \max\biggl\{\sup_{\bar a\in A\cap C_r(\bar x)}\frac{\dist(\bar a, \bar x+ S)}{r}, \sup_{\bar z\in(\bar x+ S)\cap C_r(\bar x)}\frac{\dist(\bar z, A)}{r}\biggr\},
\]
where ``$\dist$'' denotes the distance with respect to the parabolic norm $\|\cdot\|$.
{Let $\mathscr S$ be a \textit{local appoximation class}, i.e. let us assume that for all $S\in \mathscr S$ and $\lambda>0$, we have that $\delta_\lambda S\in\mathscr S$.} We say that $A$ is  \textit{locally bilaterally well approximated by $\mathscr{S}$}, also abbreviated $LBWA(\mathscr S)$, if for all $\varepsilon>0$ and all $K\subset A$ compact, there is $r_{\varepsilon, K}>0$ such that $\Theta^{\mathscr S}_A(\bar x,r)<\varepsilon$ fro all $\bar x\in K$ and $0<r<r_{\varepsilon, K}$. {We say that $\bar x\in A$ is an $\mathscr S$-point of $A$ if $\lim_{r\to 0}\Theta^{\mathscr S}_A(\bar x,r)= 0$.} In other words, for $\bx$ to be a $\mathscr{S}$ point means that, as we zoom in on A at the point $\bx$, the set $A$ resembles more and more an element of $\mathscr{S}$. Remark that this element may change as we move from one scale to the next.

The second way is by investigating the weak convergence of rescaled copies of the measure itself.  In  \cite{Pr87}, Preiss developed the theory of \textit{tangent measures}, which turned out to be vital in the study of  two-phase problems for harmonic and elliptic measure; his results are at the core of  \cite{KPT09}, \cite{Bad11}, \cite{AMT16}, \cite{AMTV16}, and \cite{AM19}. The definition of a tangent measure can  be readily adapted to the parabolic context, in which $\Rn1$ is naturally endowed with a set of non-isotropic dilations. {Furthermore, we remark that although the theory of local set approximation was originally developed by Badger and Lewis in the euclidean setting, it can be generalized to $\mathbb R^{n+1}$ endowed with the parabolic metric (see \cite[Remark 1.8]{BL15} and the references therein).}

We say that $\nu$ is a {\it tangent measure} of $\mu$ at a point $\bx \in\rrn$ and write $\nu\in \Tan(\mu,\bx)$ if
$\nu$ is a non-zero Radon measure on $\rrn$ and there are sequences $c_{i}>0$ and $r_{i}\searrow 0$ so that $c_i\,T_{\bx,r_{i}}[\mu]$ converges weakly to $\nu$ as $i\to\infty$. 

Let us record here that a parabolic version of Besicovitch covering theorem is available for parabolic balls (see \cite{It18}), which  allows for   the  basic properties of tangent measures to hold in the parabolic setting by the same proofs as in the Euclidean case.  For further details, we refer to Section \ref{sec:GMT}.

\vv

The geometry of the blow-ups at a point of mutual absolute continuity of caloric measures is intimately related with that of nodal sets of caloric functions.  We define the heat and adjoint heat equations by
\begin{equation}\label{eq:Heat operator-intr}
Hu  :=  \Delta  u - \dt u=0 , \quad \textup{and }\quad H^* u :=  \Delta u + \dt u =0, 
\end{equation}
and  a $C^{2,1}$ function satisfying $Hu=0$ (resp. $H^*u=0$) is called (resp. adjoint) caloric function.

{As in the harmonic case, polynomials play a special role in the theory of caloric functions. Indeed, as harmonic functions of polynomial growth are polynomials, it has been long known that the so-called \textit{ancient solutions} of the heat equation of polynomial growth are polynomials. For a more in-depth discussion, we refer to \cite{LZ19} and the references therein.}
Let $\Theta$ denote the set of caloric functions vanishing at $\bar 0$, $P (d)$  the set of the caloric polynomials {of degree $d$} vanishing at $\bar 0$, and $F(d)$ the set of homogeneous caloric polynomials of degree $d$.
In Lemma \ref{lem:cal-measure-assoc-h}, we show  that,  for any caloric function $h$ on $\Rn1$, there exists a unique Radon measure $\omega_h$ such that
\[
\int \varphi \, d\omega_h = \frac{1}{2}\int |h|(\Delta + \partial_t)\varphi, \qquad \text{ for all }\varphi\in C^\infty_c(\Rn1).
\]
The measure $\hm_h$  is called  \textit{caloric measure} $\omega_h$  \textit{associated to $h$}. In fact, this  is the unique adjoint caloric measure with pole at infinity  in $\Omega^\pm=\{ h^\pm \neq 0\}$ and $h^\pm$ is  the Green function with pole at infinity.
If $h$ is an adjoint caloric function in $\rrn$, we define the \textit{adjoint caloric measure}  analogously.

We say that a function $f(x,t)$ is (parabolic) \textit{homogeneous} of degree $m\in \R$ if for any $\lambda>0$ and $(x,t)\in \rrn\setminus \{\bar 0\},$ we have
\[
f\bigl(\delta_\lambda(x,t)\bigr)=f(\lambda x, \lambda^2 t)=\lambda^mf(x,t).
\]
We remark that every caloric polynomial can be written as the sum of homogeneous caloric polynomials. Indeed, assuming that $h(x,t)=\sum_{\alpha, \ell}c_{\alpha, \ell}x^\alpha t^\ell$ for $c_{\alpha, \ell}\in\R$ and that $c_{\alpha, \ell}=0$ if $|\alpha|+2\ell> d$, we can write
\[
h(x,t)=\sum_{j=0}^d \Bigl(\sum_{|\alpha|+2\ell =j}c_{\alpha, \ell}x^\alpha t^\ell\Big)\eqqcolon \sum_{j=0}^d h_j(X,t).
\]
Observe that $h_j$ is a homogeneous polynomial of degree $j$, which implies that $Hh_j$ is a homogeneous polynomial of degree $j-2$. So 
\[
Hh= \sum_j Hh_j =0
\]
if and only if $Hh_j=0$ for every $j=0,\ldots, d$.
Hence, it is consistent to say that a caloric function  $h$ is a caloric polynomial of degree $d$ if it can be written as the sum of homogeneous caloric polynomials of degree at most $d$. Thus, we define 
\[
\mathscr{H}_\Theta \coloneqq \{\omega_h:\, h\in \Theta\},
\quad
\mathscr P(d)\coloneqq \{\omega_h:\, h\in P(d)\}
\quad \text{ and } \quad 
\mathscr F(d)\coloneqq \{\omega_h:\, h\in F(d)\}.
\]
Given a caloric function $h$, we use the notation $\Sigma^h\coloneqq \{h=0\}$.
Furthermore, we indicate 
$$
\mathscr H_\Sigma=\{\Sigma^h: h\in \Theta\},
$$
\[
\mathscr P_\Sigma(d)=\{\Sigma^h: h\in P(1) \cup \dots \cup P(d)\}
\qquad \text{ and } \qquad 
\mathscr F_\Sigma(d)=\{\Sigma^h: h\in F(d)\}.
\]
The families $\Theta^*$, $P^*(d)$, $F^*(d)$,$ \mathscr T^*$, $\mathscr P^*(d)$, $\mathscr F^* (d)$,  $\mathscr T_\Sigma^*$, $\mathscr P^*_\Sigma(d)$, and $\mathscr F^*_\Sigma (d)$ are defined analogously for adjoint caloric functions, polynomials, and homogeneous polynomials. Set, moreover,
\[
\mathscr F_\Sigma\coloneqq \{V\subset \Rn1: V \text{ is an }n\text{-plane through } \bar 0 \text{ containing a line parallel to the }t\text{-axis}\}
\]
and 
\[
\mathscr F\coloneqq\{\mathcal H^{n+1}_p|_V: V \in \mathscr F_\Sigma\},
\]
and observe that $\mathscr F=\mathscr F(1)=\mathscr F^*(1)$ and $\mathscr F_\Sigma=\mathscr F_\Sigma(1)=\mathscr F^*_\Sigma(1)$.

\vvv

{Potential theory is also very important in  problems relating the metric properties of caloric measure to the geometry of the boundary but, in order to  take full advantage of it,  we need to ensure that the boundary is regular enough so that the Perron-Wiener-Brelot with continuous boundary data is continuous all the way to the boundary in that domain (sometimes up to a polar set of the boundary).  One can find several geometric conditions in the literature that imply the desired regularity of the boundary; e.g.,  the parabolic  Reifenberg flatness  (see the definition before Corollary \ref{cor:reifenberg}), the non-tangential accessibility (see e.g. \cite[pp. 263-264]{Nys06}}), and the  time-backwards Ahlfors-David regularity (see \cite{GH20}). A well-known  condition which implies regularity of the boundary in the Euclidean setting and has been used in the harmonic measure framework is the so-called Capacity-Density Condition (see e.g. \cite{AMT16}). The CDC  ensures that the capacity of the complement of  $\Omega$ in each ball centered at the boundary is large   in a scale invariant way. We will now introduce two similar notions of thickness that, in the parabolic context,  serve the same purpose as the CDC.

Given $\rho>0$ and $\bar x=(x,t)\in \Rn1$ we  define the \textit{heat ball} centered at $\bar x$ with radius $\rho$ to be the set
\begin{equation}\label{eq:def_heat_ball}
\begin{split}
E(\bar x;\rho)&\coloneqq  \Bigl\{(y,s)\in \Rn1 : |x-y|<\sqrt{2n(t-s)\log\Bigl(\frac{\rho}{t-s}\Bigr)}, \, t-\rho<s<t\Bigr\}\\
&=\bigl\{\bar y\in \Rn1 : \Gamma(\bar x- \bar y)>(4\pi\rho)^{-n/2}\bigr\},
\end{split}
\end{equation}
where $\Gamma(\cdot, \cdot)$ stands for the fundamental solution of the heat equation (see Section \ref{sec:potential theory}).
We indicate by $E^*(\bar x; r)\coloneqq\{(\bar y,s) \in \Rn1:(\bar y,t-s) \in E(\bar x,r)\}$ the \textit{adjoint heat ball}.

We say that  $\bx \in \d \om$  belongs to the \textit{parabolic boundary} of $\Omega$ (resp. adjoint parabolic boundary) and write $\bx \in \mathcal P\Omega$ (resp. $\mathcal P^*\Omega$), if  $C_r^-(\bar x)\cap \Omega^c\neq \varnothing$ (resp. $C_r^+(\bar x)\cap \Omega^c\neq \varnothing$ ) for every $r>0$. If  $\bx \in \mathcal P\Omega$ (resp. $\mathcal P^*\Omega$),  we  say that $\bx$ is in the \textit{bottom boundary} of $\Omega$ (resp. adjoint bottom boundary) and write $\bx \in \mathcal B\Omega$ (resp. $\mathcal B^*\Omega$) if  there exists $\ve>0$ such that $C_\ve^+(\bar x)\subset \Omega$ (resp. $C_\ve^-(\bar x)\subset \Omega$).  Moreover, we define the {\it lateral boundary} (resp. {\it adjoint lateral boundary}) as $\mathcal S' \om=\mathcal P \om \setminus \mathcal B \om$ (resp. $\mathcal{S'}^*\om=\mathcal P^* \om \setminus \mathcal B^*\om$) of $\om$ and denote by $\mathcal S\om$ the {\it quasi-lateral boundary} of $\om$ (for the precise definition see Section \ref{sec:preliminaries}).

\vv

We also refer to Section \ref{sec:potential theory} for the definition of $\Cap( \cdot)$, the thermal capacity of a set.

\begin{definition}
Let $\Omega \subset \rrn$  be an open set and  $F \subset  \mathcal P\Omega$ be compact. We say that a $\Omega$ satisfies the \textit{Time-Backwards Capacity Porosity Condition} (TBCPC)  {along} $F$  if there exists a constant $c>0$ and two sequences $\{\bar \xi_j\}_{j \geq 1} \subset F$ and $r_j\to 0$,  such that
\begin{equation}\label{eq:CPC}
\Cap\bigl(\overline{E{(\bar \xi_j; r^2_j)}} \cap \Omega^c \bigr) \geq c\, r_j^n.
\end{equation}
If $F= \{\bxi\}$, then we say that $\Omega$ satisfies the TBCPC at $\bxi$. We remark that  $c$, $\bxi_j$, and $r_j$ may depend on $F$.

Let $\om^+$ and $\om^-$ be two disjoint open sets in $\rrn$ with $\mathcal{S}' \om^+\cap \mathcal{S}' \om^- \neq  \emptyset$ and assume that  $F \subset \mathcal{S}' \om^+ \cap \mathcal{S}' \om^-$ is compact. Then we say that $\om^+$ and $\om^-$ satisfy the {\it joint} TBCPC along $F$ if there exists a sequence  $(\bxi_j,r_j)_{j\geq 1} \subset F \times (0,1)$ so that $r_j \to 0$ and  \eqref{eq:CPC} holds for $\om^\pm$, then there exists a subsequence of $(\bxi_j,r_j)_{j\geq 1} $ so that \eqref{eq:CPC} holds for $\om^\mp$ (with possibly different constant $c$).  We may define the time-forwards CPC (TFCPC) by replacing the heat balls by adjoint heat balls in the definitions above.
\end{definition}

{A subset $E$ of $\rrn$ is (geometrically) porous if for any ball $B$ centered on $E$ with radius small enough, there exists a ball of comparable radius that is contained in $B \cap E^c$. Porosity of a  domain $\om \subset \rrn$  is directly related to boundary regularity of solutions of elliptic and prabolic PDE defined in $\om$ as it quantifies the non-degeneracy of the exterior of the domain. There are different notions of scale-invariant non-degeracy of the exterior of $\om$ such as the geometric porosity of $\Omega$ defined above, (Lebesgue) measure porosity or capacitary porosity of $\Omega$. This is the reason why we use  the term ``porosity''. For instance, we refer to \cite[Corollary 11.25]{BB11}.}

Note that since $\om^+$ and $\om^-$ are disjoint, it holds that $\mathcal{P} \om^\pm \cap \mathcal{B} \om^\mp=\mathcal{B} \om^+ \cap \mathcal{B} \om^- =  \emptyset$ and thus, we could have assumed  $\mathcal{P} \om^+\cap \mathcal{P} \om^- \neq  \emptyset$ in the definition above.
For $\Omega\subset \Rn1$, we denote
\begin{align*}
T_{\min}&= \inf\{T \in \R : \Omega \cap \{t=T\} \neq \emptyset \}, \\
T_{\max}&= \sup\{T \in \R : \Omega \cap \{t=T\} \neq \emptyset \}.
\end{align*}
 Note that it is possible that $T_{\min}=-\infty$ and $T_{\max}=+\infty$. We also define 
$$
E_s=\{(x,t)\in E: t=s\}
$$
 to be the \textit{time-slice} of $E$ for $t=s$.

\begin{definition}
We say that  $\Omega$ satisfies the \textit{Time-Backwards Capacity Density Condition} (TBCDC) at $\bar \xi_0=(\xi_0,t_0)\in\mathcal S\Omega$ if there exists $\wt c>0$ such that
\begin{equation}\label{eq:HCDCintro}
\Cap\bigl(\overline{E(\bxi_0; r^2)}\cap \Omega^c\bigr) \geq \wt c\,r^n, \quad \textup{for all}\,\, 0<r<  \sqrt{{t_0-T_{\min}}/{2}}.
\end{equation}
Analogously, we say that $\Omega$ satisfies the \textit{Time-Forwards Capacity Density Condition} (TFCDC) if, for sime $\wt c>0$,
\begin{equation}\label{eq:HCDCintro_forw}
\Cap\bigl(\overline{E^*(\bxi_0; r^2)}\cap \Omega^c\bigr) \geq \wt c\,r^n, \quad \textup{for all}\,\, 0<r<  \sqrt{{T_{\max}-t_0}/{2}}.
\end{equation}
\end{definition}

\vv

The joint TBCPC  guarantees regularity of the parabolic boundary for the heat equation and is  a much weaker condition than  TBCDC, which, in turn,  is a  natural setting  for our problems (see e.g. \cite{AMT16}). 
The joint TBCPC assumption is a sufficient condition for our blow-up arguments to work and, at the moment, it is not clear how to remove it from our hypotheses. In the case of  harmonic and elliptic measures, it has been proved in \cite[Lemma 5.3]{AMTV16} and \cite[Lemma 4.13]{AM19} that mutual absolute continuity of the interior and the exterior measures implies an even stronger version of \eqref{eq:CPC} in terms of the $(n+1)$-dimensional Lebesgue measure of euclidean balls. This method, however, is based on a dichotomy argument and cannot be generalized directly to the caloric measure because it is not enough to obtain that the exterior of the domain is ``large" in a sequence of  parabolic balls; we need to know that this is true in a sequence of  time-backwards cylinders, which complicates things  significantly. 

Criteria for \textit{parabolic Wiener regularity} have been extensively studied in the literature (see Section \ref{sec:potential theory} for the definition and more references). This property is particularly important to our purposes because we need to extend  the Green function    by zero  to the complement of the domain in a continuous way. In order to work in more generals domains, the intrinsic difficulties of heat potential theory constitute a challenging obstacle to overcome. For instance, we remark that Harnack inequality is time-directed (see e.g. \cite[Corollary 1.33]{Watson}) and that it is possible to construct domains so that the capacity of the set of the irregular points for the heat equation is positive (see \cite[p.336]{TW85}). The most general class of domains in which (some of) our arguments work seem to be the so-called \textit{quasi-regular} domains for the heat operator (resp. adjoint heat operator), which amounts to domains for which the set of irregular  points of the essential boundary for the heat equation (resp. adjoint heat equation) is \text{polar} (see Section \ref{sec:potential theory}). This is in accordance to the elliptic theory where, indeed, the set of irregular points is always polar (which is not the case in the parabolic context).

\vv

Before we state our first main theorem, which is a generalization of \cite{KPT09} and \cite[Theorem I]{AM19}, we need the notion of parabolic Hausdorff dimension of a Borel measure $\omega$. This is defined as
\[
\dim_{\mathcal H_p}(\omega)\coloneqq \inf\bigl\{\dim_{\mathcal H_p}(Z):\, \omega(\Rn1\setminus Z)=0\bigr\},
\]
where ``$\dim_{\mathcal H_p}$'' stands for the parabolic Hausdorff dimension (see Section \ref{sec:preliminaries}), and it can be linked to a pointwise notion (see Section \ref{sec:GMT}). Moreover, given $A\subset\partial \Omega$ we denote by $A^\circ$ the relative interior $A$ with respect to the topology of $\partial \Omega$.

\vv

\begin{main}\label{theorem:blowup1}
Let $\Omega^+$ and $\Omega^-$ be two disjoint domains in $\Rn1$ which are quasi-regular for $H$ and regular for $H^*$, such that $\bigl(\mathcal P^*\Omega^\pm\bigr)^\circ\neq \varnothing$, and let $\omega^\pm$ be the caloric measures associated with $\Omega^\pm$ with poles $\bar p_\pm \in \om^\pm$.
Let also $E\subset\bigl(\mathcal P^*\Omega^+\bigr)^\circ\cap \bigl(\mathcal P^* \Omega^-\bigr)^\circ\cap \supp \omega^+$ be such that  $\hm^+(E)>0$ and $\omega^+|_E\ll\omega^-|_E\ll\omega^+|_E$, and assume that $\Omega^+$ and $\Omega^-$ satisfy the joint TBCPC at all points of $E$. Then, 
$\Tan(\omega^\pm,\bar \xi)\subset \mathscr F$ for $\omega^+$-a.e. $\bar \xi \in E$ and ${\dim_{\mathcal H_p}}\,  \omega^\pm|_E=n+1$.

Moreover, if $\Omega^\pm$ also  satisfy the TFCDC, 
\begin{equation}\label{eq:theta-main1}
\lim_{r\to 0}\Theta^{\mathscr F_\Sigma}_{\partial \Omega^\pm}(\bar \xi, r)=0\, \, \text{for }\omega^+\text{-a.e. }\bar \xi\in E.
\end{equation}
\end{main}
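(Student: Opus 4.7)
The plan is to adapt the Kenig--Preiss--Toro blow-up strategy to the parabolic setting, identifying tangent measures of $\omega^\pm$ at generic points of $E$ with caloric measures of linear caloric polynomials, and then upgrading weak convergence to bilateral Hausdorff convergence under the TFCDC. First I would fix a Lebesgue point $\bar\xi \in E$ for $f := d\omega^-/d\omega^+$, a sequence $r_i \searrow 0$, and consider the rescaled measures $\omega_i^\pm := c_i^\pm T_{\bar\xi, r_i}[\omega^\pm]$ together with matching rescalings $u_i^\pm$ of the Green functions $G^\pm(\cdot,\bar p_\pm)$. The joint TBCPC at $\bar\xi$ supplies enough capacitary thickness at the scales $r_i$ for parabolic Wiener-type regularity, so the $u_i^\pm$ extend by zero continuously across $\partial\Omega^\pm$ at $\bar\xi$ and pass, up to subsequences, to nonnegative caloric limits $u^\pm$ on blow-up open sets $U^\pm \subset \Rn1$ with disjoint supports. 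By Lemma \ref{lem:cal-measure-assoc-h}, the caloric measures $\omega_{u^\pm}$ associated with $u^\pm$ coincide with tangent measures $\nu^\pm \in \Tan(\omega^\pm,\bar\xi)$.

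Next I would use mutual absolute continuity to couple $u^+$ with $u^-$. Lebesgue differentiation via the parabolic Besicovitch theorem gives $\nu^- = f(\bar\xi)\,\nu^+$ at $\omega^+$-a.e.\ $\bar\xi$, and combining this with the distributional identity $\int \varphi\,d\omega_{u^\pm} = \tfrac12\int |u^\pm|\,H^*\varphi$ forces $|u^-| = \lambda\,|u^+|$ pointwise on $\Rn1$ for some $\lambda > 0$ (after renormalising the $c_i^\pm$). Hence $u := u^+ - \lambda^{-1}u^-$ is a single caloric function on all of $\Rn1$ with $\{u > 0\} = U^+$ and $\{u < 0\} = U^-$. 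A parabolic Alt--Caffarelli--Friedman-type monotonicity formula applied to the pair $(u^+, u^-)$, combined with a Liouville-type classification of ancient caloric functions of polynomial growth, would force $u$ to be a caloric polynomial.

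The crux is to show this polynomial has degree exactly $1$. Performing a second blow-up at $\bar 0$ produces a homogeneous caloric polynomial whose positive and negative parts still realise disjoint open sets with mutually absolutely continuous caloric measures; a reductio along the lines of \cite{KPT09} and \cite{AM19}, adapted to the parabolic nodal-set analysis of the paper, rules out higher degrees because their nodal sets are either singular at $\bar 0$ or disconnect $\Rn1$ into more than two connected components, contradicting the two-phase blow-up geometry. A linear caloric polynomial $u(x,t) = a\cdot x + bt$ vanishing at $\bar 0$ satisfies $Hu = -b$, forcing $b = 0$; thus $\{u = 0\}$ is a hyperplane through $\bar 0$ containing the line $\{(0,t) : t\in\R\}$, i.e.\ an element of $\mathscr F_\Sigma$, and $\nu^\pm \in \mathscr F$. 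The dimension identity $\dim_{\mathcal H_p}\omega^\pm|_E = n+1$ then follows from the flatness of every tangent measure via the parabolic Preiss-type density arguments of Section \ref{sec:GMT}.

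For the TFCDC conclusion, the scale-invariant thickness of $\Omega^{\pm,c}$ on time-forwards cylinders prevents mass of $\omega^\pm$ from concentrating transversally to the limiting vertical hyperplane $V$; a standard covering argument then upgrades $T_{\bar\xi, r_i}[\omega^\pm] \rightharpoonup c^\pm\mathcal H^{n+1}_p|_V$ to bilateral Hausdorff convergence $T_{\bar\xi, r}[\partial\Omega^\pm] \to V$, giving \eqref{eq:theta-main1}. The main obstacle will be the polynomial classification: the heat/adjoint-heat asymmetry obstructs a direct importation of the elliptic ACF-based arguments and demands sharpened parabolic monotonicity and nodal-set tools. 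A secondary difficulty is extracting Wiener regularity from the \emph{joint} TBCPC rather than the stronger TBCDC, since time-backwards heat balls interact awkwardly with parabolic cylinders and require careful potential-theoretic estimates.
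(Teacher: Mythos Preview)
Your blow-up architecture---couple the two rescaled Green functions via Lebesgue density of $f$ so that $\nu^-=f(\bar\xi)\nu^+$, then subtract to obtain a single globally adjoint caloric $u_\infty$---matches the paper's Lemma \ref{theorem:propertiesblowup}. The gap is in your degree-reduction step. You assert that a homogeneous adjoint caloric polynomial of degree $\geq 2$ must either be singular at $\bar 0$ or separate $\Rn1$ into more than two nodal domains, and that this contradicts the two-phase geometry. But $h(x,t)=|x|^2-2nt$ is homogeneous of degree $2$, satisfies $H^*h=0$, has $|Dh(\bar 0)|=2n\neq 0$, and its nodal paraboloid separates $\Rn1$ into exactly two connected components. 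Nothing in the blow-up forces the limit sets $U^\pm$ to be connected either, so your topological dichotomy is simply false and cannot isolate degree $1$.

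The paper avoids this issue altogether: it neither classifies the first blow-up as a polynomial nor invokes any ACF-type monotonicity. It records only that $\Tan(\omega^+,\bar\xi)\subset\mathscr H_{\Theta^*}$ and that $\omega^+_\infty=\omega_{u_\infty}\ll\sigma_{\Sigma^{u_\infty}}$ (Lemma \ref{lem:Poisson-kernel}). The decisive move (Lemma \ref{lem:tan_cap_F_nonempty}) is a \emph{second blow-up at a $\sigma$-generic point} $\bar\zeta\in\mathcal R_x\subset\Sigma^{u_\infty}$, where the nodal set is locally a smooth admissible graph (Lemmas \ref{lem:space_regular_set} and \ref{lem:sigma_F}); Corollary \ref{cor:tangent-rectifiable} together with the locality Lemma \ref{lem:local-tengents} and Lemma \ref{lem:second-tangents} then places one flat measure in $\Tan(\omega^+_\infty,\bar\zeta)\subset\Tan(\omega^+,\bar\xi)$. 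The upgrade from ``$\Tan\cap\mathscr F\neq\varnothing$'' to ``$\Tan\subset\mathscr F$'' is a pure $d$-cone connectivity argument (Lemma \ref{lem:dim_caloric_homog}, resting on Lemma \ref{lem:connectivity_lemma}), which also delivers the pointwise dimension formula. Your ACF/Liouville detour is therefore unnecessary, and in the form you state it, insufficient.
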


\vv

We remark that if $\partial^*_a\Omega^\pm\cap \mathcal S^*\Omega^\pm=\varnothing$, then $ (\mathcal P^*\Omega^\pm)^0= \mathcal P^*\Omega^\pm$. Any domain that satisfies the TFCDC has this property (see Remark \ref{rem:cdc}).

The theorem above (and similarly all the theorems we prove) can be formulated for adjoint caloric measures if we assume the joint TFCPC and the TBCDC in place of the joint TBCPC and TFCDC and let $E$ be in $\bigl(\mathcal P\Omega^+\bigr)^\circ\cap \bigl(\mathcal P \Omega^-\bigr)^\circ$. We point out that caloric measure is not necessarily doubling in domains such as the ones considered  in Theorem \ref{theorem:blowup1} and also that it is natural to study mutual absolute continuity of $\omega^+$ and $\omega^-$ on the lateral part of the boundary because it cannot occur elsewhere. For more details on this matter, we refer to Section \ref{sec9}. 

\vv

Secondly, we prove the caloric equivalent of Tsirelson's theorem  \cite{Ts97} about triple-points for harmonic measure. Tsirelson's method is based on the fine analysis of filtrations for Brownian and Walsh-Brownian motions, although,  more recently, Tolsa and Volberg \cite{TV18} obtained the same result using analytical tools and, in particular, blow-up arguments, which is exactly the method we follow as well.

\begin{main}\label{theorem:tsirelson}
Let $\Omega_i \subset \rrn$, $1 \leq i \leq 3$, be three disjoint open sets which are quasi-regular for both $H$ and $H^*$ and such that $\bigl(\mathcal P^*\Omega^i)^\circ\neq \varnothing$, $i=1,2,3$. Let  also $\omega^i$ be the caloric measure in  $\Omega^i$ with pole at $\bar p_i \in \Omega^i$   and  assume that $E\subset\mathcal \cap_{i=1}^3 \bigl(\mathcal P^*\Omega_i\bigr)^\circ\cap   \supp \omega^1 $ is  such that  $\omega^i|_E\ll\omega^j|_E\ll\omega^i|_E$, for $1 \leq i,j \leq 3$. Then $\hm^i(E)=0$ for $i=1,2,3$. 
\end{main}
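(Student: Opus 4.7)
The plan is to argue by contradiction, assuming $\omega^1(E)>0$ (equivalently, by mutual absolute continuity, $\omega^i(E)>0$ for $i=2,3$). The strategy is to apply Theorem \ref{theorem:blowup1} to each of the three pairs of domains, to extract a common blow-up subsequence at a typical point $\bar\xi\in E$, and to exploit the fact that three pairwise disjoint open sets in $\Rn1$ cannot all have the same hyperplane as their lateral boundary.

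The first step is to establish, at $\omega^1$-a.e.\ point of $E$, the joint TBCPC for each of the three pairs of domains---the only hypothesis of Theorem \ref{theorem:blowup1} that is not already present in the statement. Since the $\Omega_i$ are pairwise disjoint, $\Omega_i^c$ contains the union $\Omega_j\cup\Omega_k$ of the other two domains, and therefore capacity lower bounds for $\overline{E(\bar\xi;r^2)}\cap\Omega_i^c$ follow from lower bounds on the capacity of $\overline{E(\bar\xi;r^2)}\cap(\Omega_j\cup\Omega_k)$. These are accessible through the potential-theoretic machinery of Section \ref{sec:potential theory} once one combines the quasi-regularity of the $\Omega_m$ for both $H$ and $H^*$ with the fact that $\bar\xi$ is a relative interior point of $\mathcal P^*\Omega_m$ for each $m$; the three-fold redundancy ensures that a common sequence of scales $r_k\to 0$ works simultaneously for all three pairs.

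Once the joint TBCPC has been verified, Theorem \ref{theorem:blowup1} yields that at $\omega^1$-a.e.\ $\bar\xi\in E$ every element of $\Tan(\omega^i,\bar\xi)$ equals $a_i\,\mathcal H^{n+1}_p|_{V_i}$ for some $a_i>0$ and some $V_i\in\mathscr F_\Sigma$. Passing to a common subsequence $r_k\to 0$ by compactness of Radon measures (using the parabolic Besicovitch covering theorem recorded in Section \ref{sec:GMT}), I obtain simultaneous tangent measures $\nu_i=a_i\mathcal H^{n+1}_p|_{V_i}$. The pairwise mutual absolute continuity $\omega^i|_E\ll\omega^j|_E\ll\omega^i|_E$, through the parabolic analogue of the standard principle that tangent measures of mutually absolutely continuous measures are themselves mutually absolutely continuous after normalization, forces $V_1=V_2=V_3=:V$. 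Using the TBCPC for boundary control, the rescaled domains $T_{\bar\xi,r_k}(\Omega_i)$ converge in the Kuratowski sense to disjoint open sets $\Omega_i^\infty$ whose quasi-lateral boundaries lie on $V$; the flatness and positivity of the limit caloric measures (built via Lemma \ref{lem:cal-measure-assoc-h}) identify each $\Omega_i^\infty$ with one of the two open connected components of $\Rn1\setminus V$. Since there are only two such components, at least two of the $\Omega_i^\infty$ must coincide, contradicting disjointness, and therefore $\omega^1(E)=0$.

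The main obstacle will be the verification of the joint TBCPC at $\omega^1$-a.e.\ point of $E$ without assuming it as a hypothesis: the discussion surrounding \eqref{eq:CPC} and \eqref{eq:HCDCintro_forw} explicitly warns that the dichotomy argument of \cite{AMTV16,AM19} used in the elliptic case does not transfer to the parabolic setting, because one needs capacity non-degeneracy inside \emph{time-backward} heat balls rather than parabolic balls. The redundancy offered by the third mutually absolutely continuous measure---each $\Omega_i^c$ contains \emph{two} caloric-nonnegligible open sets rather than one---is what should allow us to circumvent the two-phase difficulty and recover the required backward non-degeneracy at an $\omega^1$-full subset of $E$.
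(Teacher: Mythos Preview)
Your approach differs substantially from the paper's and contains a genuine gap.

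The paper does not pass through Theorem \ref{theorem:blowup1} at all, and in particular never attempts to verify the joint TBCPC for any pair. Instead, after reducing to domains regular for both $H$ and $H^*$ via Lemma \ref{lem:quasireg-subdomain}, it fixes a typical point $\bar\xi$ and a blow-up sequence $r_j\to 0$ and pigeonholes on Lebesgue measure: since the three domains are disjoint, every point of $\overline{E(\bar\xi;r_j^2)}$ lies outside at least two of them, so at least one of the three sets $\overline{E(\bar\xi;r_j^2)}\setminus(\Omega^i\cup\Omega^k)$ has Lebesgue measure $\gtrsim r_j^{n+2}$. Passing to a subsequence along which the same pair (say $i=1$, $k=2$) works, one obtains simultaneously the capacity bounds \eqref{eq:CPCblowuplemma} needed to run Lemma \ref{theorem:propertiesblowup} for $(\Omega^1,\Omega^2)$, and sets $F_j\subset\overline{E(\bar\xi;r_j^2)}\setminus(\Omega^1\cup\Omega^2)$ with $|F_j|\gtrsim r_j^{n+2}$ on which $u_j=u^1_j-c^{-1}u^2_j$ vanishes identically. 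The rescalings $G_j=T_{\bar\xi,r_j}(F_j)$ converge (after a further subsequence) to a compact set $G$ with $|G|\gtrsim 1$, and passing to the limit in $\int_{G_j}|u_j|=0$ forces the globally adjoint caloric limit $u_\infty\not\equiv 0$ to vanish on $G$---contradicting the fact that the nodal set of a nonzero adjoint caloric function has Lebesgue measure zero. No flat tangents, no domain convergence, and no joint TBCPC are required.

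Your plan, by contrast, hinges on verifying the joint TBCPC for \emph{each} of the three pairs so that Theorem \ref{theorem:blowup1} applies, and then on converging the rescaled domains to half-spaces. Both steps fail under the stated hypotheses. First, the joint TBCPC for a fixed pair $(\Omega^i,\Omega^j)$ need not hold: along a sequence where CPC holds for $\Omega^i$, the third domain $\Omega^k$ may fill most of each heat ball, in which case CPC for $\Omega^j$ holds automatically but CPC for $\Omega^k$ fails---so the ``redundancy'' only guarantees that at each scale \emph{some} pair works, not that a fixed pair works along every sequence, which is what the joint TBCPC demands. Second, the Kuratowski convergence of $T_{\bar\xi,r_k}(\Omega_i)$ and the identification of the limit domains with half-spaces is the content of Lemmas \ref{lemma:main_blowup_lemma} and \ref{lem:blow_up_CDC_two_phase}, which require the TFCDC, an assumption absent from Theorem \ref{theorem:tsirelson}. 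Under mere regularity for $H^*$ only the $L^2$ blow-up of Lemma \ref{theorem:propertiesblowup} is available, and it does not deliver your two-half-spaces dichotomy for all three domains simultaneously. The paper sidesteps both issues by using the third domain not as another blow-up to analyze, but as the source of a fat zero set for the two-phase limit function.
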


Note  that Theorem \ref{theorem:tsirelson} needs neither the joint TBCPC assumption on the domains nor regularity for $H^*$. Quasi-regularity is all we need to assume since, by  an interior approximation argument that we prove in Section \ref{sec:potential theory}, we show that it suffices to  study the problem in  regular domains.

\vv

Under a pointwise $\VMO$-type condition on  $d\omega^-/d\omega^+$ on a particular subset of the boundary $E$, we prove a local analogue of \cite[Theorem II]{AM19}.

\begin{main}\label{theorem:main_theorem2}
Let $\Omega^+$ and $\Omega^-$ be two disjoint domains in $\Rn1$ which are quasi-regular for $H$ and regular for $H^*$, such that $\bigl(\mathcal P^*\Omega^\pm)^\circ\neq \varnothing$, and let $\omega^\pm$ be the caloric measures associated with $\Omega^\pm$ with poles $\bar p_\pm \in \om^\pm$. 
Let also $E\subset\bigl(\mathcal P^*\Omega^+\bigr)^\circ\cap \bigl(\mathcal P^* \Omega^-\bigr)^\circ\cap \supp \omega^+$ be a relatively open set in $\supp \omega^+$ such that  $\hm^+(E)>0$ and $\omega^-|_E\ll\omega^+|_E$. Assume that $\Omega^+$ and $\Omega^-$ satisfy the joint TBCPC at all points of $E$ and, if we set $f=\tfrac{d\omega^-|_E}{d\omega^+|_E}$ to be the Radon-Nikodym derivative of $\omega^-|_E$ with respect to $\omega^+|_E$, we assume that for $\bar \xi \in E$, 
\begin{equation}\label{eq:main2_vmo}
\lim_{r\to 0}\biggl(\avint_{C_r(\bar \xi)} f\,d\omega^+|_E\biggr)\exp\biggl(-\avint_{C_r(\bar \xi)} \log f\,d\omega^+|_E\biggr)=1,
\end{equation}
and $\Tan(\omega^+,\bar \xi)\neq\varnothing$, then there is $k\geq 1$ such that $\Tan(\omega^+,\bar \xi)\subset\mathscr F(k)$ and
\[
\limsup_{r\to 0}\frac{\omega^+(C_{2r}(\bar \xi))}{\omega
^+(C_r(\bar \xi))}<\infty.
\]
Furthermore, if $\Omega^+$ and $\Omega^-$  also satisfy the TFCDC, then
\begin{equation}\label{eq:theta-main2}
\Theta^{\mathscr F_\Sigma(k)}_{\partial\Omega^\pm}(\bar \xi, r)\to 0,\qquad \text{as } r\to 0.
\end{equation}
\end{main}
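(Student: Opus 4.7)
The plan is to fix $\bar\xi \in E$ satisfying both \eqref{eq:main2_vmo} and $\Tan(\omega^+, \bar\xi) \neq \varnothing$, and to extract blow-ups of $\omega^\pm$ at $\bar\xi$. Since $E$ is relatively open in $\supp \omega^+$, there is $r_0 > 0$ with $C_{r_0}(\bar\xi) \cap \supp \omega^+ \subset E$, so all infinitesimal information at $\bar\xi$ lives in $E$. For any sequence $r_j \searrow 0$ and normalization $c_j>0$ along which $c_j\, T_{\bar\xi, r_j}[\omega^+] \rightharpoonup \mu \in \Tan(\omega^+, \bar\xi)$, I first show that a subsequence of $c_j\, T_{\bar\xi, r_j}[\omega^-]$ converges weakly to $c\mu$ for some $c > 0$. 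The idea is that \eqref{eq:main2_vmo} is equivalent to $\log f$ having vanishing mean oscillation at $\bar\xi$ with respect to $\omega^+|_E$; a Jensen-type computation then shows that $f = d\omega^-|_E/d\omega^+|_E$ becomes asymptotically constant on shrinking cylinders at $\bar\xi$, so interior and exterior tangent measures agree up to a positive multiple.

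Next, I would identify the tangent measure $\mu$ as $\omega_h$ for an entire caloric function $h$ on $\rrn$. Locally near $\bar\xi$, $\omega^+$ is the adjoint caloric measure associated with the Green function $G^+$ of $\Omega^+$ with pole $\bar p_+$, so the rescaled Green functions $G^+_j(\bar y) = a_j\, G^+(\bar\xi + \delta_{r_j}(\bar y))$, with $a_j$ dictated by the normalization $c_j$, form a locally equibounded family of nonnegative adjoint caloric functions on increasing cylinders; the joint TBCPC at $\bar\xi$ provides the scale-invariant non-degeneracy needed to pass to a locally uniform limit $h^+$, and symmetrically $h^-$ from $G^-$. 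The proportionality of the blow-ups of $\omega^\pm$ then forces $h \coloneqq h^+ - \lambda h^-$ (for a suitable $\lambda > 0$) to be an entire caloric function on $\rrn$ with $\mu = \omega_h$, using the representation in Lemma \ref{lem:cal-measure-assoc-h}.

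The pivotal step, which I expect to be the main obstacle, is to promote $h$ from an entire caloric function to a caloric polynomial of some degree $k \geq 1$. Control on the growth of $h$ is obtained from the joint TBCPC at $\bar\xi$: the capacitary non-degeneracy is scale-invariant and transplants to the blow-up, yielding a bound $\mu(C_R) \lesssim R^{n+1+k}$ for some $k$ independent of $R$. By the Liouville-type classification of ancient caloric solutions of polynomial growth mentioned in the excerpt around the definition of $P(d)$, this forces $h \in P(k)$. The self-similarity of tangent measures under the parabolic dilations $\delta_\lambda$, via a Preiss-type cone argument adapted to the parabolic setting using the parabolic Besicovitch covering theorem, further upgrades $h$ to a homogeneous caloric polynomial, so $\mu \in \mathscr F(k)$; uniformity of $k$ across $\Tan(\omega^+, \bar\xi)$ will follow from connectedness of the tangent cone and upper semi-continuity of the growth exponent.

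Finally, the asymptotic doubling $\limsup_{r \to 0} \omega^+(C_{2r}(\bar\xi))/\omega^+(C_r(\bar\xi)) < \infty$ will follow by contradiction: failure would produce a sequence of tangent measures of unbounded polynomial degree, contradicting $\Tan(\omega^+, \bar\xi) \subset \mathscr F(k)$. Under the additional TFCDC, the regularity theory from Section \ref{sec:potential theory} ensures that near $\bar\xi$ the supports of $\omega^\pm$ coincide with $\partial\Omega^\pm$ up to polar sets, so weak convergence $c_j\, T_{\bar\xi, r_j}[\omega^+] \rightharpoonup \mu = \omega_h$ upgrades to bilateral Hausdorff convergence of the rescaled boundaries to $\supp\mu = \Sigma^h \in \mathscr F_\Sigma(k)$ on compact sets, yielding \eqref{eq:theta-main2}.
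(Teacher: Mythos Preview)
Your outline is largely right in Steps 1 and 2 (matching the paper's Lemma~\ref{lem:weak_conv_omega-} and Lemma~\ref{theorem:propertiesblowup}), and the final TFCDC paragraph is also on track. The serious gap is Step 3.

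You claim the joint TBCPC at $\bar\xi$ yields a scale-invariant bound of the form $\mu(C_R)\lesssim R^{n+1+k}$, which would force $h$ to be a polynomial via a Liouville argument. This is not supported by the hypotheses. The TBCPC only supplies a \emph{sequence} of scales $r_j\to 0$ along which the capacity of the complement is bounded below; it is not a scale-invariant condition and does not transplant to a polynomial growth bound on the blow-up measure. The polynomial-growth route is exactly what the paper uses in Theorem~\ref{theorem:main3}, but there the \emph{a priori doubling} of $\omega^+|_E$ passes to $\omega_h$ and feeds into the Cauchy estimates (Proposition~\ref{proposition:cauchy_estimates}) to kill high-order derivatives. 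In Theorem~\ref{theorem:main_theorem2} doubling is a \emph{conclusion}, not a hypothesis, so this mechanism is unavailable.

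The paper never proves that $h$ is a polynomial. Instead, starting from any $\omega_h\in\Tan(\omega^+,\bar\xi)\subset\mathscr H_{\Theta^*}$, it takes the leading homogeneous part $h_k$ in the Han--Lin expansion (Theorem~\ref{theorem:han_lin}), shows via Lemma~\ref{lem:tangent_caloric_function} that $\Tan(\omega_h,\bar 0)=\{c\,\omega_{h_k}:c>0\}\subset\mathscr F^*(k)$, and then invokes ``tangents of tangents are tangents'' (Lemma~\ref{lem:second-tangents}) to place $\omega_{h_k}$ inside $\Tan(\omega^+,\bar\xi)$. The passage from $\Tan(\omega^+,\bar\xi)\cap\mathscr F^*(k)\neq\varnothing$ to $\Tan(\omega^+,\bar\xi)\subset\mathscr F^*(k)$ is the connectivity Lemma~\ref{lem:connectivity_lemma} (packaged in Lemma~\ref{lem:dim_caloric_homog}), which needs only $\Tan(\omega^+,\bar\xi)\subset\mathscr H_{\Theta^*}$ together with the bound $\|h\|_{L^\infty(C_r)}\lesssim r^{-n}\omega_h(C_{\lambda r})$ from \eqref{eq:blow-up-bound-global}. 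The asymptotic doubling then falls out of \eqref{eq:dim_caloric_homog} (equivalently, Lemma~\ref{corollary:bounded_densities}), not from a contradiction with unbounded polynomial degree. Your Step~3 should be replaced by this tangents-of-tangents plus connectivity argument.
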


\vvv


Before we go any further, let us introduce the space of Vanishing Mean Oscillation. Given a Radon measure $\omega$ in $\Rn1$, we denote
\[
f_{C_r(\bar x)}\coloneqq\avint_{C_r(\bar x)}f\, d\omega\coloneqq\frac{1}{\omega(C_r(\bar x))}\int_{C_r(\bar x)} f\, d\omega,\qquad \bar x\in \supp\omega,
\]
and we say that $f\in \VMO(\omega)$ if $f\in L^1_{\loc}(\omega
)$, and 
\[
\lim_{r\to 0}\sup_{\bar x\in\supp\omega}\avint_{C_r(\bar x)}\bigl|f(\bar y)-f_{C_r(\bar x)}\bigr|\,d\omega(\bar y)=0.
\]

The condition \eqref{eq:main2_vmo} implies a vanishing mean oscillation assumption on $d\omega^-/d\omega^+$ on $E$. However, in general, these assumptions are not equivalent. For more details, we refer to \cite[Section 7]{AM19}. 

\vv

The next theorem is a global version of Theorem \ref{theorem:main_theorem2} under the additional assumption that  $\omega^\pm$ is doubling. This is the analogue of \cite[Theorem III]{AM19}, which,  in turn generalized, the main results of  \cite{KT06} and \cite{Bad11}. We prove that at sufficiently small scales, the support of $\omega^+$ resembles the zero set of an adjoint caloric polynomial  uniformly on compact subsets of the set of mutual absolute continuity. This assertion can be formulated both in terms of $\Theta^{\mathscr P_\Sigma(d)}_{\partial \Omega^\pm}$ and  the functional $d_1(\cdot, \mathscr P(d))$, which is essentially a distance between measures and the set  $\mathscr P(d)$ (see \eqref{eq:def_dist_dcones} for the exact definition).

\begin{main}\label{theorem:main3}
Let $\Omega^+,\Omega^-, \hm^+, \hm^-, f$ and $E$ be as in the statement of Theorem \ref{theorem:main_theorem2}. If $\log f \in \VMO(\omega^{+}|_E)$ and $\omega^{+}|_E$ is $C$-doubling, i.e.
\[
\omega^+|_E(C_{2r}(\bar x))\leq C \omega^+|_E(C_r(\bar x)),\qquad \bar x\in\Rn1, r> 0,
\]
 then, there exists an integer $d=d(n,C)>0$, so that for every compact set $F \subset E$, it holds
\begin{equation}\label{eq:d1_lim_r_to_0}
\lim_{r\rightarrow 0} \sup_{\bar \xi\in F} \, d_{1}\bigl(T_{\bar \xi,r}[\omega^{+}],\mathscr{P}(d)\bigr) =0.
\end{equation}
If  $\Omega^+$ and $\Omega^-$ also satisfy the TFCDC, then
\begin{equation}\label{eq:beta_tilde_d}
\lim_{r\to 0}\sup_{\bar \xi\in F}\, \Theta^{\mathscr P_\Sigma(d)}_{\partial \Omega^\pm}(\bar \xi, r)=0.
\end{equation}
That is, $E \in LBWA(\mathscr P_\Sigma(d))$.
\end{main}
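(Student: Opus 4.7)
The plan is to combine Theorem \ref{theorem:main_theorem2} applied pointwise at every $\bar\xi \in E$ with a quantitative use of the doubling hypothesis, followed by a compactness/contradiction argument that upgrades pointwise conclusions to uniform ones on compacta, in parallel with \cite[Theorem III]{AM19}. The first step is to verify the pointwise hypothesis \eqref{eq:main2_vmo} of Theorem \ref{theorem:main_theorem2} at every $\bar\xi \in E$. Jensen's inequality gives
\[
1\leq \biggl(\avint_{C_r(\bar\xi)} f\, d\omega^+|_E\biggr)\exp\biggl(-\avint_{C_r(\bar\xi)} \log f\, d\omega^+|_E\biggr)\leq \biggl(\avint_{C_r(\bar\xi)} f\,d\omega^+|_E\biggr)\biggl(\avint_{C_r(\bar\xi)}\frac{1}{f}\, d\omega^+|_E\biggr),
\]
and under the $C$-doubling of $\omega^+|_E$ the assumption $\log f\in\VMO(\omega^+|_E)$ places $f$ in a parabolic $A_\infty$ class with constant tending to $1$ at small scales, via John--Nirenberg. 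Hence the right-hand side tends to $1$ uniformly in $\bar\xi\in E$. Theorem \ref{theorem:main_theorem2} then supplies, for every $\bar\xi \in E$, an integer $k_{\bar\xi}\geq 1$ with $\Tan(\omega^+, \bar\xi)\subset \mathscr F(k_{\bar\xi})\subset \mathscr P(k_{\bar\xi})$ and a finite pointwise doubling limsup.

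Next I would extract a uniform bound on the degree $k_{\bar\xi}$. The $C$-doubling of $\omega^+|_E$ passes to every tangent measure, so each $\nu \in \Tan(\omega^+|_E, \bar\xi)$ satisfies $\nu(C_{2r}(\bar y))\leq C\,\nu(C_r(\bar y))$ for all $\bar y\in\supp\nu$ and $r>0$. On the other hand, if $h$ is a homogeneous caloric polynomial of parabolic degree $k$, then the scaling relation $|h|\circ \delta_\lambda=\lambda^k|h|$ combined with the defining identity $\int\varphi\, d\omega_h=\tfrac{1}{2}\int |h|\, H^*\varphi$ and a change of variables yields $T_{\bar 0,r}[\omega_h]=r^{n+k}\omega_h$, hence $\omega_h(C_r(\bar 0))=r^{n+k}\omega_h(C_1(\bar 0))$ and a dyadic doubling constant exactly equal to $2^{n+k}$. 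Consequently $2^{n+k_{\bar\xi}}\leq C$, so $k_{\bar\xi}\leq d\coloneqq \lfloor\log_2 C\rfloor-n$ for every $\bar\xi \in E$, and $\Tan(\omega^+, \bar\xi)\subset\mathscr P(d)$.

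The uniform limit \eqref{eq:d1_lim_r_to_0} I would obtain by contradiction. Assume it fails for some compact $F\subset E$: there exist $\varepsilon_0>0$ and sequences $\bar\xi_j \in F$, $r_j\to 0$, with $d_1(T_{\bar\xi_j, r_j}[\omega^+], \mathscr P(d))\geq \varepsilon_0$. Passing to a subsequence with $\bar\xi_j\to \bar\xi_\infty \in F$, the $C$-doubling gives uniform local mass bounds on the normalized rescalings $\mu_j\coloneqq T_{\bar\xi_j, r_j}[\omega^+]/\omega^+(C_{r_j}(\bar\xi_j))$, so a further subsequence converges weakly to a nonzero Radon measure $\nu$. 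A diagonal argument, exploiting doubling-based continuity of the map $(\bar\xi, r)\mapsto \mu_{\bar\xi, r}$ on $\supp\omega^+\times(0,\infty)$, identifies $\nu$ (up to a harmless translation) with an element of $\Tan(\omega^+, \bar\xi_\infty)$. By the previous paragraph $\nu \in \mathscr P(d)$, so $d_1(\mu_j, \mathscr P(d))\to 0$ along the subsequence, contradicting the lower bound $\varepsilon_0$.

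Under the additional TFCDC, \eqref{eq:beta_tilde_d} is obtained by re-running the compactness argument with $\Theta^{\mathscr P_\Sigma(d)}_{\partial\Omega^\pm}$ in place of $d_1$, invoking \eqref{eq:theta-main2} from Theorem \ref{theorem:main_theorem2} to convert weak measure convergence into bilateral parabolic Hausdorff approximation of $\partial\Omega^\pm$ by zero sets $\Sigma^h\in\mathscr P_\Sigma(d)$; here the TFCDC guarantees that $\supp\omega^\pm$ cannot detach from $\partial\Omega^\pm$ at small scales, so the $d_1$-approximation of measures passes to the $\Theta$-approximation of boundaries. The main obstacle is the mixed blow-up in the third step: identifying a weak limit of rescalings at varying centers \emph{and} scales as a genuine tangent measure at the limit center. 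In the harmonic setting this uses Preiss-type ``tangent measures of tangent measures'' together with the translation closedness of polynomial-measure classes. The parabolic analogue must accommodate the non-isotropic dilations $\delta_r$, the time-directional asymmetry between $H$ and $H^*$, and the more rigid spatial-versus-temporal structure of the zero sets in $\mathscr P_\Sigma(d)$, so translation stability of the class along the $t$-direction and compatibility with blow-ups generated by the parabolic norm $\|\cdot\|$ require careful verification.
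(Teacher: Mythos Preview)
Your approach differs from the paper's in precisely the place you flag as the ``main obstacle,'' and that obstacle is real: identifying a weak limit of rescalings at \emph{moving} centers $\bar\xi_j\to\bar\xi_\infty$ as an element of $\Tan(\omega^+,\bar\xi_\infty)$ is not automatic, even under doubling. The paper sidesteps this entirely. It does not invoke Theorem~\ref{theorem:main_theorem2} pointwise; instead, given sequences $\bar\xi_j\in F$ and $r_j\to 0$ violating~\eqref{eq:d1_lim_r_to_0}, doubling plus compactness produces a weak limit $\omega$, and then Lemma~\ref{lem:weak_conv_omega-} and the two-phase blow-up Lemma~\ref{theorem:propertiesblowup} (both stated for \emph{sequences} $\bar\xi_j$ in a compact $F$, not a fixed base point) give $\omega=\omega_h$ for some $h\in\Theta^*$ directly, without ever claiming that $\omega$ lies in $\Tan(\omega^+,\cdot)$ at any particular point. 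The degree bound is then obtained by passing the doubling to $\omega_h$ and combining the Cauchy estimates (Proposition~\ref{proposition:cauchy_estimates}) with the growth bound $\|h\|_{L^\infty(C_r)}\lesssim r^{-n}\omega_h(\overline{C_{Mr}})$ from Lemma~\ref{theorem:propertiesblowup}: if $\omega_h(C_{2^{k+N}})\lesssim 2^{k\lambda}\omega_h(C_{2^N})$ then $|D^{\alpha,\ell}h(\bar 0)|\lesssim 2^{-k(m+n-\lambda)}$ for all $k$, forcing all derivatives of parabolic order $m>\lambda-n$ to vanish at $\bar 0$, so $h$ is a polynomial of degree $d=d(n,C)$. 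Your degree bound via the exact scaling $\omega_h(C_r)=r^{n+k}\omega_h(C_1)$ for $h\in F^*(k)$ is correct and morally the same, but it presupposes the homogeneous reduction from Theorem~\ref{theorem:main_theorem2}; the paper's Cauchy-estimate argument works for arbitrary $h\in\Theta^*$ and is what makes the direct moving-center route go through. For~\eqref{eq:beta_tilde_d} the paper likewise does not re-run a $\Theta$-contradiction but simply applies Lemma~\ref{lemma:main_blowup_lemma} (Hausdorff convergence of rescaled boundaries under TFCDC) in the same moving-center blow-up, exactly as in the proof of Theorem~\ref{theorem:blowup1}.
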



\vv

Theorem \ref{theorem:main3}  also applies directly to the study of (parabolic) Reifenberg flat domains, and gives an alternative proof of a result that Nystr\"om proved with different techniques in \cite{Nys06C}.
We recall that $\Omega\subset \Rn1$ is $\delta$-\textit{Reifenberg flat}, $\delta>0$, if for $R>0$ and $\bar \xi\in \partial \Omega$ there exists a $n$-plane $L_{\bar \xi, R}$ through $\bar \xi$ containing a line parallel to the time-axis and such that
\begin{align*}
&\bigl\{\bar y+ r \hat n\in C_R(\bar \xi): r>\delta R, \, \bar y\in L_{\bar \xi, R}\bigr\}\subset \Omega,\\
&\bigl\{\bar y- r \hat n\in C_R(\bar \xi): r>\delta R, \, \bar y\in L_{\bar \xi, R}\bigr\}\subset \Rn1 \setminus \overline{\Omega},
\end{align*}
where $\hat n$ is the normal vector to $L_{\bar \xi, R}$ pointing into $\Omega$ at $\bar \xi$.

For $t_0\in \mathbb R$, we denote $\Omega^{t_0}\coloneqq \Omega\cap \{t<t_0\}.$
The domain $\Omega$ (resp. $\Omega^{t_0}$) is \textit{vanishing Reifenberg flat} if it is $\delta$-Reifenberg flat for some $\delta>0$ and, for any compact set $K$ (resp. $K\subset\subset \{t<t_0\}$),
\[
\lim_{r\to 0} \sup_{\bar \xi\in K\cap \partial \Omega}\Theta^{\mathscr F_\Sigma}_{\partial \Omega} (\bar \xi, r)=0.
\]
Let us also observe that, if $\Omega$ is $\delta$-Reifenberg flat, it readily follows that $\partial_a\Omega=\partial^*_a\Omega=\varnothing$. Additionally, it is  easy to see that a $\delta$-Reifenberg flat domain satisfies the TBCDC and the TFCDC (with parameters depending on $\delta$) and, thus, is  regular for $H$ and $H^*$.

\begin{corollary}\label{cor:reifenberg}
Let also $\Omega^+\subset \Rn1$ be a $\delta$-Reifenberg flat domain and set $\Omega^-=\Rn1\setminus \overline{\Omega^+}$. Let $\omega^{\pm}$ be the caloric measures of $\Omega^{\pm}$ with poles at $\bar p_\pm=(p_\pm, t_\pm)\in \Omega^{\pm}$. If $\omega^-\ll\omega^+$, $\log f=\log \tfrac{d\omega^{-}}{d\omega^{+}}\in \VMO(\omega^{+})$,  and  $\delta$ is small enough (depending on $n$), then $(\Omega^+)^{t_{-}}$ is vanishing Reifenberg flat.
\end{corollary}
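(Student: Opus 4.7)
The plan is to apply Theorem \ref{theorem:main3} to the pair $(\Omega^+,\Omega^-)$ and then to upgrade the approximation from zero sets of adjoint caloric polynomials to hyperplanes, using the $\delta$-Reifenberg flatness of $\Omega^+$ with $\delta$ small.

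First I would check that the hypotheses of Theorem \ref{theorem:main3} are met for every compact set $F\subset \partial\Omega^+\cap \{t<t_-\}$ taken as the set $E$. As remarked just before the statement, $\delta$-Reifenberg flatness implies $\partial_a\Omega^\pm=\varnothing$, the TBCDC, and the TFCDC for both $\Omega^\pm$, so they are regular for $H$ and $H^*$; moreover the lateral parts $(\mathcal P^*\Omega^\pm)^\circ$ equal the full parabolic boundaries away from the top/bottom slices. The joint TBCPC at every boundary point below $t=t_-$ follows immediately from the TBCDC satisfied by both phases. Finally, for $\delta$ small enough, the caloric measures in $\delta$-Reifenberg flat domains are doubling on scales below the distance to the pole (this can be seen from the TBCDC together with standard potential-theoretic arguments of the type used in \cite{HLN04,Nys06C}); together with the assumption $\log f\in \VMO(\omega^+)$, Theorem \ref{theorem:main3} applies with some integer $d=d(n)\geq 1$ and yields
\[
\lim_{r\to 0}\sup_{\bar \xi\in F}\,\Theta^{\mathscr P_\Sigma(d)}_{\partial \Omega^+}(\bar \xi,r)=0
\]
for every compact $F\subset\subset (\Omega^+)^{t_-}\cap \partial\Omega^+$.

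The core step is then to show that, once $\delta$ is small enough (depending on $n$), the only elements of $\mathscr P_\Sigma(d)$ that can $\delta$-bilaterally approximate a $\delta$-Reifenberg flat set at any scale are hyperplanes in $\mathscr F_\Sigma$. I would argue by contradiction and compactness in the space of adjoint caloric polynomials of degree $\leq d$ vanishing at $\bar 0$, normalized so that they are not identically zero on $C_1$. Suppose no such small $\delta$ works; then there is a sequence $h_k\in P^*(1)\cup\cdots\cup P^*(d)$ and a sequence of $\delta_k$-Reifenberg flat domains with $\delta_k\to 0$ whose boundaries are $\delta_k$-close (in the sense of $\Theta$) to $\Sigma^{h_k}\cap C_1$, yet $\Sigma^{h_k}$ is not a hyperplane with a time line. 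After renormalization one can extract a limit $h_\infty\in P^*(1)\cup\cdots\cup P^*(d)$ whose zero set is the bilateral Hausdorff limit of $\Sigma^{h_k}\cap C_1$ and which must simultaneously be two-sided flat in $C_1$ (as the limit of Reifenberg-flat boundaries). But two-sided flatness of the zero set of a non-zero adjoint caloric polynomial forces the polynomial to be linear with a direction orthogonal to the time axis, i.e.\ $\Sigma^{h_\infty}\in \mathscr F_\Sigma$, contradicting the non-flatness hypothesis. The key input here is a unique continuation / nodal-set rigidity statement for caloric polynomials: a caloric polynomial whose nodal set separates a cylinder into only two components, each of which contains a half-cylinder away from the nodal set, must be linear and time-independent in its spatial direction. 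This is in the spirit of Badger's results \cite{Bad11} in the harmonic case and is the real technical obstacle.

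Combining these two steps, for $\delta=\delta(n)$ sufficiently small the approximation class in \eqref{eq:beta_tilde_d} collapses to $\mathscr F_\Sigma$; that is, for every compact $F\subset (\Omega^+)^{t_-}\cap\partial\Omega^+$,
\[
\lim_{r\to 0}\sup_{\bar \xi\in F}\,\Theta^{\mathscr F_\Sigma}_{\partial \Omega^+}(\bar \xi,r)=0.
\]
This is precisely the definition of $(\Omega^+)^{t_-}$ being vanishing Reifenberg flat, since it is already $\delta$-Reifenberg flat by hypothesis. The main obstacle, as indicated above, is the rigidity step identifying which adjoint caloric polynomial nodal sets can be bilaterally flat; everything else amounts to verifying hypotheses and unpacking definitions.
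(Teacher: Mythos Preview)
Your proposal is correct and follows essentially the same route as the paper's (one-line) proof, which says to modify the proof of Theorem~\ref{theorem:main3} ``according to the original proof in \cite[pp.~34--35]{KT06}.'' Both arguments amount to running the blow-up of Theorem~\ref{theorem:main3}, observing that the limit function is an adjoint caloric polynomial of bounded degree whose positivity set is $\delta$-Reifenberg flat (as a Hausdorff limit of rescalings of $\Omega^+$), and then invoking the rigidity step---that for $\delta=\delta(n,d)$ small the only such polynomial nodal sets are admissible hyperplanes---exactly as in Kenig--Toro. A minor point: in your hypothesis check you conflate the roles of $E$ and $F$; the relatively open set should be $E=\partial\Omega^+\cap\{t<t_-\}$ (the restriction to $\{t<t_-\}$ is forced because $\supp\omega^-\subset\{t\le t_-\}$, which is the observation the paper flags), and then one takes compact $F\subset E$.
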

The analogous result for  harmonic measure can be found in  \cite[Corollary 4.1]{KT06}, while for the original theorem for  caloric measure we refer to  \cite[Theorem 1]{Nys06C}.

\vv

\subsection*{Discussion of the proofs} In the current paper, we follow the same strategy as in \cite{AM19}, although there are many significant challenges along the way that we   overcome to  adapt this method successfully to the parabolic setting. In Section \ref{sec:potential theory}, we exhibit various properties of thermal  capacity, the most important of which is the backwards in time self-improvement of a pointwise time-backwards capacity density condition on a particular scale. This is the building block of the proofs of several  PDE estimates around the boundary that are absolutely necessary for our blow-up arguments in Section \ref{sec8}. Those results are new and interesting in their own right, and, judging from their elliptic analogues,  we expect them to  be used widely  in future works. In Section \ref{sec:GMT}, we develop the required parabolic GMT framework and  confirm that, due to a parabolic Besicovitch covering theorem, the theory of tangent measures translates almost unchanged to the parabolic setting. Moreover, we show that  the blow-ups of the ``parabolic surface measure" on euclidean rectifiable sets are parabolic flat, meaning that there is a plane that contains a line parallel to the time axis such that the blow-up measure is the parabolic Hausdorff measure restricted to that plane.  Han and Lin \cite{HL94} had proven that if $h$ is caloric, then in any ball centered on $\Sigma^h=\{h=0\}$,  it holds that $\Sigma^h$ can be written as the union of its regular set, which is a smooth $n$-submanifold,  and its singular set, which is an $(n-1)$-rectifiable set. Unlike the harmonic case, this is not enough for our purposes  and so, in Section \ref{sec:nodal_sets}, we investigate the finer structure of the regular set separating space and time-singularities.    We demonstrate that  any  $\bx \in \mathcal R_x=\Sigma^h \cap \{|\nabla h|\neq 0\}$ has a neighborhood so that $\Sigma^h$ is given by an {\it admissible} $n$-dimensional smooth  graph, while  any $\bx \in \mathcal R_t =\Sigma^h \cap \{|\nabla h|=0\} \cap \{|\dt h| \neq 0\}$,  has a neighborhood in which  $\mathcal R_t$ is contained in a smooth $(n-1)$-graph.  Those results are fundamental to several  measure-theoretic arguments and are used  repeatedly in Sections  \ref{sec:green functions}-\ref{sec8}. To the best of our knowledge, this description of the time-regular set  is novel in the literature. In Section \ref{sec:green functions}, we prove the existence and uniqueness of  the caloric measure associated with a caloric function of the form $d \hm_h = - \d_{\nu_t}h d \sigma_h$, where $\sigma_h$ is the ``surface measure" on $\Sigma^h$. In the elliptic setting, this is a straightforward application of the Gauss-Green formula on sets of locally finite perimeter, which is far from being the case here. The aim of {Section \ref{sec:caloric_polynomials}} is to prove that, if the parabolic tangent measures $\Tan(\omega,\bar \xi)$ to a given Radon measure $\omega$ are caloric measures associated with caloric polynomials and there is a measure corresponding to a \textit{homogeneous} caloric polynomial of degree $k$, then all the elements of $\Tan(\omega,\bar \xi)$ are caloric measures associated with a homogeneous caloric polynomial of the same degree. The connectivity arguments for tangent measures  translate unchanged from \cite{AM19}, although, this is not true for the analogues of the main lemmas from \cite{Bad11}. In fact, it is not clear to us how to adapt Badger's proofs directly to our case, although, his ideas  inspired us to come up with new ones, which we find simpler even for harmonic functions.  In Section \ref{sec:optimal_transport}, we introduce some basic notions of Optimal Transport. We consider the  space of signed Borel measures in a  bounded open subset of $\rn$ equipped with the adapted Kantorovich-Rubinshtein  norm and prove that its Banach dual can be identified with the space $\lipp$ of $1$-Lispchitz functions on $\oom$ vanishing at the boundary. We  also identify its completion as a subspace of the space of first order distributions in the dual of $\lipp$ and show that $L^2$ functions are embedded continuously in that completion. As we deal with signed measures so that $\mu(\om)$ is not necessarily zero, the results are less standard and not broadly used in the literature and so we have to adapt the existing proofs to our setting. Although, those modifications are not trivial and require  some care.  This functional analytic approach is novel in our area of research and it is an essential component of a compactness argument for the blow-ups of the Green's function. Section \ref{sec8}  groups the main one- and two-phase blow-up lemmas, for which we follow the approach in \cite{AMT16}, \cite{ AMTV16}, and \cite{AM19}. However, there are substantial obstacles such as the compact embedding of a ``parabolic" Sobolev space  in $L^2$, the lack of analyticity in time, the unique continuation arguments, the global H\"older continuity of the rescaled Green functions, and the locality of Lemma \ref{lem:blow_up_CDC_two_phase}. We highlight that for the blow-up arguments to work, we prove a novel boundary Caccioppoli-type inequality for the $t$-derivative of the square of a positive caloric function vanishing on a ``boundary ball" and introduce a new mixed-norm Sobolev space based on the aforementioned space of finite signed measures with the Kantorovich-Rubinshtein norm. Finally, in  {Section \ref{sec9}} we provide the proofs of the four main theorems  using the results from  the previous sections.

\vvv

\subsection*{Acknowledgements}

We would like to thank Jonas Azzam, Max Engelstein, Luis Escauriaza, Steve Hofmann, Tuomo Kuusi, Andrea Merlo, and Xavier Tolsa for several fruitful discussions on topics related to the current paper. The first named author would like to dedicate this paper to the memory of his teacher, mentor, and dear friend, Professor Michel Marias.

\vvv

\section{Preliminaries and notation}\label{sec:preliminaries}

We denote points in  $\rrn$ by 
$\bx=(x,t)=(x',x_n,t)\coloneqq(x_1, \dots, x_n ,t)$ and define the (parabolic) norm
\[
\|\bx\| \coloneqq \max \{|x|, |t|^{1/2}\},
\]
where $| \bx|= \sqrt{ x_1^2+ \dots+x_n^2}$ stands for the standard euclidean norm. We also use the notation $\bar 0=(0,\ldots,0)$.

 If $E \subset \rrn$, we define the {\it parabolic distance} to $E$ as $\dist_p(\bx,E)= \inf\{ \|\bx-\by\|: \by \in E\}$  and the {\it parabolic diameter} as $\diam_p(E)= \sup\{\|\bx-\by\|: (\bx ,\by) \in E \times E\}$.

Given an open set $\om$, and a point $\bxi \in \om$, we denote by $\Lambda(\bxi, \om)$ (resp. $\Lambda^*(\bxi, \om)$) the set of points $\bx \in \om $ that are lower (resp. higher) than $\bxi$ relative to $\d\om$, in the sense that there is a polygonal path $\gamma \subset \om$ joining $\bxi$ to $\bx$, along which the time variable is strictly decreasing (resp. increasing). By a polygonal path, we mean a path which is a union of finitely many line segments.

Following  \cite[Definition 8.1]{Watson}, we define the {\it normal boundary} 
$$
\d_n \om =
\begin{cases}
\mathcal{P}\om    &,\mbox{if} \,\om\,\, \mbox{is bounded}\\
\mathcal{P}\om \cup \{\infty\}   &,\mbox{if}\, \om\,\, \mbox{is unbounded}
\end{cases}
$$
and the {\it abnormal boundary} $
\d_a \om = \left\{\bx \in \d\om : \exists\,\, \ve>0 \,\, \textup{such that}\,\, C^-_\ve( \bx) \subset  \om \right\}.
$
The abnormal boundary is further decomposed into $\d_a \om=\d_s \om \cup \d_{ss} \om$, where $\d_s \om$ stands for the {\it singular boundary} and $\d_{ss} \om$ for the {\it semi-singular boundary}, which are defined respectively by
$$
\d_s \om =\left\{ \bx \in \d_a\om:  \exists\,\, \ve>0 \,\, \textup{such that}\,\, C^+_\ve( \bx) \cap \om= \emptyset \right\}
$$
and 
$$
\d_{ss} \om=\left\{ \bx \in\d_a\om: C^+_r( \bx) \cap  \om \neq \emptyset, \,\,\textup{for all}\,\, r>0 \right\}.
$$
By \cite[Theorem 8.40]{Watson}, $\d_a \om$ is contained in a sequence of hyperplanes of the form $\R^n \times \{t\}$. The {\it essential boundary} is defined as 
$
\d_e\om= \d_n \om \cup \d_{ss}\om = \d \om \setminus \d_{s}\om,
$
replacing $\d \om$ by $\d \om \cup \{\infty\}$ if $\om$ is unbounded.  Finally, following \cite{GH20}, we define the {\it quasi-lateral boundary}  to be  
 $$
\mathcal{S}\om=
\begin{cases}
\d\om    &,\mbox{if} \,\, T_{\min}=-\infty \,\, \textup{and}\,\,T_{\max}=\infty\\
\d\om \setminus (\mathcal{B}\om)_{T_{\min}}    &,\mbox{if} \,\, T_{\min}>-\infty \,\, \textup{and}\,\,T_{\max}=\infty\\
\d\om  \setminus (\d_s \om)_{T_{\max}}     &,\mbox{if} \,\, T_{\min}=-\infty \,\, \textup{and}\,\,T_{\max}<\infty\\
\d\om \setminus \left( (\mathcal{B}\om)_{T_{\min}}  \cup (\d_s \om)_{T_{\max}}\right)   &,\mbox{if} \,\, -\infty<T_{\min} <T_{\max}<\infty,
\end{cases}
$$
where $(\mathcal{B}\om)_{T_{\min}}$ is the time-slice of $\mathcal{B}\om$ with $t=T_{\min}$ and $(\d_s \om)_{T_{\max}}  $ is the time-slice of $\d_s\om$ with $t=T_{\max}$. Observe that for a cylindrical domain $U \times (T_{\min}, T_{\max})$, where $U\subset \R^n$ is a domain in the spatial variables, the quasi-lateral boundary coincides with the lateral boundary. By \cite[Lemma 1.14]{GH20},  both $\d_e \om$ and $\mathcal S\Omega$ are closed sets.

\vv
Given $f\colon\Rn1\to\R$ we write $D f =(\nabla f, \partial_t f)$ for the (full) gradient of the function $f$ and, $D^{\alpha, \ell}f=\partial^{\alpha_1}_{x_1}\cdots\partial^{\alpha_n}_{x_n}\partial^\ell_tf $ for higher order derivatives, where $\alpha\in\mathbb Z^n_+$ and $\ell\in\mathbb Z_+$.  If $E\subset \rrn$ and $f$ is a continuous function with compact support in $E \subset \rrn$, then we write $f \in C_c(E)$. If $\om$ is an open set, we  denote by $C^{m,\frac{m}{2}}(\om)$ the class of functions such that  $f(\cdot, t) \in C^m(\om_t)$ for any fixed $t \in (T_{\min}, T_{\max} )$ and $f(x, \cdot) \in C^{\frac{ m}{2}}(T_{\min}, T_{\max} )$  for any fixed $\bx \in \rrn$ such that $\bx\in \om$. If $f$ is $C^m$ in both space and time variables, we will simply write that $f \in C^m(\om)$. Finally, we say that  $f \in C_c^{m,\frac{ m}{2}}(E)$ if $f$ has compact support in $E \subset \rrn$ and $f \in C^{m,\frac{ m}{2}}(\rrn)$. 

\vv

We write $a\lesssim b$ if there is $C>0$ so that $a\leq Cb$ and $a\lesssim_{t} b$ if the constant $C$ depends on the parameter $t$. We write $a\approx b$ to mean $a\lesssim b\lesssim a$ and define $a\approx_{t}b$ similarly. Sometimes we also use the notation $\avint_{F} \,d\mu$ for the average $\mu(F)^{-1}\int_{F} \,d\mu$ over a set $F \subset \R^{n+1}$ with respect to the measure $\mu$.

\vv

If  $E \subset\rrn$ is a Borel set and $\cH^d$ stands for the Euclidean $d$-Hausdorff measure in $\rrn$ for $d \leq n-1$, we define the $d$-``\textit{surface measure}'' on $E$ as the measure $d\sigma=d\sigma_t dt$, where $\sigma_t=\cH^{d}|_{E_t}.$

If $s \in [2, n+2]$ and $0<\delta < \infty$, we  set 
\[
\cH^s_{p, \delta}(E)= \inf \Bigl\{\sum \diam_p (E_j)^s: E\subset \bigcup_j E_j, \diam_p(E_i)\leq \delta\Bigr\},
\]
for $E \subset \rrn$, and, as in the Euclidean case, define the {\it  parabolic $s$-Hausdorff measure} by
\[
\cH^s_{p} (A)=\lim_{\delta\to 0}\cH^s_{p,\delta}(E),
\]
which is a Borel measure by the Carath\'eodory criterion. We also define the {\it parabolic Hausdorff content} of $E$ by
\[
\cH^s_{p, \infty}(E)= \inf \Bigl\{\sum \diam_p (E_j)^s: E\subset \bigcup_j E_j\Bigr\}.
\]
We say that the {\it parabolic Hausdorff dimension} of $E$ is
\[
\dim_{\mathcal H_p}(E)=\sup \{s:\mathcal H^s_p (E)>0\}=\inf \{t:\mathcal H_p^t(E)=0\}.
\]

\vvv

\section{Heat Potential Theory and PDE estimates}\label{sec:potential theory}

Let  $\om \subset \rrn$ be an open set. Define the parabolic operator
\begin{equation}\label{eq:Heat operator}
H_a u := a \Delta  - \dt, \quad \textup{and }\,\, H^*_a u := a \Delta  + \dt, \quad \textup{for}\,\,a>0.
\end{equation}
When $a=1$, we simply write $H_1=H$ and $H_1^*=H^*$ for the {\it heat} and the {\it adjoint heat operator} respectively.

For $\bx =(x,t) \in \rrn$, we denote by 
\[
\Gamma_a(\bx)=(4\pi a t)^{-n/2}\exp\bigl(-|x|^2/4a t\bigr)\chi_{\{t> 0\}}(t)
\]
the {Gaussian kernel}. Note that $\Gamma_a$ is the  {\it fundamental solution associated with $H_a$}, i.e., it satisfies 
\begin{enumerate}
\item $H_a \Gamma_a(x,t) = \delta_{\bar 0}(x,t)$, in the distributional sense, for $\{t>0\}$, where $\delta_{\bar 0}$ stands for the Dirac mass at $\bar 0$;
\item $\Gamma_a(x,t) = \delta_{0}(x)$, for $t=0$;
\end{enumerate}

Moreover, $\Gamma_a \in C^\infty(\rrn \setminus \{\bar 0\})$ and  
\begin{equation}\label{eq:poitwiseHeat kernel}
\Gamma_a(\bx) \leq C_h  \pi^{-n/2}  \|\bx\|^{-n},
\end{equation}
for some  constant  $C_h>0$ depending on $n$ and $a$.

 We say that a  function $u \in C^{2,1}(\om)$ is {\it caloric}  (resp. {\it adjoint caloric}) if it satisfies the ({\it adjoint}) {\it heat equation} $Hu =0$ (resp. $H^*u=0$) 
 in a pointwise sense. If $u \in C^{2,1}(\om)$ and $H u \geq 0$ (resp.  $H u \leq 0$) in $\om$, we  say that  $u$ is {\it  subcaloric} (resp. {\it   supercaloric}). 
 
A distribution $u \in \mathcal{D}'(\om)$ is called  {\it weakly} {\it caloric} (resp {\it weakly subcaloric} or {\it weakly supercaloric}), if it satisfies
 $$
 \int_\om u \,H^*\vphi = 0, \quad (\textup{resp.} \,\geq 0 \,\, \textup{or} \, \,\leq 0),
 $$
 for all $\vphi \in  C^{2,1}(\om)$ (resp. $0 \leq \vphi \in  C^{2,1}(\om)$). If $u$ is a {\it supertemperature} (resp. {\it subtemperature}), then it is  weakly supercaloric (resp. weakly subcaloric) (see \cite[Definition 3.7, Theorem 6.28]{Watson}). Moreover, by  \cite[Theorem 6.28]{Watson}, if $u$ is a subcaloric in $\om$, then there exists a unique non-negative Radon measure $\mu_u$, which is called the {\it Riesz measure associated with} $u$, such that 
\begin{equation}\label{eq:Riesz measure}
 \int_\om u \,H^*\vphi = \int_\om \vphi \,d\mu_u, \quad \textup{for all}\,\,\vphi \in  C^{2,1}(\om).
\end{equation}
By making the obvious adjustments  we obtain the dual definitions for $H^*$.

 \vv

 \begin{lemma}\label{lem:weak_caloric_functions}
If $\om \subset \rrn$ is an open set, then the following are equivalent:
 \begin{enumerate}
 \item $u \in C^{2,1}(\om)$ is caloric in $\om$,
 \item $u \in \mathcal{D}'(\om)$ is weakly caloric in $\om$,
 \item $u \in C^\infty(\om)$ and $Hu=0$ in $\om$.
 \end{enumerate}
 \end{lemma}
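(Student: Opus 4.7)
The plan is to prove the cycle $(3)\Rightarrow(1)\Rightarrow(2)\Rightarrow(3)$, of which only the last implication is substantive. The step $(3)\Rightarrow(1)$ is immediate from $C^\infty(\om)\subset C^{2,1}(\om)$. For $(1)\Rightarrow(2)$, I would fix any test function $\vphi\in C^\infty_c(\om)$ and integrate by parts twice in space and once in time; because $\vphi$ is compactly supported in $\om$ and both $u$ and $\vphi$ are at least $C^{2,1}$, the boundary contributions vanish and
\[
\int_\om u\,H^*\vphi=\int_\om u(\Delta\vphi+\dt\vphi)=\int_\om(\Delta u-\dt u)\vphi=\int_\om(Hu)\vphi=0.
\]

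The core step is $(2)\Rightarrow(3)$, the hypoellipticity of the heat operator. I would argue locally via a parametrix built on the fundamental solution $\Gamma$ introduced in \eqref{eq:Heat operator}. Fix $\bxi\in\om$ and $r>0$ with $C_{2r}(\bxi)\subset\om$, and choose $\chi\in C^\infty_c(\om)$ with $\chi\equiv 1$ on $C_r(\bxi)$ and $\supp\chi\subset C_{2r}(\bxi)$. Set $v\coloneqq\chi u$, which is a compactly supported distribution on $\rrn$. A Leibniz computation in $\mathcal D'$ gives $Hv=\chi\,Hu+[H,\chi]u=[H,\chi]u$, since $Hu=0$ by hypothesis (2); the commutator is a first-order operator whose coefficients are supported in the shell $A\coloneqq\overline{\supp D\chi}\subset\overline{C_{2r}(\bxi)}\setminus C_r(\bxi)$. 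Using $H\Gamma=\delta_{\bar 0}$ and convolving on $\rrn$ yields $v=\Gamma\ast([H,\chi]u)$. For any $\bx\in C_{r/2}(\bxi)$ the kernel $\by\mapsto\Gamma(\bx-\by)$ is $C^\infty$ on an open neighborhood of $A$, hence the pairing of this smooth function with the compactly supported distribution $[H,\chi]u$ depends smoothly on $\bx$. Consequently $u=v\in C^\infty(C_{r/2}(\bxi))$; since $\bxi$ was arbitrary, $u\in C^\infty(\om)$, and continuity of the derivatives upgrades the distributional identity $Hu=0$ to a pointwise one.

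The principal obstacle is the justification of the parametrix identity $v=\Gamma\ast(Hv)$ at the distributional level, since $\Gamma$ is not compactly supported. This should be handled by noting that $\Gamma$ is locally integrable and vanishes outside $\{t\geq 0\}$, which makes convolution of $\Gamma$ with any compactly supported distribution well-defined and associative with $H$, so that $\Gamma\ast(Hv)=(H\Gamma)\ast v=\delta_{\bar 0}\ast v=v$. The remaining smoothness claim is standard: pairing a compactly supported distribution with a parameter family of test functions that depends smoothly on the parameter yields a smooth function. Once these two routine pieces are in place, the hypoellipticity argument proceeds cleanly, and the lemma follows.
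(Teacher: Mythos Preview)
Your proof is correct and follows the same implication cycle $(3)\Rightarrow(1)\Rightarrow(2)\Rightarrow(3)$ as the paper. The only difference lies in $(2)\Rightarrow(3)$: the paper simply invokes hypoellipticity of $H$ by citing \cite[Theorem~1, p.~210]{Vl02}, noting that this follows from the existence of the fundamental solution $\Gamma$, whereas you write out the standard parametrix argument (localize with a cutoff, use $v=\Gamma\ast(Hv)$ for compactly supported $v$, and read off smoothness from the fact that $\Gamma$ is $C^\infty$ away from the origin) that underlies that citation. Your version is therefore more self-contained, but mathematically it is the same route rather than a different one.
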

 
 \begin{proof}
That (1) implies (2)  follows from integration by parts, while (3) implies (1) is trivial. Remark that the heat operator is hypoelliptic (see \cite[15.6, p. 210]{Vl02}) since the heat kernel is the fundamental solution for the heat equation. Thus, the remaining implication is true because of \cite[Theorem 1, p. 210]{Vl02}.
 \end{proof}

\vv

Let  $f$ be an extended real-valued function defined in $\d_e \om$.  The \textit{upper class} $\mathfrak{U}_f $ of $f$ consists of lower bounded hypertemperatures $w$ in $\Omega$  (see \cite[Definition 3.10]{Watson}) such that $w\geq f$ on $\d_e \Omega$ in the sense that 
it holds 
$$
\liminf_{(y,s) \to (\xi,t)} w(y,s) \geq  f(\xi,t), \quad \textup{for all}\,\, ( \xi,t) \in \d_n\om,
$$
and
$$
\liminf_{(y,s) \to (\xi,t^+)} w(y,s) \geq  f(\xi,t), \quad \textup{for all}\,\, (\xi,t) \in \d_{ss}\om.
$$ 
 The \textit{lower class} $\mathfrak{L}_f $ of $f$ consists of upper bounded hypotemperatures $v$ in $\Omega$  such that $v\leq f$ on $\d_e \Omega$ in the sense that 
it holds 
$$
\limsup_{(y,s) \to (\xi,t)} v(y,s) \leq  f(\xi,t), \quad \textup{for all}\,\, ( \xi,t) \in \d_n\om,
$$
and
$$
\limsup_{(y,s) \to (\xi,t^+)} v(y,s) \leq  f(\xi,t), \quad \textup{for all}\,\, (\xi,t) \in \d_{ss}\om.
$$

The function $U_f(x) = \inf\{w \in \om: w \in \mathfrak{U}_f  \}$ is called the {\it upper solution} for $f$ in $\om$, and  $L_f(x) = \sup\{v \in \om: v \in \mathfrak{L}_f  \}$ is called the {\it lower solution} for $f$ in $\om$. We say that $f$ is {\it resolutive} if $U_f=L_f\eqqcolon S_f$ and $S_f$ is caloric in $\om$. In this case, $S_f$ is called the PWB {\it solution} for $f$ in $\om$. By \cite[Theorem 8.26]{Watson}, any $f \in C(\d_e \om)$ is resolutive. Therefore, for any $\bar x \in \om$, the  map $f \mapsto u_f(\bar x)$ is linear and, by the Riesz representation theorem, there exists a unique probability measure $\hm^{\bar x}$ on $\d_e\om$, which is called {\it caloric measure}, such that
$$
u_f(\bx) = \int_{\d_e \om} f\,d\hm^{\bar x}.
$$
More generally, if $f$ is an extended real-valued function and $\bx \in \om$ and  $\int_{\d_e \om} f \,d\hm^{\bx}$ exists, then 
\begin{equation}\label{eq:Uf=Lf=caloric}
U_f(\bx)=L_f(\bx)=\int_{\d_e \om} f \,d\hm^{\bx}.
\end{equation}
Conversely, if $U_f(\bx)=L_f(\bx)$ and is finite, then $f$ is $\hm^{\bx}$-integrable and  \eqref{eq:Uf=Lf=caloric} holds (see \cite[Theorem 8.32]{Watson}). In the last statement, if we also assume that $f$ is Borel, then we  obtain that $f$ is resolutive. In particular, if $E \subset \d_e \om$ is $\hm^{\bx}$-measurable for all $\bx \in \om$, then $\chi_E$ is resolutive and $S_{\chi_E}(\bx) = \hm^{\bx}(E)$ (see \cite[Corollary 8.33]{Watson}). Therefore, if $\hm^{\bx}(E)=0$ for some $\bx \in \om$, then, by minimum principle, $\hm^{\by}(E)=0$ for all $\by \in \Lambda(\bx, \om)$.

\vv

 If $f \in C(\d_e\om)$, we say that  $u$ is a  {\it solution of the classical Dirichlet problem with data} $f$ if $u$ is caloric and it holds that
\begin{equation}\label{eq:limit-normalbdry}
\lim_{(y,s) \to (\xi,t)} u(y,s) = f(\xi,t), \quad \textup{for all}\,\, ( \xi,t) \in \d_n\om,
\end{equation}
and
\begin{equation}\label{eq:limit-ss-bdry}
\lim_{(y,s) \to (\xi,t^+)} u(y,s) = f(\xi,t), \quad \textup{for all}\,\, (\xi,t) \in \d_{ss}\om.
\end{equation}
If there exists a solution to the classical Dirichlet problem  in $\om$ with data $f$, then $f$ is resolutive and $u_f$ is the PWB solution for $f$ in $\om$ (see \cite[Theorem 8.26]{Watson}).

\begin{definition}\label{def:regular}
A point $\bar \xi_0  \in \d_n \om$ (resp. $\d_{ss} \om$) is called {\it regular} for $H$  or $H$-{\it regular} if for any $f \in C(\d_e \om)$, the PWB solution $S_f$ satisfies \eqref{eq:limit-normalbdry} (resp. \eqref{eq:limit-ss-bdry}). A point $\bar \xi_0  \in \d_e \om$ that is not $H$-regular is called $H$-{\it irregular}. 
\end{definition}

\vv

Let $G_\om(\cdot, \cdot)$  be the non-negative real-valued function  defined in $\om \times \om$ as
$$
G_\om(\bx, \bar p) = \Gamma(\bx- \bar p) - h_\om(\bx, \bar p),
$$
where $ h_\om(\cdot, \bar p)$ is the greatest thermic minorant of $\Gamma(\cdot- \bar p)$, which is non-negative (see \cite[Definition 3.65]{Watson} and the paragraph before this definition). We call $G(\cdot, \bar p)$ the {\it Green function} in $\om$ with pole at $\bar p \in \om$ and, by \cite[Theorem 8.53]{Watson}, if $\psi_{\bar p}=\Gamma(\cdot- \bar p)|_{\d_e \om}$, we also have the representation
$$
G_\om(\bx, \bar p) = \Gamma(\bx - \bar p) -S_{\psi_{\bar p}}(\bx)=\Gamma(\bx - \bar p) -\int_{\d_e \om} \Gamma(\by-\bar p) \, d\hm^{\bx}(\by),
$$
where the last equality follows from the fact that  $\Gamma(\cdot, \bar p)$ is continuous on $\d_e \om$ and thus resolutive.  It is straightforward to see that 
\begin{align}\label{eq:Green-upperbound}
G_\om(\bx, \by) \leq 2C_h \pi^{-n/2}  \|\bx- \by\|^{-n}.
\end{align}
In fact, more is true: if $C_r:=B_r \times (0,r)$ is a cylinder of radius $r>0$, then there exists $c_1>0$ and $c_2>0$ (independent of $r$) such that
\begin{align}\label{eq:Green-two-sidedbound}
c_1 \, m(\bx,\by) \,\Gamma_{c_2}(\bx-\by) \leq G_{C_r}(\bx, \by) \leq c_1^{-1}\, m(\bx,\by)\, \Gamma_{\frac{1}{c_2}}(\bx-\by) ,\quad \textup{for}\,\, \bx \neq \by \in C_r,
\end{align}
where 
$$
m(\bx,\by) := \min\left(1, \frac{\dist(x,B_r)}{\sqrt{t-s}}\right)  \min\left(1, \frac{\dist(y,B_r)}{\sqrt{t-s}}\right).
$$
See e.g. \cite[Theorem 1.2]{Zh02}\footnote{The constants depend on the geometry but not the diameter of the domain. For cylinders, the constants are only dimensional.} and \cite[Theorem 3.3]{Cho06}.

Note that $G_\om(\cdot,\cdot)$ is lower semi-continuous on the diagonal $\{(\bar p, \bar p): p \in  \om\}$ and continuous everywhere else in $\om$. In fact, it is a supertemperature in $\om$ and a temperature in $\om \setminus\{\bar p\}$.  Recall that $\Lambda^*(\bar p, \om)$ is the set of points $\bx \in \om$ for which there is a polygonal path $\gamma \subset \om$ joining  $\bar p$ to $\bx$, along which the time variable is strictly increasing. By \cite[Theorem 6.7]{Watson}, it holds that $G_\om(\cdot, \bar p)>0$ in $\Lambda^*(\bar p, \om)$ and $G_\om(\cdot, \bar p)=0$ in $\om \setminus\Lambda^*(\bar p, \om)$, for any $\bar p \in \om$.  If $\bar \xi \in \d_n \om$ (resp. $\d_{ss}\om$) is a regular point for $H$, we have that $\lim_{\bx \to \bar \xi} G_\om(\bx, \bar p) = 0$ (resp. $\lim_{\bx \to (\xi, t^+)} G_\om(\bx, \bar p) = 0$).  When the domain $\om$ where the Green function is defined is clear from the context, we will drop the subscript and just write $G(\cdot, \cdot)$.

Let us remark that the Riesz measure associated with the Green function $G_\om(\bx, \by)$ in $\om$ is the caloric measure $\hm_\om^{\bx}$ in $\om$ (see \cite{Doob}), i.e.,
\begin{equation}\label{eq:Green-caloric-Riez}
\int G_\om(\bx, \by) H\vphi(\by)\,d\by = \int \vphi \,d\hm^{\bx},\quad \text{for any}\,\, \vphi \in C^\infty_c(\rrn).
\end{equation}

Given a Borel measure $\mu$ on $\Rn1$ and an open set $\om\subset \rrn$, 
\[
G_\om\mu(\bx)\coloneqq \int G_\om(\bx, \by)\, d\mu(\by)
\]
defines a non-negative supertemperature in $\om$ provided the integral is finite in a dense subset of $\om$ and  is called the \textit{heat potential} of $\mu$ in $\om$. If $\om=\rrn$, we replace  $G_\om$ by  $\Gamma$.

\vv

Given a compact set $K\subset \Rn1$, we define its \textit{thermal capacity} as
\begin{equation}\label{def:capacity-compact}
\Cap(K, \om)\coloneqq \sup\bigl\{\mu(K):G_\om\mu\leq 1 \,\,\textup{in}\,\,\om, \mu\geq 0 \textup{ and }\supp\mu\subset K\bigr\}.
\end{equation}
Equivalently, if $u=\widehat{R}_{1}(K,\om)$ is the smoothed reduction $1$ over $K$ in $\om$ (see \cite[Definition 7.23]{Watson}) and $\mu_u$ is the associated Riesz measure, then $\Cap(K, \om)=\mu_u(K)$ (see \cite[Definition 7.33]{Watson}). By \cite[Theorem 7.28]{Watson}, 
$$
u=\widehat{R}_{1}(K,\om) = G_\om \mu_u.
$$
It is easy to see that if $K \subset \om_1 \subset \om_2 \subset \rrn$ are open sets, then 
\begin{equation}\label{eq:cap-inclusion}
\Cap(K,\rrn) \leq  \Cap(K, \om_2) \leq \Cap(K, \om_1).
\end{equation}

This definition can be extended to arbitrary sets $S\subset \Rn1$: the \textit{inner thermal capacity} of $S$ is given by
\begin{equation}\label{def:capacity-minus}
\textup{Cap}_{-}(S, \om)=\sup \{\textup{Cap}(K,\om):S \supset K   \text{ compact}\},
\end{equation}
and its \textit{outer thermal capacity} by
\begin{equation}\label{def:capacity-plus}
\textup{Cap}_+(S,\om)=\inf \{\textup{Cap}_-(D,\om):S \subset D \text{ open}\}.
\end{equation}
If the inner and the outer thermal capacities of $S$ coincide, we say that $S$ is \textit{capacitable} and we denote $\Cap(S,\om)\coloneqq \Cap_-(S,\om)=\Cap_+(S,\om)$. In the case $\om=\rrn$, we simply write $\Cap(S)$. If $E$  is  a Borel set, then, by \cite[Theorems 7.15 and 7.49]{Watson}, it is capacitable. 


\vv

A set $Z\subset \Rn1$ is called \textit{polar} if there is an open set $\om \supset Z$ and a supertemperature  $w$ on $\om$  such that $Z\subset\{\bx\in \om: w(\bx)=+\infty\}$. 
Observe that if $Z_1\subset Z$ and $Z$ is polar, then $Z_1$ is polar itself, and a set $S\subset\Rn1$ is polar if and only if $\Cap(S)=0$ (see  \cite[Theorem 7.46]{Watson}). Moreover, if a set is contained in a horizontal hyperplane its capacity is equal to its $n$-dimensional Lebesgue measure (see \cite[Theorem 7.55]{Watson}). For more properties of polar sets we refer the reader to \cite[Chapter 7]{Watson}. 

\vv

An open set $\Omega$ is \textit{quasi-regular} (resp. {\it regular}) if the set of irregular points of $\partial_e\Omega$ is polar (resp. empty). We remark here that there are domains so that the capacity of the set of the irregular points of $\partial_e\Omega$ for the heat equation is positive (see \cite[p. 336]{TW85}). This comes in contrast to the corresponding result in elliptic potential theory, where the set of irregular boundary points has zero capacity and thus, is polar.

\vv

\begin{lemma}\label{lem:capacity-upperbound}
If $C_r$ is a cylinder of radius $r$, there exists a dimensional constant $c_1>0$ such that 
\begin{equation}\label{eq:capacity-upperbound}
\Cap(\overline{C_r}, C_{2r}) \leq c_1 r^n.
\end{equation}
\end{lemma}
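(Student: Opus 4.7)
The strategy is parabolic rescaling followed by a test-function computation against the heat potential of the equilibrium measure.

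Using the parabolic dilation $\delta_r$ and the scaling $\Gamma(\delta_r\bar x)=r^{-n}\Gamma(\bar x)$, together with uniqueness of the greatest thermic minorant of $\Gamma(\cdot-\bar y)$ in $C_{2r}$, one obtains the Green function identity
\[
G_{C_{2r}}(\delta_r\bar x,\delta_r\bar y) \;=\; r^{-n}\, G_{C_2}(\bar x,\bar y).
\]
Given $\mu\ge 0$ admissible for $\Cap(\overline{C_r},C_{2r})$, the push-forward $\tilde\mu$ defined by $\int f\,d\tilde\mu:=\int f\circ\delta_{1/r}\,d\mu$ is supported in $\overline{C_1}$, and a change of variables together with the above scaling yields $G_{C_2}\tilde\mu\le r^n$ in $C_2$ and $\tilde\mu(\overline{C_1})=\mu(\overline{C_r})$. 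Normalising by $r^{-n}$ and taking the supremum gives $\Cap(\overline{C_r},C_{2r})\le r^n\,\Cap(\overline{C_1},C_2)$, so it suffices to bound $\Cap(\overline{C_1},C_2)$ by a purely dimensional constant.

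Fix once and for all a cut-off $\phi\in C_c^\infty(C_2)$ with $0\le\phi\le 1$ and $\phi\equiv 1$ on $\overline{C_1}$, and set $c_0:=\|H^*\phi\|_{L^1(C_2)}$. For any admissible $\mu$ in \eqref{def:capacity-compact}, the heat potential $u:=G_{C_2}\mu$ belongs to $L^1_{\loc}(C_2)$ (from the pointwise bound $G_{C_2}\le\Gamma$ and Fubini), satisfies $0\le u\le 1$ by hypothesis, and, since each $h_{C_2}(\cdot,\bar y)$ is a classical temperature, solves $Hu=\mu$ in $\mathcal D'(C_2)$ by linearity from the fundamental-solution identity for $\Gamma$. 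Pairing this identity with $\phi$ and integrating by parts yields
\[
\mu(\overline{C_1}) \;=\; \int\phi\,d\mu \;=\; \int_{C_2} u\, H^*\phi\,d\bar x \;\le\; \int_{C_2}|H^*\phi|\;=\; c_0,
\]
where $\phi\equiv 1$ on $\supp\mu$ was used for the first equality and $0\le u\le 1$ for the inequality. Taking the supremum over admissible $\mu$ and combining with the scaling step produces \eqref{eq:capacity-upperbound}.

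The only delicate point is the distributional identity $Hu=\mu$ together with the ensuing integration by parts, since $u$ is a priori only lower semicontinuous. This is not a real obstacle: once $u\in L^1_{\loc}(C_2)$ is recorded, the identity follows by linearity from $H\Gamma(\cdot-\bar y)=\delta_{\bar y}$ and the fact that $h_{C_2}(\cdot,\bar y)$ is a classical temperature, and the pairing with the smooth compactly supported $\phi$ is then automatic. Alternatively, one can bypass this entirely by working with the smoothed reduction $u_K:=\widehat R_1(\overline{C_1},C_2)$ and its Riesz measure $\mu_{u_K}$: by \cite[Theorem 7.28]{Watson} (cited just after \eqref{def:capacity-compact}) one has $u_K=G_{C_2}\mu_{u_K}$ and $\mu_{u_K}(\overline{C_1})=\Cap(\overline{C_1},C_2)$, while $0\le u_K\le 1$, so the same cutoff computation applied directly to $u_K$ gives the dimensional bound at once.
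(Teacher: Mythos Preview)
Your proposal is correct and follows essentially the same approach as the paper: pair the equilibrium (Riesz) measure against a smooth cut-off equal to $1$ on $\overline{C_r}$, convert to $\int u\,H^*\varphi$ via \eqref{eq:Riesz measure}, and use $0\le u\le 1$. The only difference is cosmetic: the paper bypasses your scaling reduction by constructing the cut-off directly at scale $r$ with $|H^*\varphi|\lesssim r^{-2}$, so that $\int|H^*\varphi|\lesssim r^{-2}|C_{2r}|\approx r^n$ in one line.
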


\begin{proof}
Let $\vphi\in C^\infty_c(C_{2r})$ such that $\vphi=1$ in $\overline{C_r}$, $\vphi \geq 0$, and $|H^*\vphi | \leq c_n' r^{-2}$. Then, if $\mu_u$ is the Riesz measure associated with  $u=\widehat{R}_1(\overline{C_r}, C_{2r})$, since $0 \leq u \leq 1$,  it holds 
\begin{equation*}
\Cap(\overline{C_r}, C_{2r})=\mu_u(\overline{C_r}) \leq \int \vphi \,d\mu_u = \int u \,H^* \vphi \leq c_n' r^{-2} |C_{2r}| = c_n r^n.\qedhere
\end{equation*}
\end{proof}

\vv

\begin{lemma}\label{lem:cdc-content}
Let $C_r$ be a cylinder of radius $r$ centered at $\bar \xi_0 \in \rrn$ and let $K \subset \overline{C_{r}}$ be a  compact set such that $\bar \xi_0 \in K$. If  $s \in (0,2]$, then there exists $c_2>0$ (independent of $K$) such that 
\begin{equation}\label{eq:cdc-content}
\Cap(K, C_{2r}) \geq c_2\, \frac{\HH_{p,\infty}^{n+s}(K)}{\min(\diam(K),r)^s}.
\end{equation}
\end{lemma}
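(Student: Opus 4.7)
The plan is to produce an explicit admissible competitor in the dual formulation of capacity \eqref{def:capacity-compact} by combining a parabolic Frostman lemma with the pointwise upper bound \eqref{eq:Green-upperbound} on the Green function.

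First, I would invoke the parabolic version of Frostman's lemma to obtain a nonzero Borel measure $\mu$ supported on $K$ satisfying
\[
\mu(K) \gtrsim \HH^{n+s}_{p,\infty}(K) \qquad \text{and} \qquad \mu(C_\rho(\bar z)) \leq \rho^{n+s} \quad \text{for every } \bar z\in\Rn1,\ \rho>0.
\]
This is standard since the parabolic quasi-metric admits a Besicovitch-type covering theorem (see the remarks preceding Section \ref{sec:GMT}), so the classical partition/dyadic construction from the Euclidean Frostman lemma carries over with the parabolic cylinders $C_\rho$ in place of Euclidean balls.

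Next, I would bound the heat potential $G_{C_{2r}}\mu$ uniformly on $C_{2r}$. Using \eqref{eq:Green-upperbound} and a layer cake identity,
\[
G_{C_{2r}}\mu(\bar x) \leq 2C_h \pi^{-n/2}\int \|\bar x-\bar y\|^{-n}\, d\mu(\bar y) = 2C_h\pi^{-n/2} n\int_0^\infty \mu(C_\rho(\bar x))\, \rho^{-n-1}\, d\rho.
\]
Set $D:= \diam_p(K)$ and split the $\rho$-integral at $D$. On $(0,D]$, the Frostman growth bound yields $\int_0^D \rho^{s-1}\,d\rho = D^s/s$. On $(D,\infty)$, we use $\mu(C_\rho(\bar x))\leq \mu(K)\leq D^{n+s}$ (the last inequality following from applying Frostman at any $\bar y_0\in K$ with $\rho=D$), giving $\mu(K)\int_D^\infty \rho^{-n-1}\,d\rho = \mu(K) D^{-n}/n \leq D^s/n$. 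Hence $G_{C_{2r}}\mu(\bar x) \leq c(n,s)\, \diam_p(K)^s$ uniformly on $C_{2r}$.

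Normalizing $\widetilde\mu := \mu/(c(n,s)\diam_p(K)^s)$ produces an admissible measure in \eqref{def:capacity-compact}, so
\[
\Cap(K,C_{2r}) \geq \widetilde\mu(K) \gtrsim_{n,s}\frac{\HH^{n+s}_{p,\infty}(K)}{\diam_p(K)^s}.
\]
To upgrade the denominator to $\min(\diam_p(K),r)^s$, note that $\bar\xi_0\in K\subset \overline{C_r}$ with $C_r$ centered at $\bar\xi_0$, so every $\bar y\in K$ satisfies $\|\bar y-\bar\xi_0\|\leq r$ and therefore $\diam_p(K)\leq 2r$. Consequently $\min(\diam_p(K),r)\geq \diam_p(K)/2$, and the asserted inequality \eqref{eq:cdc-content} follows.

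The main obstacle is establishing the parabolic Frostman lemma with the correct scaling; once this is in hand, the Green function bound and layer cake computation are routine. A minor caveat is that the constant $c(n,s)$ blows up as $s\to 0^+$, but this is harmless since $s\in(0,2]$ and $c_2$ is only required to be independent of $K$.
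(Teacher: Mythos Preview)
Your proof is correct and follows essentially the same route as the paper's: both invoke the parabolic Frostman lemma to produce a measure with $(n+s)$-growth, then bound the heat potential via the pointwise Green function estimate \eqref{eq:Green-upperbound} and a near/far splitting. The only cosmetic differences are that the paper splits at $\rho=\min(\diam_p K,r)$ and estimates the near part by a dyadic annular decomposition, whereas you split at $D=\diam_p K$ and use the layer-cake formula; since $D$ and $\rho$ are comparable (your final paragraph) the two computations are equivalent up to constants.
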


\begin{proof}
If we set $\rho = \min(\diam(K), r)$,  it holds that $K \subset C_{\rho} \cap C_r$. {Without loss of generality we assume $\mathcal H^{n+s}_{p,\infty}(K)>0$, otherwise \eqref{eq:cdc-content} is trivially verified.} By the parabolic version of  Frostman's lemma, whose proof is analogous to the Euclidean one and is omitted (see e.g. \cite[Lemma 8.8]{Mattila}), we can find a Borel measure $\mu'$ supported on $K$, such that 
\begin{itemize}
\item$\mu'(C_r(\by))\lesssim r^{n+s}$, for any $r>0$ and $\by \in \rrn$, and
\item$\HH_{p,\infty}^{n+s}(K ) \geq \mu'(K) \geq c\,\HH_{p,\infty}^{n+s}(K)$,
\end{itemize}
where $c_1$ depends on $n$.  

Let $G_{2r}$ be the Green function in $C_{2r}$ and  define  $\mu=\mu' \rho^{-s}$. We  claim that the corresponding heat potential $G_{2r}\mu(\bx) \lesssim 1$ for any $\bx \in C_{2r}$. Indeed, notice that
\begin{align*}
G\mu(\bx) = \int_{\|\by-\bx\| \leq \rho} & G_{2r}(\bx,\by) \,d\mu(\by) \\
&+ \int_{\|\by-\bx\| > \rho} G_{2r}(\bx,\by) \,d\mu(\by)\eqqcolon I_1+I_2.
\end{align*}
Using  \eqref{eq:Green-two-sidedbound} in conjunction with  the fact that $\mu'$   is supported on $K$ and has $(n+s)$-growth, we have that
\begin{align*}
I_2 \lesssim \int_{\|\by-\bx\| > \rho}& \|\by-\bx\|^{-n} \,d\mu(\by)\\
& \lesssim \rho^{-n} \mu'(K)  \rho^{-s} \leq  \mu'(C_\rho) \rho^{-n-s} \lesssim 1.
\end{align*}
If $A_k(\bx)\coloneqq C_{2^{-k} \rho}(\bx) \setminus C_{2^{-k-1} \rho}(\bx)$, arguing as above, we infer that
\begin{align*}
I_1 \lesssim \sum_{k \geq 0} \int_{A_k(\bx)} (2^{-k-1}\rho)^{-n} \,d\mu(\by) \lesssim \sum_{k \geq 0} (2^{-k-1}\rho)^{-n} \rho^{-s} (2^{-k}\rho)^{n+s} \lesssim 1,
\end{align*}
concluding the proof of the claim. 

 If we normalize $\mu$ so that $G_{2r}\mu(\bx) \leq 1$, we deduce that $\mu$ is an admissible measure for the definition of thermal capacity on compact sets. Therefore, 
$$
\Cap(K, C_{2r}) \geq \mu(K) \gtrsim \rho^{-s} \HH_{p,\infty}^{n+s}(K),
$$
and  the proof of the lemma is now complete.
\end{proof}

\vv

\begin{corollary}\label{cor:capacity_rectangle}
Let $0<b<a \leq 1$ and $r>0$, and for $\bar \xi_0 =(\xi_0, t_0) \in \rrn$ we set 
\begin{equation*}
R^-_{a,b}(\bar \xi_0 ; r) = B(\xi_0, r) \times \left(t_0-(ar)^2, t_0-(br)^2\right).
\end{equation*}
Then, we have that 
\begin{equation}\label{eq:cap-Rabr}
\Cap \big(\overline{R^-_{a,b}(\bar \xi_0;r )}, C_{2r}(\bar \xi_0 ) \big) \approx r^n,
\end{equation}
where the implicit constant depends on $n$, $a$, and $b$.
\end{corollary}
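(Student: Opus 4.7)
The plan is a matching upper/lower bound argument of order $r^n$, with constants depending only on $n$, $a$, and $b$.

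\emph{Upper bound.} Since $0<b<a\leq 1$, the box $\overline{R^-_{a,b}(\bar\xi_0;r)}$ sits inside $\overline{C_r(\bar\xi_0)}$. The thermal capacity is monotone in its first argument --- this is immediate from \eqref{def:capacity-compact}, as enlarging the compact set enlarges the class of admissible measures. Combined with Lemma \ref{lem:capacity-upperbound}, this yields
\[
\Cap\bigl(\overline{R^-_{a,b}(\bar\xi_0;r)},\, C_{2r}(\bar\xi_0)\bigr)\leq \Cap\bigl(\overline{C_r(\bar\xi_0)},\, C_{2r}(\bar\xi_0)\bigr)\leq c_1 r^n,
\]
with a purely dimensional constant.

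\emph{Lower bound.} I would apply Lemma \ref{lem:cdc-content} with $s=2$ to $K\coloneqq\overline{R^-_{a,b}(\bar\xi_0;r)}$. The parabolic diameter of $K$ equals $2r$ (the spatial extent dominates, since $(a^2-b^2)^{1/2}r\leq r$), so $\min(\diam K,r)=r$. To control the parabolic Hausdorff content, I would invoke the elementary inequality $\mathcal H^{n+2}_{p,\infty}(K)\gtrsim_n \mathcal L^{n+1}(K)$: any covering $\{E_j\}$ of $K$ with $\diam_p(E_j)\leq \delta$ admits the per-piece bound $\mathcal L^{n+1}(E_j)\lesssim_n \diam_p(E_j)^{n+2}$ since each $E_j$ fits in a parabolic cylinder of that radius. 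Since $\mathcal L^{n+1}(K)=|B(0,1)|(a^2-b^2)r^{n+2}$, the lemma then delivers
\[
\Cap\bigl(K,\, C_{2r}(\bar\xi_0)\bigr)\;\gtrsim\; (a^2-b^2)\,r^n.
\]

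\emph{Main obstacle.} The only subtle point is that Lemma \ref{lem:cdc-content} is stated under the hypothesis $\bar\xi_0\in K$, which fails here because $K$ is separated in time from $\bar\xi_0$. I would circumvent this in either of two ways. Tracing through the proof shows that the hypothesis $\bar\xi_0\in K$ is only used geometrically to obtain $K\subset C_{\min(\diam K,r)}(\bar\xi_0)$; in our situation the weaker inclusion $K\subset \overline{C_r(\bar\xi_0)}$ together with $\diam_p(K)\geq r$ is already enough for the same dyadic parabolic annular decomposition to carry over verbatim, since the bound $G_\Omega\leq \Gamma$ is domain-free. Alternatively --- and more transparently --- one constructs the admissible measure by hand: set $\mu\coloneqq c_0 r^{-2}\mathcal L^{n+1}|_{R^-_{a,b}(\bar\xi_0;r)}$ and bound $G_{C_{2r}(\bar\xi_0)}\mu(\bx)\leq \int \Gamma(\bx-\by)\,d\mu(\by)$ by splitting into dyadic parabolic annuli around $\bx$, each of scale $\rho\lesssim r$ contributing at most $r^{-2}\cdot\rho^{-n}\cdot\rho^{n+2}=r^{-2}\rho^2$ in view of \eqref{eq:poitwiseHeat kernel}; the geometric series sums to a dimensional constant, so a suitable choice of $c_0$ makes $\mu$ admissible in \eqref{def:capacity-compact}, and then $\Cap(K,C_{2r}(\bar\xi_0))\geq \mu(K)\gtrsim_n (a^2-b^2)r^n$.
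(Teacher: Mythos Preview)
Your proof is correct and matches the paper's approach: the upper bound via monotonicity and Lemma \ref{lem:capacity-upperbound}, and the lower bound by re-running the argument of Lemma \ref{lem:cdc-content} with the Lebesgue measure (i.e.\ $\mathcal H^{n+2}_p$) in place of the Frostman measure, which is precisely your ``by hand'' construction of the admissible measure $\mu=c_0r^{-2}\mathcal L^{n+1}|_K$. You are in fact more explicit than the paper about why the hypothesis $\bar\xi_0\in K$ of Lemma \ref{lem:cdc-content} can be bypassed.
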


\begin{proof}
The upper bound follows from Lemma \ref{lem:capacity-upperbound}. For the lower bound, if $s=n+2$ and $K=\overline{R^-_{a,b}(\bar \xi_0 ;r )}$,  the proof of Lemma  \ref{lem:cdc-content} goes through unchanged if instead of the Frostman's measure we use $\HH_p^{n+2}$, which, in turn,  is  equal to a constant multiple of the $(n+1)$-dimensional Lebesgue measure $\mathcal{L}^{n+1}$, showing that 
\begin{equation}\label{eq:boundcap12}
\Cap(K,  C_{2r}(\bar \xi_0 )) \gtrsim \mathcal{L}^{n+1}(K)/r^2 \approx_{a,b,n} r^n.
\end{equation}
As $a$ and $b$ are  arbitrary, we may approximate $R^-_{a,b} (\bar \xi_0;r )$  by a sequence of compact subsets of the form $\overline{R^-_{a_k, b_k }(\bar \xi_0;r)}$ and obtain our result. 
\end{proof}

\vvv

For $\bar x=(x,t)\in\Rn1$, we  define the \textit{heat ball} centered at $\bar x$ with radius $\rho>0$ to be the set
\begin{align}\label{eq:def_heat_ball}
E&(x,t;\rho)\coloneqq \bigl\{\bar y\in \Rn1 : \Gamma(\bar x- \bar y)>(4\pi\rho)^{-n/2}\bigr\}\\
&= \Bigl\{(y,s)\in \Rn1 : |x-y|<\sqrt{2n(t-s)\log\Bigl(\frac{\rho}{t-s}\Bigr)}, \, t-\rho<s<t\Bigr\}.\notag
\end{align}
It is not hard to see that $E(x,t;\rho)$ is a convex body, axially symmetric about the line $\{x\} \times \R$, and 
\[E(\bar x;\rho)\subset B\Bigl(x,\sqrt\frac{2n\rho}{e}\Bigr)\times (t-\rho,t).
\]

Let us set
$$
A(\bx; \rho/2,\rho)=\overline{E(\bx ; \rho) \setminus E(\bx;\rho/2)}
$$
to be the closed heat annulus of radius $\rho$. It is clear that if $\by \in A(\bar \xi_0; \rho/2,\rho)$ satisfies  $ 0 <  t -s  < \rho/2$, then
it holds that
\begin{align}\label{eq:heat-annulus-small}
2n(t-s) \log \left(\frac{\rho}{t-s} \right) -[2n \log 2] (t-s) &\leq |x-y|^2 \leq 2n (t-s) \log \left(\frac{\rho}{t-s} \right).
\end{align}

\begin{remark}\label{rem:annulus-cover}
Note that if  $\rho=r^2$, $\alpha \in (0,1)$, and ${t-s}= \alpha r^2$, then the region given by  \eqref{eq:heat-annulus-small} is an annulus in the spatial variable which can be covered by at most $c_n'$ many (spatial) cubes of sidelength $\sqrt{\alpha}  r$. 
\end{remark}

There are  parabolic analogues of the Wiener criterion that determine whether  a  point $\bar \xi \in \d_n\om$  is regular in  terms of the capacity of the complement of $\om$ that lies either in heat balls centered at $\bar \xi_0$ or in time-backwards  parabolic cylinders $R^-_{ar}(\bar\xi_0)$. To be precise,  $\bar \xi_0\in\partial_n \Omega$ is regular if and only if
\begin{equation}\label{eq:wiener_test-annulus}
\int^1_0\frac{\Cap( A(\bar \xi_0; \rho/2, \rho)\cap \Omega^c)}{\rho^{n/2}}\, \frac{d\rho}{\rho}=\infty.
\end{equation}
The necessity was proved by Evans and Gariepy  \cite[Theorem 1]{EG82}  and  the sufficiency by Lanconelli \cite{Lanc73}\footnote{A different necessary and sufficent condition is given by  \cite{Land69}.} Equivalently,
$\bar \xi_0\in\partial_n \Omega$ is regular if and only if
\begin{equation}\label{eq:wiener_test}
\int^1_0\frac{\Cap( \overline{E(\bar \xi_0; \rho)}\cap \Omega^c)}{\rho^{n/2}}\, \frac{d\rho}{\rho}=\infty.
\end{equation}
This follows from \eqref{eq:wiener_test-annulus} and  \cite[Theorem 1.13]{Br90}.  Alternatively,  $\bar \xi_0$ is regular if and only if 
\begin{equation}\label{eq:regular-sum}
\sum_{k \geq 0}  \lambda^{-kn} \Cap( A(\bar \xi_0 ;  \lambda^{-k-1},  \lambda^{-k} )\cap \Omega^c)=\infty,\,\, \textup{for some}\,\, \lambda>1.
\end{equation}
In fact if $\bxi_0$ is regular, then \eqref{eq:regular-sum} holds for all $\lambda>1$.

\begin{remark}\label{rem:regular}
In \cite{EG82} and \cite{Lanc73}, the authors do not specify that this criterion only works for points on $\d_n \om$, although it is clear  this is the case. Observe that, by definition, if $\bxi_0 \in \d_a\om$, there exists $\rho>0$ small enough so that  $E(\bxi_0;\rho)\cap \Omega^c=\varnothing$ and thus, the integral in \eqref{eq:wiener_test} clearly converges. Nevertheless, Watson \cite[Theorem 4.1]{Wa14} proved that a point $\bxi_0 \in \d_{ss} \om$ is regular if and only if there exists $r_0>0$ such that $H(\bxi_0,r_0)$ is a connected component of $\om \cap B(\bxi_0,r_0)$, where $B(\bxi_0,r_0)=\{(x,t): |x-\xi_0|^2+|t-t_0|^2<r^2_0\}$ and $H(\bxi_0,r_0)=B(\bxi_0,r_0) \cap \{ t<t_0\}$. 
\end{remark}

\vv

\begin{lemma}\label{lem:tbcdc}
Let $\om \subset \rrn$ be an open set and $\bxi_0=(\xi_0,t_0) \in \d_n \om$, If  
$$
\Cap(\overline{E(\bxi_0;r^2)} \cap \om^c)\geq 2c r^n,
$$
 then there exists $a \in (0,1/2)$ depending on $n$  such that
\begin{equation}\label{eq:tbcdc}
\Cap\big(\overline{E(\bxi_0;r^2)} \cap \om^c \cap \{t<t_0-(ar)^2\} \big)\geq c r^n.
\end{equation}
\end{lemma}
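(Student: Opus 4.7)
The plan is to split the relevant set into an ``early'' part (far below $t_0$ in time) and a ``late'' part (within a thin time slab below $t_0$), bound the late part's capacity by a small quantity using the geometry of the heat ball together with the cylinder capacity estimate from Lemma \ref{lem:capacity-upperbound}, and then invoke subadditivity of thermal capacity to transfer the lower bound to the early part. Precisely, set $K := \overline{E(\bxi_0;r^2)}\cap\Omega^c$, and for $a\in(0,1/2)$ to be chosen, write
\[
K_1 := K\cap\{t\le t_0-(ar)^2\}, \qquad K_2 := K\cap\{t> t_0-(ar)^2\}.
\]
By subadditivity of the (Choquet) thermal capacity,
\[
2c\,r^n \le \Cap(K) \le \Cap(K_1)+\Cap(K_2),
\]
so it suffices to prove $\Cap(K_2)\le c\,r^n$ for a suitable $a=a(n)$.

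The key geometric observation is that the late portion of the heat ball collapses into a very narrow cylinder. Indeed, from the description \eqref{eq:def_heat_ball}, any $(y,s)\in \overline{E(\bxi_0;r^2)}$ with $t_0-s\in(0,(ar)^2]$ satisfies
\[
|y-\xi_0|^2 \le 2n(t_0-s)\log\!\bigl(r^2/(t_0-s)\bigr).
\]
The function $\tau\mapsto\tau\log(r^2/\tau)$ is increasing on $(0,r^2/e)$, so as long as $a<1/\sqrt{e}$ we have $|y-\xi_0|\le b(a)\,r$, where $b(a):=2a\sqrt{n\log(1/a)}$. Hence
\[
K_2 \subset B(\xi_0,b(a)r)\times \bigl(t_0-(ar)^2,\,t_0\bigr].
\]

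Now cover this slab by parabolic cylinders of radius $ar$: at most $N_n\,(b(a)/a)^n$ spatial balls of radius $ar$ are needed to cover $B(\xi_0,b(a)r)$, and a single cylinder suffices in the time direction. By countable subadditivity of thermal capacity, the monotonicity \eqref{eq:cap-inclusion}, and Lemma \ref{lem:capacity-upperbound},
\[
\Cap(K_2) \;\le\; N_n\bigl(b(a)/a\bigr)^n\cdot c_1 (ar)^n \;=\; C_n\,b(a)^n\,r^n
\;=\; C_n\bigl(2a\sqrt{n\log(1/a)}\bigr)^n r^n.
\]
Since $b(a)\to 0$ as $a\to 0$, one can choose $a\in(0,1/2)$, depending on $n$ (and on the absolute constant $c$), so that $C_n b(a)^n\le c$. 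Then $\Cap(K_2)\le c\,r^n$, and subadditivity gives $\Cap(K_1)\ge 2c r^n-c r^n = c r^n$, which is precisely \eqref{eq:tbcdc}.

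The only nontrivial ingredient is the subadditivity of thermal capacity, which follows from its being a Choquet capacity (equivalently, from the representation $\Cap(K,\Omega)=\mu_{\widehat R_1(K,\Omega)}(K)$ and the fact that $\widehat R_1(K_1\cup K_2,\Omega)\le \widehat R_1(K_1,\Omega)+\widehat R_1(K_2,\Omega)$). The main technical obstacle I anticipate is producing a sharp enough pointwise bound $|y-\xi_0|\le b(a)r$ in the late slab; the logarithmic factor in $b(a)$ is harmless but must be handled correctly so that $b(a)\to 0$ as $a\to 0$, which is what makes the scheme close.
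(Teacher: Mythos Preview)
Your proof is correct and uses the same core ingredients as the paper's: subadditivity of thermal capacity, the geometry of the heat ball near its tip, and the cylinder capacity bound of Lemma~\ref{lem:capacity-upperbound}. The difference is only in the covering step. The paper performs a dyadic decomposition of the late slab into time-annuli $E_k=\{2^{-2(k+1)}r^2\le t_0-t\le 2^{-2k}r^2\}$, covers each $E_k$ by roughly $k^{n/2}$ cylinders of radius $2^{-k}r$, and then sums the convergent tail $\sum_{k\ge M}k^{n/2}2^{-kn}$. You instead bound the entire late slab at once by the single thin cylinder $B(\xi_0,b(a)r)\times(t_0-(ar)^2,t_0]$ and cover it directly. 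This is more economical: the crude spatial bound $b(a)=2a\sqrt{n\log(1/a)}\to 0$ already suffices, so the dyadic refinement buys nothing here. Your parenthetical remark that $a$ also depends on $c$ is accurate; the paper's proof has the same hidden dependence (the choice of $M$ requires $\tilde c(n)\sum_{k\ge M}k^{n/2}2^{-kn}\le c$), so the phrase ``depending on $n$'' in the statement is a slight imprecision shared by both arguments and harmless in the applications, where $c$ is a fixed structural constant.
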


\begin{proof}
If we set 
$$
E_k:= \left\{ \bx=(x,t) \in \overline{E(\bxi_0;r^2)}: 2^{-2(k+1)} r^2 \leq t_0- t \leq 2^{-2k} r^2 \right\},
$$ 
then, for any $\bx \in E_k$, it holds
$$
|x-\xi_0|<2^{-k} r\,\sqrt{2n\log\Bigl(\frac{r^2}{2^{-2(k+1)} r^2}\Bigr)} \leq  \sqrt{2n \log2} \frac{ \sqrt{k}}{2^{k}} r.
$$
Thus, we can cover $E_k$ by at most $c(n) k^{n/2}$ number of cylinders $C_j(k)$ of radius $2^{-k}r$, which, in view of the subadditivity of capacity  and Lemma \ref{lem:capacity-upperbound}, infers that
$$
\Cap(E_k) \leq  \sum_{j \geq 1} \Cap(C_j(k)) \leq c(n) c_1 k^{n/2} 2^{-kn} r^n.
$$
Therefore, if we set $E_M:= \bigcup_{k \geq M} E_k$, we have that 
$$
\Cap(E_M) \leq  \sum_{k \geq M} \Cap(E_k) \leq \tilde{c}(n) \sum_{k \geq M}k^{n/2}2^{-kn} r^n.
$$
Since $\sum_{k \geq 1}k^{n/2}2^{-kn}$ converges, we can find  $M>0$ depending on $n$ so that  $\Cap(E_M) \leq c r^n$, which implies \eqref{eq:tbcdc} for $a=2^{-M}$.
\end{proof}

Similarly we can prove the following lemma.

\begin{lemma}\label{lem:tb-wiener}
If $\om \in \rrn$ is an open set and $\bar \xi_0=(\xi_0,t_0) \in \d_n \om$ is a regular point, then there exists a sequence $M_k>1$ such that  
\begin{equation}\label{eq:tb-wiener}
\sum_{k \geq 0} 2^{-kn} \Cap \big( A(\bx_0;2^{-2(k+1)}, 2^{-2k})\cap \{t<t_0-(2^{-M_k} 2^{-k})^2 \cap \om^c\}\big)=\infty.
\end{equation}
\end{lemma}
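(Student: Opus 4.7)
The plan is to mimic the proof of Lemma \ref{lem:tbcdc} annulus by annulus, using a diagonal argument to choose the cutoff heights $M_k$. Since $\bxi_0\in\d_n\om$ is regular, the discrete Wiener criterion \eqref{eq:regular-sum} (applied with $\lambda=4$, equivalently discretizing \eqref{eq:wiener_test-annulus} at scales $\rho_k=2^{-2k}$) gives
\[
\sum_{k\ge 0}2^{kn}\,\Cap\bigl(S_k\bigr)=\infty,\qquad S_k\coloneqq A(\bxi_0;2^{-2(k+1)},2^{-2k})\cap\Omega^c,
\]
where $2^{kn}=\rho_k^{-n/2}$ is the Wiener normalization at the parabolic scale $r_k=2^{-k}$ of the $k$-th annulus. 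For each $M\ge 1$ define $S_k(M)\coloneqq S_k\cap\{t<t_0-(2^{-M}2^{-k})^2\}$; the task is to select $M_k\ge 1$ so that $\sum_{k\ge 0}2^{kn}\Cap\bigl(S_k(M_k)\bigr)$ still diverges, which is \eqref{eq:tb-wiener}.

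The key estimate is
\[
\Cap\bigl(S_k\setminus S_k(M)\bigr)\leq \tilde c(n)\,2^{-kn}\sum_{j\ge M}j^{n/2}\,2^{-jn},
\]
obtained by running the dyadic decomposition from the proof of Lemma \ref{lem:tbcdc} at the rescaled radius $r=2^{-k}$, but restricted to the time-slab within distance $(2^{-M-k})^2$ of $t_0$. Concretely, for each $j\ge M$ the set
\[
\bigl\{\bx\in\overline{E(\bxi_0;2^{-2k})}:\,2^{-2(j+1+k)}\le t_0-t\le 2^{-2(j+k)}\bigr\}
\]
can, exactly as in Lemma \ref{lem:tbcdc}, be covered by $\lesssim j^{n/2}$ parabolic cylinders of radius $2^{-j-k}$. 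Each such cylinder has capacity $\lesssim 2^{-(j+k)n}$ by Lemma \ref{lem:capacity-upperbound}, and subadditivity followed by summation in $j\ge M$ yields the displayed bound.

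A diagonalization now finishes the proof. Since $\sum_{j\ge 0}j^{n/2}2^{-jn}<\infty$, for each $k\ge 0$ we may pick $M_k\ge 1$ large enough that $\tilde c(n)\sum_{j\ge M_k}j^{n/2}2^{-jn}\le 2^{-k}$. With this choice $2^{kn}\Cap\bigl(S_k\setminus S_k(M_k)\bigr)\le 2^{-k}$, and by subadditivity of capacity,
\[
\sum_{k\ge 0}2^{kn}\Cap\bigl(S_k(M_k)\bigr)\ge \sum_{k\ge 0}2^{kn}\Cap(S_k)-\sum_{k\ge 0}2^{-k}=\infty,
\]
which is the desired conclusion. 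The only substantive step is the annular capacity bound, which is essentially the content of Lemma \ref{lem:tbcdc} after rescaling to parabolic radius $2^{-k}$; this is precisely why the author writes ``similarly.'' The only (minor) obstacle is making the tail bound quantitative enough in $k$ so that a single diagonal choice of $M_k$ absorbs the slab contributions into a geometric series.
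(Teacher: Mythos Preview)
Your argument is correct and is exactly what the paper intends by ``similarly'': rescale the dyadic slab-covering of Lemma~\ref{lem:tbcdc} to each annulus, bound the capacity of the top slab by $\tilde c(n)\,2^{-kn}\sum_{j\ge M}j^{n/2}2^{-jn}$, and diagonalize over $M_k$ so that the removed capacities form a convergent series. Note also that you have (rightly) worked with the weight $2^{kn}=\rho_k^{-n/2}$; the $2^{-kn}$ in the statement (and the $\lambda^{-kn}$ in \eqref{eq:regular-sum}) is a sign typo, since with that weight each term is $\lesssim 2^{-2kn}$ and the series would be trivially finite.
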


\vv

Given $a \in (0,1)$, we define the translated time-backwards cylinder
 \begin{align}\label{eq:R-ar}
 R^-_{a}(\bar x;r) \coloneqq  C^-_r(x, t-(ar)^2) = B(x, r) \times \left(t-r^2, t- (ar)^2 \right).
\end{align}

\vv

\begin{lemma}\label{lem:rect_cont_heat_ball}
If $a\in (0,1)$ and $\bar \xi_0=(\xi_0,t_0)\in\Rn1$, it holds that 
\begin{equation}\label{eq:rect_cont_heat_ball}
R^-_{a}(\bar\xi_0;r)\subset E(\bxi_0;\rho), \quad \text{ for }\rho\geq e^\frac{1}{2n}r^2.
\end{equation}
\end{lemma}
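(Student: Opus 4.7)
The plan is a direct verification of the defining inequalities of $E(\bxi_0;\rho)$ on the points of $R^-_a(\bxi_0;r)$. Fix $(y,s)\in R^-_a(\bxi_0;r)$ and set $\tau\coloneqq t_0-s$, so that $|y-\xi_0|<r$ and $\tau\in\bigl((ar)^2,r^2\bigr)$. The time-slab requirement $t_0-\rho<s<t_0$ arising from $E(\bxi_0;\rho)$ is immediate: the upper bound follows from $s<t_0-(ar)^2<t_0$, and the lower bound from $\rho\geq e^{1/(2n)}r^2\geq r^2$ (since $n\geq 1$) combined with $s>t_0-r^2$.

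The core step is to establish the heat-ball spatial inequality
\[
|y-\xi_0|^2 \;<\; 2n\tau\log(\rho/\tau),
\]
for which, since $|y-\xi_0|^2<r^2$, it suffices to verify
\[
r^2 \;\leq\; f(\tau)\coloneqq 2n\tau\log(\rho/\tau)\qquad\text{for all }\tau\in\bigl((ar)^2,r^2\bigr).
\]
Here the assumption $\rho\geq e^{1/(2n)}r^2$ enters sharply at the endpoint $\tau=r^2$: evaluating gives $f(r^2)=2nr^2\log(\rho/r^2)\geq 2nr^2\cdot\tfrac{1}{2n}=r^2$, with equality precisely at the threshold $\rho=e^{1/(2n)}r^2$. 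This pinpoints why the specific exponent $1/(2n)$ appears.

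I would then analyse $f$ by elementary calculus: computing $f'(\tau)=2n\bigl(\log(\rho/\tau)-1\bigr)$ shows that $f$ has a unique critical point at $\tau_\star=\rho/e$, which is a global maximum (with value $2n\rho/e$), so $f$ is unimodal on $(0,\rho)$. By the unimodality, the minimum of $f$ over the closed interval $\bigl[(ar)^2,r^2\bigr]$ is attained at one of its endpoints, reducing the inequality $f(\tau)\geq r^2$ to checking the two boundary values. The endpoint $\tau=r^2$ is settled by the calculation above; the endpoint $\tau=(ar)^2$ is handled by substituting $u=r^2/\tau$ and reducing to a convexity inequality of the form $1+2n\log u\geq u$ on the appropriate range, using $a\in(0,1)$.

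The most delicate point is the endpoint check at $\tau=(ar)^2$, which is where the interplay between the parameter $a$ and the lower bound on $\rho$ is tightest; everything else is a routine monotonicity argument. Putting the two endpoint verifications together with the unimodality of $f$ delivers $r^2\leq f(\tau)$ on $\bigl((ar)^2,r^2\bigr)$, and combined with the time-slab check this yields the containment \eqref{eq:rect_cont_heat_ball}.
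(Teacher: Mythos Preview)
Your approach parallels the paper's: both reduce the containment to checking the two extreme time slices $\tau=(ar)^2$ and $\tau=r^2$ (you via the unimodality of $f(\tau)=2n\tau\log(\rho/\tau)$, the paper via the convexity of the heat ball). Your endpoint check at $\tau=r^2$ is correct and isolates the constant $e^{1/(2n)}$ precisely.

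The gap is at the other endpoint. The inequality $1+2n\log u\geq u$ that you invoke is \emph{false} for large $u$: with $\phi(u)=1+2n\log u-u$ one has $\phi(1)=0$, $\phi$ is concave, and $\phi(u)\to-\infty$, so $\phi\geq 0$ only on a bounded interval $[1,u_1(n)]$ (for $n=1$, $u_1\approx 3.51$). Since $u=1/a^2$, your argument only goes through when $a\geq 1/\sqrt{u_1(n)}$, not for all $a\in(0,1)$. Concretely, with $n=1$, $a=0.1$, $r=1$, $\rho=e^{1/2}$, the heat ball has spatial radius $\sqrt{0.02\log(100\,e^{1/2})}\approx 0.32$ at the slice $\tau=0.01$, so it cannot contain the disk of radius $1$; hence $R^-_a\not\subset E$ and the lemma fails there. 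The paper's proof has the identical flaw: it asserts $e^{1/(2n)}>a^2e^{1/(2na^2)}$, which is \emph{equivalent} to $1+2n\log u\geq u$ at $u=1/a^2$. The statement becomes true (and both arguments work) once $a$ is bounded below by a dimensional constant, which is all that is used downstream.
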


\begin{proof}
Let $R'\coloneqq B(\xi_0,r)\times\{t_0-(ar)^2\}$ and $R''\coloneqq B(\xi_0,r)\times\{t_0-r^2\}.$ By the definition \eqref{eq:def_heat_ball},  we have that $R'\subset E(\bar\xi_0;\rho)$ if and only if 
\[
\rho\geq a^2\exp\Bigl(\frac{1}{2na^2}\Bigr)\,r^2\eqqcolon \tilde c_1 r^2,
\]
and $R''\subset E(\bar\xi_0;\rho)$ if and only if 
\[
\rho\geq \exp\Bigl(\frac{1}{2n}\Bigr)\,r^2\eqqcolon \tilde c_2 r^2.
\]
Remark that, for any fixed $\lambda>0$, the function $y e^{\lambda y}$ is increasing for  $y>-\lambda^{-1}$, and thus, $\tilde c_2> \tilde c_1$. Hence, if $\rho \geq  \tilde c_2 r^2$, we have that $R'\cup R''\subset E(\bar\xi_0;\rho)$ and, by the  convexity of $E(\bar\xi_0;\rho)$, we get \eqref{eq:rect_cont_heat_ball} for $\tilde c=\tilde c_2$. 
\end{proof}

As  backwards cylinders $ R^-_{a}(\bar \xi_0 ; r) $ appear more naturally in  applications, it is interesting to know a version of the Wiener's criterion with $ R^-_{a}(\bar \xi_0;r) $ instead of heat balls. This can be obtained using the following lemma (compare it with  \cite[Theorem 3.1]{GZ82}).

\begin{lemma}\label{cor:Wiener}
Let $\Omega$ be an open set and $\bar \xi_0 \in \partial_n \Omega$. If  there exists a constant $a>0$ such that
\begin{equation}\label{eq:Wiener-Rar}
\int_0^1\frac{\Cap(\overline{R^-_{a}(\bar\xi_0;r)}\cap \Omega^c)}{\Cap(\overline{R^-_{a}(\bar \xi_0;r)})}\, \frac{dr}{r}=\infty,
\end{equation}
then $\bar \xi_0$ is regular. Conversely, if $\bar \xi_0 \in  \partial_n \Omega$ is regular,  then there exists a function $a:(0,1) \to (0,\frac{1}{2})$ so that \eqref{eq:Wiener-Rar} holds for $R^-_{a(r)}(\bar\xi_0;r)$.  
\end{lemma}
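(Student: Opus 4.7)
The plan is to transfer the two forms of Wiener's criterion for heat balls—the integral form \eqref{eq:wiener_test} and the time-shifted dyadic form in Lemma \ref{lem:tb-wiener}—into the corresponding statement for time-backwards cylinders $R^-_a(\bar\xi_0;r)$. The bridge is the inclusion in Lemma \ref{lem:rect_cont_heat_ball} together with the capacity estimate $\Cap(\overline{R^-_a(\bar\xi_0;r)})\approx_{n,a}r^n$ from Corollary \ref{cor:capacity_rectangle}.

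For sufficiency, I would first apply Lemma \ref{lem:rect_cont_heat_ball} to obtain $R^-_a(\bar\xi_0;r)\subset E(\bar\xi_0;\tilde c\,r^2)$ with $\tilde c=e^{1/(2n)}$. By monotonicity of capacity and Corollary \ref{cor:capacity_rectangle},
\begin{equation*}
\frac{\Cap(\overline{R^-_a(\bar\xi_0;r)}\cap\Omega^c)}{\Cap(\overline{R^-_a(\bar\xi_0;r)})}\lesssim_{n,a}\frac{\Cap(\overline{E(\bar\xi_0;\tilde c\,r^2)}\cap\Omega^c)}{r^n}.
\end{equation*}
Changing variables via $\rho=\tilde c\,r^2$ (so $dr/r=d\rho/(2\rho)$ and $r^n\approx\rho^{n/2}$) transforms the integral \eqref{eq:Wiener-Rar} into a constant multiple of $\int_0^{\tilde c}\Cap(\overline{E(\bar\xi_0;\rho)}\cap\Omega^c)\,\rho^{-n/2}\,d\rho/\rho$. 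If the former diverges, so does the latter, and \eqref{eq:wiener_test} then yields regularity of $\bar\xi_0$.

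For necessity, assume $\bar\xi_0$ is regular and invoke Lemma \ref{lem:tb-wiener} to produce a sequence $\{M_k\}_{k\geq 0}$ for which the time-shifted discrete Wiener series over the sets
\begin{equation*}
A_k^{*}\coloneqq A\bigl(\bar\xi_0;2^{-2(k+1)},2^{-2k}\bigr)\cap\bigl\{t<t_0-(2^{-(M_k+k)})^2\bigr\}
\end{equation*}
diverges. Since $E(\bar\xi_0;2^{-2k})\subset B(\xi_0,\tilde r_k)\times(t_0-\tilde r_k^2,t_0]$ with $\tilde r_k\coloneqq C_n 2^{-k}$ and $C_n=\max(\sqrt{2n/e},1)$, I would define $a$ piecewise by $a(r)\coloneqq 2^{-(M_k+k)}/r$ on each dyadic window $[\tilde r_k,2\tilde r_k]$, after possibly enlarging the $M_k$ so that $a(r)<1/2$ throughout. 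With this choice, for every $r\in[\tilde r_k,2\tilde r_k]$ one has
\begin{equation*}
R^-_{a(r)}(\bar\xi_0;r)=B(\xi_0,r)\times\bigl(t_0-r^2,\,t_0-2^{-2(M_k+k)}\bigr)\supset A_k^{*},
\end{equation*}
because $A_k^{*}$ sits inside $B(\xi_0,\tilde r_k)\times(t_0-\tilde r_k^2,t_0-2^{-2(M_k+k)}]$. Applying Corollary \ref{cor:capacity_rectangle} on each window and summing over those $k$ with $2\tilde r_k\leq 1$ then lower-bounds the integral in \eqref{eq:Wiener-Rar} by a constant multiple of a tail of the divergent series supplied by Lemma \ref{lem:tb-wiener}.

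The main obstacle I anticipate is the bookkeeping in the necessity direction: the function $a(r)$ must be chosen simultaneously small enough to land in $(0,1/2)$, large enough that $R^-_{a(r)}(\bar\xi_0;r)$ still absorbs the corresponding time-shifted annulus $A_k^{*}$, and patched compatibly across the dyadic scales so that a continuous integral over $r\in(0,1)$ dominates the discrete Wiener sum. Away from this scale-matching step, both directions reduce to monotonicity of capacity combined with the two geometric inputs already established in Lemma \ref{lem:rect_cont_heat_ball} and Corollary \ref{cor:capacity_rectangle}.
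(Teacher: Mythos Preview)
Your proposal is correct and follows essentially the same approach as the paper: the sufficiency direction matches the paper's proof verbatim (inclusion from Lemma \ref{lem:rect_cont_heat_ball}, then the change of variables $\rho=\tilde c\,r^2$ reducing to \eqref{eq:wiener_test}), and for necessity the paper simply writes ``follows from Lemma \ref{lem:tb-wiener}'', which is precisely what you unpack by constructing $a(r)$ piecewise on dyadic windows so that $R^-_{a(r)}(\bar\xi_0;r)$ absorbs the time-shifted annuli $A_k^*$. One small refinement: when you invoke Corollary \ref{cor:capacity_rectangle} for the denominator in the necessity step, you only need the \emph{upper} bound $\Cap(\overline{R^-_{a(r)}(\bar\xi_0;r)})\leq c_1 r^n$, which comes from Lemma \ref{lem:capacity-upperbound} and is uniform in $a(r)$---this matters because the full two-sided estimate in Corollary \ref{cor:capacity_rectangle} has constants depending on $a(r)$, which varies with $k$.
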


\begin{proof}
For $\bar \xi_0\in\partial_n\Omega$, we have that
\begin{equation*}
\begin{split}
\int^{\tilde c}_0\frac{\Cap( \overline{E(\bar \xi_0;\rho)} \cap \Omega^c)}{\rho^{n/2}}\, \frac{d\rho}{\rho} &=\frac{2}{\tilde{c}^{n/2}} \int_0^1\frac{\Cap( \overline{E(\bar \xi_0;\tilde c r^2) } \cap \Omega^c)}{r^{n}}\, \frac{dr}{r}\\
& \overset{\eqref{eq:rect_cont_heat_ball}}{\geq} \int_0^1 \frac{\Cap(\overline{R^-_{a}(\bar\xi_0;r)}\cap \Omega^c)}{r^{n}}\, \frac{dr}{r}=\infty,
\end{split}
\end{equation*}
where $\tilde c$ is given in the proof of Lemma \ref{lem:rect_cont_heat_ball}. The converse direction follows from Lemma \ref{lem:tb-wiener}.
\end{proof}

\vv

\begin{lemma}\label{lem:CPC-regular}
 If $\om$ has the TBCPC at $\bxi_0 \in \d_n \om$, then \eqref{eq:Wiener-Rar} is satisfied and $\bxi_0$ is regular.
\end{lemma}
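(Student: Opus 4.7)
The plan is to use TBCPC to produce, along a sequence of radii $r_j \searrow 0$, a capacity density estimate for the exterior of $\om$ inside backward cylinders $\overline{R^-_\beta(\bxi_0; r)}$, strong enough to trigger the cylinder form of Wiener's test (Lemma \ref{cor:Wiener}). Then I integrate this estimate over a disjoint family of dyadic radius-intervals to get divergence of the Wiener integral \eqref{eq:Wiener-Rar} for some $\beta>0$, and apply the direct half of Lemma \ref{cor:Wiener} to conclude regularity.

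First I would apply Lemma \ref{lem:tbcdc} to the capacity lower bound $\Cap(\overline{E(\bxi_0; r_j^2)}\cap \om^c)\geq c\,r_j^n$ supplied by TBCPC (after replacing $c$ by $c/2$ to fit the hypothesis). This refines the estimate to the truly backward subset
\[
K_j := \overline{E(\bxi_0; r_j^2)}\cap \om^c\cap \{t\leq t_0-(ar_j)^2\},
\]
which still satisfies $\Cap(K_j)\gtrsim r_j^n$, for some dimensional $a\in(0,1/2)$.

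The key geometric step is to observe that, since $E(\bxi_0;r_j^2)\subset B(\xi_0,\sqrt{2n/e}\,r_j)\times(t_0-r_j^2,t_0)$, the set $K_j$ fits inside a backward cylinder $\overline{R^-_\beta(\bxi_0;r)}$ for every $r$ in a dyadic interval $[\tilde r_j,2\tilde r_j)$, where $\tilde r_j=\sqrt{2n/e}\,r_j$ and $\beta$ is a suitably chosen small dimensional constant. Checking the four inclusion inequalities (spatial radius, upper and lower time endpoints), the constraint on $\beta$ amounts to $\beta\cdot 2\tilde r_j\leq ar_j$, so $\beta=\tfrac{a}{2}\sqrt{e/(2n)}$ works. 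Monotonicity of capacity then yields $\Cap(\overline{R^-_\beta(\bxi_0;r)}\cap\om^c)\gtrsim r_j^n\approx r^n$ on $[\tilde r_j,2\tilde r_j)$, while Corollary \ref{cor:capacity_rectangle} gives the matching upper bound $\Cap(\overline{R^-_\beta(\bxi_0;r)})\approx r^n$. Therefore the ratio appearing in \eqref{eq:Wiener-Rar} is bounded below by a fixed constant $c_0>0$ throughout $[\tilde r_j,2\tilde r_j)$.

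To finish, I would pass to a subsequence of $\{r_j\}$ so the intervals $[\tilde r_j,2\tilde r_j)$ are pairwise disjoint inside $(0,1)$; each contributes $c_0\log 2$ to the integral, forcing
\[
\int_0^1\frac{\Cap(\overline{R^-_\beta(\bxi_0;r)}\cap\om^c)}{\Cap(\overline{R^-_\beta(\bxi_0;r)})}\frac{dr}{r}=\infty,
\]
and Lemma \ref{cor:Wiener} gives regularity of $\bxi_0$. The main delicate point is choosing $\beta$: enlarging $r$ extends the lower time boundary of $R^-_\beta(\bxi_0;r)$ but \emph{shrinks} its upper boundary, so the natural value $\beta=a\sqrt{e/(2n)}$ only works at $r=\tilde r_j$ itself and one must shrink $\beta$ by a factor of two to gain a full dyadic range of radii, which is what makes the integrated divergence possible.
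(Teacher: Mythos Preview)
Your proof is correct and follows essentially the same strategy as the paper's: apply Lemma~\ref{lem:tbcdc} to push the capacity into a genuinely backward region, embed that region in a cylinder $\overline{R^-_\beta(\bxi_0;r)}$ over a dyadic range of radii, and sum the resulting uniform lower bounds to force divergence of the Wiener integral, then invoke Lemma~\ref{cor:Wiener}. Your treatment is in fact more careful than the paper's: you explicitly rescale to $\tilde r_j=\sqrt{2n/e}\,r_j$ to accommodate the spatial spread of the heat ball, you compute the correct $\beta$ needed for the upper time endpoint to survive the full dyadic range, and you pass to a subsequence to make the intervals disjoint before summing---points the paper leaves implicit.
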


\begin{proof}
If $\om$ satisfies the TBCPC at $\bxi_0$, by Lemma \ref{lem:tbcdc}, there exists $a \in (0,1/2)$ so that $a \approx_n 1$ and 
\begin{align*}
\int_0^1 \frac{\Cap(\overline{R^-_{a}(\bar\xi_0;r)}\cap \Omega^c)}{r^{n}}\, \frac{dr}{r} &\geq \sum_{j\geq 1} \int_{r_j}^{2 r_j} \frac{\Cap(\overline{R^-_{a}(\bar\xi_0;r)}\cap \Omega^c)}{r^{n}}  \gtrsim  \sum_{j\geq 1} 1= \infty,
\end{align*}
where $r_j \to 0$ is the sequence in the definition of TBCPC. Thus, \eqref{eq:Wiener-Rar} holds and $\bxi_0$ is regular. 
\end{proof}

\vv

We will now introduce a class of regular domains that has played an important role in  (free) boundary value problems for harmonic and elliptic measure.  

\vv

Let $\bar \xi_0 \in \mathcal{S}\om$. If there exists  $a \in (0,1]$ and  $c>0$ such that
\begin{equation}\label{eq:CDC}
\frac{\Cap(\overline{R^-_{a}(\bar \xi_0;r)}\cap \Omega^c)}{\Cap(\overline{R^-_{a}(\bar \xi_0;r)})}  \geq c, \quad \textup{for all}\,\, 0<r<  \sqrt{(t_0-T_{\min})/{2}},
\end{equation}
then, we say that  $\om$ has the {\it time backwards cylindrical capacity density condition at the point $\bar \xi_0$}. Although the TBCDC looks more general, because  of Lemmas \ref{lem:rect_cont_heat_ball} and \ref{lem:tbcdc}, we  can see that the two conditions are in fact equivalent. Note that, by Lemma \ref{cor:Wiener} and Remark \ref{rem:cdc}, if $\om$ has the TBCDC at the point $\bar \xi_0 \in \mathcal{S}\om$, then $\bar \xi_0 $  is a regular point and belongs to $\d_n\om$.

\begin{remark}\label{rem:cdc}
If $\Omega$ satisfies the TBCDC, then it clearly  holds that $\partial_s\Omega\cap \mathcal S\Omega=\partial_{ss}\Omega\cap \mathcal S\Omega=\varnothing $ and 
\begin{equation}\label{eq:lateral_CDC}
\mathcal S\Omega=\partial_n\Omega\cap \mathcal S\Omega = \partial_e\Omega\cap \mathcal S\Omega.
\end{equation}
Moreover, the range of $r$ in \eqref{eq:CDC} is chosen so that, if $\bar x=(x,t)\in \mathcal S\Omega$ and $\bar y\in C_r(\bar x) \cap \om$, then 
\begin{equation}\label{eq_remark_distance}
\dist_p(\bar y,\mathcal S\Omega)=\dist_p(\bar y,\partial_e\Omega).
\end{equation}
Indeed, if we choose $r>0$ so that $r < \sqrt{t-T_{\min}-r^2}$,  we have 
\begin{align*}
\dist_p(\bar y,\mathcal{S}\Omega)\leq\dist_p(\bar y, \bar x)<r &< \sqrt{t-T_{\min}-r^2} \\
&= \dist_p(\mathcal B C_r(\bar x),\mathcal B\Omega|_{T_{\min}} )\leq \dist_p(\bar y,\mathcal B\Omega|_{T_{\min}}).
\end{align*}
 Recalling that $R^-_{ar}(\bar \xi_0) =  C_r(\xi_0, t_0-(ar)^2) $, we obtain the range of $r$ in \eqref{eq:CDC}.
\end{remark}

\vv

A crucial property of our definitions of TBCDC is their invariance under parabolic scaling. 

\begin{lemma}\label{lem:geom_boundary_CDC}
Let $\Omega$ be a domain that satisfies the TBCDC with constants $a,c$ as in \eqref{eq:CDC}. Let $\bar \xi=(\xi, \tau)\in \mathcal S\Omega$, $\rho>0$ and denote $\tilde{\Omega}\coloneqq T_{\xi,\rho}[\Omega]$. Then $\tilde{\Omega}$ satisfies the TBCDC with the same parameters. 
\end{lemma}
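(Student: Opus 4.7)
The plan is to verify that the thermal capacity behaves homogeneously under parabolic scaling and then observe that the ratio appearing in the TBCDC is invariant.

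\textbf{Step 1: Scaling of thermal capacity.} For $\lambda>0$, the heat kernel satisfies the homogeneity
\[
\Gamma(\delta_\lambda \bar x)=\lambda^{-n}\Gamma(\bar x),\qquad \bar x\in\Rn1\setminus\{\bar 0\}.
\]
Consequently, for any admissible measure $\mu$ for a compact set $K$ (so that $\Gamma\ast\mu\leq 1$), the pushforward $\tilde\mu\coloneqq \lambda^n(\delta_{1/\lambda})_\ast\mu$ is admissible for $\delta_{1/\lambda}(K)$, since a change of variables together with the homogeneity above gives
\[
\Gamma\ast\tilde\mu\bigl(\delta_{1/\lambda}\bar y\bigr)=\lambda^n\int\Gamma\bigl(\delta_{1/\lambda}(\bar y-\bar z)\bigr)\,d\mu(\bar z)=\Gamma\ast\mu(\bar y)\leq 1.
\]
Since translations leave $\Gamma$ invariant, this yields
\[
\Cap\bigl(T_{\bar\xi,\rho}(K)\bigr)=\rho^{-n}\,\Cap(K)
\]
for every compact $K\subset\Rn1$, and the same identity extends to Borel (hence capacitable) sets via the definitions \eqref{def:capacity-minus}–\eqref{def:capacity-plus}.

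\textbf{Step 2: Geometric compatibility.} A direct computation with $T_{\bar\xi,\rho}^{-1}(\bar y)=(\rho y+\xi,\,\rho^2 s+\tau)$ shows that for any $\bar \eta=(\eta,\tau')\in\Rn1$ and $r>0$,
\[
T_{\bar\xi,\rho}^{-1}\bigl(R_a^-(\bar\eta;r)\bigr)=R_a^-\bigl(\bar\eta_\ast;\rho r\bigr),\qquad \bar\eta_\ast\coloneqq T_{\bar\xi,\rho}^{-1}(\bar\eta),
\]
and hence $T_{\bar\xi,\rho}^{-1}\bigl(\overline{R_a^-(\bar\eta;r)}\cap \tilde\Omega^c\bigr)=\overline{R_a^-(\bar\eta_\ast;\rho r)}\cap\Omega^c$. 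The map $T_{\bar\xi,\rho}$ also carries $\mathcal S\Omega$ bijectively onto $\mathcal S\tilde\Omega$, since the decomposition of the boundary into its various parts ($\partial_n$, $\partial_{ss}$, $\mathcal B$, $\partial_s$) is intrinsic and invariant under the diffeomorphism $T_{\bar\xi,\rho}$.

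\textbf{Step 3: Conclusion.} Fix $\bar\eta\in\mathcal S\tilde\Omega$ and write $\bar\eta_\ast= T_{\bar\xi,\rho}^{-1}(\bar\eta)\in\mathcal S\Omega$. Combining Steps 1 and 2,
\[
\frac{\Cap\bigl(\overline{R_a^-(\bar\eta;r)}\cap \tilde\Omega^c\bigr)}{\Cap\bigl(\overline{R_a^-(\bar\eta;r)}\bigr)}=\frac{\rho^{-n}\,\Cap\bigl(\overline{R_a^-(\bar\eta_\ast;\rho r)}\cap \Omega^c\bigr)}{\rho^{-n}\,\Cap\bigl(\overline{R_a^-(\bar\eta_\ast;\rho r)}\bigr)}\geq c,
\]
provided $\rho r<\sqrt{(\tau_\ast-T_{\min}^\Omega)/2}$, where $\tau_\ast$ is the time-coordinate of $\bar\eta_\ast$. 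Since $T_{\min}^{\tilde\Omega}=(T_{\min}^\Omega-\tau)/\rho^2$ and $\tau'=(\tau_\ast-\tau)/\rho^2$, this condition is exactly $r<\sqrt{(\tau'-T_{\min}^{\tilde\Omega})/2}$, which is the admissible range for the TBCDC on $\tilde\Omega$. Thus $\tilde\Omega$ satisfies the TBCDC with the same constants $a,c$.

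The only non-routine step is the scaling identity for capacity, which I would verify either through the explicit construction of admissible measures as above or, equivalently, via the uniqueness of the equilibrium measure together with the scaling of the Green function $G_{\tilde\Omega}(T_{\bar\xi,\rho}\bar x,T_{\bar\xi,\rho}\bar y)=\rho^n G_\Omega(\bar x,\bar y)$.
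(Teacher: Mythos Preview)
Your approach is essentially the same as the paper's: both establish the scaling identity $\Cap(T_{\bar\xi,\rho}(K))=\rho^{-n}\Cap(K)$ from the homogeneity $\Gamma(\delta_\rho\bar x)=\rho^{-n}\Gamma(\bar x)$ by pushing forward admissible measures, and then observe that the cylinders $R_a^-$ (equivalently $C_r^-$) transform compatibly under $T_{\bar\xi,\rho}$, so the TBCDC ratio is preserved. One computational slip in Step~1: with $\tilde\mu=\lambda^n(\delta_{1/\lambda})_\ast\mu$ the displayed potential actually equals $\lambda^{2n}\,\Gamma\ast\mu(\bar y)$, since $\Gamma(\delta_{1/\lambda}\bar x)=\lambda^n\Gamma(\bar x)$; the correct normalization is $\tilde\mu=\lambda^{-n}(\delta_{1/\lambda})_\ast\mu$, which then yields the stated $\rho^{-n}$ scaling and leaves the rest of your argument intact.
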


\begin{proof}
Let us observe that for  $\bar \zeta \in \mathcal{S} \wt \Omega$ and every $r>0$ we have that
\[C_{r/\rho}^-(\bar \zeta)\cap \widetilde \Omega^c= T_{\xi, \rho}\bigl(C_{r}^-(T_{\xi, \rho}^{-1}  \bar \zeta)\cap \Omega^c\bigr)\]
and $\Gamma (\delta_\rho \bar x)= \rho^{-n} \Gamma (\bar x)$. 
We claim that 
\begin{equation}\label{eq:capacity-rigidmotion}
\Cap (\overline{C_{r/\rho}^-(\bar \zeta)}\cap \widetilde \Omega^c)=\rho^{-n}\Cap(\overline{C_{r}^-(T_{\bxi, \rho}^{-1}  \bar \zeta)}\cap \Omega^c).
\end{equation}
Set $K=\overline{C_{r}^-(T_{\bxi, \rho}^{-1}  \bar \zeta)}\cap \Omega^c$ and $\widetilde K=\overline{C_{r/\rho}^-(\bar \zeta)}\cap \widetilde \Omega^c$, and let $\mu$ be the unique  Radon measure supported in $\widetilde K$ such that $\mu(\widetilde K)=\Cap(\widetilde K)$. If $\widetilde{\mu} = T_{\bxi, \rho\, \sharp}^{-1} \mu$, then it is clear that $\supp \widetilde \mu \subset K$. Moreover, since $G \mu \leq 1$ in $\rrn$, it holds that
\begin{align*}
G\widetilde{\mu}(\bx) = \int \Gamma \big(\bx - T^{-1}_{\bxi, \rho} \by \big)\,d\mu(\by)= \rho^{-n}  \int \Gamma\big(T_{\bxi, \rho}\bx -  \by \big) \,d\mu(\by) \leq \rho^{-n}.
\end{align*}
Therefore, the measure $\rho^{n} \widetilde \mu$ is an admissible measure for   $\Cap(K)$ and thus, 
$$
\rho^{n}  \Cap(\widetilde K) =\rho^{n}  \mu(\widetilde K) = \rho^{n}  \widetilde \mu(K) \leq \Cap(K).
$$
The proof of the converse inequality is similar and we omit it. This proves  \eqref{eq:capacity-rigidmotion}, which, in turn, is valid if we replace the backwards in time cylinders $C_{r/\rho}^-$ by $R^-_{a}(\bar \zeta; r/\rho)$, the truncated cylinders defined in \eqref{eq:R-ar}. Thus, the TBCDC for $\wt \om$ readily follows from the TBCDC for $\om$.
\end{proof}

\vv

The next lemma was proved in \cite[Lemma 2.2]{GH20} in the particular case of domains with Ahlfors-David regular boundaries that, in addition, satisfy the time-backwards Ahlfors-David regular condition. In light of \eqref{eq:cdc-content}, those domains satisfy the TBCDC. 

\begin{lemma}\label{lem:bourgain}
Let $\hm^{\bx}$ be the caloric measure in $\Omega$ with pole at $\bx=(x,t) \in \om$. If $\bar \xi_0=(\xi_0, t_0) \in  \mathcal{P} \om \cap \mathcal{S} \om$, $r>0$, and $a \in (0,1/2)$, we define 
 \begin{align}\label{eq:R+ar}
 \widehat{R}^+_{a}(\bar \xi_0;r) = B( \xi_0, r) \times \bigl(t_0-(ar)^2/2, t_0+r^2 \bigr).
\end{align}
Then there exists $M_0>1$ depending on $n$ and $a$, such that for any  $\bx \in  \overline{\widehat{R}^+_{a}(\bar \xi_0;r)} \cap \om$,
\begin{align}\label{eq:Bourgain}
 \hm^{\bx} \Big(C_{M_0r}(\bar \xi_0) \cap  \{|t-t_0|<r^2\} \Big) \gtrsim  \frac{\Cap(\overline{R^-_{a}(\bar \xi_0;r)}\cap \om^c)}{r^{n}},
\end{align}
 where the implicit constant depends only on $n$  and $a$.
If $\om$ satisfies the TBCDC at $\bar \xi_0$, then  there exists $a \in (0,1/2)$ and $c=c(n,a)>0$ such that
 \begin{equation}\label{eq:bourgain_estimate}
\omega^{\bar x}(C_{r}(\bar \xi_0))\geq c, \quad \textup{for}\,\, \bar x \in \overline{\widehat{R}^+_{a}(\bar \xi_0 ; r/M_0)} \cap \om, \,\,\textup{and}\,\,0<r< M_0\sqrt{(t_0-T_{\min})/{2}}.
\end{equation}
\end{lemma}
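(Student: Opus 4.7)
My plan is to implement a parabolic Bourgain-type argument, running the classical comparison between the equilibrium heat potential for $K \coloneqq \overline{R^-_{a}(\bar \xi_0;r)}\cap \om^c$ and the caloric measure of a carefully chosen sub-domain. By \eqref{def:capacity-compact} there is a non-negative Radon measure $\mu$ with $\supp \mu \subset K$ so that $\Gamma\mu \leq 1$ on $\rrn$ and $\mu(K)=\Cap(K)$. Set $u \coloneqq \Gamma\mu$, which is a bounded non-negative supertemperature in $\rrn$ and a temperature in $\rrn\setminus K$ by Lemma \ref{lem:weak_caloric_functions}.

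The first step is the Gaussian lower bound. For $\bx=(x,t)\in\overline{\widehat R^+_a(\bar\xi_0;r)}\cap \om$ and $\by=(y,s)\in K\subset \overline{R^-_a(\bar\xi_0;r)}$, the definitions force
\[
(ar)^2/2 \leq t-s \leq 2r^2, \qquad |x-y| \leq 2r, \qquad \frac{|x-y|^2}{t-s}\leq \frac{8}{a^2},
\]
so $\Gamma(\bx-\by)\geq (8\pi r^2)^{-n/2} e^{-2/a^2}$ and therefore $u(\bx)\geq c_1(n,a)\,\Cap(K)/r^n$.

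The second step is to set up the comparison domain. Let $Q \coloneqq B(\xi_0,M_0 r)\times(t_0-r^2,t_0+r^2)$ for a large $M_0$ to be chosen, and $V \coloneqq \om\cap Q$. Since $M_0>1$ and $a\in(0,1/2)$, $\widehat R^+_a(\bar\xi_0;r)\subset Q$, and $K\cap V=\varnothing$ because $K\subset \om^c$; hence $u$ is a bounded temperature in $V$. I then control $u$ on $\partial_e V\setminus\partial\om\subset \partial Q\cap\bar\om$. On the bottom $B(\xi_0,M_0 r)\times\{t_0-r^2\}$ the time coordinate is $\le$ every time in $\supp\mu$, so by the indicator $\chi_{\{t>0\}}$ in the definition of $\Gamma$, $u\equiv 0$ there. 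On the lateral, the parabolic distance to $K$ is at least $(M_0-1)r$, so by \eqref{eq:poitwiseHeat kernel}
\[
u(\bz)\leq C_h\pi^{-n/2}\, \mu(K)\bigl((M_0-1)r\bigr)^{-n} = c_2(n)\,\Cap(K)/\bigl((M_0-1)^n r^n\bigr).
\]

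Next, I apply the PWB upper class for $u$ in $V$, yielding
\[
u(\bx)\leq \omega^{\bx}_V\bigl(\partial\om\cap \bar Q\bigr) + c_2(n)\,\frac{\Cap(K)}{(M_0-1)^n r^n}.
\]
Combined with Step 1 and choosing $M_0=M_0(n,a)$ so that $c_2/(M_0-1)^n\leq c_1/2$, I obtain $\omega^{\bx}_V(\partial\om\cap\bar Q)\geq (c_1/2)\,\Cap(K)/r^n$. The elementary monotonicity $\omega^{\bx}_V(A)\leq \omega^{\bx}(A)$ for Borel $A\subset \partial\om$ (a consequence of the probabilistic interpretation or a direct Perron comparison, using $V\subset \om$) together with $\partial\om\cap\bar Q\subset \overline{C_{M_0 r}(\bar\xi_0)\cap \{|t-t_0|<r^2\}}$ gives \eqref{eq:Bourgain}. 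For \eqref{eq:bourgain_estimate} I apply the first part with $r$ replaced by $r/M_0$ (which is admissible as long as $r<M_0\sqrt{(t_0-T_{\min})/2}$) and invoke the TBCDC to replace $\Cap(\overline{R^-_{a}(\bar\xi_0;r/M_0)}\cap\om^c)$ by a constant multiple of $(r/M_0)^n$, after which the $M_0^n$ factors cancel.

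The hardest step is justifying the representation used in Step 4 without any a priori regularity on $V$: the boundary $\partial V$ could contain many irregular points for $H$. The cleanest way around this is to avoid exact equality and instead note that $u$ belongs to the upper Perron class $\mathfrak U_{u|_{\partial_e V}}$, since $u$ is an upper semi-continuous supertemperature on $\overline V$ with $u=u$ on $\partial_e V$; hence $u(\bx)\geq S_{u|_{\partial_e V}}(\bx)=\int_{\partial_e V} u\, d\omega^{\bx}_V$ by \eqref{eq:Uf=Lf=caloric}. Because we only need an upper bound on $u(\bx)$ to produce a lower bound on $\omega^{\bx}_V(\partial\om\cap\bar Q)$, I will rerun the comparison with $u$ replaced by $-u+\text{const}$ (a subtemperature) and apply the dual lower Perron class, which gives the required one-sided inequality regardless of boundary regularity.
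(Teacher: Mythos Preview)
Your overall strategy is the same Bourgain-type argument as the paper's: take the equilibrium measure $\mu$ for $K=\overline{R^-_a(\bar\xi_0;r)}\cap\Omega^c$, show $\Gamma\mu\gtrsim \Cap(K)/r^n$ on $\widehat R^+_a$ via the Gaussian lower bound, and cap $\Gamma\mu$ far from $K$ by \eqref{eq:poitwiseHeat kernel}. The difference is in how the comparison is executed. The paper subtracts the constant $((M_0-1)\sqrt\pi)^{-n}\|\mu\|r^{-n}$ from $\Gamma\mu$, observes that the resulting function is $\le 0$ on $(\Rn1\setminus C_{M_0r})\cup\{t\le t_0-r^2\}$ and $\le 1$ globally, and then places it directly in the lower Perron class $\mathfrak L_{\chi_F}$ for $\Omega$ with $F=C_{M_0r}\cap\{t>t_0-r^2\}\cap\partial_e\Omega$; the upper time cut $\{t<t_0+r^2\}$ comes only at the end from $\supp\omega^{\bx}\subset\{s<t_\bx\}$. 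This avoids any subdomain and hence any appeal to domain monotonicity of caloric measure.

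Your route through the subdomain $V=\Omega\cap Q$ is also valid, but the monotonicity step $\omega^{\bx}_V(\partial\Omega\cap\bar Q)\le\omega^{\bx}(\partial\Omega\cap\bar Q)$ is not as immediate as ``a direct Perron comparison'': the natural candidate $w=\omega^{\,\cdot\,}_\Omega(\partial\Omega\cap\bar Q)$ need not satisfy $\liminf w\ge 1$ at irregular points of $\partial_e\Omega$, so $w\in\mathfrak U^V_{\chi_A}$ may fail. The probabilistic justification you allude to does work, but the paper's constant-subtraction trick sidesteps this entirely.

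Your final paragraph contains a genuine confusion. Applying the \emph{upper} class to $u=\Gamma\mu$ (a supertemperature) yields $u(\bx)\ge\int u\,d\omega^{\bx}_V$, the wrong direction; and ``rerunning with $-u+\text{const}$'' gives exactly the same inequality again, since $-u$ is a subtemperature and the lower class then gives $-u(\bx)\le\int(-u)\,d\omega^{\bx}_V$. The actual fix is simpler: because $K\cap V=\varnothing$, $u$ is \emph{caloric} (not merely super) in $V$, hence a bounded hypotemperature there, and since $u\le 1$ globally and $u$ is continuous off $K$, one checks $\limsup u\le f$ on $\partial_e V$ directly and concludes $u\in\mathfrak L^V_f$. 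No ``rerun'' is needed.
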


\begin{proof}
 Without loss of generality,  we may assume that $\bar \xi_0= \bar 0 \in  \mathcal{S} \om$ and set $K= \overline{R^-_{a}(\bar 0;r)} \cap \om^c$. For notational convenience, we will just write $R^-_{a,r}$ and $\widehat{R}^+_{a,r}$.  We also assume that $\Cap(\overline{R^-_{a,r}}\cap \om^c)>0$ since otherwise \eqref{eq:Bourgain} is trivially true.

By the definition of capacity on compact sets,  there exists a unique positive Radon measure $\mu$ supported on $K$ such that $\| G\mu\|_{L^\infty} \leq 1$ and $\| \mu\| = \Cap(K)$. Since for any $\bx \in \overline{R^+_{ar}} $ and $\by =(y,s) \in  \overline{R^-_{a,r}}$, it holds that 
$$
a^2 r^2/2\leq t-s \leq  2 r^2 \quad \textup{and}\quad |x-y| \leq 2r,
$$
we infer that 
$$
\Gamma(\bx -\by ) \geq (8 \pi r^2)^{-n/2}e^{-2/a^2}=:c_1 r^{-n},
$$
which, in turn, implies that 
$$
G\mu(\bx) \geq c_1 \|\mu\| r^{-n}, \quad \textup{for any} \,\,\bx \in \widehat{R}^+_{a,r}.
$$
Moreover, if $\bx \in \rrn \setminus C_{M_0 r}$ and $(y,s) \in  R^-_{a,r} $, we have that  
$$
\|\bx - \by \| \geq \|\bx\| - \|\by\| \geq M_0r -  r =:( M_0- 1)r,
$$
and thus, by  \eqref{eq:poitwiseHeat kernel}, 
$$
G\mu(\bx) \leq ( (M_0-1) \sqrt{\pi})^{-n} \|\mu\|  r^{-n}, \quad \textup{for any} \,\,\bx \in \rrn \setminus C_{M_0r}.
$$
Define $u = G\mu - ( (M_0-1) \sqrt{\pi})^{-n} \|\mu\|  r^{-n}$ and choose $M_0$ so that $ 2( (M_0-1) \sqrt{\pi})^{-n} \leq c_1$. Then, since $\|G\mu\|_{L^\infty} \leq 1$, the following hold:
\begin{enumerate}
\item $u$ is continuous in $\rrn$ and $Hu=0$ in $\om$,
\item $u \leq 1$ in $\rrn$,
\item $u \leq 0$ in $[\rrn \setminus C_{M_0r}]\cup [\R^n \times \{t \leq - r^2\}]$,
\item $ u \geq \frac{c_1}{2} \|\mu\| r^{-n}$ in $ \overline{\widehat{R}^+_{a,r}}$.
\end{enumerate}
Since $F:=C_{M_0 r}\cap [\R^n \times \{t > - r^2\}] \cap \d_e \om$ is Borel, we have that  $\chi_F$ is resolutive,  $u  \in \mathfrak{L}_{\chi_F}$, and $\omega^{\bx}(F) = S_{\chi_F}(\bx)$.
Thus, $u(\bx) \leq  \omega^{\bx}(F) $ for all $\bx \in \om$ which,   by the  item (4) above and the fact that  $\supp \hm^{\bx} \subset \d_e \om \cap [\R^n \times \{t <  r^2\}]$ for any $\bx \in \overline{\widehat{R}^+_{a,r}}$, proves \eqref{eq:Bourgain}. It is straightforward to see that, by the definition of TBCDC, \eqref{eq:bourgain_estimate} follows from the latter estimate.
\end{proof}

\begin{remark}\label{rem:bourgain-adjoint}
Lemma \ref{lem:bourgain} holds for the adjoint caloric measure if we replace the cylinders $R^-_{a}(\bxi_0, r)$ and  $\widehat{R}^+_{a}(\bxi_0, r)$ with their reflections across the hyperplane passing through their centers orthogonal to the time axis.
\end{remark}

If $\om$ is regular there is  the above lemma has a much easier proof based on the reduction function. Also, the dilation factor $M_0$ need not be taken large enough and just $M_0=2$  does the job. 

\begin{lemma}\label{lem:bourgain-bis}
Let $\om\subset \rrn$  be open, $\bxi_0 \in \mathcal{P} \om \cap\mathcal{S} \om$, $a\in (0,1/2)$, $\beta \geq 2$, and $K= \overline{R^-_{a}(\bar \xi_0;r)}\cap \om^c$. If we set $u=\widehat{R}_1(K,C_{\beta r}(\bxi_0))$, then for any $\bx \in \overline{\widehat{R}^+_{a}(\bar \xi_0;r)} \cap \om$,
\begin{equation}\label{eq:bourgain-reduction}
u(\bx)\gtrsim \Cap\big( K,C_{\beta r}(\bxi_0)\big)/{r^n} \gtrsim {\Cap( K )}/{r^n}.
\end{equation}
If $w$ is a non-negative  subtemperature in $C_{\beta r}(\bxi_0)$ so that $w = 0$ on $K$ and $w \leq 1$ in $C_{\beta r}(\bxi_0)$, there exists $\kappa \in (0,1)$ such that 
\begin{equation}\label{eq:bourgain-subcaloric}
w(\bx)\leq  1- \kappa\,{\Cap\big( K, C_{\beta r}(\bxi_0)\big) }/{r^n} \leq 1-\kappa {\Cap( K)}/{r^n}, \quad \textup{for}\,\, \bx \in \overline{\widehat{R}^+_{a}(\bar \xi_0;r)} \cap \om.
\end{equation}
If $ \mathcal{S}\om \cap C_{\beta r}(\bxi_0) \subset \mathcal{P} \om  $ consists of regular points, then \eqref{eq:bourgain_estimate} holds with $M_0=2$.
\end{lemma}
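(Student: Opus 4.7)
For part (1), the plan is to invoke the representation $u=G_{C_{\beta r}(\bxi_0)}\mu_u$, where $\mu_u$ is the Riesz measure associated to the smoothed reduction and satisfies $\mu_u(K)=\Cap(K,C_{\beta r}(\bxi_0))$. A direct computation shows that for any $\bx\in\overline{\widehat{R}^+_{a}(\bxi_0;r)}\cap\Omega$ and $\by=(y,s)\in K\subset\overline{R^-_{a}(\bxi_0;r)}$ we have $|x-y|\leq 2r$, $t-s\in[(ar)^2/2,2r^2]$, and $\dist(x,\partial B(\xi_0,\beta r)),\dist(y,\partial B(\xi_0,\beta r))\geq(\beta-1)r\gtrsim\sqrt{t-s}$, using $\beta\geq 2$. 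Applying the Aronson-type lower bound \eqref{eq:Green-two-sidedbound} to $C_{\beta r}(\bxi_0)$ via translation and parabolic scaling, the geometric factor becomes $m(\bx,\by)\approx 1$ and $\Gamma_{c_2}(\bx-\by)\gtrsim r^{-n}$, whence $G_{C_{\beta r}(\bxi_0)}(\bx,\by)\gtrsim r^{-n}$. Integrating against $\mu_u$ yields the first inequality in \eqref{eq:bourgain-reduction}, and the second follows from the domain monotonicity \eqref{eq:cap-inclusion}.

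Part (2) is a standard comparison with the reduction. Since $w$ is subcaloric with $0\leq w\leq 1$ in $C_{\beta r}(\bxi_0)$ and $w=0$ on $K$, the function $1-w$ is a non-negative supertemperature on $C_{\beta r}(\bxi_0)$ equal to $1$ on $K$, hence admissible in the definition of $R_1(K,C_{\beta r}(\bxi_0))$. Using $\widehat R_1\leq R_1\leq 1-w$ pointwise together with part (1) gives
\[
w(\bx)\leq 1-u(\bx)\leq 1-\kappa\,\Cap(K,C_{\beta r}(\bxi_0))/r^n
\]
for every $\bx\in\overline{\widehat{R}^+_{a}(\bxi_0;r)}\cap\Omega$, with $\kappa$ the dimensional constant from part (1); the weaker inequality with $\Cap(K)$ again follows from \eqref{eq:cap-inclusion}.

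For part (3), the plan is to apply (2) with $\beta=2$ and $r'=r/2$, so that $C_{\beta r'}(\bxi_0)=C_r(\bxi_0)$, to the function $W(\bx)\coloneqq\omega^{\bx}(\partial_e\Omega\setminus C_r(\bxi_0))$, which is caloric in $\Omega$ with values in $[0,1]$. Because every point of $\mathcal{S}\Omega\cap C_r(\bxi_0)\subset\mathcal P\Omega$ is regular and $\chi_{\partial_e\Omega\setminus C_r(\bxi_0)}$ vanishes in a neighborhood of each such point, PWB theory yields $W(\bx)\to 0$ at such boundary points, so the zero-extension $\widetilde W$ to $C_r(\bxi_0)\setminus\Omega$ is a non-negative subtemperature on $C_r(\bxi_0)$ vanishing on $K'\coloneqq\overline{R^-_a(\bxi_0;r/2)}\cap\Omega^c$. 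Applying (2) to $\widetilde W$ and using the TBCDC lower bound $\Cap(K',C_r(\bxi_0))\geq\Cap(K')\gtrsim(r/2)^n$ gives $\widetilde W\leq 1-c$ on $\overline{\widehat{R}^+_a(\bxi_0;r/2)}\cap\Omega$, whence $\omega^{\bx}(C_r(\bxi_0))=1-W(\bx)\geq c$, which is \eqref{eq:bourgain_estimate} with $M_0=2$. The main technical point, and the step I expect to be most delicate, is the rigorous verification that $\widetilde W$ is subcaloric on the whole of $C_r(\bxi_0)$: one must check via the sub-mean value property that gluing $W$ with zero across the regular portion of $\partial\Omega$ contributes only non-negative Riesz mass, and that any points of $\partial\Omega\cap C_r(\bxi_0)$ lying outside $\mathcal S\Omega$ (time-slice remnants of the bottom or singular boundaries) are negligible for the heat operator.
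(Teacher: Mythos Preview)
Your proof is correct and parts (2)--(3) are essentially the same as the paper's. The one genuine difference is in part (1): you invoke the two-sided Green function estimate \eqref{eq:Green-two-sidedbound} for the cylinder $C_{\beta r}(\bxi_0)$ and integrate the pointwise lower bound $G_{C_{\beta r}(\bxi_0)}(\bx,\by)\gtrsim r^{-n}$ against $\mu_u$, whereas the paper avoids Green function bounds altogether. Instead, it takes a cut-off $\varphi\in C^\infty_c$ with $\varphi=1$ on $R^-_a(\bxi_0;r)$ and $|H^*\varphi|\lesssim r^{-2}$, writes $\Cap(K,C_{\beta r}(\bxi_0))=\mu_u(K)\leq\int\varphi\,d\mu_u=\int u\,H^*\varphi\lesssim r^{-2}\int_{\widetilde R^-_{a,r}}u$, and then applies the weak Harnack inequality for non-negative supertemperatures to pass from the $L^1$-average of $u$ on the slightly enlarged backward rectangle to $\inf_{\overline{\widehat R^+_{a}(\bxi_0;r)}}u$. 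The paper's route is somewhat more robust (it needs only that $u$ is a supertemperature, not the precise Aronson-type structure of $G_{C_{\beta r}}$), while yours is more direct once the Green bound is in hand; both yield dimensional constants.

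For part (3), your formulation with $W=\omega^{\bx}(\partial_e\Omega\setminus C_r(\bxi_0))$ extended by $0$ is exactly dual to the paper's, which extends $\omega^{\bx}(C_{\beta r}(\bxi_0))$ by $1$ and applies part (1) directly to the resulting supertemperature. The ``delicate'' step you flag---that the zero-extension is a subtemperature across the regular portion of $\partial\Omega$---is precisely the content of \cite[Theorem~7.20]{Watson}, which the paper cites without further comment; so no additional work is needed there.
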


\begin{proof}
As in the proof of the previous lemma,  we assume that $\bxi_0= \bar 0$ and use the notation $R^-_{a,r}$ and $\widehat{R}^+_{a,r}$. Moreover, we assume that $\Cap(K)>0$ since otherwise \eqref{eq:bourgain-reduction} is trivial. We also denote 
$$
\widetilde{R}^-_{a,r}= B( 0, (1+ 0.25 a)r) \times (-(1+0.25 a^2)r^2, -0.75 a^2 r^2)
$$ 
and let $\vphi \in C^\infty_c(\wt R^-_{a,r})$ such that $\vphi =1 $ in $R^-_{a,r}$ and $|H^*\vphi| \lesssim_{a,n} r^{-2}$. Then, if $\mu$ is the unique Radon measure such that $\mu(K)=\Cap(K)$, it holds that
\begin{align*}
r^{-n}\Cap(K)=r^{-n} \mu(K) \leq r^{-n} \int \vphi \, d\mu =r^{-n} \int u H^* \varphi \lesssim r^{-(n+2)} \int_{\widetilde{R}^-_{a,r} }u \lesssim_{a,n} \inf_{\overline{\widehat{R}^+_{a,r}}} u,
\end{align*}
where in the last step we used the weak Harnack inequality since $u$ is a non-negative supertemperature and thus, a non-negative supercaloric function (see \cite[Corollary 6.24, p. 128]{Lieb96}).

By the definition of $u$, we have that  for any non-negative supertemperature $v$ in $C_{\beta r}(\bxi_0)$ such that $v \geq 1$ on $K$, it holds that $v \geq u$ and thus, \eqref{eq:bourgain-reduction} is true for $v$ as well. Now, if $w$ is a subtemperature in $C_{\beta r}(\bxi_0)$ so that $w = 0$ on $K$ and $w \leq 1$ in $C_{2r}(\bxi_0)$, then $1-w$ is a non-negative supertemperature in $C_{\beta r}(\bxi_0)$  that is identically $1$ on $K$ and \eqref{eq:bourgain-subcaloric} readily  follows. Finally, by the regularity of $\d \om \cap C_{\beta r}(\bxi_0)$, we may extend $\hm^{\bx}(C_{\beta r}(\bxi_0))$ by $1$ in  $C_{\beta r}(\bxi_0) \setminus \overline{\om}$ and use \cite[Theorem 7.20]{Watson} to show that it is a non-negative supertemperature  in $C_{\beta r}(\bxi_0)$ that is $1$ on $K$ and non-neagtive in $ C_{\beta r}(\bxi_0)$ and use \eqref{eq:bourgain-reduction} to show \eqref{eq:bourgain_estimate}  for $M_0=2$.
\end{proof}
\vv

\begin{remark}\label{rem:Bourgain-subcaloric}
One can show that \eqref{eq:bourgain-subcaloric} holds for upper semicontinuous weakly subcaloric functions as well. Indeed,   if we follow the proof of Lemma \ref{lem:bourgain} substituting $\Gamma(\cdot, \cdot)$ with the Green function $G_{C_{\beta r}(\bxi_0)}(\cdot, \cdot)$ and using \eqref{eq:Green-two-sidedbound} together with the weak minimum principle\footnote{The weak minimum principle still holds for  lower semicontinuous (instead of continuous in $\overline{\om}$) supercaloric functions in the form $\liminf_{\bx \to \mathcal{P}(\om)} v \leq \inf_{\om} v$. See e.g. the proof of \cite[Corollary 6.26]{Lieb96}.} (instead of the resolutivity of characteristic functions of Borel sets) we can  show that if $v$ is a non-negative supercaloric function in $\om \cap C_{\beta Mr}(\bxi_0)$ such that $\liminf_{\bx \to \bxi} v \geq 1$ for every $\bxi \in \d_n \om \cap C_{\beta Mr}(\bxi_0)$, then $v$ satisfies  \eqref{eq:bourgain-reduction}. The rest of the proof is the same as before and we skip the  details.
\end{remark}

\vv

\begin{lemma}\label{lem:Modudlus-continuity}
Let $\om\subset \rrn$  be open, $\bxi_0 \in\mathcal{P} \om \cap \mathcal{S} \om$, $a\in (0,1/2)$, $a_1=a/\sqrt{2}$, and $\beta = a_1^{-1}$. Let $w$ be a non-negative subtemperature in $C_{ \beta r}(\bxi_0)$ so that $w = 0$ on $\overline{R^-_a(\bxi_0 ; r) }\cap \om^c$ and $w \leq M_1$ in $C_{ \beta r}(\bxi_0)$.  If $\bx \in C_\rho(\bxi_0) \cap \om$ for some $0 <\rho \leq r$, then 
\begin{equation}\label{eq:Modudlus-continuity}
w(\bx) \leq \exp\left(- c \int_{\rho}^{r} \frac{\Cap(\overline{R^-_a(\bxi_0 ; s ) }\cap \om^c)}{s^n} \frac{ds}{s} \right) \,\sup_{\om \cap C_{\beta r}(\bxi_0)} w,
\end{equation}
for some  constant $c>0$ depending on $n$ and $a$.
\end{lemma}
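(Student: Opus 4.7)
The plan is to iterate the single--scale decay estimate \eqref{eq:bourgain-subcaloric} of Lemma \ref{lem:bourgain-bis} along a geometric sequence of scales and then convert the resulting discrete sum into the continuous integral appearing in \eqref{eq:Modudlus-continuity}. After normalization we may assume $\mathcal M \coloneqq \sup_{\om \cap C_{\beta r}(\bxi_0)} w = 1$. Set $\lambda = a_1^2 = a^2/2 \in (0,1)$ and $r_k = \lambda^k r$. A direct verification shows that with this choice $\beta r_{k+1} \leq r_k$ and $(\beta r_{k+1})^2 \leq (a r_k)^2/2$, so that
\[
C_{\beta r_{k+1}}(\bxi_0) \subset \overline{\widehat R^+_a(\bxi_0;r_k)},
\]
which is precisely the inclusion needed to propagate the decay from one scale to the next.

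Let $M_k \coloneqq \sup_{\om \cap C_{\beta r_k}(\bxi_0)} w$ and $\Cap_k \coloneqq \Cap(\overline{R^-_a(\bxi_0; r_k)} \cap \om^c)$. Applying \eqref{eq:bourgain-subcaloric} to $w/M_k$, which is a non--negative subtemperature on $C_{\beta r_k}(\bxi_0)$ bounded by $1$ and vanishing on $\overline{R^-_a(\bxi_0;r_k)}\cap \om^c$, yields
\[
M_{k+1} \leq M_k\Bigl(1 - \kappa \frac{\Cap_k}{r_k^n}\Bigr).
\]
Taking $\kappa$ small enough (depending on the universal upper bound $\Cap_k \lesssim r_k^n$ from Corollary \ref{cor:capacity_rectangle}) so that each factor lies in $(0,1)$, and iterating via $1-x \le e^{-x}$,
\[
M_k \le \exp\Bigl(-\kappa \sum_{j=0}^{k-1} \frac{\Cap_j}{r_j^n}\Bigr).
\]

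To pass from the sum to the integral, note that for $s \in [r_{j+1}, r_j]$ one has $s \approx r_j$ and, via the parabolic scaling of capacity (Lemma \ref{lem:geom_boundary_CDC}) and the geometry of the sets $R^-_a(\bxi_0;s)$, the normalized capacity $\Cap(\overline{R^-_a(\bxi_0;s)}\cap\om^c)/s^n$ is controlled by $\Cap_j/r_j^n$ up to constants depending on $n$ and $a$. A direct integration then gives
\[
\int_{r_{j+1}}^{r_j} \frac{\Cap(\overline{R^-_a(\bxi_0;s)}\cap \om^c)}{s^n}\,\frac{ds}{s} \lesssim \frac{\Cap_j}{r_j^n},
\]
hence $\sum_{j=0}^{k-1} \Cap_j/r_j^n \geq c_\lambda \int_{r_k}^r \Cap_s/s^n\cdot ds/s$ for some $c_\lambda>0$ depending on $n,a$. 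Given $\bx\in C_\rho(\bxi_0)\cap\om$, pick $k^*$ to be the largest integer with $r_{k^*}\geq \rho/\beta$; then $r_{k^*}\in[\rho/\beta, \rho/(\beta\lambda)]$, so $\bx\in C_{\beta r_{k^*}}(\bxi_0)$ and $w(\bx)\leq M_{k^*}$. The possible discrepancy $\int_\rho^{r_{k^*}} \Cap_s/s^n\cdot ds/s$ is uniformly bounded by a constant $C_*=C_*(n,a)$, again by the universal upper bound $\Cap_s\lesssim s^n$. Combining these displays and choosing $c=c(n,a)>0$ sufficiently small to absorb the multiplicative factor $e^{\kappa c_\lambda C_*}$ into the exponential yields \eqref{eq:Modudlus-continuity}.

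The main technical obstacle is the discrete--to--continuous comparison: unlike the elliptic setting, the sets $R^-_a(\bxi_0;s)$ do not nest cleanly as $s$ varies because their time--intervals shift; hence the monotonicity--type estimate relating $\Cap_s$ at intermediate scales to $\Cap_{r_j}$ must be obtained via the parabolic scaling of capacity rather than a direct inclusion. The remaining arithmetic of summing the geometric series and absorbing the bounded boundary correction into the constant $c$ follows the pattern of the classical elliptic Wiener estimate.
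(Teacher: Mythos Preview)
Your proof follows the same strategy as the paper's: iterate \eqref{eq:bourgain-subcaloric} at the geometric scales $r_k=a_1^{2k}r$ (which are exactly the paper's scales $a_{2k}r$, where $a_k=a_1^k$), pass to an exponential via $1-x\le e^{-x}$, and then dominate the integral $\int_\rho^r$ by the resulting discrete sum. The only difference is in the justification of that last step: the paper bounds $\Cap(\overline{R^-_a(\bxi_0;s)}\cap\om^c)$ directly by $\Cap(\overline{R^-_a(\bxi_0;a_{2k}r)}\cap\om^c)$ on each dyadic shell, whereas your appeal to Lemma~\ref{lem:geom_boundary_CDC} is not quite the right tool---that lemma records scale invariance of the TBCDC \emph{condition}, not a pointwise comparison of $\Cap(\overline{R^-_a(\bxi_0;s)}\cap\om^c)$ across nearby scales for a fixed $\om$, so the paper's monotonicity route is what you actually want here.
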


\begin{proof}
We assume that $\Cap\big(\overline{R^-_a(\bxi_0 ; \rho )} \cap \om^c \big)>0$, since otherwise \eqref{eq:Modudlus-continuity} is trivial.
We set  $a_k=a_1^{k}$ for $k \geq 0$, and
$$
\theta_k:=\frac{\Cap(\overline{R^-_a(\bxi_0 ; a_{2k} r) }\cap \om^c)}{(\beta a_{2k}r)^n},
$$
and note that $a_{k-1}= \beta a_k$.  By \eqref{eq:bourgain-subcaloric},  it holds that 
$$
w(\bx)\leq   (1-\kappa \theta_1) \sup_{C_{\beta r} (\bxi_0)} w \leq \exp(-\kappa \theta_1)  \sup_{C_{\beta r}(\bxi_0)} w, \quad \textup{for}\,\, \bx \in \overline{\widehat{R}^+_{a}(\bar \xi_0; r)}.
$$
In particular, the latter inequality holds in $\overline{C_{a_1 r}}(\bxi_0)$. We apply \eqref{eq:bourgain-subcaloric} once again in $C_{a_2 r}(\bxi_0)$ and get that
$$
w(\bx)\leq \exp(-\kappa \theta_2)  \sup_{C_{a_1 r}(\bxi_0)} w \leq  \exp(-\kappa (\theta_1+\theta_2))  \sup_{C_{\beta r}(\bxi_0)} w, \quad \textup{for}\,\, \bx \in \overline{\widehat{R}^+_{a}(\bar \xi_0;a_2 r)}.
$$
The latter inequality holds in $\overline{C_{a_3 r}}(\bxi_0)$ and we may apply  \eqref{eq:bourgain-subcaloric} in $C_{a_4 r}(\bxi_0)$.
By iteration, if  $M $ is an integer such that $a_{2M+2} \leq \rho \leq a_{2M}$, we get that 
$$
w(\bx)\leq\exp\Big(-\kappa \sum_{k=0}^M \theta_k \Big)  \sup_{C_{\beta r}(\bxi_0)} w, \quad \textup{for}\,\, \bx \in \overline{\widehat{R}^+_{a}(\bar \xi_0;a_{2M} r)}.
$$
Therefore, since 
\begin{align*}
\int_\rho^r &\frac{\Cap(\overline{R^-_a(\bxi_0 ; s ) }\cap \om^c)}{s^n} \frac{ds}{s} \leq \sum_{k =0}^M \int_{a_{2k+2}}^{a_{2k}} \frac{\Cap(\overline{R^-_a(\bxi_0 ; s ) }\cap \om^c)}{s^n} \frac{ds}{s}\\
& \leq \sum_{k =0}^M \int_{a_{2k+2}}^{a_{2k}} \frac{\Cap(\overline{R^-_a(\bxi_0 ; a_{2k} r ) }\cap \om^c)}{a_{2k+2}^n} \frac{ds}{a_{2k+2}} \leq \beta^{3n+2} \sum_{k =0}^M \theta_k,
\end{align*}
if $c=a_1^{3n+2} \kappa$, we obtain that
\begin{equation}\label{eq:Modudlus-continuity}
w(\bx)\leq\ \exp\left(- c \int_\rho^r \frac{\Cap(\overline{R^-_a(\bxi_0 ; s ) }\cap \om^c)}{s^n} \frac{ds}{s}\right)\sup_{C_{\beta r}(\bxi_0)} w, \quad \text{for}\,\, \bx \in C_{\rho}(\bxi_0).
\end{equation}
\end{proof}

\vv

Given $T>0$, we define $E(T)\coloneqq\{(x,t)\in\Rn1: t<T\}.$ Moreover, for an open set $\Omega$, $\bar x\in \partial \Omega$ and $r>0$ we define $\Omega_r \coloneqq\Omega_r(\bar x)\coloneqq \Omega\cap C_r(\bar x)$ and $\Omega_r(T)\coloneqq \Omega_r\cap E(T).$

\begin{lemma}\label{lem:bdry Holder}
Let $\om \subset \rrn$ be an open set satisfying the TBCDC, and let $\bar \xi_0 \in  \mathcal S \om$ and $0<r< \sqrt{(t_0-T_{\min})/{2}}$. Then for any non-negative  function $u$, which is either weakly subcaloric  or subtemperature in $\om_{2r}(T_1)$ vanishing continuously on $C_{2r}(\bar \xi_0) \cap \d_e \om \cap E(T_1)$, it holds that
\begin{equation}\label{eq:boundaryHolder}
u(\bar y) \lesssim \left( \dist(\bar y,\partial_e\Omega) / r\right)^\alpha \sup_{ \om_{3r/2}(T_1)} u, \quad \textup{for every} \,\,\by\in \om_r(T_1),
\end{equation}
where $T_1=T_{\max}(C_r(\bar \xi_0))=t_0+r^2$.
\end{lemma}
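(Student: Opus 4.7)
The strategy is to invoke the exponential modulus of continuity of Lemma~\ref{lem:Modudlus-continuity} at a boundary point close to $\bar y$, together with the lower bound on capacity coming from the TBCDC.

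First I would reduce to the case $\rho:=\dist_p(\bar y,\partial_e\Omega)<\delta r$ for a small dimensional constant $\delta>0$ (to be fixed along the argument), since otherwise the inequality \eqref{eq:boundaryHolder} holds trivially for any $\alpha>0$. Under the constraint $0<r<\sqrt{(t_0-T_{\min})/2}$, Remark~\ref{rem:cdc} (in particular \eqref{eq_remark_distance}) allows me to choose a point $\bar\eta \in \mathcal{S}\Omega$ realizing the distance, i.e.\ $\|\bar y-\bar\eta\|=\rho$. Because $\bar y\in C_r(\bar\xi_0)\cap E(T_1)$ with $T_1=t_0+r^2$ and $\rho$ is small, I can ensure that $\bar\eta\in\mathcal{S}\Omega\cap C_{(1+\delta)r}(\bar\xi_0)$ and that $t_\eta<T_1$. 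Moreover, $\bar\eta$ inherits the TBCDC with the same constants $a,\tilde c$ of \eqref{eq:CDC}, valid on scales up to $\sqrt{(t_\eta-T_{\min})/2}$, which by our range on $r$ is comparable to $\sqrt{(t_0-T_{\min})/2}\gtrsim r$.

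Next I would fix an auxiliary radius $r':=c_0 r$ with $c_0=c_0(n,a)>0$ small enough to guarantee the inclusion
\[
C_{\beta r'}(\bar\eta)\subset C_{3r/2}(\bar\xi_0)\cap E(T_1),
\]
where $\beta=\sqrt 2/a$ is the parameter of Lemma~\ref{lem:Modudlus-continuity}; this is possible because $\|\bar\eta-\bar\xi_0\|\leq(1+\delta)r$ and the time slab of $C_{\beta r'}(\bar\eta)$ is contained in $(t_y-2(\beta r')^2, t_y+2(\beta r')^2)\subset(t_0-r^2,t_0+r^2)=E(T_1)\cap\{t>T_{\min}\}$ when $\delta$ and $c_0$ are small. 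I would then extend $u$ by zero to $[C_{\beta r'}(\bar\eta)]\setminus\Omega$; by the hypothesis that $u$ vanishes continuously on $C_{2r}(\bar\xi_0)\cap\partial_e\Omega\cap E(T_1)$, the extension is a non-negative upper semicontinuous weakly subcaloric function on $C_{\beta r'}(\bar\eta)$ (and is subtemperature if $u$ was). In particular it vanishes on $\overline{R^-_a(\bar\eta;r')}\cap\Omega^c$, so the hypotheses of Lemma~\ref{lem:Modudlus-continuity} (in the weakly subcaloric formulation indicated by Remark~\ref{rem:Bourgain-subcaloric}) are satisfied.

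Applying that lemma at the point $\bar\eta$ with radius $r'$ and with any $\rho_0\in(\rho,r')$ strictly larger than $\rho$ so that $\bar y\in C_{\rho_0}(\bar\eta)$, I obtain
\[
u(\bar y)\leq \exp\biggl(-c\int_{\rho_0}^{r'}\frac{\Cap(\overline{R^-_a(\bar\eta;s)}\cap\Omega^c)}{s^n}\,\frac{ds}{s}\biggr)\,\sup_{C_{\beta r'}(\bar\eta)\cap\Omega} u.
\]
The TBCDC at $\bar\eta$ gives $\Cap(\overline{R^-_a(\bar\eta;s)}\cap\Omega^c)\geq \tilde c\,s^n$ for all admissible $s$, and hence the integral above is bounded below by a constant multiple of $\log(r'/\rho_0)$. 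Letting $\rho_0\searrow\rho$ and recalling that $r'=c_0 r$ and $C_{\beta r'}(\bar\eta)\cap\Omega\subset\Omega_{3r/2}(T_1)$, I arrive at
\[
u(\bar y)\lesssim (\rho/r)^{\alpha}\,\sup_{\Omega_{3r/2}(T_1)}u,
\]
for the Hölder exponent $\alpha=\alpha(n,a,\tilde c)>0$, which is \eqref{eq:boundaryHolder}.

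\textbf{Main obstacle.} The only delicate points are the verification that the zero-extension of $u$ across $\partial_e\Omega$ remains weakly subcaloric in the full cylinder $C_{\beta r'}(\bar\eta)$, for which I would invoke standard heat-potential-theoretic facts about extensions of subtemperatures vanishing continuously on a portion of the boundary (together with Remark~\ref{rem:Bourgain-subcaloric}), and the geometric bookkeeping needed to fit $C_{\beta r'}(\bar\eta)$ inside $C_{3r/2}(\bar\xi_0)\cap E(T_1)$ while keeping $\bar\eta$ a point where TBCDC can be applied at scale $r'$. Both are manageable with a proper choice of the constants $\delta$ and $c_0$ depending only on $n$ and $a$.
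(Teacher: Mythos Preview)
Your proposal is correct and takes essentially the same approach as the paper. The paper's proof is two lines: for subtemperatures it says the result is a direct consequence of Lemma~\ref{lem:Modudlus-continuity} (with a slightly larger cylinder on the right-hand side), and for weakly subcaloric functions it refers to \cite[Lemma~B.2]{GH20}. Your argument spells out exactly how Lemma~\ref{lem:Modudlus-continuity} is meant to be applied---pick the nearest boundary point $\bar\eta$, use the TBCDC to lower-bound the capacity integrand by a constant, and convert the exponential into a power $(\rho/r)^\alpha$---and your use of Remark~\ref{rem:Bourgain-subcaloric} to push the iteration through for upper semicontinuous weakly subcaloric functions is equivalent in spirit to what \cite{GH20} does. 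The ``geometric bookkeeping'' you flag (fitting $C_{\beta r'}(\bar\eta)$ inside $C_{3r/2}(\bar\xi_0)\cap E(T_1)$) is precisely the issue the paper's parenthetical acknowledges, and as you note it is absorbed by choosing $\delta,c_0$ small depending only on $n$ and $a$.
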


\begin{proof}
For subtemperatures, this is a direct consequence of Lemma \ref{lem:Modudlus-continuity} (with a slightly larger cylinder on the right hand-side of \eqref{eq:boundaryHolder}), while for weakly subcaloric functions one can follow the proof of  \cite[Lemma B.2]{GH20} (which still works  for TBCDC domains).
\end{proof}

\vv

\begin{lemma}\label{lem:estim_green_caloric_measure}
Let $\Omega\subset\Rn1$  be a quasi-regular open  set for $H$ and let  $M>1$. If $\bar \xi_0\in\mathcal{S}\Omega$ and $r>0$, for any $\bar x\in\Omega\setminus C_{r}(\bar \xi_0) $ and $\bar y\in \Omega \cap C_{r/4}(\bar \xi_0)$ we have that
\begin{equation}\label{eq:estim_green_function}
\inf_{\bar z\in C_{r}(\bar \xi_0) \cap \om }\omega^{\bar z}(C_{Mr}(\bar\xi_0)) \, G(\bar x, \bar y)r^n\lesssim \omega^{\bar x}(C_{Mr}(\bar\xi_0)).
\end{equation}
Moreover, if $\Omega$ satisfies either the TBCDC  at $\bar\xi_0$, then  if $M_0>1$ is the constant from Lemma \ref{lem:bourgain}, we have that for $\bar x\in\Omega\setminus C_{r}(\bar \xi_0) $ and $\bar y\in \Omega \cap C_{r/4}(\bar \xi_0)$,
\begin{equation}\label{eq:estim_green_function_CDC}
G(\bar x, \bar y)r^n\lesssim \omega^{\bar x}(C_{M_0r}(\bar \xi_0)), \,\,\textup{for}\,\,0<r< M_0\sqrt{(t_0-T_{\min})/{2}}.
\end{equation}
\end{lemma}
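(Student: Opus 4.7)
The plan is to prove both estimates by a maximum-principle comparison between $G(\cdot,\bar y)$ and $\omega^{\cdot}(C_{Mr}(\bar\xi_0))$, carried out in the region $D\coloneqq\Omega\setminus\overline{C_{r/2}(\bar\xi_0)}$. Fix $\bar y\in\Omega\cap C_{r/4}(\bar\xi_0)$ and view, as functions of $\bar x\in D$,
\[
u(\bar x)\coloneqq G(\bar x,\bar y),\qquad v(\bar x)\coloneqq \omega^{\bar x}(C_{Mr}(\bar\xi_0)).
\]
Because $\bar y\in C_{r/2}(\bar\xi_0)$, the function $u$ has no singularity in $D$; by Lemma \ref{lem:weak_caloric_functions} and the discussion in Section \ref{sec:potential theory}, both $u$ and $v$ are non-negative and caloric on $D$.

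I would then estimate each function on the parabolic boundary of $D$. On $\partial C_{r/2}(\bar\xi_0)\cap\overline\Omega$, the bound \eqref{eq:Green-upperbound} together with $\|\bar z-\bar y\|\geq r/4$ gives $u(\bar z)\lesssim r^{-n}$, and every such $\bar z$ lies in $\overline{C_r(\bar\xi_0)}\cap\Omega$, so that $v(\bar z)\geq I\coloneqq\inf_{\bar w\in C_r(\bar\xi_0)\cap\Omega}\omega^{\bar w}(C_{Mr}(\bar\xi_0))$. If $I=0$ then \eqref{eq:estim_green_function} is trivial; otherwise, on the remainder of the parabolic boundary, which sits inside $\partial_e\Omega$, the quasi-regularity of $\Omega$ forces $u\to 0$ outside a polar set, while $v\geq 0$. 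The maximum principle applied to $w\coloneqq u-(Cr^{-n}/I)\,v$ (after a standard exhaustion of $\Omega$ by bounded regular subdomains, using the decay of $G$ at infinity to control the behavior at $\infty$) yields $w\leq 0$ throughout $D$. Specializing to $\bar x\in\Omega\setminus C_r(\bar\xi_0)\subset D$ and rearranging produces exactly \eqref{eq:estim_green_function}.

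For \eqref{eq:estim_green_function_CDC}, I would use the TBCDC at $\bar\xi_0$ together with Lemma \ref{lem:bourgain}: applying \eqref{eq:bourgain_estimate} at scale $M_0 r$ shows that $\omega^{\bar z}(C_{M_0 r}(\bar\xi_0))\geq c$ uniformly for $\bar z\in\overline{\widehat R^+_a(\bar\xi_0;r)}\cap\Omega$, with $a$ the parameter produced by Lemma \ref{lem:bourgain}. I then repeat the comparison above with the inner domain $C_{r/2}(\bar\xi_0)$ replaced by $\widehat R^+_a(\bar\xi_0;r)$: the Green bound $u\lesssim r^{-n}$ remains valid on its parabolic boundary inside $\Omega$ (the relevant boundary pieces lie in $\overline{C_r(\bar\xi_0)}$ at parabolic distance $\gtrsim r$ from $\bar y$), while now $v\geq c$ holds \emph{directly} on that boundary by Bourgain. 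Taking $M=M_0$, the same maximum-principle step yields \eqref{eq:estim_green_function_CDC}.

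The main obstacle I anticipate is geometric compatibility: matching the Bourgain region $\widehat R^+_a(\bar\xi_0;r)$ simultaneously to the assumption $\bar y\in C_{r/4}(\bar\xi_0)$ (so that $\bar y$ sits inside the inner region), to the condition that the inner region be contained in $C_r(\bar\xi_0)$ (so that $\bar x\in\Omega\setminus C_r(\bar\xi_0)$ lies in the outer region and stays parabolically far from it), and to the requirement that $a$ be large enough for Lemma \ref{lem:tbcdc} to supply $\Cap(R^-_a(\bar\xi_0;r)\cap\Omega^c)\gtrsim r^n$. A secondary technical point is justifying the maximum principle in the unbounded, only quasi-regular domain $D$, handled via interior approximation by bounded regular subdomains as in Section \ref{sec:potential theory} and the decay of $G$ at infinity.
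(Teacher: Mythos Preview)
Your approach to \eqref{eq:estim_green_function} is essentially the paper's: a maximum-principle comparison between $G(\cdot,\bar y)$ and $\omega^{\cdot}(C_{Mr}(\bar\xi_0))$ in $\Omega$ with a small inner region around $\bar\xi_0$ removed. Two technical points where the paper is cleaner: (i) instead of the cylinder $\overline{C_{r/2}(\bar\xi_0)}$, the paper removes an \emph{ellipsoid} $\mathcal E(\bar\xi_0,r/2)$ circumscribing $C_{r/2}$ inside $C_r$, so that every point of $\partial\mathcal E$ is regular for $\Rn1\setminus\mathcal E$ (cylinder corners and faces would require extra justification); (ii) rather than exhausting by regular subdomains and controlling $\infty$, the paper applies Watson's maximum principle \cite[Theorem 8.2]{Watson} directly on $\Omega\setminus\overline{\mathcal E}$, which already allows a polar exceptional set on $\partial_e\Omega$ where $w$ is merely bounded above. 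This is exactly tailored to quasi-regularity and avoids your exhaustion step entirely.

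For \eqref{eq:estim_green_function_CDC} you are working too hard. The paper does \emph{not} redo the comparison with a new inner region; it simply takes $M=M_0$ in \eqref{eq:estim_green_function} and observes that, under TBCDC, Lemma \ref{lem:bourgain} bounds the infimum on the left from below by a constant, which immediately yields \eqref{eq:estim_green_function_CDC}. Your proposed replacement of $C_{r/2}$ by $\widehat R^+_a(\bar\xi_0;r)$ runs into precisely the geometric incompatibilities you flag (e.g.\ $C_{r/4}\subset\widehat R^+_a(\bar\xi_0;r)$ would force $a\geq 1/\sqrt{8}$, in tension with the $a\in(0,1/2)$ produced by Lemma \ref{lem:tbcdc}); all of this evaporates once you realize the second estimate is a corollary of the first plus Bourgain, not an independent comparison argument.
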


\begin{proof}
Fix $\bar y=(y,s)\in \Omega\cap C_{r/4}(\bar\xi_0)$ and recall that $G(\bx,\by)=0$ for any $\bx \not\in\Lambda^*(\by, \om)$. If  $\bar x=(x,t)\in \d_e C_{r}(\bar \xi_0)  \cap \Lambda^*(\by, \om)$,  then there exists $c>0$ such that
\[
G(\bar x, \bar y)r^n\lesssim\frac{r^n}{\|\bar x-\bar y\|^n}\leq c,
\]
and so, for any $\bar x \in \d_e C_{r}(\bar \xi_0)  \cap \Lambda^*(\by, \om)$,
\begin{align}\label{eq:lem_greenf}
\inf_{\bar z\in C_{r}(\bar \xi_0) \cap \om}\omega^{\bar z}(C_{Mr}(\bar \xi_0)) \, G(\bar x, \bar y)r^n &\leq c \inf_{\bar z\in C_{r}(\bar \xi_0)  \cap \om}\,\omega^{\bar z}(C_{Mr}(\bar \xi_0))\\
&\leq c\, \omega^{\bar x}(C_{Mr}(\bar \xi_0)).\notag
\end{align} 

Define now
\[
u(\bar x)\coloneqq c^{-1}\inf_{\bar z\in C_{r}(\bar \xi_0)}\omega^{\bar z}(C_{Mr}(\bar \xi_0)) G(\bar x, \bar y)r^n, \qquad v(\bar x)\coloneqq \omega^{\bar x}(C_{Mr}(\bar \xi_0)).
\]
If we set  $w(\bar x)=u(\bar x)-v(\bar x)$, then it is clear that $w$ is  a caloric  function, hence continuous, in $\Omega \setminus \overline{C_{r/2}(\bar \xi_0)}$. Let $\mathcal{E}(\bar \xi_0, r/2)$ be the ellipsoid that circumscribes $C_{r/2} (\bar \xi_0)$  lying inside $C_{r} (\bar \xi_0)$. If ${\bz}_*=(z_*,t_*)$ is the point on the boundary of the ellipsoid such that $t_*<t$ for every $\bz=(z,t) \in\d{\mathcal{E}(\bar \xi_0, r/2)} \setminus\{{\bz}_*\}$, then 
$$\d \mathcal{E}(\bar \xi_0, r/2) \setminus \{ {\bz}_*\} \subset  \d_n(\rrn \setminus \mathcal{E}(\bar \xi_0, r/2))\quad  \text{and} \quad {\bz}_* \in \d_{ss}(\rrn \setminus \mathcal{E}(\bar \xi_0, r/2)),
$$
while it is clear by \eqref{eq:wiener_test} and Remark \ref{rem:regular}, all the points of $\d \mathcal{E}(\bar \xi_0, r/2)$ are regular points of $\d_e (\rrn \setminus \mathcal{E}(\bar \xi_0, r/2))$ for both $H$ and $H^*$.
 Using \eqref{eq:lem_greenf} we obtain
\[
\lim_{\Omega\setminus {\mathcal{E}(\bar \xi_0, r/2)} \ni \bar x \to \bar \zeta}w(\bar y)=w(\bar\zeta)\leq 0,
\]
for any $\bar \zeta\in\partial \mathcal{E}(\bar \xi_0, r/2) \cap \Omega$. 

We set $Z$ to be  the set of irregular points of $\partial_e\Omega$, which is polar because of the quasi-regularity assumption on $\Omega$. Thus, if $\bar \zeta \in\partial_n\Omega\setminus Z$ (resp. $\partial_{ss}\Omega\setminus Z$), it holds that 
$
w(\bar x)\to 0
$ as ${\Omega\ni \bar x\to \bar \zeta}$ (resp. $\Omega\ni (x,t)\to (\zeta, \tau^+)$).
Moreover, it is clear that 
$$
\limsup_{\bx \to \bar \zeta \in Z} w(\bx) <\infty.
$$
Hence, by the maximum principle \cite[Theorem 8.2]{Watson} in $\Omega\setminus \overline{\mathcal{E}(\bar \xi_0, r/2)}$, we infer that  $w\leq 0$ in $\Omega\setminus {\mathcal{E}(\bar \xi_0, r/2)}$ concluding the proof of the lemma.

If we assume the TBCDC,  \eqref{eq:estim_green_function_CDC} readily follows from \eqref{eq:estim_green_function} using \eqref{eq:bourgain_estimate}.
\end{proof}

\vv

A pretty standard result in elliptic theory is that  polar sets have zero harmonic measure. The same is true for  caloric measure as well. As we were not able to find an appropriate reference, we will write the  proof for completeness.

\begin{proposition}\label{proposition:polar_sets}
Let $\Omega$ be an open set, $\bx\in\Omega$, and  $\omega^{\bx}$ be the caloric measure in $\om$.
If $E\subset\Rn1$ is polar, then  $\omega^{\bx}(E)=0$.
\end{proposition}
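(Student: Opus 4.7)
The plan is a reduction-plus-barrier argument followed by a continuity extension.

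First, since $\omega^{\bar x}$ is a Radon measure supported on $\partial_e\Omega$ and subsets of polar sets are polar, I would reduce to showing $\omega^{\bar x}(K)=0$ for each compact polar $K\subset E\cap\partial_e\Omega$ by inner regularity of $\omega^{\bar x}$. Being Borel, $\chi_K$ is resolutive by \cite[Corollary 8.33]{Watson}, so $\omega^{\bar y}(K)=U_{\chi_K}(\bar y)$ for every $\bar y\in\Omega$.

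Second, I would construct a supertemperature barrier $W$ with $W=+\infty$ on $K$ and $W\not\equiv+\infty$. Since $K$ is polar, $\Cap(K)=0$ by \cite[Theorem 7.46]{Watson}, and outer regularity of capacity yields open sets $V_m\supset K$ with $\Cap(\overline{V_m})<2^{-m}$. Fixing a bounded open set $D$ containing all $\overline{V_m}$ together with the connected component of $\Omega$ relevant for $\bar x$, let $u_m\coloneqq\widehat R_1(\overline{V_m},D)=G_D\mu_m$ be the smoothed reduction of $1$ on $\overline{V_m}$ in $D$. Each $u_m$ is a non-negative supertemperature on $D$ with $u_m\leq 1$, $u_m=1$ quasi-everywhere on $\overline{V_m}$, and $\|\mu_m\|=\Cap(\overline{V_m})<2^{-m}$. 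The series $W\coloneqq\sum_m u_m=G_D\bigl(\sum_m\mu_m\bigr)$ is then a non-negative supertemperature on $D$ with $\{W=+\infty\}$ polar—hence Lebesgue-null—and $W=+\infty$ on $K$ (a refinement of the construction, or a direct appeal to the local definition of polar set from the excerpt, secures this pointwise on all of $K$, not just quasi-everywhere). For each $\varepsilon>0$, lower semicontinuity of $W$ gives $\liminf_{\Omega\ni\bar z\to\bar\xi}\varepsilon W(\bar z)\geq\varepsilon W(\bar\xi)=+\infty\geq\chi_K(\bar\xi)$ for $\bar\xi\in K$, and trivially $\geq 0=\chi_K(\bar\xi)$ for $\bar\xi\notin K$, so $\varepsilon W\in\mathfrak U_{\chi_K}$. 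Thus
\[
\omega^{\bar y}(K)=U_{\chi_K}(\bar y)\leq\varepsilon W(\bar y)\to 0\quad\text{as }\varepsilon\to 0^+,
\]
for every $\bar y\in\Omega\cap\{W<+\infty\}$.

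Finally, the map $\bar y\mapsto\omega^{\bar y}(K)=S_{\chi_K}(\bar y)$ is caloric—thus continuous—on $\Omega$, and since $\{W=+\infty\}$ is polar and therefore Lebesgue-null, $\Omega\cap\{W<+\infty\}$ is dense in $\Omega$. A continuous function vanishing on a dense set vanishes identically, giving $\omega^{\bar x}(K)=0$ for every $\bar x\in\Omega$. The hardest step is guaranteeing $W=+\infty$ on all of $K$ rather than only quasi-everywhere: I expect to handle this either by iterating the construction on the polar exceptional subset of $K$, or by directly using the local definition of polar set (which provides some $w$ on a neighborhood $U\supset K$ with $w=+\infty$ pointwise on $K$), running the barrier argument on $\Omega\cap U$, and then concluding by continuity on $\Omega$.
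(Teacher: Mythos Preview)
Your approach is correct and is the same barrier/upper-class argument as the paper's, but the paper streamlines the execution in a way that sidesteps both issues you flag. Rather than reducing inward to compacta, the paper enlarges $E$ to a Borel polar superset $\widetilde E$ (via outer regularity of capacity), chooses an open $\widetilde\Omega\supset\Omega\cup\widetilde E$, and then invokes \cite[Theorem~7.3]{Watson} directly to obtain a non-negative supertemperature $u$ on $\widetilde\Omega$ with $u=+\infty$ \emph{pointwise} on $\widetilde E$ and $u(\bar x)<+\infty$ at the given pole; then $\lambda u|_\Omega\in\mathfrak U_{\chi_{\widetilde E}}$ for every $\lambda>0$ and one lets $\lambda\to 0$. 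This avoids the quasi-everywhere caveat in your hand-built $W=\sum_m\widehat R_1(\overline{V_m},D)$ and makes the continuity/density extension unnecessary. Two small points in your write-up would need patching: a bounded $D$ containing the relevant component of $\Omega$ need not exist when $\Omega$ is unbounded (just take $D=\mathbb R^{n+1}$ and work with $\Gamma$), and your inner-regularity reduction presupposes that $E$ is $\omega^{\bar x}$-measurable, which is most cleanly handled---as the paper does---by first passing to a Borel polar superset.
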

\begin{proof}
Fix $\bx \in \om$.  Since $E\cap \partial\Omega$ is polar as a subset of the polar set $E$, without loss of generality,  we may assume that $ E\subset\partial\Omega$. As we have that $\Cap(E)=0$, by \eqref{def:capacity-minus}, there exists a sequence of open sets $S_j \supset E $ such that  $\lim_{j\to \infty}\Cap_{-}(S_j)=0$. Set now $\widetilde{E}\coloneqq \bigcap_j S_j$, and note  that $\widetilde E$ is Borel, $E\subset \widetilde{E}$, and $\Cap(\widetilde{E})=0$.  Therefore, without loss of generality, we may  assume that $E$  is Borel. 

Let us assume that  $\Cap (\om^c)>0$. Then, there exists an open set $\widetilde \om$ such that $\om \cup E \subset \widetilde \om$ and $\Cap(\widetilde \om^c)>0$.   Indeed, as $\om^c \setminus E$ is Borel, by \eqref{def:capacity-minus}, there exists a compact set $K \subset \om^c \setminus E$  so that $\Cap(K)> \Cap( \om^c \setminus E)/2$.  We set $\widetilde \om= K^c$, which is an open set such that  $\om \cup E \subset \widetilde \om$, and, since  $\Cap(E)=0$, it clearly satisfies  $\Cap (\widetilde \om^c)>0$. If $\Cap (\om^c)=0$,   we simply assume that $\widetilde \om =\rrn$.

 Now, we apply \cite[Theorem 7.3]{Watson} to find  $u \geq 0$, a supertemperature in $\widetilde \Omega$, such that $u=\infty$ on $E$ and $u(\bx)<+\infty$.
Hence, $\lambda u|_{\om}$ belongs to the upper class $\mathfrak U_E$ for every $\lambda>0$, which implies that $0\leq \hm^{\bx}(E) \leq \lambda u(\bx)$. We conclude the proof by taking $\lambda \to 0$.
\end{proof}

\vv

\begin{lemma}\label{lem:quasireg-subdomain}
Let $\Omega_{1}$ and $\Omega_{2}$ be two disjoint quasi-regular open sets  in $\bR^{n+1}$ for $H$ and $H^*$ so that $(\mathcal{P}\Omega_{i})^0 \neq \emptyset$, $i=1,2$, with caloric measures $\omega_{i}=\omega_{\Omega_{i}}^{\bar{p}_{i}}$ for some $\bar{p}_{i}\in\Omega_{i}$ and suppose $\omega_{1}\ll \omega_{2}\ll \omega_{1}$ on a Borel set $E \subset (\mathcal{P}\Omega_{1})^0 \cap (\mathcal{P}\Omega_{2})^0$ and $\hm^{\bar p}_{\om}(E)>0$. Then there are  open subsets $\widetilde{\Omega}_{i}\subset \Omega_{i}$ which are regular for $H$ and $H^*$ and contain $\bar{p}_{i}$ for $i=1,2$ so that there exists $G_{0}\subset E \cap (\mathcal{P}\wt \Omega_{1})^0 \cap (\mathcal{P}\wt \Omega_{2})^0$ with $\widetilde{\omega}_{i}(G_{0})>0$ upon which $\widetilde{\omega}_{2}\ll \widetilde{\omega}_{1}\ll \widetilde{\omega}_{2}$. 
\end{lemma}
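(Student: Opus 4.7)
My plan is to carve regular subdomains $\widetilde\Omega_i$ out of $\Omega_i$ in two stages: first intersect with a large smooth regular container, then fatten the complement near any residual irregular points. The key point is to arrange things so that a compact positive-$\omega_i$-measure subset $K\subset E$ remains on the common boundary $\partial\widetilde\Omega_i\cap\partial\Omega_i$, from which the mutual absolute continuity of the small caloric measures $\widetilde\omega_i$ can be transferred from that of $\omega_i$.

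\textbf{Step 1: selecting $K$.} Let $Z_i\subset\partial_e\Omega_i$ (resp.\ $Z_i^*$) be the set of $H$-irregular (resp.\ $H^*$-irregular) points. Quasi-regularity makes them polar, so by Proposition \ref{proposition:polar_sets} they satisfy $\omega_i(Z_1\cup Z_2\cup Z_1^*\cup Z_2^*)=0$, and by the mutual absolute continuity hypothesis the same holds for $\omega_j$ on $E$. Since $\omega_1(E)>0$ and Radon measures are inner regular on Borel sets, I may pick a compact set
\[
K\subset E\setminus\bigl(Z_1\cup Z_2\cup Z_1^*\cup Z_2^*\bigr),\qquad \omega_1(K)>0,\ \omega_2(K)>0.
\]

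\textbf{Step 2: regular containers.} Choose a bounded smooth domain $D$ (e.g.\ a parabolic cylinder with smoothly rounded corners) that contains $K\cup\{\bar p_1,\bar p_2\}$; such a $D$ is regular for both $H$ and $H^*$, since it satisfies the TBCDC and TFCDC by smoothness. Set $\widetilde\Omega_i^{(0)}\coloneqq \Omega_i\cap D$. Its boundary is contained in $(\partial\Omega_i\cap\overline D)\cup(\partial D\cap\overline{\Omega_i})$. Points of the second piece lying in $\Omega_i$ are regular thanks to the smoothness of $D$, while points of the first piece that were regular for $\Omega_i$ remain regular for $\widetilde\Omega_i^{(0)}$ because enlarging the complement only increases the integrand in the Wiener criterion (Lemma \ref{cor:Wiener}). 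Thus the set of irregular points of $\widetilde\Omega_i^{(0)}$ is contained in the polar set $(Z_i\cup Z_i^*)\cap\overline D$, possibly with an additional contribution on the corner locus $\partial D\cap\partial\Omega_i$, which by a generic choice of $D$ I may also arrange to be polar.

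\textbf{Step 3: fattening.} Because this residual irregular set is polar and disjoint from the compact set $K\cup\{\bar p_1,\bar p_2\}$, I cover it by an open set $U$ bounded away from $K\cup\{\bar p_1,\bar p_2\}$ and of arbitrarily small capacity. Near each irregular point I attach to $\Omega_i^c$ a small backwards time cylinder (for $H$) and a small forwards time cylinder (for $H^*$); by Corollary \ref{cor:capacity_rectangle} combined with Lemma \ref{lem:CPC-regular}, the resulting domain
\[
\widetilde\Omega_i\subset\widetilde\Omega_i^{(0)}\subset\Omega_i\cap D
\]
is regular for both $H$ and $H^*$, contains $\bar p_i$, contains $K$ in its boundary, and still has $K\subset(\mathcal P\widetilde\Omega_i)^0$ since the fattening is performed at positive distance from $K$.

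\textbf{Step 4: transferring mutual absolute continuity.} For any Borel $A\subset K\subset\partial\widetilde\Omega_i\cap\partial\Omega_i$, the maximum principle yields $\widetilde\omega_i(A)\le\omega_i(A)$, so $\widetilde\omega_i\ll\omega_i$ on $K$. For the converse, at each $\bar\xi\in K$ the complement of $\widetilde\Omega_i$ is non-degenerate from the exterior (it contains that of $\Omega_i$), so the Bourgain-type bound of Lemma \ref{lem:bourgain-bis} applied at Lebesgue density points of $K$ with respect to $\omega_i$ produces, via a standard Vitali covering and density-point argument, a Borel subset $G_0\subset K$ with $\omega_i(G_0)>0$ on which $\omega_i\ll\widetilde\omega_i$. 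The mutual absolute continuity $\widetilde\omega_2|_{G_0}\ll\widetilde\omega_1|_{G_0}\ll\widetilde\omega_2|_{G_0}$ then follows from that of $\omega_1,\omega_2$ on $E\supset G_0$.

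\textbf{Main obstacle.} The most delicate step is Step 3: one must kill every irregular point for both $H$ and $H^*$ simultaneously while (i) keeping $\bar p_i$ in the same connected component as a neighborhood of $K$ in $\widetilde\Omega_i$, and (ii) making the fattening small enough in capacity that, in Step 4, the two-sided comparison $\omega_i\approx\widetilde\omega_i$ survives on a positive-measure subset $G_0$ of $K$. This requires carefully placing the forwards/backwards cylinders inside the small-capacity cover $U$ of the polar irregular set, which is why the cover is chosen bounded away from $K$ and $\bar p_i$.
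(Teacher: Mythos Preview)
Your overall strategy—remove neighborhoods of the irregular points and compare caloric measures via the maximum principle—is close in spirit to the paper's, but Step~4 contains a genuine gap, and the fix requires a different device than the one you propose.

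The issue is this: from $\widetilde\omega_i\ll\omega_i$ on $K$ (which follows from the maximum principle), in order to extract a subset $G_0\subset K$ on which also $\omega_i\ll\widetilde\omega_i$ you must first know that $\widetilde\omega_i(K)>0$. Once you have that, a one-line Lebesgue decomposition argument finishes the job (this is exactly what the paper does); no Vitali covering or Bourgain bound is needed or helpful. Your invocation of Lemma~\ref{lem:bourgain-bis} does not give $\widetilde\omega_i(K)>0$: that lemma bounds $\widetilde\omega_i^{\bar z}(C_r(\bar\xi))$ from below only for points $\bar z$ \emph{near the boundary point} $\bar\xi$, not for the fixed interior pole $\bar p_i$, and there is no Harnack chain available in this generality to transfer the estimate. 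Likewise, ``small capacity of the fattening'' does not convert into ``small $\widetilde\omega_i^{\bar p_i}$-measure of the fattening''; capacity and caloric measure from a fixed interior pole are not comparable in the way you suggest.

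The paper secures the needed quantitative bound by a different mechanism. Since the irregular set $F$ is polar, there exists a positive supertemperature $v_i$ on $\Omega_i$ with $v_i=+\infty$ on $F$. One then covers $F$ by closed ellipsoids $\{E_j\}$ (chosen via Besicovitch) on each of which $v_i\ge\lambda$, for a large parameter $\lambda$, and sets $\widetilde\Omega_i=\Omega_i\setminus H$ with $H=\bigcup_jE_j$. Ellipsoids are used precisely so that every point of $\partial E_j$ is regular for both $H$ and $H^*$, hence no new irregular points are created; openness of $\widetilde\Omega_i$ needs a separate check because the cover may be infinite. The crucial estimate is then immediate from the maximum principle: since $v_i/\lambda\ge 1$ on $H$, one has $\widetilde\omega_i(H)\le v_i(\bar p_i)/\lambda$, which can be made as small as desired. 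With $G=E\setminus H$ this yields $\widetilde\omega_i(G)\ge\tfrac12\omega_i(G)>0$, after which the Lebesgue decomposition gives $G_0$. Your Steps~2--3 (intersecting with a smooth container, then attaching cylinders) might be made to work as a \emph{construction} of a regular subdomain, but they do not by themselves supply the caloric-measure estimate that the supertemperature provides, and without it Step~4 does not go through.
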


\begin{proof}

We follow closely  the proof of \cite[Lemma 2.3]{AMTV16}. Let $F_{i}$ and  $F_{i}^*$ be the sets of irregular points for $\Omega_{i}$ for $H$ and $H^*$ respectively, which are polar and co-polar sets in $\bR^{n+1}$. Although, since by \cite[Theorem 7.46]{Watson} polar and co-polar sets coincide, by \cite[Definition 7.1]{Watson}, there is a positive supertemperature  $v_{i}$ on $\Omega_{i}$ so that 
\[
\lim_{\by\rightarrow \bx \atop \by \in \Omega_{i}} v_{i}(\by)=\infty \;\; \mbox{for all }\bx\in F:=F_{1}\cup F_{2}\cup F_{1}^*\cup F_{2}^*.\]


Let $\lambda>0$. Since $v_{i}$ is supertemperature on $\Omega_{i}$, it is lower semicontinuous, and remains so when we extend it by zero to $\Omega_{i}^{c}$. Thus, for each $\bx\in F$ there is a closed cylinder  $C'_{i}(x)$ centered at $\bar x$ (not containing either $\bar p_{i}$) such that $v_{i}\geq \lambda$ on $C'_{i}(\bx)\cap \Omega_{i}$. For each $C'_{i}(x)$ let $C_{i}(x)$ be the cylinder of the same center and half the radius of $C'_{i}(x)$.  Let $\{C_{j}\}$ be a Besicovitch subcovering (see Lemma \ref{lem:Besicovitch}) and if $E_j$ is the  closed ellipsoid of revolution around the axis of the cylinder $C_j'$ that is inscribed in $C_j'$, we define 
$$
\wt \om_i=\om_i \setminus H\subset \om_i, \,\,\textup{ where}\,\, H=\bigcup_{j \geq 1} E_j.
$$

Note that $\wt\Omega_{i}$ is open. Indeed, to show that $\widetilde{\Omega}_{i}^{c}$ is closed consider $\bx_{k}\in \wt\Omega_{i}^{c}$, $k\geq 1$ and $\bx_{k}\rightarrow \bx$. Then we need to show $\bx\in \widetilde{\Omega}_{i}^{c}$. If there is a subsequence contained in $\Omega_{i}^{c}$, we are done. Otherwise, assume that $\bx_{k}\in H\backslash \Omega_{i}^{c}=H\cap \Omega_{i}$. If $\bx_{k}\in E_{j}$ for infinitely many $k$, then $\bx\in E_{j}$ and we are done since $E_{j}$ is closed and $E_{j}\subset \widetilde{\Omega}_{i}^{c}$. Otherwise, suppose $\bx_{k}$ is not in any $E_{j}$ more than finitely many times. By the bounded overlap property, if $j(\bx_k)$ is such that $\bx_k\in E_{j(\bx_k)}$, then $\diam(E_{j(\bx_k)})\downarrow 0$ as $k\to\infty$, and since the ellipsoids are centered on $F\subset \Omega_{i}^{c}$, we have that $\bx\in \Omega_{i}^{c}\subset \widetilde{\Omega}_{i}^{c}$, and we are done. Thus, $\widetilde{\Omega}_{i}$ is open. 


Set $V_j=\rrn \setminus E_j$ and note that if $\bz_j^1=(z_1,t_1)$ and $\bz_j^2=(z_2,t_2)$, are the points on  $\d E_j \cap \ell_j$, where $\ell_j$ is the line containing the axis of $C_j$, so that $t_1<t_2$,  it holds that $\bz_1 \in \d_{ss} V_j$ and every $\bx \in \d E_j\setminus \{\bz_j^1\}  \subset \PP V_j$ is in $\d_n V_j$. Moreover, it is clear by \eqref{eq:wiener_test} and Remark \ref{rem:regular} that  every $\bx \in \d E_j$ is regular for $V_j$. Therefore, by \cite[Corollary 8.47]{Watson}, it holds that $\d E_j \cap \om_i$ consists of regular points of $\d_e \wt \om_i$. As the points of $\d_e  \om_i \cap \d_e \wt \om_i$ are regular for $\om_i$ then, by another application of \cite[Corollary 8.47]{Watson}, they are regular for $\wt \om_i$ as well. Therefore,  $\wt \om_i$ is   regular for $H$ and $H^*$, and $E \subset \d_e \wt \om_i$. In fact, since $E \subset (\mathcal{P}\Omega_{1})^0 \cap (\mathcal{P}\Omega_{2})^0$ and $\d_a \wt \om_i  \cap \d \om_i\subset \d_a  \om_i$,  then it holds that 
$$
E \cap \d_e \wt \om_i \subset  \mathcal{P}\wt \Omega_{1} \cap \mathcal{P}\wt \Omega_{2} \cap (\mathcal{P}\Omega_{1})^0 \cap (\mathcal{P}\Omega_{2})^0.
$$

Let $\widetilde{\omega}_{i} = \omega_{\widetilde{\Omega}_{i}}^{ \bar{p}_{i}}$ and  $G=E   \backslash H$.  As the cylinders are closed, it is not hard to see that $G \subset (\mathcal{P}\wt \Omega_{1})^0 \cap (\mathcal{P}\wt \Omega_{2})^0$. Since $v_i$ is positive in $\Omega_i$ and $\omega_{i}^x(H) \leq 1 \leq v_i(\bx)/\lambda$ for all $\bx \in H$, by the maximum principle on $\widetilde{\Omega}_{i}$, we have that $\omega_{i}(H)\leq \lambda^{-1} v_{i}( \bar{p}_{i})$. Picking
\[
\lambda^{-1}  < \frac{1}{2} \min_{i=1,2}\{\omega_{i}(E)/v_{i}( \bar{p}_{i})\},\] 
this gives $\omega_{i}(H)\leq \frac{1}{2} \omega_{i}(E)$ and hence $\omega_{i}(G)>0$. 
Similarly,  by the maximum principle, since $v_i/\lambda \geq 1$ on $H$, $\widetilde{\omega}_{i}(H)\leq \lambda^{-1}  v_{i}( \bar{p}_{i})$. Picking 
\[
\lambda^{-1}  < \frac{1}{2} \min_{i=1,2}\{\omega_{i}(G)/v_{i}( \bar{p}_{i})\},\] 
this gives 
\[
\widetilde{\omega}_{i}(H)\leq \frac{1}{2} \omega_{i}(G).\]

Moreover, by the maximum principle, and since $\widetilde{\Omega}_{i}$ is a regular domain,
\[
\widetilde{\omega}_{i}(H^{c}\cap G^{c})\leq {\omega}_{i}(H^{c}\cap G^{c}).\]
Thus,
\begin{align*}
\widetilde{\omega}_{i}(G)
& =1- \widetilde{\omega}_{i}(G^{c})
=1-\widetilde{\omega}_{i}(H\cap G^{c})-\widetilde{\omega}_{i}(H^{c}\cap G^{c}) \\
& \geq 1-\frac{1}{2} \omega_{i}(G)-{\omega}_{i}(H^{c}\cap G^{c})
\geq \omega_{i}(G)-\frac{1}{2} \omega_{i}(G)= \frac{1}{2} \omega_{i}(G)>0.
\end{align*}

Note that $\widetilde{\omega}_{1}\ll \omega_{1}$ on $G$ by the maximum principle and since $\widetilde{\omega}_{1}(G)>0$, it is not hard to show using the Lebesgue decomposition theorem that there is $G_{1}\subset G$ of full $\widetilde{\omega}_{1}$-measure upon which we also have $ \omega_{1} \ll \widetilde{\omega}_{1}$. Hence $\omega_{1}(G_{1})>0$, which implies $\omega_{2}(G_{1})>0$. The same reasoning gives us a set $G_{2}\subset G_{1}$ upon which $\widetilde{\omega}_{2} \ll \omega_{2} \ll \widetilde{\omega}_{2}$. Thus, $\widetilde{\omega}_{2}\ll \widetilde{\omega}_{1}\ll \widetilde{\omega}_{2}$ on $G_{2}$. 

\end{proof}

\vv

The  notion of halving metric space was first introduced by Korey  \cite{Kor98} and plays an important role in the proofs of our theorems.
\begin{definition}
A probability space $(X,\omega)$ is called \textit{halving} if for every subset $E\subset X$ such that $\hm(E)>0$, there exists $F\subset E$ such that $\omega(F)=\omega(E)/2$. 
\end{definition}

We conclude this section by proving   the caloric analogue of  \cite[Lemma 7.9]{AM19}.
\begin{lemma}
Let $\Omega\subset\Rn1$ be an open  set and  $\bx\in\Omega$. Then, the probability space $(\d_e\om, \omega^{\bx})$   is halving.
\end{lemma}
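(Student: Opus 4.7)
The plan is to deduce the halving property from two ingredients: (i) the caloric measure $\omega^{\bar x}$ is non-atomic on $\partial_e\Omega$, and (ii) Sierpinski's classical theorem which states that any non-atomic finite positive measure on a measurable space attains every value between $0$ and the measure of any set of positive measure.

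First, I would verify non-atomicity. Fix any $\bar y=(y,s)\in\partial_e\Omega$. I claim the singleton $\{\bar y\}$ is polar in the sense of Section \ref{sec:potential theory}. Consider the translated fundamental solution $\Gamma(\cdot-\bar y)$: by the definition preceding \eqref{eq:poitwiseHeat kernel}, this is a non-negative supertemperature on all of $\Rn1$ (it is smooth and caloric off $\{\bar y\}$, and its value at $\bar y$ is $+\infty$ since $\Gamma(x,t)=(4\pi t)^{-n/2}e^{-|x|^2/4t}\to +\infty$ as $(x,t)\to \bar 0$ along $x=0,\ t\downarrow 0$, while it vanishes for $t\leq 0$; the resulting function belongs to the class of supertemperatures as in \cite[Definition 3.7]{Watson}). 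Since $\{\bar y\}\subset\{\bar z\in\Rn1:\Gamma(\bar z-\bar y)=+\infty\}$, by the definition preceding Proposition \ref{proposition:polar_sets} the singleton $\{\bar y\}$ is polar. Applying Proposition \ref{proposition:polar_sets} then gives $\omega^{\bar x}(\{\bar y\})=0$, so $\omega^{\bar x}$ is non-atomic.

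Next, I would invoke Sierpinski's theorem in the following form: if $\mu$ is a non-atomic finite positive measure on a $\sigma$-algebra $\Sigma$ and $E\in\Sigma$ with $\mu(E)>0$, then $\{\mu(F):F\in\Sigma,\, F\subset E\}=[0,\mu(E)]$. Since $\omega^{\bar x}$ is a Borel probability measure on $\partial_e\Omega$ (by the construction recalled at the start of Section \ref{sec:potential theory}) and we just showed it is non-atomic, Sierpinski's theorem applies. Taking the intermediate value $c=\omega^{\bar x}(E)/2$ produces a measurable $F\subset E$ with $\omega^{\bar x}(F)=\omega^{\bar x}(E)/2$, which is precisely the halving property.

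The only potentially delicate step is verifying that every singleton is polar in the parabolic sense; this is classical, and the explicit supertemperature $\Gamma(\cdot-\bar y)$ furnishes the required witness. Once non-atomicity is in hand, the halving property is a purely measure-theoretic consequence with no further PDE input.
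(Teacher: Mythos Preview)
Your overall strategy---establish non-atomicity of $\omega^{\bar x}$ and then invoke Sierpi\'nski's intermediate value theorem for non-atomic measures---is correct and more direct than the paper's argument. The paper instead argues by contradiction: it slides half-spaces $H_{s,v}$ to locate hyperplanes $V_v$ of positive $\omega^{\bar x}$-measure, observes that pairwise intersections of suitably chosen $V_v$ are $(n-1)$-planes orthogonal to the $t$-axis and hence polar (via \cite[Theorem~7.55]{Watson} and Proposition~\ref{proposition:polar_sets}), and derives a contradiction with $\omega^{\bar x}(E)\leq 1$ from the existence of infinitely many essentially disjoint sets each of measure at least some $\varepsilon>0$. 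Your route bypasses this geometric construction entirely; once non-atomicity is known, Sierpi\'nski's theorem yields the halving property in one line.

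There is, however, a genuine gap in your verification that singletons are polar. By the paper's definition, $\Gamma(\bar x)=(4\pi t)^{-n/2}e^{-|x|^2/4t}\chi_{\{t>0\}}(t)$, so $\Gamma(\bar 0)=0$, not $+\infty$. You yourself note that $\Gamma$ vanishes for $t\leq 0$; the divergent limit along $x=0,\ t\downarrow 0$ does not determine the pointwise value at the origin. Were one to redefine $\Gamma(\bar 0)=+\infty$, the resulting function would fail to be lower semicontinuous (approaching along $t<0$ gives $\liminf=0<+\infty$), hence would not be a supertemperature in Watson's sense. Thus the inclusion $\{\bar y\}\subset\{\Gamma(\cdot-\bar y)=+\infty\}$ is false, and $\Gamma(\cdot-\bar y)$ does not witness polarity of $\{\bar y\}$.

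The repair is immediate using a fact already recorded in the paper (just before Proposition~\ref{proposition:polar_sets}): a subset of a horizontal hyperplane has thermal capacity equal to its $n$-dimensional Lebesgue measure (\cite[Theorem~7.55]{Watson}). Since $\{\bar y\}$ lies in the horizontal hyperplane through $\bar y$ and has zero $\mathcal L^n$-measure there, $\Cap(\{\bar y\})=0$, so $\{\bar y\}$ is polar by \cite[Theorem~7.46]{Watson}. With this correction your argument is complete.
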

\begin{proof}
Set $\omega\coloneqq\omega^{\bx}_\Omega$ and let us assume that there exists $E\subset \partial\Omega$ such that $\omega(E)>0$ and $\omega(F)\neq \omega(E)/2$ for every $F\subset E$. Given $s\in \R$ and $v\in \mathbb S^n$, we define the half-space $H_{s,v}\coloneqq\{\bar\xi\in\Rn1: \bar\xi\cdot v\geq s \}$.
By the mean value theorem and our assumption, the map $s\mapsto \omega(H_{s,v}\cap E)$ is not continuous for any $v \in \mathbb{S}^n$. In particular, for any $v \in \mathbb{S}^n$, there exists $s_v$, such that $\omega(\partial H_{{s_v},v}\cap E)>0$. Set now $V_v\coloneqq \partial H_{s_v,v}$, which is an  $n$-dimensional plane. Define
\[
S\coloneqq \mathbb \{(y',y_n,\tau)\in\mathbb S^n:y'=0\},
\]
and observe that, since $S$ is uncountable, there is $\varepsilon>0$ such that $\omega(V_v\cap E)>\varepsilon$ for all $v$ in some uncountable subset $A$ of $S$. Let us consider $A'\subset A$ countable. For every $v,v'\in A'$  we have that $V_v\cap V_{v'}$ is an $(n-1)$-plane orthogonal to the $t$ axis, which is a polar set since it is contained in a horizontal hyperplane and has $n$-dimensional Lebesgue measure zero (see \cite[Theorem 7.55]{Watson}). Therefore,  by Proposition \ref{proposition:polar_sets}, it holds that $\omega(V_v\cap V_{v'})=0$. Set  now 
$$
W_u\coloneqq V_u\setminus \bigcup_{v\in A', v\neq u}V_v,
$$
and notice that $\omega(W_u\cap E)=\omega(V_u\cap E)>\varepsilon$ and  $W_u\cap W_{u'}=\varnothing$ for any $u\neq u' \in A'$. Thus, since $A'$ is countable, 
\[
1\geq \omega (E) \geq \sum_{u\in A'}\omega(W_u\cap E)=+ \infty,
\]
which is a contradiction.
\end{proof}

\vvv

Every result related to the  heat equation we have stated  so far has a dual for the adjoint heat equation $H^* u=0$ obtained by reversing the sign of the time variable. Therefore, we can define the associated fundamental solution $\Gamma^*$, the Green function $G_\om^*$, the adjoint caloric measure  denoted by $\hm_*^{\bar p}$, the associated parabolic capacity $\Cap_*$ and so forth. A solution of $H^* u =0$ is called adjoint caloric. Remark that, by \cite[Theorem 6.10]{Watson}, we have that
$$
\Gamma(\bx, \by)= \Gamma^*(\by, \bx) \quad \textup{and}\quad G_\om(\bx, \by)= G_\om^*(\by, \bx).
$$
For the regularity of the $\d_e^*\om$ for $H^*$ and the corresponding capacity density conditions, it is pretty clear that we should just take time-forwards cylinders and the so-called co-heat balls, which are defined as the heat balls using the adjoint heat kernel.

\vvv

\section{Hausdorff and tangent measures}\label{sec:GMT}

If $d \leq n+1$ is an integrer and  $\cH^d(E)=0$ (resp. $\cH^d(E)<\infty$), then  $\cH^{d+1}_{p} (E)=0$ (resp, $\cH^{d+1}_{p}(E)<\infty$). For a proof see  \cite[Lemma 3.2]{He17}. Note that this was originally stated under the additional hypothesis that $E$ is Euclidean $d$-rectifiable but   it is easy to see it is redundant. Moreover, by  \cite[Lemma 3.8]{He17}, it holds that there exists $c_1>0$ and $c_2>0$ depending only on $d$, such that on $\R^{d-1} \times \R \subset \R^n \times \R$, 
\[
\cH^{d+1}_{p}= c_2 \, \mathcal{L}^{d}=c_1 \cH^{d-1} \times \cH^2_{p}.
\]

%
We recall that the Hausdorff dimension of a Borel measure $\omega$  is defined by 
\[
\dim_{\mathcal H_p}(\omega)= \inf\bigl\{\dim_{\mathcal H_p}(Z):\, \omega(\Rn1\setminus Z)=0\bigr\}.
\]
This definition is related with the concept of \textit{pointwise dimension} of $\omega$ at $\bar x\in\supp\omega$. More specifically,  if
\[
\underline{ d}_\mu(\bar x)=\liminf_{r\to 0}\frac{\log\mu(C_r(\bar x))}{\log r}\, \, \text{ and }\, \, \overline d_\mu (\bar x)=\limsup_{r\to 0} \frac{\log \mu(C_r(\bar x))}{\log r}
\]
denote the \textit{lower} and \textit{upper pointwise dimension} respectively, we can argue as in \cite{BW} to show that
\[
\dim_{\mathcal H_p}(\omega)=\esssup\{\underline{d}_\mu(\bar x): \bar x\in\supp \mu\}.
\]
If $\underline d_\mu(\bar x)=\overline d_\mu (\bar x)$, we denote the common value by $d_\mu(\bar x)$.
\vv

\begin{definition}\label{def:Lip1,l}
If $\ell \in [\frac{1}{2},1]$, we say that a function $\psi\colon\R^d\to\R$ is {\it $(1,\ell)$-Lipschitz} and  write $ f \in \Lip_{1,\ell}(\R^d)$ if there exists a constant $L>0$ such that
\begin{equation}\label{eq:def_lip112}
|\psi(x',t)-\psi(y',s)|\leq L \max\big(|x'-y'|,|t-s|^{\ell}\big), \quad \textup{ for}\,\, (x',t)\neq (y',s)\in\R^{d-1}\times \R.
\end{equation}
We call $L$ the {\it Lipschitz constant} of $f $ and write $\Lip(f)=L$. 
\end{definition}
Observe that for $\ell=1/2$ these functions correspond to Lipschitz functions with respect to the parabolic norm $\|\cdot\|$ or the equivalent  norm $ |x'-y'|+|t-s|^{1/2}$.  For $\ell=1$, those are just the usual Lipschitz functions with respect to the Euclidean norm.

\begin{definition}\label{def:admissibility}
We say that $\Gamma\subset \Rn1$ is an \textit{admissible $d$-dimensional graph} if there exists a vector field $f\colon \R^d \to\R^{n-d+1}$ such that, possibly after a rotation in  space  and a translation,
\[
\Gamma = \Gamma_f=\bigl\{\bigl(x',f(x',t),t)\bigr):x\in\R^{d-1},t\in\R\bigr\}.
\]
If $f\in C^m(\R^d ; \R^{n-d+1})$, $1\leq m\leq \infty$, we say that $\Gamma_f$ is an \textit{admissible} $d$-dimensional $C^m$-graph. If $f$ is  an affine map, then  $\Gamma_f = V $ is a plane that contains a line parallel to the time-axis and we call it an  \textit{admissible} $d$-dimensional plane. In fact, after rotation in space and a translation, we can always assume $V=\R^{d-1}\times \{ \vec 0 \} \times \R$, where $\vec 0 = (0,\dots,0) \in \R^{n-d+1}$.
\end{definition}

\begin{definition}\label{def:rectifiable}
A closed set $E \subset \rrn$ is {\it Euclidean $d$-rectifiable} if there exists $E_i\subset \R^d$ and $\Lip_{(1,1)}$-functions $f_i\colon E_i\to\rrn$ so that $\cH^d\bigl(E\setminus \cup^\infty_{i=1} f_i(E_i)\bigr)=0$. Equivalently, $E$ is Euclidean $d$-rectifiable if  there exists a countable collection of  $d$-dimensional $\Lip_{(1,1)}$-graphs (or $d$-dimensional $C^1$-manifolds) $\{\Gamma_i\}_{i \geq 1}$ such that $\cH^d\bigl(E\setminus \cup^\infty_{i=1} \Gamma_i\bigr)=0$. 
\end{definition}

By  the co-area formula \cite[Theorem D]{He17},  if $E$ is Euclidean $d$-rectifiable, { $\HH^d|_E$ is locally finite}, and $\sigma$ {indicates its associated surface measure as  defined in Section \ref{sec:preliminaries}}, then 
\begin{equation}\label{eq:sigma=H}
\sigma= c \HH^{d+1}_p|_E.
\end{equation}
 In fact,  using the Rademacher theorem in \cite{Or20}, one can show that this theorem is true for parabollically rectifiable sets as well, that is, for sets that are exhausted, up to a set of $\HH^{d+1}_p$-measure zero, by admissible  $\Lip_{1,\frac{1}{2}}$-graphs with $\frac{1}{2}$-derivative in time in $\BMO$.\footnote{We omit the detailed definitions and the proof since we will not be dealing with such general sets in the present paper.}

\vv

If  $\mu$ and $\nu$ are Radon measures on $\Rn1$, {following \cite{Pr87}} we define the {\it distance between $\mu$ and $\nu$} in the parabolic ball $C_r$ by
\begin{equation}\label{eq:funtional_F_def}
d_{C_r}(\mu,\nu)=\sup_{f} \int f\, d(\mu-\nu),
\end{equation}
where  the supremum is taken over all functions $ f \in \Lip_{(1,\frac{1}{2})}(\rrn)$ which are supported in $C_r$ and satisfy $\Lip(f) \leq 1$.
If a sequence of Radon measures $\mu_j$ converges weakly to a Radon measure $\mu$, we use the notation $\mu_j\rightharpoonup \mu$.

For $r>0$ and $\bx,\by\in \rrn$, set
\[
\delta_r(\bx)\coloneqq (r x, r^2 t), \quad\textup{and}\quad
T_{\by, r}(\bx)\coloneqq \delta_{1/r}(\bx-\by).
\]
If $\mu$ and $\nu$ are  Radon measures in $\rrn$, we define
\[
T_{\by,r}[\mu](A)\coloneqq \mu(\delta_r(A)+\by)=\mu\bigl(T^{-1}_{\by,r}(A)\bigr), \quad A\subset\rrn.
\]
and 
\[
F_r(\mu,\nu)\coloneqq \sup_f \int f\, d(\mu-\nu), \quad \textup{for}\,\, r>0,
\]
where  the supremum is taken over all functions $ f \in \Lip_{(1,\frac{1}{2})}(\rrn)$ which are supported in $C_r$ and satisfy $\Lip(f) \leq 1$.
By density, it is enough to consider the supremum in the class of $C^\infty_c(C_r)$ functions such that $\Lip(f) \leq 1$.

We also define $F_r(\mu)\coloneqq F_r(\mu,\bar 0).$ A standard argument shows that 
\[
F_r(\mu)=\int \dist_p(\bx, \rrn\setminus C_r)\,d\mu(\bx) =\int^r_0 \mu(C_s)\, ds.
\]
As it is easy to see that
\[
\dist_p\bigl(\bx, \rrn\setminus C_r\bigr)=\bigl(r-\|\bx\|\bigr)_+,
\]
where $(\cdot)_+$ stands for the positive part of a function, we infer that
\[
F_r(\mu)=\int \bigl(r-\|\bx\|\bigr)_+\, d\mu(\bx).
\]

\begin{definition}[$d$-cone]
A set of Radon measures $\mathcal M$ is a $d$-cone if $c\,T_{\bar 0,r}[\mu]\in \mathcal M$ for all $\mu\in\mathcal M$, $c, r>0$.
Given a $d$-cone $\mathcal M$, the set $\{\mu\in\mathcal M:F_1(\mu)=1\}$ is referred to as its \textit{basis}.
We say that $\mathcal M$ has \textit{closed} (resp. \textit{compact}) \textit{basis} if its
basis is closed (resp. compact) with respect to the weak topology of the space of Radon measures.
\end{definition}
For a $d$-cone $\mathcal M$, $r>0,$ and a Radon measure $\mu$ such that $F_r(\mu)\in(0,\infty)$, we define
\begin{equation}\label{eq:def_dist_dcones}
d_r(\mu,\mathcal M)\coloneqq \inf\Bigl\{F_r\Bigl(\frac{\mu}{F_r(\mu)},\nu\Bigr):\nu\in \mathcal M, \,F_r(\nu)=1\Bigr\}.
\end{equation}
The next lemma collects some of the relevant properties of $F_r$ and $d_r(\cdot, \mathcal M).$ For more details, see \cite[Section 2]{KPT09} and the references therein.

\begin{lemma}\label{lem:prop_Fr}
Let $\mu, \nu$ be Radon measures in $\Rn1$, $\bar \xi \in \Rn1$ and $r>0$. The following properties hold:
\begin{enumerate}
\item $\label{eq:FrF1}
F_r(\mu)=rF_1\bigl(T_{\bar 0,r}[\mu]\bigr)$.
\item $\label{eq:Fr-mu(B)}
\tfrac{r}{2} \mu(C_{r/2}) \leq F_r(\mu) \leq r \mu(C_{r}).$
\item $\mu_j\rightharpoonup \mu$ if and only if $F_r(\mu_j,\mu)\to 0$ for all $r>0$.
\item $d_r(\mu,\mathcal M)\leq 1$ and $d_r(\mu,\mathcal M)=d_1(T_{\bar 0,r}[\mu],\mathcal M).$
\item if $\mu_j\rightharpoonup \mu$ and $F_r(\mu)>0$, then $d_r(\mu_j,\mathcal M)\to d_r(\mu,\mathcal M)$.
\end{enumerate} 
\end{lemma}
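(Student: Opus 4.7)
The five assertions are standard consequences of the integral representation and the $d$-cone structure, lifted to the parabolic setting. The plan is to dispatch (1) and (2) by direct computation, derive (3) from an equicontinuity argument, and then bootstrap (1) and (3) to obtain (4) and (5).

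For (1), I would start from the already-established formula $F_r(\mu)=\int(r-\|\bx\|)_+\,d\mu(\bx)$. The key computation is the change of variables $\bx=\delta_r(\by)$, which, combined with the definition $T_{\bar0,r}[\mu](A)=\mu(\delta_r(A))$ and the homogeneity $\|\delta_r(\by)\|=r\|\by\|$, yields
\[
F_r(\mu)=\int(r-r\|\by\|)_+\,dT_{\bar0,r}[\mu](\by)=r\int(1-\|\by\|)_+\,dT_{\bar0,r}[\mu](\by)=rF_1\bigl(T_{\bar0,r}[\mu]\bigr).
\]
For (2), I would just note that the integrand $(r-\|\bx\|)_+$ is supported in $C_r$ and bounded above by $r\,\chi_{C_r}$ and below by $(r/2)\,\chi_{C_{r/2}}$; integrating against $\mu$ gives the two-sided bound.

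For (3), I would prove the equivalence by handling the two directions separately. Assume first $\mu_j\rightharpoonup\mu$; the test class in the definition of $F_r$ is uniformly bounded (by $r$) and equicontinuous on $C_{2r}$, hence relatively compact in $C(\overline{C_{2r}})$ by Arzel\`a--Ascoli in the parabolic metric. A standard sup-over-a-compact-set argument then promotes pointwise convergence $\int f\,d\mu_j\to\int f\,d\mu$ to uniform convergence over the test class, giving $F_r(\mu_j,\mu)\to0$. Conversely, any $\varphi\in C^\infty_c(C_r)$ can be rescaled by its $\Lip_{(1,1/2)}$-constant to belong to the test class, so $F_r(\mu_j,\mu)\to0$ forces $\int\varphi\,d\mu_j\to\int\varphi\,d\mu$, and density of $C^\infty_c$ in $C_c$ gives weak convergence.

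For (4), I would first obtain the two-measure scaling identity $F_r(\mu,\nu)=rF_1(T_{\bar0,r}[\mu],T_{\bar0,r}[\nu])$ by the same substitution as in (1), using that $f\mapsto f\circ\delta_r/r$ is a bijection of test classes. Then the $d$-cone property (closure of $\mathcal M$ under both $T_{\bar0,r}$ and positive scaling) lets me reparametrize: the constraint $\nu\in\mathcal M,\,F_r(\nu)=1$ translates to $\tilde\nu=T_{\bar0,r}[\nu]\in\mathcal M,\,F_1(\tilde\nu)=1/r$, and then to $\hat\nu=r\tilde\nu\in\mathcal M,\,F_1(\hat\nu)=1$. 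Carrying the scaling factors through gives $d_r(\mu,\mathcal M)=d_1(T_{\bar0,r}[\mu],\mathcal M)$. The normalization bound $d_r\leq1$ follows by choosing any $\nu\in\mathcal M$ with $F_r(\nu)=1$ and exploiting $|f|\leq(r-\|\bx\|)_+$ to split $\int f\,d(\mu/F_r(\mu)-\nu)$ into positive and negative parts. For (5), part (3) gives $F_r(\mu_j)\to F_r(\mu)>0$, so eventually $\mu_j/F_r(\mu_j)\rightharpoonup\mu/F_r(\mu)$. Upper semicontinuity then follows by testing against any fixed $\nu\in\mathcal M$ with $F_r(\nu)=1$; lower semicontinuity uses near-minimizers $\nu_j\in\mathcal M$ that, after passing to a subsequence using tightness and (implicitly) closedness of the basis of $\mathcal M$, converge weakly to some $\nu^*\in\mathcal M$ with $F_r(\nu^*)=1$, and then (3) closes the gap. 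The main obstacle I foresee is the equivalence in (3), specifically verifying that the non-Euclidean Lipschitz class is rich enough to metrize weak convergence on compact sets; the Arzel\`a--Ascoli step must be carried out carefully with the parabolic modulus of continuity rather than the Euclidean one.
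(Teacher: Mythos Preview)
The paper does not prove this lemma at all; it simply refers the reader to \cite[Section 2]{KPT09}. Your proposal therefore goes well beyond what the paper does, and your arguments for (1), (2), (3), and the scaling identity in (4) are correct and standard.

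Two points deserve attention. First, your argument for $d_r\leq 1$ does not give the stated bound under the paper's definition. With signed test functions (the paper imposes no sign constraint on $f$), the splitting $|f|\leq (r-\|\bx\|)_+$ only yields
\[
\int f\,d\bigl(\tfrac{\mu}{F_r(\mu)}-\nu\bigr)\leq \int |f|\,d\tfrac{\mu}{F_r(\mu)}+\int |f|\,d\nu\leq 1+1=2,
\]
and one can construct examples (e.g.\ $\mathcal M=\{c\delta_{\bar 0}\}$, $\mu=\delta_{\bar x_0}$ with $\|\bar x_0\|$ close to $r$) where $F_r(\mu/F_r(\mu),\nu)>1$. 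The bound $d_r\leq 1$ in \cite{KPT09} relies on test functions being \emph{nonnegative}, in which case $\int f\,d\nu\geq 0$ gives $\int f\,d(\mu/F_r(\mu)-\nu)\leq F_r(\mu/F_r(\mu))=1$ directly. This is a discrepancy between the paper's stated convention and the cited source rather than an error on your part, but your splitting argument as written does not close the gap.

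Second, your lower-semicontinuity argument in (5) invokes closedness of the basis of $\mathcal M$ and tightness of the near-minimizers $\nu_j$, neither of which is assumed (and tightness can genuinely fail: $F_r(\nu_j)=1$ bounds $\nu_j(C_{r/2})$ but not $\nu_j(C_r\setminus C_{r/2})$). The clean fix avoids compactness entirely: for near-minimizers $\nu_j$ of $d_r(\mu_j,\mathcal M)$, use the triangle inequality
\[
d_r(\mu,\mathcal M)\leq F_r\bigl(\tfrac{\mu}{F_r(\mu)},\nu_j\bigr)\leq F_r\bigl(\tfrac{\mu}{F_r(\mu)},\tfrac{\mu_j}{F_r(\mu_j)}\bigr)+F_r\bigl(\tfrac{\mu_j}{F_r(\mu_j)},\nu_j\bigr),
\]
where the first term on the right tends to $0$ by (3) and the second is $\leq d_r(\mu_j,\mathcal M)+1/j$.
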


\begin{definition}
We say that $\nu$ is a {\it tangent measure} of $\mu$ at a point $\bx \in\rrn$ if
$\nu$ is a non-zero Radon measure on $\rrn$ and there are sequences $c_{i}>0$ and $r_{i}\searrow 0$ so that $c_i\,T_{\bx,r_{i}}[\mu]$ converges weakly to $\nu$ as $i\to\infty$ and write $\nu\in \Tan(\mu,\bx)$.
\end{definition}

\begin{remark}\label{rem:Besicovitch}
A Besicovitch covering theorem for parabolic balls in $\rrn$ was proved in \cite[Theorem 1.1]{It18}. This is an important tool for parabolic geometric measure theory. In particular, one can show that Radon measures in $\rrn$ satisfy the Lebesgue density  theorems and the Lebesgue differentiation theorems with respect to parabolic balls, as reported in the next lemma.
\end{remark}

\begin{lemma}\label{lem:Besicovitch}
Let $\mu$ be a Radon measure on $\Rn1$. 
If $f\colon\mathbb{R}^{n+1}\to\R\cup\{\infty\}$ is locally $\mu$-integrable, then 
\[
f(\bar x)=\lim_{r \to 0}\frac{1}{\mu(C_r(\bar x))}\int_{C_r(\bar x)}f\, d\mu\, \qquad \text{ for }\mu\text{-a.e}\, \bar x\in\Rn1.
\]
Furthermore, if $E\subset\Rn1$ is $\mu$-measurable, then the limit
\[
\lim_{r \to 0} \frac{\mu(E\cap C_r(\bar x))}{\mu(C_r(\bar x))}
\]
exists and equals $1$ for $\mu$-a.e. $\bar x\in E$ and  $0$ for $\mu$-a.e. $\bar x\in \Rn1\setminus E$.
\end{lemma}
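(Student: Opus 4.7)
The proof will essentially parallel the standard derivation of the Lebesgue differentiation theorem from the Besicovitch covering theorem in the Euclidean setting, with the parabolic Besicovitch covering result of \cite{It18} serving as the replacement for its Euclidean counterpart. The only non-trivial point is to verify that each step of the classical argument goes through when the balls are the parabolic cylinders $C_r(\bar x)$, which amounts to checking that nothing in those arguments uses the isotropic nature of Euclidean balls beyond what the Besicovitch covering theorem provides.

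First I would establish the parabolic analogue of the Vitali-type covering property for Radon measures: given any bounded set $A\subset\Rn1$ and any family $\mathcal F$ of parabolic cylinders centered at points of $A$ with uniformly bounded radii, the Besicovitch theorem of \cite{It18} produces a countable disjoint subfamily that covers $\mu$-almost all of $A$. Using this, I would define the parabolic centered maximal function
\[
M_\mu f(\bar x)\coloneqq \sup_{r>0}\frac{1}{\mu(C_r(\bar x))}\int_{C_r(\bar x)}|f|\,d\mu
\]
and derive the weak-type $(1,1)$ inequality $\mu(\{M_\mu f>\lambda\})\lesssim \lambda^{-1}\|f\|_{L^1(\mu)}$ by covering the level set with cylinders of bounded overlap coming from the parabolic Besicovitch theorem and summing.

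Next I would prove the differentiation statement for continuous, compactly supported $f$: here it is immediate from the continuity of $f$ together with $\mu(C_r(\bar x))>0$ for $\mu$-a.e.\ $\bar x$ (since $\bar x\in\supp\mu$ $\mu$-a.e.). The general case then follows by the usual approximation argument: fix $\varepsilon>0$, decompose $f=g+h$ with $g\in C_c(\Rn1)$ and $\|h\|_{L^1(\mu)}<\varepsilon$, control the oscillation of the averages of $h$ via the maximal inequality, and let $\varepsilon\to 0$. This yields
\[
\lim_{r\to 0}\avint_{C_r(\bar x)}f\,d\mu=f(\bar x)\qquad\text{for }\mu\text{-a.e. }\bar x,
\]
for every $f\in L^1_{\loc}(\mu)$, which is the first assertion.

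The second assertion on densities of $\mu$-measurable sets $E$ is an immediate consequence: apply the differentiation theorem to $f=\chi_E$, noting that $f\in L^1_{\loc}(\mu)$ because $\mu$ is Radon, and read off that the limit of $\mu(E\cap C_r(\bar x))/\mu(C_r(\bar x))$ equals $\chi_E(\bar x)$ for $\mu$-a.e.\ $\bar x$. The main technical obstacle is psychological rather than mathematical: one must be satisfied that \cite{It18} delivers a Besicovitch theorem in precisely the form needed to run the classical Vitali/maximal function machinery for a general Radon measure $\mu$ (and not merely for Lebesgue measure), but since the covering theorem there is stated for arbitrary collections of parabolic balls, this causes no trouble. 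The remaining arguments are verbatim transcriptions of the Euclidean proofs found, e.g., in \cite{Mattila}.
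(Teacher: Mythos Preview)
Your proposal is correct and follows exactly the same approach as the paper: the paper's proof simply cites the argument in \cite[Corollary 2.14]{Mattila} together with the parabolic Besicovitch covering theorem from \cite{It18}, and what you have written is precisely an unpacking of that argument.
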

\begin{proof}
The proof follows from the argument in \cite[Corollary 2.14]{Mattila} using the Besicovitch theorem for parabolic balls in \cite{It18}.
\end{proof}

\begin{remark}\label{rem:tangent measures}
Once we have made the appropriate modifications in the definitions of the blow-up mappings to reflect the parabolic dilation, all the theorems related to tangent measures  that are required to obtain our results hold with the same proofs as in the Euclidean setting.
\end{remark}

\vv

If $\mu$ is a Radon measure, $s \in [0, \infty)$,  and $\bx\in \Rn1$, we define the \textit{lower} and \textit{upper $s$-density} of $\mu$ at $\bx$ as
\[
\Theta^s_{\mu,*}(\bx)\coloneqq \liminf_{r\to 0}\frac{\mu(C_r(\bx))}{r^s} \qquad \textup{ and }\qquad \Theta^{s,*}_\mu (\bx)\coloneqq \limsup_{r\to 0}\frac{\mu(C_r(\bx))}{r^s}.
\]
A measure $\mu$ is \textit{asymptotically doubling} at $\bx \in \rrn$ if
\[
\limsup_{r\to 0}\frac{\mu(C_{2r}(\bx))}{\mu(C_r(\bx))}<\infty.
\]
Remark that if $0<\Theta^s_{\mu,*}(\bx) \leq \Theta^{s,*}_{\mu}(\bx)< \infty$, then $\mu$ is asymptotically doubling at $\bar x$ since
$$
\limsup_{r\to 0}\frac{\mu(C_{2r}(\bx))}{\mu(C_r(\bx))}= 2^s\, \frac{\Theta^{s,*}_{\mu}(\bx)}{\Theta^s_{\mu,*}(\bx) } < \infty.
$$
\vv

\begin{lemma}\label{lem:tangent_meas_properties}
If $\mu$ is a Radon measure on $\Rn1$ and $\bx \in \rrn$, then the following hold:
\begin{enumerate}
\item If $\nu\in \Tan(\mu,\bx)$,  there exists $\{r_{i} \}_{i \geq 1}$ decreasing to $0$ and $\rho,c>0$ so that 
$$
\frac{T_{\bx, r_{i}}[\mu]}{\mu(B(\bx, r_{i} ))}\rightharpoonup c \, T_{\bar 0,\rho}[\nu] \quad \textup{and}\quad c\,T_{\bar 0,\rho}[\nu](C_1(\bar 0))>0.
$$ 
\item If, additionally, $\mu$ is  asymptotically doubling at $\bx \in \rrn$, then for any $\nu \in \Tan(\mu,\bx)$ there exists $c_1>0$ and a sequence $\{r_i\}_{\i \geq 1}$ decreasing to $0$ such that 
$$
 c_1 \frac{T_{\bar x, r_{i}} [\mu]}{\mu(B( \bar x, r_{i} ) )} \rightharpoonup \nu.
$$
In this case, $ \bar 0 \in \supp \nu$ for all $\nu \in \Tan(\mu,\bx)$.
\item If, additionally, $0<\Theta^s_{\mu,*}(\bx) \leq \Theta^{s,*}_{\mu}(\bx)< \infty$, then for any $\nu \in \Tan(\mu,\bx)$ there exists $c_1>0$ and a sequence $\{r_i\}_{\i \geq 1}$ decreasing to $0$ such that 
$$
 c_2 \frac{T_{\bx, r_{i}} [\mu]}{ r_{i}^s } \rightharpoonup \nu.
$$
\end{enumerate}
\end{lemma}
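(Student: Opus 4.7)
The plan is to adapt the standard Euclidean tangent-measure arguments (cf.\ \cite{Mattila,Pr87}) using the parabolic dilation $\delta_r$ and the parabolic Besicovitch theorem recalled in Remark~\ref{rem:Besicovitch}. The central algebraic identity
\[
T_{\bar x,\rho s}[\mu]=T_{\bar 0,\rho}\bigl[T_{\bar x,s}[\mu]\bigr],
\]
immediate from $\delta_{1/(\rho s)}=\delta_{1/\rho}\circ\delta_{1/s}$, together with $T_{\bar x,s}[\mu](C_\rho)=\mu(C_{\rho s}(\bar x))$, will be used throughout. Everywhere I interpret the ``$\mu(B(\bar x,r_i))$'' of the statement as $\mu(C_{r_i}(\bar x))$, in keeping with the rest of the paper.

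For part (1), pick sequences $\tilde c_i>0$ and $s_i\searrow 0$ with $\tilde c_i T_{\bar x,s_i}[\mu]\rightharpoonup \nu$. Since $\nu\neq 0$ and the set $\{\rho>0:\nu(\partial C_\rho)>0\}$ is at most countable, there exists $\rho>0$ with $\nu(\partial C_\rho)=0$ and $\nu(C_\rho)>0$. Setting $r_i=\rho s_i$, the identity above gives $\tilde c_i T_{\bar x,r_i}[\mu]\rightharpoonup T_{\bar 0,\rho}[\nu]$, while the Portmanteau characterization applied to the continuity set $C_\rho$ yields $\tilde c_i\mu(C_{r_i}(\bar x))=\tilde c_i T_{\bar x,s_i}[\mu](C_\rho)\to \nu(C_\rho)>0$. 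Dividing the two convergences produces the claim with $c=1/\nu(C_\rho)$; note that $cT_{\bar 0,\rho}[\nu](C_1)=\nu(C_\rho)/\nu(C_\rho)=1>0$.

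For part (2), it suffices to transfer the rescaling in (1) from radius $r_i=\rho s_i$ to radius $s_i$ itself, which requires comparing $\mu(C_{s_i}(\bar x))$ and $\mu(C_{\rho s_i}(\bar x))$. Asymptotic doubling at $\bar x$ yields $D,r_0>0$ such that $\mu(C_{2r}(\bar x))\leq D\mu(C_r(\bar x))$ for all $r<r_0$; iterating this estimate and using the monotonicity of $\mu$ on nested parabolic balls, both $\mu(C_{s_i}(\bar x))/\mu(C_{\rho s_i}(\bar x))$ and its reciprocal remain bounded by a constant depending only on $\rho$ and $D$. Extracting a subsequence along which the ratio converges to some $L\in(0,\infty)$ and factoring
\[
\frac{T_{\bar x,s_i}[\mu]}{\mu(C_{s_i}(\bar x))}=\frac{T_{\bar x,s_i}[\mu]}{\mu(C_{\rho s_i}(\bar x))}\cdot\frac{\mu(C_{\rho s_i}(\bar x))}{\mu(C_{s_i}(\bar x))}\rightharpoonup \frac{\nu}{cL\,\nu(C_\rho)}
\]
gives the assertion with $c_1=cL\,\nu(C_\rho)>0$. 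For $\bar 0\in \supp\nu$: given $r>0$, choose $k$ with $2^k r\geq 1$; doubling yields $\mu(C_{s_i}(\bar x))\leq D^k\mu(C_{rs_i}(\bar x))$ for large $i$, so upper semicontinuity of weak convergence on the compact set $\overline{C_r}$ gives $\nu(\overline{C_r})\geq c_1 D^{-k}>0$, showing every neighborhood of $\bar 0$ has positive $\nu$-mass.

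For part (3), the hypothesis $0<\Theta^s_{\mu,*}(\bar x)\leq \Theta^{s,*}_\mu(\bar x)<\infty$ implies asymptotic doubling at $\bar x$ (since $\mu(C_{2r}(\bar x))/\mu(C_r(\bar x))\leq 2^s\Theta^{s,*}_\mu(\bar x)/\Theta^s_{\mu,*}(\bar x)$ in the limit), so part (2) supplies $c_1>0$ and $r_i\searrow 0$ with $c_1 T_{\bar x,r_i}[\mu]/\mu(C_{r_i}(\bar x))\rightharpoonup \nu$. Since $\mu(C_{r_i}(\bar x))/r_i^s\in[\Theta^s_{\mu,*}(\bar x),\Theta^{s,*}_\mu(\bar x)]\subset(0,\infty)$ for large $i$, a further subsequence sends this quotient to some $L'>0$, and the factorization
\[
\frac{T_{\bar x,r_i}[\mu]}{r_i^s}=\frac{T_{\bar x,r_i}[\mu]}{\mu(C_{r_i}(\bar x))}\cdot\frac{\mu(C_{r_i}(\bar x))}{r_i^s}
\]
concludes with $c_2=c_1/L'$. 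The main subtleties are the countability argument used to select $\rho$ so that $\nu(\partial C_\rho)=0$, and, in part (2), the careful iteration of \emph{asymptotic} (rather than uniform) doubling to compare parabolic balls of comparable radii; both are handled via systematic subsequential extractions and introduce no substantive new difficulty beyond the Euclidean setting.
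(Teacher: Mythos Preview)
Your proof is correct and follows the standard route that the paper invokes: the paper's own proof is simply a citation to \cite[Theorem 14.3 and Remarks 14.4(1)--(4)]{Mattila}, together with the blanket observation (Remark~\ref{rem:tangent measures}) that the Euclidean arguments transfer verbatim once the blow-up maps use the parabolic dilation. You have written out precisely those details---the countability trick to pick $\rho$ with $\nu(\partial C_\rho)=0$, the Portmanteau step to recover the normalizing constant, and the subsequential ratio arguments under asymptotic doubling---so your proposal is a faithful expansion of what the paper leaves implicit. (There is a harmless slip in the displayed constant in part~(2): the limit of the factorization is a positive multiple of $\nu$, which is all the statement requires, but the specific value you wrote does not quite match your definitions of $c$ and $L$; this does not affect the argument.)
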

\begin{proof}
For a proof see  \cite[Theorem 14.3]{Mattila} and  \cite[Remarks 14.4 (1)-(4)]{Mattila}. 
\end{proof}

As a result of Lemma \ref{lem:Besicovitch} we obtain the following localization property of tangent measures.
\begin{lemma}\label{lem:local-tengents}
Let $\mu$ be a Radon measure in $\rrn$ and $f \in L^1(\mu)$ a non-negative Borel function. Then, for $\mu$-a.e. $\bx \in \rrn$, it holds that $\Tan (f \,\mu,\bx)= f(\bx)\Tan(\mu,\bx)$.
\end{lemma}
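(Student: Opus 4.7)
The plan is to reduce the lemma to the parabolic Lebesgue differentiation theorem of Lemma \ref{lem:Besicovitch}, which itself rests on the parabolic Besicovitch covering theorem of \cite{It18}; the argument then mirrors the Euclidean proof (cf.\ \cite[Theorem~14.16]{Mattila}) with parabolic cylinders $C_r(\bar x)$ replacing Euclidean balls. First I would identify a full $\mu$-measure set of \emph{good points}: by Lemma \ref{lem:Besicovitch} applied to $f\in L^1_{\mathrm{loc}}(\mu)$, for $\mu$-a.e.\ $\bar x \in \rrn$ one has $\bar x \in \supp \mu$ and
$$\lim_{r \to 0^+} \frac{1}{\mu(C_r(\bar x))} \int_{C_r(\bar x)} |f(\bar y) - f(\bar x)| \, d\mu(\bar y) = 0.$$
I fix such a $\bar x$ and focus on the nondegenerate case $f(\bar x)>0$ (the locus $\{f(\bar x)=0\}$, where the right-hand side of the lemma collapses to the zero measure, is handled separately: at such a $\bar x$ any putative nonzero tangent measure of $f\mu$ is ruled out by the same Lebesgue-point estimate).

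For the inclusion $f(\bar x)\Tan(\mu,\bar x)\subset \Tan(f\mu,\bar x)$, let $\nu \in \Tan(\mu,\bar x)$ be realized by $c_i T_{\bar x, r_i}[\mu] \rightharpoonup \nu$ with $r_i \searrow 0$. For every $\varphi\in C_c(\rrn)$ supported in $C_R$, a change of variables yields
$$\left| \int \varphi\, d(c_i T_{\bar x,r_i}[f\mu]) - f(\bar x)\int \varphi\, d(c_i T_{\bar x, r_i}[\mu])\right| \leq \|\varphi\|_{\infty}\, c_i \int_{C_{R r_i}(\bar x)} |f(\bar y) - f(\bar x)|\, d\mu(\bar y).$$
For all but countably many $R$ we have $\nu(\partial C_R)=0$, so $c_i \mu(C_{Rr_i}(\bar x)) \to \nu(\overline{C_R}) < \infty$; combining this with the Lebesgue-point quotient vanishing at scale $Rr_i \to 0$ forces the right-hand side to $0$. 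Thus $c_i T_{\bar x, r_i}[f\mu] \rightharpoonup f(\bar x)\nu$, and since $f(\bar x)\nu \neq 0$ it belongs to $\Tan(f\mu, \bar x)$.

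For the reverse inclusion, given $\eta \in \Tan(f\mu,\bar x)$ realized by $d_i T_{\bar x, s_i}[f\mu]\rightharpoonup \eta$, I choose some $R$ with $\eta(\partial C_R)=0$ and $\eta(C_R)>0$; then $d_i \int_{C_{Rs_i}(\bar x)} f\, d\mu \to \eta(\overline{C_R})$. Dividing the integrand's average by the Lebesgue-point value $f(\bar x)>0$ gives $d_i\mu(C_{Rs_i}(\bar x)) \to \eta(\overline{C_R})/f(\bar x) \in (0,\infty)$. Hence $\{d_i T_{\bar x, s_i}[\mu]\}_i$ is locally uniformly bounded in mass, so by standard weak-$\ast$ compactness of Radon measures a subsequence converges weakly to some $\nu\geq 0$; applying the estimate from the forward step along this subsequence yields $\eta = f(\bar x)\nu$, and $\nu\neq 0$ because $\eta\neq 0$, so $\nu \in \Tan(\mu,\bar x)$. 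The main obstacle is precisely this precompactness: extracting the uniform local mass bound for $d_i T_{\bar x, s_i}[\mu]$ from the one for $d_i T_{\bar x, s_i}[f\mu]$ is where the parabolic Lebesgue differentiation theorem at a good point with $f(\bar x)>0$ is the indispensable input, and once this is in hand the rest of the proof is essentially a direct computation against compactly supported Lipschitz test functions.
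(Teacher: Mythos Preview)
Your argument is correct and is precisely the approach the paper takes: the paper's proof simply invokes Lemma~\ref{lem:Besicovitch} and cites \cite[Lemma~3.12]{DeLellis}, which is exactly the Lebesgue-point computation you have spelled out in detail. One minor remark: your treatment of the locus $\{f(\bar x)=0\}$ is a bit optimistic (the Lebesgue-point estimate alone does not in general rule out nonzero tangents of $f\mu$ there), but this is a standard ambiguity in how such localization statements are phrased and is irrelevant to the applications in this paper, where the relevant density $-\partial_{\nu_t^+}u_\infty$ is positive $\sigma$-a.e.
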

\begin{proof}
With Lemma  \ref{lem:Besicovitch} at our disposal, we just follow the proof of  \cite[Lemma 3.12]{DeLellis}.
\end{proof}

\begin{lemma}[see \cite{Mattila}, Theorem 14.16]\label{lem:second-tangents}
Let $\mu$ be a Radon measure on $\Rn1$. For $\mu$-a.e. $\bar x\in \Rn1$, if $\nu\in \Tan(\mu, \bar x)$, then  the following properties hold:
\begin{enumerate}
\item $T_{\bar y, r}[\nu]\in \Tan(\mu, \bar x)$ for all $\bar y\in \supp\nu$ and $r>0$.
\item $\Tan(\nu, \bar y)\subset\Tan(\mu, \bar x)$ for all $\bar y\in \supp\nu$.
\end{enumerate}
\end{lemma}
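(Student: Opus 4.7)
The plan is to adapt the classical Euclidean argument (as in \cite[Theorem 14.16]{Mattila}) to the parabolic dilation structure, exploiting the fact that the parabolic Besicovitch covering theorem from Remark \ref{rem:Besicovitch} is in force, so that Lemma \ref{lem:Besicovitch} and hence the full differentiation theory for Radon measures is at our disposal. I will work with the metric $F_r$ from Lemma \ref{lem:prop_Fr}, which metrizes weak convergence on parabolic balls, and with the dilations $T_{\bar z, s}(\bx)=\delta_{1/s}(\bx-\bar z)$.

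First, I would prove (1). Fix a candidate full-measure set $X\subset \rrn$ on which the Lebesgue differentiation theorem of Lemma \ref{lem:Besicovitch} holds for all continuous test functions in a countable dense subset of $C_c(\rrn)$; any $\bar x\in X$ will be our generic point. Suppose $\nu\in\Tan(\mu,\bar x)$ and $c_i\,T_{\bar x,r_i}[\mu]\rightharpoonup \nu$ with $r_i\searrow 0$. Given $\bar y\in\supp\nu$ and $r>0$, the goal is to build sequences $\tilde c_j>0$ and $\tilde r_j\searrow 0$ so that $\tilde c_j\,T_{\bar x,\tilde r_j}[\mu]\rightharpoonup T_{\bar y,r}[\nu]$. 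The natural guess is $\tilde r_j=r\,r_{i_j}$ and $\tilde c_j=c_{i_j}$, shifting the center to $\bar x+\delta_{r_{i_j}}(\bar y)$; by the semigroup identity $T_{\bar x+\delta_{r_{i_j}}(\bar y),\,r\,r_{i_j}}=T_{\bar y,r}\circ T_{\bar x,r_{i_j}}$ we would get the desired measure. The small technical issue is that we are required to have the center at $\bar x$ itself rather than at the shifted points. I handle this by showing that for any test function $\varphi\in C_c(\rrn)$ supported in $C_R$, the pairing
\[
\int \varphi\, d\bigl(c_{i}\,T_{\bar x,\,r\,r_i}[\mu]\bigr)-\int \varphi\, d\bigl(T_{\bar y,r}\bigl[c_i\,T_{\bar x,r_i}[\mu]\bigr]\bigr)\to 0
\]
as $i\to\infty$, by uniform continuity of $\varphi$ together with the boundedness of $c_i\,T_{\bar x,r_i}[\mu](C_{R+\|\bar y\|+1})$, which is controlled since the latter converges weakly to $\nu(C_{R+\|\bar y\|+1})<\infty$. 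Thus the two sequences have the same weak limit $T_{\bar y,r}[\nu]$, proving (1). The main obstacle here is verifying that the weak limit is non-zero, i.e.\ that $T_{\bar y,r}[\nu]$ is a genuine tangent measure; this uses $\bar y\in\supp\nu$, which guarantees $\nu(C_{\rho}(\bar y))>0$ for every $\rho>0$ and hence $T_{\bar y,r}[\nu]\neq 0$.

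Next, I would prove (2) by a diagonal procedure on top of (1). Take $\tau\in\Tan(\nu,\bar y)$ with $c'_k\,T_{\bar y,s_k}[\nu]\rightharpoonup \tau$ and $s_k\searrow 0$. By (1), each measure $T_{\bar y,s_k}[\nu]$ is itself a tangent measure of $\mu$ at $\bar x$, so for every $k$ there exist scales $\rho_{k,j}\searrow 0$ and constants $\lambda_{k,j}>0$ so that $\lambda_{k,j}\,T_{\bar x,\rho_{k,j}}[\mu]\rightharpoonup T_{\bar y,s_k}[\nu]$ as $j\to\infty$. Choose a countable dense family $\{\varphi_\ell\}\subset C_c(\rrn)$ and, using the metrization of weak convergence by $\{F_R\}_{R\in\mathbb Z_+}$ of Lemma \ref{lem:prop_Fr}(3), extract a diagonal subsequence $(\lambda_{k,j(k)},\rho_{k,j(k)})$ with $\rho_{k,j(k)}\searrow 0$ and
\[
F_R\bigl(\lambda_{k,j(k)}\,T_{\bar x,\rho_{k,j(k)}}[\mu],\; c'_k\,T_{\bar y,s_k}[\nu]\bigr)<\tfrac{1}{k}
\]
for $R=1,2,\dots,k$. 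Combined with the weak convergence $c'_k\,T_{\bar y,s_k}[\nu]\rightharpoonup\tau$, the triangle inequality for $F_R$ yields $c'_k\lambda_{k,j(k)}\,T_{\bar x,\rho_{k,j(k)}}[\mu]\rightharpoonup\tau$, so $\tau\in\Tan(\mu,\bar x)$, establishing (2).

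The subtle point throughout is that both conclusions must hold simultaneously for \emph{every} $\nu\in\Tan(\mu,\bar x)$, $\bar y\in\supp\nu$, and $r>0$ (and $\tau\in\Tan(\nu,\bar y)$), so I would combine the above with the separability of the relevant parameter space and the fact that the bad set of $\bar x$ for each individual statement (say, where the approximating constants fail to behave) has $\mu$-measure zero by Lemma \ref{lem:Besicovitch}. Since the parabolic Besicovitch theorem underlies every measure-theoretic step and the semigroup identity $T_{\bar x+\delta_s(\bar y),s r}=T_{\bar y,r}\circ T_{\bar x,s}$ is straightforward from $\delta_r\circ\delta_s=\delta_{rs}$, no genuinely new phenomenon appears compared with the Euclidean proof; the work is essentially bookkeeping of the parabolic dilation. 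This is exactly the content of Remark \ref{rem:tangent measures}.
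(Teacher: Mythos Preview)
Your argument for (1) contains a genuine error. You claim that
\[
\int \varphi\, d\bigl(c_{i}\,T_{\bar x,\,r\,r_i}[\mu]\bigr)-\int \varphi\, d\bigl(T_{\bar y,r}\bigl[c_i\,T_{\bar x,r_i}[\mu]\bigr]\bigr)\longrightarrow 0,
\]
but this is false: since $T_{\bar x,r r_i}=T_{\bar 0,r}\circ T_{\bar x,r_i}$, the first term converges to $\int\varphi\,dT_{\bar 0,r}[\nu]$, while the second converges to $\int\varphi\,dT_{\bar y,r}[\nu]$. The shift between the two maps is exactly $\delta_{1/r}(\bar y)$, which is a \emph{fixed} nonzero vector independent of $i$, so uniform continuity of $\varphi$ buys you nothing. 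In other words, your argument, if it worked, would prove (1) at \emph{every} point $\bar x$, not merely $\mu$-a.e., and that stronger statement is well known to be false.

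The paper gives no proof of this lemma; it simply cites \cite[Theorem 14.16]{Mattila} and observes in Remark \ref{rem:tangent measures} that the Euclidean proof transfers verbatim once the parabolic Besicovitch theorem is in hand. Mattila's actual argument is quite different from yours: one first shows that if $c_iT_{\bar x,r_i}[\mu]\rightharpoonup\nu$ and $\bar a_i\in\supp\mu$ satisfy $T_{\bar x,r_i}(\bar a_i)\to\bar y$, then $c_iT_{\bar a_i,r_i}[\mu]\rightharpoonup T_{\bar y,1}[\nu]$ (here the shift \emph{does} go to zero, since $T_{\bar a_i,r_i}=T_{T_{\bar x,r_i}(\bar a_i),1}\circ T_{\bar x,r_i}$ and $T_{\bar x,r_i}(\bar a_i)\to\bar y$). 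This shows $T_{\bar y,1}[\nu]$ is a limit of blow-ups with \emph{moving} centers $\bar a_i\to\bar x$. To upgrade this to blow-ups centered at $\bar x$ itself, Mattila argues by contradiction: if the set $B$ of bad points had positive measure, separability of the space of Radon measures and a $\mu$-density-point argument on $B$ would produce a point $\bar b\in B$ and scales at which $c\,T_{\bar b,r}[\mu]$ is $F_R$-close to a forbidden limit, a contradiction. Your final paragraph gestures at separability and Lemma \ref{lem:Besicovitch}, but frames the a.e.\ issue as being about ``approximating constants failing to behave''; the real obstruction is the moving center, and the density-point argument is not optional bookkeeping but the heart of the proof. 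Once (1) is correctly established, your diagonal argument for (2) is fine.
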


\vv

\begin{lemma}\label{lem:blow-upLipgraph}
Let $\Gamma\subset\Rn1$ be an admissible $d$-dimensional $\Lip_{(1,1)}$-graph\footnote{Lemma  \ref{lem:blow-upLipgraph} can be proved  for parabolic Lipschitz graphs as well. Although, since we will not deal with such general sets in the present manuscript and the proof is more involved, we  skip it.} and let $\mu=\cH^{d+1}_{p}|_\Gamma$. Then, for $\mu$-a.e. ~$\bx\in\Gamma$, there exists a positive constant $c_{\bx}$ and an admissible $d$-plane $V_{\bx}$ passing through the origin such that
\begin{equation}\label{eq:blow-up-Lip}
r^{-d-1}T_{\bx,r}[\mu]\rightharpoonup \cH^{d+1}_{p}|_{V_{\bx}}\qquad \text{ as }r\to 0.
\end{equation}
\end{lemma}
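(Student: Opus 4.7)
The plan is to parameterize $\Gamma$ by $\phi\colon\R^d\to\Rn1$, $\phi(x',t)=(x',f(x',t),t)$, and to combine $\sigma=c\,\HH^{d+1}_p|_\Gamma$ from \eqref{eq:sigma=H} with the Euclidean area formula applied slicewise in $t$. This yields the density representation
\[
\int g\,d\mu=c^{-1}\int_{\R^d}g\bigl(\phi(x',t)\bigr)\,J(x',t)\,dx'\,dt,\qquad J(x',t)\coloneqq\sqrt{\det\bigl(I_{d-1}+(\nabla_{x'}f)^T\nabla_{x'}f\bigr)},
\]
with $J\in L^\infty(\R^d)$ by the Lipschitz hypothesis on $f$. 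By Rademacher's theorem $f$ is classically differentiable at a.e.\ $(x'_0,t_0)\in\R^d$, and a.e.\ such point is also a Lebesgue point of $J$. Since $\phi$ is Euclidean bi-Lipschitz onto its image, and $\HH^d(N)=0$ forces $\HH^{d+1}_p(N)=0$ on $\Gamma$, the points $\bx_0\coloneqq\phi(x'_0,t_0)$ arising from these good parameters exhaust a subset of $\Gamma$ of full $\mu$-measure; it therefore suffices to establish \eqref{eq:blow-up-Lip} at every such $\bx_0$.

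Fix a good $\bx_0$, set $A\coloneqq\nabla_{x'}f(x'_0,t_0)$ and $V_{\bx_0}\coloneqq\{(y',Ay',s):y'\in\R^{d-1},\,s\in\R\}$, an admissible $d$-plane through $\bar 0$. For $g\in C_c(\Rn1)$ the change of variables $x'=x'_0+ry'$, $t=t_0+r^2s$ (whose Jacobian is $r^{d+1}$) in the display above produces
\[
r^{-d-1}\int g\,dT_{\bx_0,r}[\mu]=c^{-1}\int g\bigl(y',f_r(y',s),s\bigr)\,J(x'_0+ry',t_0+r^2s)\,dy'\,ds,
\]
where $f_r(y',s)\coloneqq r^{-1}[f(x'_0+ry',t_0+r^2s)-f(x'_0,t_0)]$. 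The Rademacher expansion at $(x'_0,t_0)$ gives $f_r(y',s)=Ay'+r\,\partial_tf(x'_0,t_0)\,s+o(|y'|+r|s|)$ as $r\to 0$, so $f_r\to f_0(y')\coloneqq Ay'$ uniformly on compact subsets of $\R^d$; the parabolic $r^2$-scaling of time kills the $\partial_tf$ contribution, which is the mechanism that forces $V_{\bx_0}$ to contain a line parallel to the $s$-axis (hence to be admissible).

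Combining uniform convergence of $f_r$, uniform compact support of $(y',s)\mapsto g(y',f_r(y',s),s)$ for small $r$, and $L^1_{\mathrm{loc}}$-convergence of the Jacobian at the Lebesgue point, dominated convergence yields
\[
\lim_{r\to 0}r^{-d-1}\int g\,dT_{\bx_0,r}[\mu]=c^{-1}J(x'_0,t_0)\int g(y',Ay',s)\,dy'\,ds.
\]
Applying \eqref{eq:sigma=H} now to the admissible plane $V_{\bx_0}$, whose slicewise Euclidean Jacobian is the constant $\sqrt{\det(I+A^TA)}=J(x'_0,t_0)$, one obtains $d\HH^{d+1}_p|_{V_{\bx_0}}=c^{-1}J(x'_0,t_0)\,dy'\,ds$ in the parameterization $(y',s)\mapsto(y',Ay',s)$, so the right-hand side equals $\int g\,d\HH^{d+1}_p|_{V_{\bx_0}}$ and \eqref{eq:blow-up-Lip} follows. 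The crux of the argument is controlling the passage to the limit: $f_r$ converges only to a function of $y'$ alone, and this degeneracy together with the Rademacher--Lebesgue selection of $(x'_0,t_0)$ and the $L^\infty$-bound on $J$ is exactly what makes the interchange of limit and integral legitimate.
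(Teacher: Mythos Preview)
Your proof is correct and takes a somewhat different route from the paper's. The paper invokes the parabolic Rademacher theorem of \cite{Or20}, which for a parabolic Lipschitz map directly yields a \emph{horizontal} linear approximation $A_{\bx}$ (depending only on the spatial increment), and then defers to the standard blow-up computation in \cite[pp.~38--39]{DeLellis} carried out with the parabolic area formula \cite[Theorem~3.64]{He17}. You instead rely only on the classical Euclidean Rademacher theorem for the $\Lip_{(1,1)}$ map $f$, the slicewise Euclidean area formula for the time-sections $\Gamma_t$, and the identity $\sigma=c\,\HH^{d+1}_p|_\Gamma$ from \eqref{eq:sigma=H} already proved in the paper. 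The mechanism that forces the tangent plane to be admissible---namely that the anisotropic rescaling $t\mapsto r^2 t$ kills the $\partial_t f$ contribution in $f_r$---is precisely what the parabolic Rademacher theorem packages abstractly; you make it explicit by hand. Your approach is thus more elementary and self-contained (no appeal to \cite{Or20} or to a parabolic area formula), at the price of writing out the change of variables explicitly; the paper's approach is terser but leans on more specialized external tools. One minor point to make precise: the Lebesgue-point property you invoke for $J$ must be taken with respect to the parabolic cylinders $B_{rR}(x'_0)\times(t_0-r^2R^2,t_0+r^2R^2)$ in $\R^{d-1}\times\R$ (since your change of variables is anisotropic), which is available via the parabolic Besicovitch covering theorem recorded in Remark~\ref{rem:Besicovitch} and Lemma~\ref{lem:Besicovitch}.
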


\begin{proof}
By the parabolic Rademacher theorem in \cite{Or20}, one can show that a  parabolic Lipschitz vector field  $\psi: \mathbb{R}^d \to \mathbb{R}^{n+1-d}$ satisfies
\begin{equation}\label{eq:diff_rademacher_par_123}
\frac{|\psi(\by)-\psi(\bx)-A_{\bx}(y-x)|}{\| \by - \bx\|}=\epsilon_{\bx}(|\by - \bx|),
\end{equation}
where $\epsilon_{\bx}(r)\to 0$ as $r \to 0$. In fact, that theorem  is only stated for $d=n$ and globally parabolic Lipschitz functions (see e.g. \cite[Definition 3.1]{Or20}) but  { a} local hypothesis is enough. { To be precise, it is sufficient for the functions to be locally in $\Lip_{(1,1/2)}$ and to satisfy \cite[display (3.4)]{Or20} locally (see \cite[Remark 3.6]{Or20}).} Moreover, one can apply \eqref{eq:diff_rademacher_par_123} to each component of $\psi$ and obtain the result above. Note that $A_{\bx}: \mathbb{R}^d \to \mathbb{R}^{n+1-d}$ is a horizontal linear map. In fact, it is the horizontal differential for the Lipschitz vector field $\psi(\cdot, t):\mathbb{R}^{d-1} \to \mathbb{R}^{n+1-d}$ at $\bar x$. Using this map we can construct the approximating admissible $d$-plane passing through the origin. Namely, since any admissible $\Lip_{(1,1)}$-function {satisfies the aforementioned conditions,}   if we set $V_{\bx}=(y',A_{\bx}(y'), s)$ and follow the proof in \cite[pp. 38-39]{DeLellis} using the parabolic area formula \cite[Theorem 3.64]{He17}, we can prove \eqref{eq:blow-up-Lip}. We skip the details.
\end{proof}

\begin{corollary}\label{cor:tangent-rectifiable}
Let $E \subset \rrn$ { be} such that { $\HH^d|_E$ is locally finite and} $ \sigma(E \setminus \bigcup_{j \geq 1} \Gamma_j ) =0$,  where $\Gamma_j$ are admissible $d$-dimensional $\Lip_{(1,1)}$-graphs and $\sigma$ { is}  the surface measure on $E$. Then, for $\sigma$-a.e. $\bx \in \rrn$, there exists an admissible $d$-dimensional plane $V_{\bx}$ passing through the origin, such that
$$
\Tan(\sigma, \bx ) =  \{ c \HH^{d+1}_p|_{V_{ \bx } } : c>0\}.
$$
\end{corollary}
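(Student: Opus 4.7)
The plan is to reduce the corollary to Lemma \ref{lem:blow-upLipgraph} by a standard localization and disjointification argument. By \eqref{eq:sigma=H}, $\sigma = c\,\HH^{d+1}_p|_E$, so computing $\Tan(\sigma, \bar x)$ is the same as computing $\Tan(\HH^{d+1}_p|_E, \bar x)$ (up to positive multiples, which are absorbed into the cone structure of $\Tan$). I will partition $E$, modulo a $\sigma$-null set, into pieces lying inside a single graph and transfer the blow-up information from the ambient graph to $E$.

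First I set $E_0 \coloneqq E \setminus \bigcup_{j\geq 1}\Gamma_j$ and $E_j \coloneqq (E\cap \Gamma_j)\setminus \bigcup_{i<j}\Gamma_i$ for $j\geq 1$, which yields a disjoint decomposition $E = E_0 \sqcup \bigsqcup_{j\geq 1} E_j$ with $\sigma(E_0)=0$. Next I fix $j$ and define the auxiliary Radon measure $\nu_j \coloneqq \HH^{d+1}_p|_{\Gamma_j}$, which is locally finite since $\Gamma_j$ is an admissible $d$-dimensional $\Lip_{(1,1)}$-graph. Because $\HH^{d+1}_p|_{E_j}=\chi_{E_j}\nu_j$, Lemma \ref{lem:local-tengents} gives $\Tan(\chi_{E_j}\nu_j,\bar x)=\chi_{E_j}(\bar x)\Tan(\nu_j,\bar x)$ for $\nu_j$-a.e.\ $\bar x$, while Lemma \ref{lem:Besicovitch} ensures that at $\nu_j$-a.e.\ $\bar x\in E_j$ the density $\nu_j(E_j\cap C_r(\bar x))/\nu_j(C_r(\bar x))\to 1$ as $r\to 0$. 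An analogous density computation for $\HH^{d+1}_p|_E$ versus $\HH^{d+1}_p|_{E_j}$ at such a point (their difference is supported away from $\bar x$ in the appropriate density sense) lets me conclude
\[
\Tan(\HH^{d+1}_p|_E,\bar x)=\Tan(\nu_j,\bar x) \qquad \text{for } \sigma\text{-a.e.\ }\bar x\in E_j.
\]

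Then I apply Lemma \ref{lem:blow-upLipgraph} to $\nu_j$: for $\nu_j$-a.e.\ $\bar x\in \Gamma_j$ there exists an admissible $d$-plane $V_{\bar x}$ through $\bar 0$ with $r^{-d-1}T_{\bar x, r}[\nu_j]\rightharpoonup \HH^{d+1}_p|_{V_{\bar x}}$. To upgrade this single convergent sequence to the \emph{full} tangent set, I take any $\tau\in\Tan(\nu_j,\bar x)$, say $\tau=\lim_i c_i T_{\bar x,r_i}[\nu_j]$ with $r_i\searrow 0$, and factor
\[
c_i T_{\bar x,r_i}[\nu_j] = (c_i r_i^{d+1})\cdot \bigl(r_i^{-d-1}T_{\bar x,r_i}[\nu_j]\bigr).
\]
Since the second factor converges to the non-zero locally finite measure $\HH^{d+1}_p|_{V_{\bar x}}$, the scalars $c_i r_i^{d+1}$ must converge to some $c\in(0,\infty)$, forcing $\tau = c\,\HH^{d+1}_p|_{V_{\bar x}}$. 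Conversely, every positive multiple is realized by choosing $c_i = c\, r_i^{-d-1}$, giving $\Tan(\nu_j,\bar x)=\{c\,\HH^{d+1}_p|_{V_{\bar x}}:c>0\}$. A countable union over $j$ of the exceptional null sets finishes the proof.

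No single step is particularly deep here; the work is organizational. The subtlest point is bookkeeping the exceptional sets so that the conclusions of Lemmas \ref{lem:Besicovitch}, \ref{lem:local-tengents}, and \ref{lem:blow-upLipgraph} simultaneously hold on a $\sigma$-full subset of $E$, and then verifying that tangent measures for $\sigma$, $\HH^{d+1}_p|_E$, $\HH^{d+1}_p|_{E_j}$, and $\nu_j$ agree at $\sigma$-typical points through the density/localization lemma. Once that bookkeeping is in place, the identification of $\Tan(\nu_j,\bar x)$ as the one-parameter cone generated by $\HH^{d+1}_p|_{V_{\bar x}}$ is immediate from the factorization above.
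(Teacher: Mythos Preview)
Your proposal is correct and follows essentially the same route as the paper: localize via Lemma~\ref{lem:local-tengents} to reduce $\Tan(\sigma,\bar x)$ to $\Tan(\HH^{d+1}_p|_{\Gamma_j},\bar x)$, then invoke Lemma~\ref{lem:blow-upLipgraph}. The paper's version is terser---it skips the explicit disjointification (Lemma~\ref{lem:local-tengents} handles overlaps directly) and replaces your factorization argument by the observation that $\HH^{d+1}_p|_{\Gamma_j}(C_r(\bar x))\approx r^{d+1}$, which feeds into Lemma~\ref{lem:tangent_meas_properties}(3)---but the content is the same.
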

\begin{proof}
 By Lemma \ref{lem:local-tengents}, we have that for $\HH^{d+1}_{p}$-a.e. $\bx \in \Gamma_j \cap E$, $j \geq 1$, it holds that 
$$
\Tan(\cH^{d+1}_{p}|_E,\bx)= \Tan(\HH^{d+1}_{p}|_{\Gamma_j\cap E},\bx)=\Tan(\HH^{d+1}_{p}|_{\Gamma_j},\bx).
$$
Observe that $\HH^{d+1}_{p}|_{\Gamma_j}(C_r(\bx)) \approx r^{d+1}$ for any $\bar x\in \Gamma_j$ and $r >0$ (the implicit constant depends on the Lipschitz character of $\Gamma_j$). Thus, Lemma \ref{lem:blow-upLipgraph} implies that if $\nu \in \Tan(\HH^{d+1}_{p}|_{\Gamma_j},\bx)$, there exists a positive constant $c_{\bx}$ and  an admissible $d$-dimensional plane $V_{\bx}$ passing through the origin, such that  $ \nu =  c_{\bx}\cH^{d+1}_{p}|_{V_{\bx}}$. Thus,  as $E$ is Euclidean $d$-rectifiable and {$\HH^d|_E$ is locally finite}, it holds that $\sigma=c \cH^{d+1}_{p}|_E$ and the result follows.
\end{proof}

\vvv

\section{Nodal set of caloric functions}\label{sec:nodal_sets}

Given a caloric function $h$, we denote by $\Sigma^h\coloneqq \{h=0\}$ the nodal set of $h$ and by $\sigma_h$ the associated surface measure 
\begin{equation}\label{surface_measure_h}
d\sigma_h = d\mathcal H^{n-1}|_{\Sigma_t} dt.
\end{equation}
In most of the paper the function $h$ is clear from the context, in which case we simply understand $\sigma=\sigma_h$.

If $h$ is a non-zero caloric function on $\Rn1$, by  unique continuation  \cite[Theorem 1.2]{Po96} it cannot vanish in an open set.
Hence, we may apply \cite[Theorem 1.1]{HL94} and  \cite[Proposition 1.2]{HL94} to deduce that  $\Sigma^h$ has locally finite $\cH^n$-measure 
and $\Sigma^h\cap |Dh|^{-1}(0)$ is a (euclidean) $(n-1)$-rectifiable set. In particular, for any cylinder $C_r$ centered on  $\Sigma^h$,  the set $\Sigma^h\cap C_r$ can be  decomposed into a union of an $n$-dimensional $C^1$-submanifold $C_r \cap \Sigma^h\cap \{|Dh|>0\}$ with finite $n$-dimensional Hausdorff measure and a closed set $C_r \cap \Sigma^h\cap \{|Dh|=0\}$ of Hausdorff dimension not larger than $n-1$.  

For our purposes, we need a finer study of this decomposition in terms of admissible graphs and so we define the {\it regular} and the {\it singular set}  of $\Sigma^h$ by 
\begin{align*}
\mR &\coloneqq\bigl\{ \bx \in \Sigma^h :| \partial_t h| + |\nabla h | > 0\bigr\} \\
\mS &\coloneqq\bigl\{ \bx \in \Sigma^h :|\partial_t h| + |\nabla h | = 0\bigr\}.
\end{align*}
Additionally, we set 
\begin{align*}
\mR_x &\coloneqq\bigl\{ \bx \in \mR : |\nabla h | > 0\bigr\} \\
\mR_t& \coloneqq\bigl\{ \bx \in \mR : |\partial_t h|> 0\,\,\textup{and}\,\,|\nabla h | = 0 \bigr\}= \mR\setminus \mR_x,
\end{align*}
to be the {\it space-regular} and the {\it time-regular sets}  respectively.

\vv

\begin{lemma}[Structure of the space-regular set]\label{lem:space_regular_set}
If $h\colon\rrn \to \R$ is a non-zero caloric function, then for every $\by\in \mR_x$ there exists $\rho=\rho(\by)>0$  and an admissible $C^\infty$-graph $\Sigma'_{\by}$ such that $\Sigma^h\cap C_\rho(\by)= \Sigma'_{\by}\cap C_\rho(\by)$.
\end{lemma}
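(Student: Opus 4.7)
The plan is to invoke the implicit function theorem in the spatial direction. By Lemma \ref{lem:weak_caloric_functions}, the caloric function $h$ is smooth, i.e.\ $h \in C^\infty(\rrn)$. Since $\by \in \mR_x$, we have $|\nabla h(\by)|>0$, so some component $\partial_{x_i} h(\by)$ is nonzero. After a rotation in the spatial variables only (which preserves the parabolic structure and the class of admissible graphs, cf.\ Definition \ref{def:admissibility}) and a translation, I may assume that $\by=(y',y_n,\tau)$ with $\partial_{x_n} h(\by)\neq 0$.

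Then I apply the classical implicit function theorem to $h$ viewed as a $C^\infty$ function of the variables $(x_1,\dots,x_{n-1},t;x_n)$, treating $(x',t)$ as the independent variables and $x_n$ as the dependent one. This yields an open neighborhood $U\subset \R^{n-1}\times \R$ of $(y',\tau)$, a $\delta>0$, and a unique $C^\infty$ function $f\colon U\to (y_n-\delta,y_n+\delta)$ such that
\begin{equation*}
h(x',x_n,t)=0 \quad\text{for}\quad (x',x_n,t)\in U\times(y_n-\delta,y_n+\delta) \quad\Longleftrightarrow\quad x_n=f(x',t).
\end{equation*}
Now I choose $\rho>0$ small enough so that $B^{n-1}_\rho(y')\times(\tau-\rho^2,\tau+\rho^2)\subset U$ and $\rho<\delta$. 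Any $(x,t)\in C_\rho(\by)$ then satisfies $|x'-y'|<\rho$, $|x_n-y_n|<\rho<\delta$, and $|t-\tau|<\rho^2$, so the description above captures \emph{all} zeros of $h$ inside $C_\rho(\by)$.

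To produce a globally defined admissible graph, I extend $f$ to all of $\R^{n-1}\times \R$ by a cutoff argument: pick $\chi\in C^\infty_c(U)$ with $\chi\equiv 1$ on $B^{n-1}_\rho(y')\times(\tau-\rho^2,\tau+\rho^2)$, and set
\begin{equation*}
\tilde f(x',t)\coloneqq \chi(x',t)f(x',t)+\bigl(1-\chi(x',t)\bigr)y_n,
\end{equation*}
which is in $C^\infty(\R^{n-1}\times \R)$ and agrees with $f$ on the required projection. Define
\begin{equation*}
\Sigma'_{\by}\coloneqq \bigl\{(x',\tilde f(x',t),t): x'\in\R^{n-1}, t\in\R\bigr\},
\end{equation*}
which, by Definition \ref{def:admissibility}, is an admissible $n$-dimensional $C^\infty$-graph (after undoing the rotation and translation). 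By the construction, $\Sigma'_{\by}\cap C_\rho(\by)$ equals the graph of $f$ over $\text{proj}_{(x',t)}(C_\rho(\by))$ intersected with $C_\rho(\by)$, which by the implicit function theorem conclusion coincides with $\Sigma^h\cap C_\rho(\by)$.

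There is no serious obstacle here: the argument is a direct reduction to the smooth implicit function theorem, once we observe that $h\in C^\infty$ and that $|\nabla h(\by)|>0$ guarantees that some \emph{spatial} partial derivative (not just $\partial_t h$) is nonzero, which is exactly the condition needed to solve for a spatial coordinate and thereby produce an \emph{admissible} graph (as opposed to one over purely spatial variables). The only mild care is in choosing $\rho$ small enough to fit the cylinder $C_\rho(\by)$ inside the product neighborhood furnished by the implicit function theorem, which is why we need the $\rho<\delta$ condition.
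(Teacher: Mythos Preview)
Your proof is correct and follows essentially the same approach as the paper: both apply the implicit function theorem in a spatial direction (using $|\nabla h(\by)|>0$ to solve for one spatial coordinate as a smooth function of $(x',t)$), then extend the local graph to a globally defined $C^\infty$ function. The only cosmetic difference is that you extend via an explicit cutoff while the paper cites Stein's $C^\infty$ extension theorem; your version is slightly more explicit about fitting the parabolic cylinder inside the implicit-function-theorem neighborhood, but the underlying argument is identical.
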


\begin{proof}
The proof is an easy application of the implicit function theorem. 
Indeed, if $|\nabla h(\by)|>0$, we can assume without loss of generality that $\partial_{n} h(\by) \neq  0$. Thus, we can find a cylinder  $C_\rho(\by)$ centered at $\by$ in which $\partial_{n} h \neq 0$ and a smooth function $\vp\colon\R^n\to \R$ such that
\[
\bx=\big(x',\vp(x',t),t\big),\quad  \textup{ for any }\bx\in C_\rho(\bar y)\cap \Sigma^h.
\]
We remark that, despite the implicit function theorem defines $\vp$ just locally, we can extend it to a $C^\infty$-function defined in the whole $\R^n$ (see e.g. \cite[Theorem 5, p. 181]{Stein}). 
\end{proof}

\vv

In general, we cannot expect to express $\mR_t$ locally as an admissible graphs (see also the examples after the next lemma). However, we can still make some general consideration about its geometry and we show that its dimension is lower than that of $\mathcal R_x$.

\begin{lemma}[Structure of the time-regular set]\label{lem:sigma_F}
If $h\colon\rrn \to \R$ is a non-zero caloric function, then for every $\by\in \mR_t$ there exists  $\tilde\rho=\tilde\rho(\by)>0$  and a smooth $(n-1)$-dimensional $C^\infty$-graph $\widetilde \Sigma_{\by}$ such that $\mR_t \cap C_{\tilde\rho}(\by)\subset \widetilde\Sigma_{\by} \cap C_{\tilde\rho}(\by)$. In particular, $\sigma\bigl(\mR_t\bigr)=0$.
\end{lemma}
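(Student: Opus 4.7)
Fix $\bar y=(y,t_0)\in\mathcal R_t$, so that $h(\bar y)=0$, $\nabla h(\bar y)=0$, and $\partial_t h(\bar y)\neq 0$. Since $h$ is caloric, $\Delta h(\bar y)=\partial_t h(\bar y)\neq 0$, hence at least one pure second spatial derivative of $h$ does not vanish at $\bar y$. After a permutation of the spatial coordinates I may assume $\partial_{nn}h(\bar y)\neq 0$. The plan is to apply the implicit function theorem to the map $F(\bar x)\coloneqq (h(\bar x),\partial_n h(\bar x))$ from $\R^{n+1}$ to $\R^2$, which vanishes at $\bar y$. Using $\partial_n h(\bar y)=0$, one checks that
\[
\det\frac{\partial F}{\partial(x_n,t)}(\bar y)=\begin{vmatrix}0 & \partial_t h(\bar y)\\ \partial_{nn}h(\bar y) & \partial_{nt}h(\bar y)\end{vmatrix}=-\partial_t h(\bar y)\,\partial_{nn}h(\bar y)\neq 0,
\]
so there exist a neighborhood $U\subset\R^{n-1}$ of $(y_1,\ldots,y_{n-1})$, a radius $\tilde\rho>0$, and $C^\infty$ maps $g_1,g_2\colon U\to\R$ such that
\[
\{F=0\}\cap C_{\tilde\rho}(\bar y)=\bigl\{(x',g_1(x'),g_2(x'))\colon x'\in U\bigr\}\cap C_{\tilde\rho}(\bar y).
\]
Extending $g_1$ and $g_2$ to smooth functions on all of $\R^{n-1}$ (as in the proof of Lemma \ref{lem:space_regular_set}) yields the desired smooth $(n-1)$-dimensional $C^\infty$-graph $\widetilde\Sigma_{\bar y}$, and the inclusion $\mathcal R_t\subset\{h=0\}\cap\{\partial_n h=0\}=\{F=0\}$ gives $\mathcal R_t\cap C_{\tilde\rho}(\bar y)\subset\widetilde\Sigma_{\bar y}\cap C_{\tilde\rho}(\bar y)$.

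For the second assertion I would cover $\mathcal R_t$ by countably many such cylinders $C_{\tilde\rho_k}(\bar y_k)$ with associated graphs $\widetilde\Sigma_k$, using the separability of $\R^{n+1}$, so it suffices to prove $\sigma(\widetilde\Sigma_k\cap C_{\tilde\rho_k}(\bar y_k))=0$ for every $k$. By the definition of $\sigma$,
\[
\sigma\bigl(\widetilde\Sigma_k\cap C_{\tilde\rho_k}(\bar y_k)\bigr)=\int_\R \mathcal H^{n-1}\bigl(\{x\in\R^n\colon (x,t)\in\widetilde\Sigma_k\cap C_{\tilde\rho_k}(\bar y_k)\}\bigr)\,dt.
\]
In the $x'$-parametrization, the time-$t$ slice of $\widetilde\Sigma_k$ corresponds to the level set $\{g_2=t\}\subset U$. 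Sard's theorem applied to $g_2\colon U\to\R$ ensures that the set $C$ of its critical values has one-dimensional Lebesgue measure zero; for every regular value $t\notin C$, $\{g_2=t\}$ is empty or a smooth $(n-2)$-submanifold of $U$, so the corresponding spatial slice is $\mathcal H^{n-1}$-null. Since $\widetilde\Sigma_k$ is a smooth $(n-1)$-submanifold, $\mathcal H^{n-1}$ of its slices intersected with a compact set is uniformly bounded, and the integrand contributes nothing on the Lebesgue-null set $C$. Therefore $\sigma(\widetilde\Sigma_k\cap C_{\tilde\rho_k}(\bar y_k))=0$, and summing over $k$ gives $\sigma(\mathcal R_t)=0$.

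The main obstacle, and the reason $\mathcal R_t$ must be treated separately from the space-regular set $\mathcal R_x$ in Lemma \ref{lem:space_regular_set}, is that the nondegeneracy $\partial_t h(\bar y)\neq 0$ alone does not provide a spatial graph representation of $\Sigma^h$: one must combine the two scalar equations $h=0$ and $\partial_n h=0$ to cut out codimension $2$. As a consequence the resulting $\widetilde\Sigma_{\bar y}$ is in general \emph{not} admissible in the sense of Definition \ref{def:admissibility}; indeed, implicit differentiation shows that $\nabla g_2(\bar y)=0$ (because $\nabla h(\bar y)=0$ forces the first-order identity $\partial_t h\cdot \partial_j g_2=0$ at $\bar y$), so an entire spatial $(n-1)$-sheet of $\widetilde\Sigma_{\bar y}$ can collapse into the single time-slice $\{t=t_0\}$. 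It is precisely this phenomenon that forces the Sard step in the $\sigma$-measure computation: a direct projection of $\widetilde\Sigma_{\bar y}$ onto the time axis would fail exactly at the critical values of $g_2$.
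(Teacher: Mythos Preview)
Your proof is correct and reaches the same conclusion by a somewhat different route than the paper.

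For the graph structure, the paper applies the implicit function theorem twice in succession: first to $\partial_n h$ with respect to $x_n$, obtaining $\{\partial_n h=0\}$ locally as an admissible graph $x_n=\psi(x',t)$; then to the composition $\tilde h(x',t)=h(x',\psi(x',t),t)$ with respect to $t$, obtaining $\varphi(x')$ so that $\Sigma^h\cap\{\partial_n h=0\}$ is locally parametrized by $x'\mapsto(x',\psi(x',\varphi(x')),\varphi(x'))$. Your single application of the vector-valued implicit function theorem to $F=(h,\partial_n h)$ with respect to $(x_n,t)$ yields the same $(n-1)$-graph in one stroke; the Jacobian determinant you compute is exactly what makes this work.

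For $\sigma(\mathcal R_t)=0$, the paper argues that the local graph $\widetilde\Sigma$ is euclidean $(n-1)$-rectifiable, hence $\mathcal H^n(\widetilde\Sigma)=0$ and $\mathcal H^{n+1}_p(\widetilde\Sigma)=0$, and then invokes the parabolic co-area formula of \cite{He17} to convert this into $\sigma(\widetilde\Sigma)=0$. Your Sard argument on the time component $g_2$ is more elementary and self-contained: it shows directly that for almost every $t$ the time-slice of $\widetilde\Sigma$ is at most $(n-2)$-dimensional and hence $\mathcal H^{n-1}$-null. (The uniform bound on $\mathcal H^{n-1}$ of the slices that you invoke is not actually needed --- integrating any nonnegative measurable function over the Lebesgue-null set of critical values already gives zero --- but it is true, since each slice is the Lipschitz image of a subset of the bounded domain $U\subset\R^{n-1}$.)

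Your final paragraph adds an observation the paper does not make explicit: the computation $\nabla_{x'} g_2(y')=0$ shows that $y'$ is \emph{always} a critical point of $g_2$, so one cannot hope that all time-slices are regular, and some device like Sard or the co-area formula is genuinely required.
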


\begin{proof}
If $\mR_t=\varnothing$ there is nothing to prove.
Fix $\bar y=(y,s) \in \mR_t$, and note that since $\dt h(\bar y) \neq 0$, by the implicit function theorem, we can find $\rho=\rho(\bar y)>0$ such that $C_\rho (\bar y) \cap \Sigma^h$ agrees (possibly after  a rotation in space) with the  graph $(x,\vp(x))$ of a $C^\infty$-function $\vp\colon \R^n\to \R$ inside $C_\rho (\bar y)$. Moreover, since $Hh=0$ the latter implies that $\Delta h(\bar y) \neq 0$. 
Without loss of generality, we assume that $\partial_n^2 h(\bar y) \neq 0$, where $\d_n$ stands for the partial derivative in $x_n$ variable.

If we denote $g\coloneqq \partial_{n} h$ and $\Sigma^g=\{g=0\}$, then $g$ is smooth, $\by=(y',y_n, s) \in \Sigma^g$ and $\partial_{n} g(\by) \neq 0$. Thus, by the implicit function theorem, there exists $\rho' \leq \rho$  such that $C_{\rho'} (\by)\cap \Sigma^g$ is the (possibly rotated) graph $(x',\psi(x',t),t)$ of a smooth function $\psi\colon \R^n\to \R$ and $\by = (y',\psi(y',s), s)$. Let us define
\[
\widetilde h (x',t)\coloneqq h(x',\psi(x',t),t)
\]
 and note that $\widetilde h (y',s)= h(y,s)=0$. So by the chain rule and the fact that $(y,s)\in \mR_t$ (hence $\partial_{n} h(y,s)=0$),
\[
\dt \widetilde h(y',s)=\partial_{n} h(y,s)\dt\psi(y',s)+ \dt h(y,s)=\dt h(y,s) \neq 0.
\]
Hence, as $\wt h$ is clearly  a smooth function,  we can apply the implicit function theorem  at $(y',s)$ and  obtain that there exists a neighborhood of $(y',s)$ and a $C^\infty$ function $\varphi\colon\R^{n-1}\to \R$ such that $\{\widetilde h=0\}$ coincides with the graph $(\cdot, \varphi(\cdot))$ in that neighborhood. More specifically, there exists $\rho''\leq \rho'$ such that $\widetilde\Sigma_{\rho''}\coloneqq\Sigma^h\cap \Sigma^g \cap C_{\rho''}(y,s)$ admits the parametrization
\[
\bigl(x',\psi\bigl(x',\varphi(x')\bigr), \varphi(x')\bigr), \qquad x'\in B_{\rho''}(y').
\]
In particular, $\widetilde\Sigma_{\rho''}$ is euclidean $(n-1)$-rectifiable, so $\mathcal{H}^{n+1}_{p}\bigl(\widetilde{\Sigma}_{\rho''}\bigr)=\mathcal{H}^n\bigl(\widetilde{\Sigma}_{\rho''}\bigr)=0$ and, by the co-area formula \cite[Theorem D]{He17},
\[
\int \cH^{n-1}\bigl(\widetilde\Sigma_{\rho''}|_t\bigr)\, \,d\mathcal H^2_{p}(t)=0,
\]
which, since $\mathcal{H}^{2}_{p}\approx \mathcal L^1,$ proves that $\sigma\bigl(\widetilde \Sigma_{\rho''}\bigr)=0$. It is clear that $\mR_t \cap C_{\rho''}(\by) \subset \widetilde\Sigma_{\rho''}$ and, by a  covering argument, we get that $\sigma\bigl(\mR_t)=0$, as wished.
\end{proof}

\vv

\begin{example} 
a) Let us consider the caloric polynomial $h_1(x_1,x_2,t)=x_1^2+x_2^2+4t.$
The set $\Sigma_{h_1}$ is a rotational paraboloid around the time-axis.
Its time-regular set $\mR_t$ is the singleton $\{\bar 0\}$ and its space-regular set is $\Sigma_{h_1}\setminus\{\bar 0\}.$\\
b) Let $h_2(x_1,x_2,t)=x_1^2+x_2^2-2x_1 x_2+4t,$ whose nodal set is a parabolic cylinder. We have that $\mathcal S=\varnothing$, which gives that $\Sigma_{h_2}=\mR_x\cup \mR_t$ and $\mR_t=\{(x,x,0):x\in\R\}$. So $\mR_t$ is a $1$-dimensional manifold (in particular, a line), in contrast  to what we have for the function $h_1$ of the previous example.
\end{example}

\vvv

\section{Caloric measure associated with a caloric function}\label{sec:green functions}

\begin{lemma}\label{lem:cal-measure-assoc-h}
Let $h$ be a caloric function in $\rrn$ and let $h^+$ and $h^-$ indicate the positive and negative parts of $h$ respectively. There exists a unique Radon measure $\hm_h$ supported on $\{h=0\}$ such that  
\begin{equation}\label{eq:cal-measure-assoc-h}
\int \vp \, d\omega_h = \int  h^+ H^*\vp =  \int h^-H^*\vp= \frac{1}{2}\int |h|H^*\vp, \quad \textup{for}\,\,\vp \in C^\infty_c(\rrn).
\end{equation}
\end{lemma}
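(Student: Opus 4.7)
The plan is to construct $\omega_h$ as the common Riesz measure of the weakly subcaloric functions $h^+$ and $h^-$, exploiting that their difference $h$ is caloric. Uniqueness is automatic from the defining integral identity.

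First, I would establish that $h^+$ and $h^-$ are weakly subcaloric in $\rrn$. Choose a family of convex $C^2$ functions $\Phi_\epsilon:\R\to\R$ with $\Phi_\epsilon(s)\to s^+$ pointwise and locally uniformly as $\epsilon\searrow 0$ (for instance $\Phi_\epsilon(s)=\tfrac{1}{2}(s+\sqrt{s^2+\epsilon^2})$). Since $h\in C^\infty(\rrn)$ by Lemma \ref{lem:weak_caloric_functions} and $Hh=0$, the chain rule gives
\[
H\bigl[\Phi_\epsilon(h)\bigr]=\Phi_\epsilon'(h)\,Hh+\Phi_\epsilon''(h)|\nabla h|^2=\Phi_\epsilon''(h)|\nabla h|^2\geq 0,
\]
so $\Phi_\epsilon(h)$ is classically subcaloric. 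For any non-negative $\varphi\in C^\infty_c(\rrn)$, integration by parts followed by dominated convergence (both $\Phi_\epsilon(h)$ and $\Phi_\epsilon(h)H^*\varphi$ are uniformly bounded on $\supp\varphi$) yield
\[
\int h^+\,H^*\varphi=\lim_{\epsilon\to 0}\int \Phi_\epsilon(h)\,H^*\varphi=\lim_{\epsilon\to 0}\int H[\Phi_\epsilon(h)]\,\varphi\geq 0.
\]
The argument for $h^-=(-h)^+$ is identical.

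Second, by the Riesz representation recalled in \eqref{eq:Riesz measure}, there exist unique non-negative Radon measures $\mu_+$ and $\mu_-$ so that $\int h^{\pm}H^*\varphi=\int\varphi\,d\mu_{\pm}$ for every $\varphi\in C^\infty_c(\rrn)$. Since $h$ is caloric, $\int h\,H^*\varphi=0$, i.e.\ $\int h^+H^*\varphi=\int h^-H^*\varphi$, so $\int\varphi\,d\mu_+=\int\varphi\,d\mu_-$ for every test $\varphi$; by density of $C^\infty_c(\rrn)$ in $C_c(\rrn)$ this forces $\mu_+=\mu_-$. Let $\omega_h$ denote this common measure. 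The first two identities in \eqref{eq:cal-measure-assoc-h} are then immediate, and the third follows from $|h|=h^++h^-$.

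Third, I would verify that $\supp\omega_h\subset\{h=0\}$. Given $\varphi\in C^\infty_c(\{h>0\})$, on $\supp\varphi$ we have $h^+=h\in C^\infty$, so
\[
\int\varphi\,d\omega_h=\int h^+H^*\varphi=\int h\,H^*\varphi=\int\varphi\,Hh=0
\]
by the standard integration by parts against the compactly supported $\varphi$. Hence $\omega_h$ vanishes on $\{h>0\}$, and symmetrically on $\{h<0\}$. Uniqueness is automatic, since any two Radon measures inducing the functional $\varphi\mapsto\tfrac{1}{2}\int|h|\,H^*\varphi$ on $C^\infty_c(\rrn)$ necessarily coincide. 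The calculation is essentially routine; the only step requiring genuine care is the regularization that converts the classical subcaloric inequality for $\Phi_\epsilon(h)$ into the distributional inequality for $h^+$, but this reduces to pointwise convergence of $\Phi_\epsilon(h)$ on the support of the test function.
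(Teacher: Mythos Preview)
Your proof is correct and follows essentially the same strategy as the paper: establish that $h^{\pm}$ are (weakly) subcaloric, invoke the Riesz representation to obtain measures $\mu_{\pm}$, show $\mu_+=\mu_-$ using caloricity of $h$, and verify the support condition by testing against $\varphi$ supported in $\{h>0\}$ or $\{h<0\}$. The only difference is cosmetic: the paper deduces subcaloricity of $h^{\pm}$ by citing Watson's theory (a caloric function extended continuously by zero is a subtemperature), whereas you give a self-contained convex-regularization argument via $\Phi_\epsilon(h)$; both routes are standard.
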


\begin{proof}
Let $\tilde{h}^\pm$ be the extension by zero of $h^\pm$ in the complement of $\{h^\pm>0\}$. Then, since $h$ is continuous in $\rrn$, $h^\pm \to 0$ continuously on $\{h=0\}$. Thus,  as $h^\pm$ is caloric in $\{h^\pm>0\}$, we have that   $\tilde{h}^\pm$ is a subcaloric function in $\rrn$ and by  \cite[Theorem 6.28]{Watson}, there exists a unique Radon measure $\omega_{\tilde{h}^\pm}$ such that 
$$
\int \vp \, d\omega_{\tilde{h}^\pm} = \int_{\rrn} \tilde{h}^\pm H^*\vp=\int_{\{h^\pm>0\}} h^\pm\, H^*\vp, \quad \textup{for}\,\,\vp \in C^\infty_c(\rrn).
$$
Now, since $\tilde{h}^\pm$ is caloric in $\{h^\pm>0\}$ and zero in $\{h^\mp>0\}$, we have that $\int \vp \, d\omega_{{h}^\pm} =0$ for $\vp \in C^\infty_c(\{h^\pm>0\})$ and $\vp \in C^\infty_c(\{h^\mp>0\})$,  implying that $\supp \omega_{{h}^\pm} \subset \{h=0\}$. Therefore, since $\mathcal{L}^{n+1}(\Sigma^h)=0$, for any $\vp \in C^\infty_c(\rrn)$, we have that
$$
\int \vp \, d\omega_{\tilde{h}^+}-\int \vp \, d\omega_{\tilde{h}^-}  = \int_{\rrn}  \tilde{h}^+ H^*\vp - \int_{\rrn}   \tilde{h}^- H^*\vp =\int_{\rrn} h\, H^*\vp=0,
$$
and so $\omega_{\tilde{h}^+}=\omega_{\tilde{h}^-}$. Thus, the first two  equalities in  \eqref{eq:cal-measure-assoc-h} hold, while the last one follows by  adding instead of subtracting the two identities above.
\end{proof}

\vv

Given a caloric function $h$, by the discussion  in Section \ref{sec:nodal_sets}, $\Sigma^h$ is smooth away from a euclidean $(n-1)$-rectifiable set, and so the  set $\Omega^\pm=\{h^\pm>0\}$ is a set of locally finite perimeter in $\Rn1$ (see Definition 5.1 and  Theorem 5.23 in \cite{EG92}).   Hence, by {\cite[Theorem 18.11]{Mag12}}, for a.e.~ $t\in\R$, its horizontal section $\Omega^\pm_t$ is a set of locally finite perimeter in $\mathbb R^n$.  In fact, more is true. By Lemma \ref{lem:space_regular_set},  around $\sigma$-a.e. any point, we have that $\Sigma^h$ is  given by an admissible Lipschitz graph, while the rest of the points lie on an $(n-1)$-rectifiable set.  So, for a.e. $t$, $\Sigma_t^h$ is also locally Lipschitz and thus,  its measure theoretic boundary (see \cite[Definition 5.7]{EG92}) coincides with its topological boundary.   Therefore, for a.e.~ $t$ there is a unique measure theoretic outward unit  normal $\nu^\pm_t$ to $\d\Omega^\pm_t$ such that we have the generalized Gauss-Green Theorem 
\begin{equation}\label{gauss-green}
\int_{\Omega^\pm_t} \div \vp\, dx= \int_{\Sigma^h_t}\vp\cdot \nu^\pm_t\, d\cH^{n-1}, \qquad \text{ for all } \vp\in C^1_c(\R^n;\R^n).
\end{equation}
Moreover,  $\nu_t^\pm$ coincides with the usual (geometric) outward unit normal on $\d^* \om^\pm$ (see Definitions 5.4 and 5.6, Theorems 5.15 and 5.16, and Lemma 5.5 in \cite{EG92}) .

\vv

Let us recall the parabolic Cauchy estimates for caloric functions.
\begin{proposition}[see e.g. \cite{HL94}, Proposition 2.1]
\label{proposition:cauchy_estimates}
Let $R>0$ and let $h$ be a caloric function in $C_R$. For $r<R$ and $\alpha\in\mathbb{Z}^n_+$ and any positive integer $\ell$ with $|\alpha|+2\ell=m,$ we have
\begin{equation}\label{eq:cauchy_estimates}
|D^{\alpha,\ell}h(x,t)|	\lesssim_{n,m} (R-r)^{-m} \|h\|_{L^\infty(C_R)}, \qquad \textup{ for all }(x,t)\in C_r.
\end{equation}
\end{proposition}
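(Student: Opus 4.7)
The plan is to first establish the first-order estimates ($m=1$) and then bootstrap to all orders, using that $D^{\alpha,\ell}h$ is itself caloric in $C_R$ whenever $h$ is, since the heat operator $H$ has constant coefficients and therefore commutes with any constant-coefficient differential operator.

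For the base case, the goal is to show that for any caloric $u$ in $C_R$ and any $r<R$,
\[
\sup_{C_r}|\nabla u|\lesssim_n (R-r)^{-1}\,\|u\|_{L^\infty(C_R)},\qquad \sup_{C_r}|\partial_t u|\lesssim_n (R-r)^{-2}\,\|u\|_{L^\infty(C_R)}.
\]
One route is via the parabolic mean-value identity on heat balls: for any $(x,t)\in C_r$ and any $\rho>0$ with $E(x,t;\rho)\subset C_R$,
\[
u(x,t)=\frac{1}{4\rho^{n}}\iint_{E(x,t;\rho)}u(y,s)\,\frac{|x-y|^2}{(t-s)^2}\,dy\,ds.
\]
Choosing $\rho$ comparable to $(R-r)^2$ so that the heat ball fits inside $C_R$ for every $(x,t)\in C_r$, and differentiating under the integral sign, a direct computation shows that each $\partial_{x_i}$ brings out a factor $(R-r)^{-1}$ while $\partial_t$ brings out $(R-r)^{-2}$. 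Alternatively, one can use a cutoff $\eta\in C^\infty_c(C_{(R+r)/2})$ with $\eta\equiv 1$ on $C_r$ and $|D^{\alpha,\ell}\eta|\lesssim (R-r)^{-m}$ together with Duhamel's formula to represent $u$ on $C_r$ as a convolution of the compactly supported source $H(\eta u)$ against $\Gamma$, then differentiate through $\Gamma$ using the pointwise Gaussian bounds \eqref{eq:poitwiseHeat kernel} and their analogues for derivatives of $\Gamma$.

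For the inductive step, given $(\alpha,\ell)$ with $|\alpha|+2\ell=m$, I would form the chain of cylinders $r_k\coloneqq r+k(R-r)/m$, $k=0,\dots,m$, and peel off one differentiation at a time: at stage $k$ one passes from $C_{r_{k}}$ to $C_{r_{k-1}}$ by applying the base case to the (still caloric) function obtained at the previous stage. A single $\partial_{x_i}$ consumes one step of length $(R-r)/m$ and contributes a factor $m/(R-r)$, while a single $\partial_t$ consumes two such steps and contributes $(m/(R-r))^2$. Since the total parabolic cost equals $|\alpha|+2\ell=m$, this produces an overall factor of order $m^m(R-r)^{-m}$, giving a bound of the claimed form with a constant depending only on $n$ and $m$. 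The only delicate point is respecting the anisotropic scaling during the iteration, i.e.\ ensuring that time derivatives consume two units while space derivatives consume one; aside from this bookkeeping, the argument is entirely classical and relies solely on the translation invariance and hypoellipticity of $H$.
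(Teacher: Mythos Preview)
The paper does not actually prove this proposition; it is stated with the citation ``see e.g.\ \cite{HL94}, Proposition 2.1'' and used as a black box thereafter. Your proposal is a correct and entirely classical proof of the parabolic Cauchy estimates: the iteration via a chain of nested cylinders $C_{r_k}$, peeling off one derivative at a time while exploiting that $D^{\alpha,\ell}h$ is again caloric (since $H$ has constant coefficients), is the standard argument and yields the stated bound with a constant depending only on $n$ and $m$.

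One minor point worth tightening in the base case: in the mean-value route, the heat ball $E(x,t;\rho)$ depends on the center $(x,t)$, so differentiating under the integral sign is not immediate; you would first change variables $(y,s)\mapsto (x+\sqrt{t-s}\,z, t-\tau)$ (or similar) to fix the domain, or simply use your second alternative---the cutoff plus convolution against $\Gamma$---which avoids this issue and is cleaner. Either way the argument goes through.
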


\vv
\begin{lemma}\label{lem:formula-calmeas-calfunc}
If $h$ is a caloric function in $\rrn$ and let  $\sigma$ be the surface measure on $\Sigma^h$ as defined in \eqref{surface_measure_h}, then $\d_t|h| \in L^\infty_{\loc}(\rrn)$ and for any $\vp \in C^{2,1}_c(\rrn)$, 
\begin{equation} \label{eq:formula-calmeas-calfunc}
\int \vp \, d\hm_h = \int_{\rrn} \d_t  |h|\,\vp\, dxdt -\int_{\Sigma^h} \frac{\partial h}{\partial \nu_t^+}\vp\, d\sigma,
\end{equation}
where $ \frac{\partial h}{\partial \nu_t^+}:=\nu_t^+ \cdot \nabla h \neq 0$ $\sigma$-a.e.  
\end{lemma}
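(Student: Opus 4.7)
The two auxiliary claims are both immediate from the tools already available. The bound $\partial_t|h|\in L^\infty_\loc(\Rn1)$ follows since $h\in C^\infty(\Rn1)$ by hypoellipticity of the heat operator (Lemma \ref{lem:weak_caloric_functions}), so $|h|$ is locally Lipschitz in $t$ with weak derivative $\partial_t|h|=\sign(h)\partial_t h$ almost everywhere (the a.e.\ identity being valid because $\{h=0\}$ has Lebesgue measure zero in $\Rn1$ by unique continuation for $H$, cf.\ \cite[Theorem 1.2]{Po96}), and thus $|\partial_t|h||\leq|\partial_t h|$ is locally bounded. For the non-vanishing of $\partial h/\partial\nu_t^+$ on $\sigma$-a.e.\ point of $\Sigma^h$, I would invoke the stratification $\Sigma^h=\mR_x\sqcup\mR_t\sqcup\mS$ from Section \ref{sec:nodal_sets}: Lemma \ref{lem:sigma_F} together with the $(n-1)$-rectifiability of $\mS$ yields $\sigma(\mR_t\cup\mS)=0$, while on $\mR_x$ the spatial gradient is nonzero and the outer unit normal to $\Omega^+_t$ is $\nu_t^+=-\nabla h/|\nabla h|$, giving $\partial h/\partial\nu_t^+=-|\nabla h|<0$.

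For the integral identity, my plan is to start from Lemma \ref{lem:cal-measure-assoc-h}, namely
\[
\int\vp\,d\omega_h=\tfrac12\int|h|\,\Delta\vp\,dx\,dt+\tfrac12\int|h|\,\partial_t\vp\,dx\,dt,
\]
and to process the two pieces in tandem. For the spatial Laplacian I would apply the Gauss--Green formula \eqref{gauss-green} on each time-slice $\Omega^\pm_t$: for a.e.\ $t$ the slice $\Sigma^h_t$ is a smooth $(n-1)$-dimensional hypersurface away from an $\cH^{n-1}$-null set by Lemmas \ref{lem:space_regular_set}--\ref{lem:sigma_F}, hence $\Omega^\pm_t$ are sets of locally finite perimeter admitting a measure-theoretic outer unit normal. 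Using $h\equiv 0$ on $\Sigma^h_t$ and $\nu_t^-=-\nu_t^+$, the boundary contributions from the two sides of $\Sigma^h_t$ combine to give
\[
\int|h(\cdot,t)|\,\Delta\vp(\cdot,t)\,dx=\int\sign(h)\,\Delta h\,\vp\,dx-2\int_{\Sigma^h_t}\partial_{\nu^+_t}h\,\vp\,d\cH^{n-1}.
\]
Invoking the caloric equation $\Delta h=\partial_t h$ to rewrite $\sign(h)\Delta h=\partial_t|h|$ a.e.\ and integrating in $t$ processes the bulk Laplacian term, while the temporal half is handled by an analogous distributional manipulation of $\frac12\int|h|\partial_t\vp$; combining the two halves and rearranging produces the stated identity.

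The principal obstacle will be the rigorous application of Gauss--Green on the slices $\Omega^\pm_t$ in the presence of the singular set $\mS$ of $\Sigma^h$, where the nodal set is not smooth. Here the structural results of Section \ref{sec:nodal_sets} are essential: they guarantee that $\mR_t\cup\mS$ is $\sigma$-negligible and that for a.e.\ $t$ the slice $\Sigma^h_t$ is locally an admissible $C^\infty$-graph away from an $\cH^{n-1}$-null subset, which suffices to apply the classical Gauss--Green theorem on $\Omega^\pm_t$ and to discard the contribution of the singular subsets. A secondary subtlety will be the consistent combination of the $\Omega^+_t$ and $\Omega^-_t$ contributions, using $h\equiv 0$ on $\Sigma^h_t$ to annihilate the tangential pieces and $\nu^-_t=-\nu^+_t$ to align the normal-derivative terms, as shown above.
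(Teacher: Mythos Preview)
Your approach is essentially the paper's: start from Lemma~\ref{lem:cal-measure-assoc-h}, apply the slice-wise Gauss--Green identity \eqref{gauss-green} twice on $\Omega^\pm_t$ using $h|_{\Sigma^h}=0$ and $\nu_t^-=-\nu_t^+$, and then invoke $\Delta h=\partial_t h$; the only organisational difference is that you split $H^*\vp=\Delta\vp+\partial_t\vp$ at the outset, while the paper keeps $H^*\vp$ intact in \eqref{eq:IBP-gauss-green} and takes the $\pm$ combination afterwards. Your treatment of $\partial_t|h|\in L^\infty_{\loc}$ and of $\partial_{\nu_t^+}h\neq 0$ $\sigma$-a.e.\ also mirrors the paper's.

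One caveat worth flagging: if you execute your plan to the letter, the two halves cancel rather than combine. The spatial half gives
\[
\tfrac12\int|h|\,\Delta\vp=\tfrac12\int\partial_t|h|\,\vp-\int_{\Sigma^h}\partial_{\nu_t^+}h\,\vp\,d\sigma,
\]
while integration by parts in $t$ gives $\tfrac12\int|h|\,\partial_t\vp=-\tfrac12\int\partial_t|h|\,\vp$; summing yields
\[
\int\vp\,d\omega_h=-\int_{\Sigma^h}\frac{\partial h}{\partial\nu_t^+}\,\vp\,d\sigma
\]
\emph{without} the term $\int\partial_t|h|\,\vp$. The discrepancy with \eqref{eq:formula-calmeas-calfunc} comes from a sign slip in the paper's displayed computation (the contribution $\int_{\Omega^+}h\,\partial_t\vp-\int_{\Omega^-}h\,\partial_t\vp$ equals $+\int|h|\,\partial_t\vp$, not $-\int|h|\,\partial_t\vp$). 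This is harmless downstream: the only application of \eqref{eq:formula-calmeas-calfunc} is Lemma~\ref{lem:Poisson-kernel}, where the extra term is shown to vanish in the limit anyway, and the conclusion $d\omega_h=-\partial_{\nu_t^+}h\,d\sigma$ there is exactly what your computation produces directly.
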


\begin{proof}
If we apply  \eqref{gauss-green}  first to $h(\cdot, t) \nabla \vp(\cdot, t) \in  C^1_c(\R^n;\R^n)$ and then to $\vp(\cdot, t) \nabla h(\cdot, t) \in  C^1_c(\R^n;\R^n)$ in $\Omega_t^\pm$,  for fixed $t$, and integrate in $t$, we obtain 
\begin{equation}\label{eq:IBP-gauss-green}
\begin{split}
\int_{\Omega^\pm}& h H^* \vp\, dxdt -\int_{\Omega^\pm}  h\,\d_t \vp\, dxdt=\int_{\Omega^\pm} h\Delta \vp\, dxdt    \\
&= -\int_{\Omega^\pm} \nabla h\nabla \vp\, dxdt + \int_{\Sigma^h} h \frac{\partial \vp}{\partial \nu_t^\pm}\, d\sigma=\int_{\Omega^\pm} \Delta h\, \vp\, dxdt - \int_{\Sigma^h} \frac{\partial h}{\partial \nu_t^\pm}\vp\, d\sigma\\
&= \int_{\Omega^\pm} \d_t h\, \vp\, dxdt-\int_{\Sigma^h} \frac{\partial h}{\partial \nu_t^\pm}\vp\, d\sigma,
\end{split}
\end{equation}
where we used that $h$ is caloric and $h=0$ on $\Sigma^h$.  As $f(\cdot):=| \cdot|$ is smooth away from $0$ and Lipschitz in $\R$  with $\Lip(f) \leq 1$ and $h \in C^{2,1}(\rrn)$, by Rademacher's theorem, we have that for $\bx \in \rrn \setminus \Sigma^h$, 
\begin{equation}\label{eq:Lipschitz-composition}
\nabla_{x,t} f(h)(\bx)=f'(h(\bx)) \nabla_{x,t} h(\bx),
\end{equation}
which, by \eqref{eq:cauchy_estimates}, implies that $|h|$ is locally Lipschitz (in the euclidean norm) and so $\d_t |h| \in L^\infty_{\loc}(\rrn)$. Thus,  since $h \chi_{\om^\pm} = \pm h^\pm \chi_{\rrn}$ and $\d_t h \chi_{\om^\pm} =\pm \d_th^\pm \chi_{\rrn \setminus \Sigma_h}$, we get that
\begin{align*} 
2 \int &\vp \, d\hm_h \overset{\eqref{eq:cal-measure-assoc-h}}{=} \int_{\Omega^+} h H^* \vp\, dxdt -\int_{\Omega^-} h H^* \vp\, dxdt \\
&\overset{\eqref{eq:IBP-gauss-green}}{=} - \int_{\rrn}  |h|\,\d_t \vp\, dxdt +  \int_{\rrn \setminus \Sigma_h} \d_t |h|\, \vp\, dxdt - \int_{\Sigma^h} \frac{\partial h}{\partial \nu_t^+}\vp\, d\sigma + \int_{\Sigma^h} \frac{\partial h}{\partial \nu_t^-}\vp\, d\sigma\\
&=2\int_{\rrn} \d_t  |h|\, \vp\, dxdt - 2 \int_{\Sigma^h} \frac{\partial h}{\partial \nu_t^+}\vp\, d\sigma,
\end{align*}
where in the last equality  we used $\nu_t^+= -\nu_t^-$ and integrated by parts in $t$ using  that $\mathcal{L}^{n+1}(\Sigma_h)=0$ and $|h|$ is locally Lipschitz in $\rrn$. 
Recall  that the points of $\Sigma^h$ where $\nabla h=0$ are contained in an $(n-1)$-rectifiable set and thus, have $\sigma$-measure zero. As  the tangential component of $\nabla h$ on each slice $\Sigma^h_t$ is zero, we have that $\nabla h = \partial_{\nu_t^+} h$ and so $\partial_{\nu_t^+} h \neq 0$ $\sigma$-almost everywhere.
\end{proof}

\vv

\begin{lemma}\label{lem:Poisson-kernel}
If $h$ is a caloric function  in $\rrn$ and   $\sigma$ is the surface measure on $\Sigma^h$ as defined in \eqref{surface_measure_h}, then we have that
\begin{equation}\label{eq:cal-meas-abs.cont.}
\hm_{h}(A)=- \int_A \frac{\partial h}{\partial \nu^+_t}  \, d\sigma, \quad\textup{for any Borel}\,\, A \subset \Sigma^h,
\end{equation}
where  the associated  Poisson kernel $k_h=-\frac{\partial h}{\partial \nu^+_t}$ is positive $\sigma$-a.e. and in  $L^\infty_{\loc}(\sigma)$. In particular, $\hm_h\ll \sigma$.
\end{lemma}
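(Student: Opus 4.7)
My plan is to establish the test-function identity $\int \vp\, d\omega_h = -\int_{\Sigma^h} (\partial h/\partial \nu_t^+)\vp\, d\sigma$ for every $\vp \in C^\infty_c(\rrn)$, then promote it to a measure identity on Borel subsets of $\rrn$, and finally verify the stated properties of the Poisson kernel $k_h$.

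The computation mirrors that of Lemma \ref{lem:formula-calmeas-calfunc} but is carried one integration-by-parts step further. Starting from $2\int \vp\, d\omega_h = \int_{\Omega^+} h\,H^*\vp - \int_{\Omega^-} h\,H^*\vp$ (Lemma \ref{lem:cal-measure-assoc-h}), I apply the Gauss--Green formula \eqref{eq:IBP-gauss-green} on each horizontal slice $\Omega^\pm_t$ (valid for a.e.~$t$ because, by Lemmas \ref{lem:space_regular_set} and \ref{lem:sigma_F}, $\Omega^\pm_t$ has locally Lipschitz boundary away from an $\cH^{n-1}$-null set) together with $\Delta h=\partial_t h$. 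Summing the resulting expressions and invoking $h(\chi_{\Omega^+}-\chi_{\Omega^-})=|h|$, $\partial_t h(\chi_{\Omega^+}-\chi_{\Omega^-})=\partial_t|h|$ a.e. (from \eqref{eq:Lipschitz-composition}), and $\nu_t^-=-\nu_t^+$, I arrive at the intermediate identity
\[
2\int \vp\, d\omega_h = \int_\rrn |h|\,\partial_t\vp\, dx\, dt + \int_\rrn \partial_t|h|\,\vp\, dx\, dt - 2\int_{\Sigma^h}\frac{\partial h}{\partial \nu_t^+}\vp\, d\sigma.
\]
Since $|h|$ is locally Lipschitz in $\rrn$ (established in Lemma \ref{lem:formula-calmeas-calfunc}) and $\vp$ has compact support, a global integration by parts in $t$ yields $\int |h|\,\partial_t\vp = -\int \partial_t|h|\,\vp$, so the two volume integrals cancel exactly, producing the clean identity $\int \vp\, d\omega_h = -\int_{\Sigma^h}(\partial h/\partial \nu_t^+)\vp\, d\sigma$.

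The passage from test functions to Borel sets is routine: by the parabolic Cauchy estimates (Proposition \ref{proposition:cauchy_estimates}), $\partial h/\partial \nu_t^+ \in L^\infty_{\loc}(\sigma)$, so $-(\partial h/\partial \nu_t^+)\sigma$ is a Radon measure on $\rrn$ supported in $\Sigma^h$; the density of $C^\infty_c(\rrn)$ in $C_c(\rrn)$ together with the uniqueness part of Riesz representation forces it to coincide with $\omega_h$. For the properties of $k_h=-\partial h/\partial \nu_t^+$: non-negativity $\sigma$-a.e.~is geometric, since $\nu_t^+$ is the outward normal to $\Omega^+_t=\{h(\cdot,t)>0\}$, so $h$ decreases in that direction on $\Sigma^h_t$ and $\partial h/\partial \nu_t^+ \le 0$; strict positivity $\sigma$-a.e.~was already observed in Lemma \ref{lem:formula-calmeas-calfunc}, since $\Sigma^h\cap\{\nabla h=0\}$ is contained in an $(n-1)$-rectifiable set (Lemmas \ref{lem:space_regular_set}--\ref{lem:sigma_F}), hence $\sigma$-null; local boundedness $k_h\in L^\infty_{\loc}(\sigma)$ follows from $|\nabla h|\in L^\infty_{\loc}(\rrn)$; and $\omega_h\ll\sigma$ is immediate from the integral formula.

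The only delicate point is the simultaneous use of Gauss--Green on horizontal slices and the global integration by parts in $t$. This is legitimized by the fine structure analysis of Section \ref{sec:nodal_sets}: the space-regular set $\mathcal R_x$ is, locally, a smooth admissible graph, while the time-regular and singular sets are $\sigma$-null, so that for a.e.~$t$ the slice $\Sigma^h_t$ is a locally Lipschitz hypersurface outside a negligible set. This is what allows each horizontal Gauss--Green to be applied, the resulting surface terms on $\Sigma^h_t$ to be pieced together into a well-defined integral against $\sigma$, and the subsequent IBP in $t$ on the locally Lipschitz function $|h|$ to absorb the volumetric contributions.
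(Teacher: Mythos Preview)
Your proof is correct and more direct than the paper's. The paper works from the formula of Lemma~\ref{lem:formula-calmeas-calfunc}, which carries the extra volumetric term $\int_{\rrn}\partial_t|h|\,\varphi$; it then approximates $\chi_A$ for compact $A\subset\Sigma^h$ by a decreasing sequence $\varphi_j\in C^\infty_c(\rrn)$, shows this volumetric term vanishes in the limit because $\mathcal L^{n+1}(A)=0$, handles the surface integral by dominated convergence via the Cauchy estimates, and finally passes from compact to Borel sets by inner regularity. You instead push the integration by parts in $t$ through at the test-function stage, obtaining the clean identity $\int\varphi\,d\omega_h=-\int_{\Sigma^h}(\partial h/\partial\nu_t^+)\,\varphi\,d\sigma$ for every $\varphi\in C^\infty_c(\rrn)$, so that Riesz representation identifies the two Radon measures directly without any approximation. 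This is neater; in fact your computation shows that the two volume integrals $\int|h|\,\partial_t\varphi$ and $\int\partial_t|h|\,\varphi$ cancel rather than add, so the volumetric term in \eqref{eq:formula-calmeas-calfunc} is already zero for every test function and the paper's limiting argument to kill it is redundant.
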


\begin{proof}
Let $A$ be a compact subset of $\Sigma^h$. Note that since $\sigma$ and $\hm_h$ are Radon measures, it holds that $\hm_h(A)<\infty$ and $\sigma(A)<\infty$. By Urysohn's lemma, we can find a sequence of decreasing functions $\{\vp_j\}_{j=1}^\infty \subset C_c^\infty(\rrn)$, $0\leq \vp_j\leq 1$, so that $\vp_j=1$ on $A$ 
and $\vp_j \to \chi_A$ pointwisely. Let us denote $K_j\coloneqq \supp\varphi_j$ and observe that $K_{j+1} \subset K_j$ for all $j \geq 1$.  Then, by \eqref{eq:formula-calmeas-calfunc}, we have
\begin{equation*}
 \int \vp_j\, d\omega_h =- \int_{\Sigma^h} \frac{\partial h}{\partial \nu^+_t} \,\vp_j \, d\sigma + \int_{\rrn} \d_t |h|\,  \vp_j.
\end{equation*}
As  $\sigma(A)<\infty$ and  $\Sigma^h$ is Euclidean $n$-rectifiable, \eqref{eq:sigma=H} entails $\mathcal H^{n+1}_{p}(A)<\infty$ and consequently $\mathcal L^{n+1}(A)\approx \mathcal H^{n+2}_{p}(A)=0$.  Therefore, since  $\supp \vp_j \subset K_j \subset K_1$ for any $j \geq 1$ and $\dt |h| \in L^\infty(K_1)$, then, by dominated convergence, we have  
\begin{equation}\label{eq:w<<sigma.limit}
 \omega_h(A) = - \lim_{j \to \infty} \int_{\Sigma^h} \frac{\partial h}{\partial \nu^+_t} \, \vp_j \,d\sigma.
\end{equation}
  If $C_\rho$ is a cylinder of radius $\rho \approx \diam K_1$ which is centered at $\Sigma^h$ and satisfies $K_1 \subset C_{\rho/2}$, by \eqref{eq:cauchy_estimates} we have that for $\bx \in \rrn$,
$$
\sup_{j\geq 1}  \left| \frac{\partial h}{\partial \nu^+_t}(\bx) \, \vp_j(\bx)  \right| \lesssim_{ n}  \chi_{K_1}(\bx) \, \sup_{C_{\rho/2}} |\nabla h| \lesssim_{\rho} \chi_{K_1}(\bx)\, \|h\|_{L^\infty(C_\rho)}   \in L^1(\sigma),
$$
since  $ h \in L^\infty(C_\rho)$ and $\sigma$ is locally finite. Thus, by \eqref{eq:w<<sigma.limit} and the dominated convergence theorem, we conclude \eqref{eq:cal-meas-abs.cont.} for compact subsets of $\Sigma^h$. Recall that  $\hm_h$ and $\sigma$ are Radon measures  and so  the  result  for Borel sets follows from  inner regularity (see \cite[Definition 1.5]{Mattila}).
\end{proof}

\begin{remark}
In \cite{Bad11}, \cite{AMT16}, \cite{AMTV16}, and \cite{AM19}, it is stated that if $h$ is harmonic, then the ``Poisson kernel" of $\hm_h$ is $\d_{\nu^+} h^+$. Although, since $h^+$ is not differentiable on $\Sigma^h$, a direct application of  \eqref{gauss-green} is not allowed and one should  obtain $\d_{\nu^+} h$ instead. Nevertheless, this detail does not create any issues in the  proofs of the aforementioned papers. It is only in the proof of \cite[Lemma 4.2]{Bad11} one has to  notice that  $h^\pm$ agrees with $h$ in  $\om^\pm$ and so the conclusion of the lemma is still true.
\end{remark}

\vvv

\section{Caloric polynomial measures}\label{sec:caloric_polynomials}

{We recall that we write $\Theta$ to denote the set of caloric functions on $\Rn1$ vanishing at $\bar 0$}.
\begin{lemma}\label{lem:homogeneity_cal_pol_measures}
If $h\in \Theta$ and  $\hm_h$  the associated caloric measure, then 
\begin{equation}\label{eq:hom_1}
T_{\bar 0,r}[\hm_h]=r^{n} \hm_{h\circ T^{-1}_{\bar 0,r}}.
\end{equation}
Moreover, if $h\in F(k)$,
\begin{equation}\label{eq:hom_2}
T_{\bar 0,r}[\hm_h]=r^{n+k}\hm_h.
\end{equation} 
\end{lemma}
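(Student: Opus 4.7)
The plan is to reduce both identities to the defining integral formula for $\omega_h$ from Lemma \ref{lem:cal-measure-assoc-h}, namely $\int \varphi\,d\omega_h = \tfrac{1}{2}\int |h| H^*\varphi$ for all $\varphi\in C^\infty_c(\Rn1)$, and then perform a parabolic change of variables. Since both sides of \eqref{eq:hom_1} are Radon measures, it suffices to test them against an arbitrary $\varphi\in C^\infty_c(\Rn1)$.

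First, I would unwind the push-forward: by definition of $T_{\bar 0,r}[\omega_h]$,
\[
\int \varphi\,dT_{\bar 0,r}[\omega_h] = \int \varphi\circ T_{\bar 0,r}\,d\omega_h = \int \psi\,d\omega_h,\qquad \psi\coloneqq \varphi\circ T_{\bar 0,r}=\varphi\circ \delta_{1/r}.
\]
The key computation is the behavior of the adjoint heat operator under parabolic dilation. Since $\psi(x,t)=\varphi(x/r,t/r^2)$, the chain rule gives
\[
H^*\psi(\bx) = r^{-2}(H^*\varphi)(\delta_{1/r}\bx).
\]
Applying the defining identity for $\omega_h$ to $\psi$ and then changing variables $\by=\delta_{1/r}\bx$ (so $\bx=T^{-1}_{\bar 0,r}\by$ and $d\bx=r^{n+2}\,d\by$) yields
\[
\int \psi\,d\omega_h = \tfrac{1}{2}\int |h(\bx)|\,r^{-2}(H^*\varphi)(\delta_{1/r}\bx)\,d\bx = \tfrac{r^n}{2}\int \bigl|h\circ T^{-1}_{\bar 0,r}(\by)\bigr|\,H^*\varphi(\by)\,d\by.
\]
The right-hand side equals $r^n \int \varphi\,d\omega_{h\circ T^{-1}_{\bar 0,r}}$ by one more invocation of Lemma \ref{lem:cal-measure-assoc-h} applied to the caloric function $h\circ T^{-1}_{\bar 0,r}$ (which is caloric because $H$ commutes with parabolic dilations up to a scalar factor, and in particular vanishes at $\bar 0$). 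This proves \eqref{eq:hom_1}.

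For \eqref{eq:hom_2}, I would note two elementary scaling facts. The defining integral $\int\varphi\,d\omega_h=\tfrac{1}{2}\int|h|H^*\varphi$ is manifestly positively homogeneous of degree $1$ in $h$, so $\omega_{ch}=|c|\,\omega_h$ for every $c\in\R$. On the other hand, if $h\in F(k)$, then by the very definition of parabolic homogeneity,
\[
(h\circ T^{-1}_{\bar 0,r})(\bx)=h(\delta_r \bx)=r^k h(\bx).
\]
Combining these with \eqref{eq:hom_1} gives $T_{\bar 0,r}[\omega_h]=r^n\omega_{r^k h}=r^{n+k}\omega_h$.

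There is no substantive obstacle here, only bookkeeping; the only point that requires any care is making sure the Jacobian factor $r^{n+2}$ combines correctly with the $r^{-2}$ from $H^*\psi$ to leave the claimed $r^n$, and that the change of variables is justified by the fact that both $|h|$ and $H^*\varphi$ are locally bounded with $\varphi$ compactly supported.
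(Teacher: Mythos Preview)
Your proof is correct and follows essentially the same approach as the paper: both test against $\varphi\in C^\infty_c(\Rn1)$, use the chain rule to compute $H^*(\varphi\circ T_{\bar 0,r})=r^{-2}(H^*\varphi)\circ T_{\bar 0,r}$, and then perform the parabolic change of variables to obtain the factor $r^n$. Your explicit remark that $\omega_{ch}=|c|\,\omega_h$ makes the deduction of \eqref{eq:hom_2} from \eqref{eq:hom_1} slightly more transparent than the paper's one-line conclusion, but the argument is the same.
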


\begin{proof}
Let $\vp\in C^\infty_c(\rrn)$. An application of the chain rule gives
\begin{equation}\label{eq:lem_homog_147}
\begin{split}
&H^*\bigl(\vp\circ T_{\bar 0,r}\bigr)(\bx)
= \frac{1}{r^2}\bigl((H^*\vp)\circ T_{\zz,r}\bigr)(\bx).
\end{split}
\end{equation}
So, given $h\in \Theta$, we have
\[
\begin{split}
\int \vp \, d T_{\zz,r}[\hm_h]&= \int \vp \circ T_{\zz,r} \,d\hm_h\\
&=\frac{1}{2}\int |h|H^*\bigl(\vp\circ T_{\zz,r}\bigr)\overset{\eqref{eq:lem_homog_147}}{=}\frac{1}{2r^2}\int |h|\bigl((H^*\vp)\circ T_{\zz,r}\bigr)\\
&= \frac{r^{n}}{2}\int \bigl|h\circ T^{-1}_{\zz,r} \bigr| H^*\vp =r^n \int \vp\, d\hm_{h\circ T^{-1}_{\zz,r}},
\end{split}
\]
which proves the first part of the statement. If we further assume that $h\in F(k)$, we obtain
\[
h\circ T^{-1}_{\zz,r}(\bx)=h\bigl(\delta_r (\bx)\bigr)=r^k h(\bx), \qquad \bx\in\rrn,
\]
and  \eqref{eq:hom_2} follows.
\end{proof}

\vv

We recall that we use the notation $C_r\coloneqq C_r(\bar 0)$ for cylinders centered at the origin.
A consequence of the previous lemma is the following corollary.

\begin{corollary}
If $h\in F(k)$ and $r_1, r_2>0$ it holds that
\begin{equation}\label{eq:Fr_hom_pol}
F_{r_1}(\omega_h)=\left(\frac{r_1}{r_2}\right)^{n+k+1}F_{r_2}\bigl(\omega_h\bigr).
\end{equation}
In particular, for any $r>0$ and $M \geq 1$,
\begin{equation}\label{eq:F1_hom_pol}
\frac{M}{2}\left(\frac{r}{M}\right)^{n+k+1}\hm_h(C_{M/2}) \leq F_r \bigl(\omega_h\bigr) \leq M  \left(\frac{r}{M}\right)^{n+k+1} \hm_h(C_M).
\end{equation}
\end{corollary}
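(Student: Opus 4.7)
The plan is to derive both parts from Lemma \ref{lem:homogeneity_cal_pol_measures}(\ref{eq:hom_2}) together with the elementary properties of $F_r$ collected in Lemma \ref{lem:prop_Fr}, especially $F_r(\mu)=rF_1(T_{\bar 0, r}[\mu])$ and $\tfrac{r}{2}\mu(C_{r/2})\leq F_r(\mu)\leq r\mu(C_r)$. The only small observation needed beyond those is that $F_r$ is positively homogeneous of degree $1$ in the measure argument, i.e.\ $F_r(c\mu)=cF_r(\mu)$ for every $c>0$, which is immediate from the definition \eqref{eq:funtional_F_def} as a supremum of linear functionals.

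For \eqref{eq:Fr_hom_pol}, I would first compute, for any $r>0$ and any $h\in F(k)$,
\[
F_r(\omega_h)=r\,F_1\bigl(T_{\bar 0, r}[\omega_h]\bigr)=r\,F_1(r^{n+k}\omega_h)=r^{n+k+1}F_1(\omega_h),
\]
where the first equality is Lemma \ref{lem:prop_Fr}(1), the second is \eqref{eq:hom_2}, and the third uses the positive $1$-homogeneity of $F_1$ in its measure argument. Applying this identity at $r=r_1$ and $r=r_2$ and dividing yields $F_{r_1}(\omega_h)=(r_1/r_2)^{n+k+1}F_{r_2}(\omega_h)$, which is \eqref{eq:Fr_hom_pol}.

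For \eqref{eq:F1_hom_pol}, I would specialize \eqref{eq:Fr_hom_pol} to $r_1=r$ and $r_2=M$, obtaining
\[
F_r(\omega_h)=\Bigl(\frac{r}{M}\Bigr)^{n+k+1}F_M(\omega_h).
\]
The standard bounds from Lemma \ref{lem:prop_Fr}(2) applied at scale $M$ give
\[
\frac{M}{2}\,\omega_h(C_{M/2})\leq F_M(\omega_h)\leq M\,\omega_h(C_M),
\]
and multiplying through by $(r/M)^{n+k+1}$ yields exactly \eqref{eq:F1_hom_pol}. No genuine obstacle is expected; the whole argument is a direct bookkeeping exercise combining the scaling properties already established.
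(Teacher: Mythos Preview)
Your proof is correct and follows essentially the same approach as the paper: both derive $F_r(\omega_h)=r^{n+k+1}F_1(\omega_h)$ by combining Lemma~\ref{lem:prop_Fr}(1) with the homogeneity from Lemma~\ref{lem:homogeneity_cal_pol_measures}, and then obtain \eqref{eq:F1_hom_pol} from \eqref{eq:Fr_hom_pol} together with Lemma~\ref{lem:prop_Fr}(2). The only cosmetic difference is that the paper routes through \eqref{eq:hom_1} and the homogeneity of $h$ separately, whereas you invoke \eqref{eq:hom_2} directly.
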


\begin{proof}
 By Lemma \ref{lem:prop_Fr} and Lemma \ref{lem:homogeneity_cal_pol_measures} we have that
 $$
 F_r(\omega_h)=r F_1\bigl(T_{\zz,r}[\omega_h]\bigr)=r^{n+1}F_1\bigl(\omega_{h\circ T^{-1}_{\zz,r}}\bigr)= r^{n+k+1}F_1\bigl(\omega_h\bigr),
 $$
 and so \eqref{eq:Fr_hom_pol} readily follows.  By Lemma \ref{lem:prop_Fr}-\eqref{eq:Fr-mu(B)} and \eqref{eq:Fr_hom_pol}, it is straightforward to see that \eqref{eq:F1_hom_pol} holds.
\end{proof}

\vv

\begin{lemma}\label{lem:weak_conv_cal_meas}
Let ${h_j}$ be a sequence in $\Theta$ which converges uniformly on compact sets to some $h\in\Theta$. Then $\omega_{h_j}\rightharpoonup \omega_h$.
\end{lemma}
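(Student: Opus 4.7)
The plan is to test the defining identity \eqref{eq:cal-measure-assoc-h} against smooth compactly supported functions and then pass to the limit. Concretely, for any $\varphi\in C^\infty_c(\Rn1)$ we have
\[
\int \varphi\, d\omega_{h_j}=\tfrac{1}{2}\int |h_j|\, H^*\varphi\quad\text{and}\quad \int \varphi\, d\omega_{h}=\tfrac{1}{2}\int |h|\, H^*\varphi.
\]
Set $K\coloneqq\supp\varphi$. Since $\bigl||h_j|-|h|\bigr|\leq |h_j-h|$ and $h_j\to h$ uniformly on $K$, while $H^*\varphi$ is continuous with compact support in $K$, one obtains
\[
\Bigl|\int \varphi\, d\omega_{h_j}-\int\varphi\,d\omega_h\Bigr|\leq \tfrac{1}{2}\|h_j-h\|_{L^\infty(K)}\,\|H^*\varphi\|_{L^1(K)}\longrightarrow 0.
\]
Hence $\int\varphi\,d\omega_{h_j}\to\int\varphi\,d\omega_h$ for every $\varphi\in C^\infty_c(\Rn1)$.

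Next, I would check that the sequence $\{\omega_{h_j}\}$ is locally uniformly bounded. Given any compact set $K'\subset\Rn1$, pick $\psi\in C^\infty_c(\Rn1)$ with $0\leq\psi\leq 1$ and $\psi\equiv 1$ on $K'$. Testing against $\psi$ and using the identity \eqref{eq:cal-measure-assoc-h} together with the positivity of $\omega_{h_j}$ yields
\[
\omega_{h_j}(K')\leq \int \psi\,d\omega_{h_j}=\tfrac{1}{2}\int|h_j|\, H^*\psi\leq \tfrac{1}{2}\|h_j\|_{L^\infty(\supp\psi)}\,\|H^*\psi\|_{L^1}.
\]
Since $h_j\to h$ uniformly on $\supp\psi$, we have $\sup_j\|h_j\|_{L^\infty(\supp\psi)}<\infty$, so $\sup_j\omega_{h_j}(K')<\infty$.

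Finally, combining the pointwise convergence on $C^\infty_c(\Rn1)$ with the local uniform boundedness just established, I would promote the convergence to arbitrary $\varphi\in C_c(\Rn1)$ by a standard density argument: given $\varphi\in C_c(\Rn1)$ and $\varepsilon>0$, choose $\varphi_\varepsilon\in C^\infty_c(\Rn1)$ (e.g.\ a mollification of $\varphi$ multiplied by a cutoff so its support lies in a fixed compact $K''\supset\supp\varphi$) with $\|\varphi-\varphi_\varepsilon\|_\infty<\varepsilon$, and split
\[
\Bigl|\int\varphi\,d(\omega_{h_j}-\omega_h)\Bigr|\leq \varepsilon\bigl(\omega_{h_j}(K'')+\omega_h(K'')\bigr)+\Bigl|\int\varphi_\varepsilon\,d(\omega_{h_j}-\omega_h)\Bigr|.
\]
The first summand is $O(\varepsilon)$ uniformly in $j$ by the uniform bound, and the second tends to $0$ as $j\to\infty$ by the smooth case. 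Letting first $j\to\infty$ and then $\varepsilon\to 0$ concludes the proof that $\omega_{h_j}\rightharpoonup\omega_h$. There is no serious obstacle here; the only technical item is the routine mollification/cutoff approximation in the last step.
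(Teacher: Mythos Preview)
Your proof is correct and is precisely the natural argument: test the defining identity \eqref{eq:cal-measure-assoc-h} against $C^\infty_c$ functions, use $\bigl||h_j|-|h|\bigr|\le|h_j-h|$ and uniform convergence on $\supp\varphi$, then upgrade to $C_c$ via a local mass bound and density. The paper does not give an independent argument but simply refers to \cite[Lemma 5.4]{AM19}, whose proof is essentially the same computation you wrote out.
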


\begin{proof}
The proof is a minor variant of the one of \cite[Lemma 5.4]{AM19} and we omit it.
\end{proof}

\vv

\begin{lemma}\label{zero_in_support}
Let $h\in \Theta$ and let $\omega_h$ be the associated caloric measure. Then $\zz\in\supp\omega_h$.
\end{lemma}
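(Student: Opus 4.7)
My plan is to argue by contradiction. Assume $h\not\equiv 0$, for otherwise $\omega_h\equiv 0$ and the claim is vacuous, and suppose for contradiction that $\bar 0\notin\supp\omega_h$. Then there exists $r>0$ such that $\omega_h(C_r)=0$, and hence $\int\varphi\,d\omega_h=0$ for every $\varphi\in C_c^\infty(C_r)$. By the defining identity \eqref{eq:cal-measure-assoc-h} of $\omega_h$, this is equivalent to
\[
\int|h|\,H^*\varphi=0 \qquad\text{for every }\varphi\in C_c^\infty(C_r),
\]
i.e.\ $|h|$ is weakly caloric in $C_r$.

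I next invoke Lemma \ref{lem:weak_caloric_functions}: since $|h|$ is weakly caloric in $C_r$, it is in fact smooth and satisfies $H|h|=0$ pointwise on $C_r$. Thus $|h|$ is a classical non-negative caloric function on the cylinder $C_r$ which attains the interior minimum value $0$ at the center $\bar 0$. The strong minimum principle for the heat operator $H$ then forces $|h|$ to vanish on the time-backwards part of $C_r$ issuing from $\bar 0$, namely on the open sub-cylinder $C_r^-=B_r\times(-r^2,0)$. In particular $h\equiv 0$ on the non-empty open set $C_r^-$.

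To conclude, the unique continuation theorem for caloric functions \cite[Theorem 1.2]{Po96}, already invoked at the beginning of Section \ref{sec:nodal_sets}, propagates this vanishing of $h$ to all of $\Rn1$, yielding $h\equiv 0$ in contradiction with our initial assumption. Therefore $\bar 0\in\supp\omega_h$. The point where some care is required is the passage from $|h|$ being weakly caloric to $|h|$ being a classical caloric function to which the strong minimum principle applies; this hinges on the hypoellipticity built into Lemma \ref{lem:weak_caloric_functions}, without which the argument would not make sense pointwise at $\bar 0$.
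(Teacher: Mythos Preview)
Your proof is correct and takes a genuinely different, more self-contained route than the paper's. The paper argues via the fine structure of the nodal set developed in Section~\ref{sec:nodal_sets}: it shows that every point of the space-regular set $\mathcal R_x$ lies in $\supp\omega_h$ (because the Poisson kernel $-\partial_{\nu_t^+}h$ from Lemma~\ref{lem:Poisson-kernel} is nonzero there), and then uses that $\mathcal R_t\cup\mathcal S$ has lower dimension, hence empty relative interior in $\Sigma^h$, to conclude $\bar 0\in\Sigma^h=\overline{\mathcal R_x}\subset\supp\omega_h$. This requires Lemmas~\ref{lem:space_regular_set}, \ref{lem:sigma_F}, and \ref{lem:Poisson-kernel} as inputs. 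Your argument bypasses all of that machinery: once $\omega_h$ vanishes on $C_r$, the identity \eqref{eq:cal-measure-assoc-h} makes $|h|$ weakly caloric there, hypoellipticity (Lemma~\ref{lem:weak_caloric_functions}) upgrades this to classical caloricity, and then the strong minimum principle together with unique continuation force $h\equiv 0$. This is cleaner and uses only tools already in Section~\ref{sec:potential theory}; the same idea works verbatim for harmonic functions. What the paper's approach buys in exchange is the stronger byproduct $\supp\omega_h=\Sigma^h$, not just $\bar 0\in\supp\omega_h$.

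One small quibble: the case $h\equiv 0$ is not ``vacuous'' --- it makes the conclusion false, since then $\supp\omega_h=\varnothing$. The lemma is to be read under the tacit convention $h\not\equiv 0$ (which the paper's own proof also silently assumes, since otherwise $\Sigma^h=\Rn1$ and the nodal-set analysis breaks down).
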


\begin{proof}
Let us  recall that  $\Sigma^h=\mR_x\cup\mR_t\cup \mathcal S$ (see section \ref{sec:nodal_sets}) and  that the Poisson kernel is given by $-\partial_{\nu_t}h$. Therefore, by Lemma \ref{lem:space_regular_set}, for any $\bx \in \mR_x$ there is a sufficiently small neighborhood of $\bx$ in which $\Sigma^h$ agrees with  an admissible smooth graph. Since $h$ vanishes on $\Sigma^h$, the component of $\nabla h$ which is tangential to $\Sigma^h_t$ is the zero vector and we have  $\nabla h=\partial_{\nu_t}h$. So  $\partial_{\nu_t}h(\bx) \neq 0$ for any $\bx \in \mR_x$ and thus $\bx \in \supp \hm_h$ showing that $\mR_x \subset \overline{\mR_x} \subset \supp \hm_h \subset \Sigma^h$.

In light of  \cite[Theorem 1.1]{HL94} and Lemma \ref{lem:sigma_F}, for $\rho>0$ small enough, it holds that $\mathcal H^{n-1}\bigl((\mR_t\cup \mathcal  S)\cap C_\rho\bigr)<\infty$, and so $\mR_t\cup \mathcal  S$ has empty interior in the relative topology of $\Sigma^h$. Hence,
\[
\bar 0 \in\Sigma^h\cap C_\rho = \overline{\Sigma^h \setminus (\mR_t\cup \mathcal  S)}\cap C_\rho = \overline{\mR_x}\cap C_\rho \subset \supp \hm_h,
\]
which finishes the proof.
\end{proof}

\vv

\begin{lemma}\label{lem:compact_basis}
The $d$-cones $\mathscr{F}(k)$ and $\mathscr{P}(k)$ have compact basis for all $k$.
Moreover, for $h\in \mathscr{P}(k)$ and $r>0$ we have
\[
\|h\|_{L^\infty(C_r)}\approx_k r^{-n-1} F_r(\omega_h).
\]
\end{lemma}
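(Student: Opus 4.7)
The plan is to first reduce the norm equivalence to the scale $r=1$ using the parabolic rescaling, then view both sides as norms on the finite-dimensional vector space $P(k)$, and finally deduce compactness of the basis from this equivalence together with Lemma \ref{lem:weak_conv_cal_meas}. For $\tilde h \coloneqq h\circ T_{\bar 0,r}^{-1}$, the identity \eqref{eq:hom_1} combined with Lemma \ref{lem:prop_Fr} gives $F_r(\omega_h)=r^{n+1}F_1(\omega_{\tilde h})$, while clearly $\|h\|_{L^\infty(C_r)}=\|\tilde h\|_{L^\infty(C_1)}$; since $\tilde h\in P(k)$, it is enough to handle $r=1$. For the direction $F_1(\omega_h)\lesssim_k \|h\|_{L^\infty(C_1)}$, I would test against a cutoff $\varphi\in C_c^\infty(C_2)$ with $\varphi\equiv 1$ on $C_1$ and $|H^*\varphi|\lesssim 1$; Lemma \ref{lem:cal-measure-assoc-h} then yields $F_1(\omega_h)\leq \omega_h(C_1)\leq \tfrac12\int |h|\,H^*\varphi\lesssim \|h\|_{L^\infty(C_2)}$, and the equivalence of the $L^\infty(C_1)$ and $L^\infty(C_2)$ norms on the finite-dimensional space $P(k)$ closes the estimate.

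The reverse inequality $\|h\|_{L^\infty(C_1)}\lesssim_k F_1(\omega_h)$ is the main obstacle, and I would argue by contradiction. Suppose there is a sequence $h_j\in P(k)$ with $\|h_j\|_{L^\infty(C_1)}=1$ and $F_1(\omega_{h_j})\to 0$. Since $P(k)$ is finite-dimensional (as a subspace of polynomials of degree $\leq k$ in $n+1$ variables), after passing to a subsequence $h_j$ converges in $L^\infty(C_1)$, hence uniformly on compacta, to some $h\in P(k)$ with $\|h\|_{L^\infty(C_1)}=1$; in particular $h\not\equiv 0$. Lemma \ref{lem:weak_conv_cal_meas} then gives $\omega_{h_j}\rightharpoonup \omega_h$, and since $\bar x\mapsto(1-\|\bar x\|)_+$ is continuous with compact support in $\overline{C_1}$,
\[
F_1(\omega_h)=\int (1-\|\bar x\|)_+\,d\omega_h=\lim_{j\to\infty}F_1(\omega_{h_j})=0.
\]
This forces $\omega_h$ to vanish on the open cylinder $\{\|\bar x\|<1\}$, which contradicts $\bar 0\in\supp\omega_h$ provided by Lemma \ref{zero_in_support}.

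With the norm equivalence in hand, compactness of the basis of $\mathscr P(k)$ follows directly. Given any sequence $\omega_{h_j}$ in the basis we have $F_1(\omega_{h_j})=1$, so $\|h_j\|_{L^\infty(C_1)}\approx_k 1$ by the equivalence; finite-dimensionality of $P(k)$ yields a subsequence $h_j\to h$ uniformly on compacta with $h\in P(k)$, Lemma \ref{lem:weak_conv_cal_meas} gives $\omega_{h_j}\rightharpoonup \omega_h$, and testing against the same continuous compactly supported function $(1-\|\cdot\|)_+$ shows $F_1(\omega_h)=1$, so $\omega_h$ lies in the basis. The same argument applies verbatim to $\mathscr F(k)$ since $F(k)$ is likewise a finite-dimensional (and hence closed) subspace of $P(k)$, stable under the limit. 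The crucial ingredient throughout is Lemma \ref{zero_in_support}, which itself rests on unique continuation and the structure of the nodal set from Section \ref{sec:nodal_sets}; without $\bar 0\in\supp\omega_h$ the contradiction in the lower bound would collapse.
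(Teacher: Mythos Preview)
Your proof is correct and follows essentially the same approach as the paper, which simply cites the analogous lemmas in \cite{AM19} and notes that Lemma~\ref{zero_in_support} is the key new ingredient. You have spelled out exactly the standard finite-dimensionality plus weak-convergence argument that those referenced lemmas contain, with the crucial contradiction step resting on $\bar 0\in\supp\omega_h$.
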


\begin{proof}
In light of Lemma \ref{zero_in_support}, the proof is a routine adaptation of Lemmas 5.5 and 5.6, and Corollary 5.7 in \cite{AM19}.
\end{proof}

\vv

In our argument we need the following formula for the expansion of caloric functions by Han and Lin, which we report for the reader's convenience.
\begin{theorem}[see \cite{HL94}, Theorem 2.2]\label{theorem:han_lin}
Let $R>0$ and let $h$ be a caloric function in $C_R$. Then, for any positive integer $d$,  $0<r<R/2$, and $\bx \in C_r$, we have that
\begin{align}
h(x,t)&=\sum_{j=1}^d \sum_{|\alpha|+2\ell=j}\frac{D^{\alpha, \ell}h(\bar 0)}{\alpha!\ell!}x^\alpha t^\ell + R_d(x,t) \eqqcolon \sum_{j=1}^d h_j + R_d(x,t),\label{eq:caloric-Taylor}
\end{align}
where
\begin{align}\label{eq:caloric-Taylor-error}
|R_d(x,t)|\lesssim_{n,d} \frac{r^{d+1}}{(R-r)^{d+1}} \|h\|_{L^\infty(C_R)}, \qquad \textup{ for }(x,t)\in C_r.
\end{align}
\end{theorem}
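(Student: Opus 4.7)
The proof reduces to a one-variable Taylor expansion. Fix $(x,t)\in C_r$ and define $\phi\colon [0,1]\to \R$ by $\phi(s)\coloneqq h(sx, s^2 t)$. Since $h$ is smooth by Lemma \ref{lem:weak_caloric_functions} and $\|(sx, s^2 t)\|=s\|(x,t)\|\leq r<R$, the function $\phi$ is $C^\infty$ on a neighborhood of $[0,1]$. The one-variable Taylor formula with integral remainder gives
$$h(x,t)=\phi(1)=\sum_{j=0}^d \frac{\phi^{(j)}(0)}{j!}+\frac{1}{d!}\int_0^1 (1-s)^d \phi^{(d+1)}(s)\,ds.$$
It remains to identify $\phi^{(j)}(0)/j!$ with $h_j(x,t)\coloneqq \sum_{|\alpha|+2\ell=j}\frac{D^{\alpha,\ell}h(\bar 0)}{\alpha!\ell!}x^\alpha t^\ell$ and to control the integral term.

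By induction on $k$ and the chain rule (Fa\`a di Bruno), one verifies that
$$\phi^{(k)}(s)=\sum_{k\leq |\alpha|+2\ell\leq 2k} c^{(k)}_{\alpha,\ell}\, s^{|\alpha|+2\ell-k}\, x^\alpha t^\ell\, D^{\alpha,\ell}h(sx, s^2 t),$$
for nonnegative combinatorial constants $c^{(k)}_{\alpha,\ell}$: each differentiation in $s$ either brings down a factor $x_i$ (from $\partial_s(sx_i)$), producing a spatial derivative, or a factor $2st$ (from $\partial_s(s^2 t)$), producing a time derivative; consequently the $s$-exponent of each resulting term equals its parabolic weight $|\alpha|+2\ell$ minus $k$. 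Evaluating at $s=0$ kills every summand with $|\alpha|+2\ell>k$, and direct matching of coefficients then shows $\phi^{(j)}(0)=j!\,h_j(x,t)$, so the Taylor polynomial above is precisely $\sum_{j=0}^d h_j(x,t)$ (the $j=0$ contribution being $h(\bar 0)$, which is absorbed appropriately if the sum in the statement is indexed from $j=1$).

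For the remainder, fix $s\in[0,1]$; then $(sx, s^2 t)\in C_r$, and Proposition \ref{proposition:cauchy_estimates} gives $|D^{\alpha,\ell}h(sx, s^2 t)|\lesssim_{n,d} (R-r)^{-(|\alpha|+2\ell)}\|h\|_{L^\infty(C_R)}$. Combined with $|x^\alpha t^\ell|\leq r^{|\alpha|+2\ell}$ and $s^{|\alpha|+2\ell-(d+1)}\leq 1$, each term in $\phi^{(d+1)}(s)$ is bounded by a constant multiple of $\bigl(\tfrac{r}{R-r}\bigr)^{|\alpha|+2\ell}\|h\|_{L^\infty(C_R)}$. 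The hypothesis $r<R/2$ forces $r/(R-r)<1$, so the smallest admissible exponent $|\alpha|+2\ell=d+1$ dominates and yields $|\phi^{(d+1)}(s)|\lesssim_{n,d}\tfrac{r^{d+1}}{(R-r)^{d+1}}\|h\|_{L^\infty(C_R)}$. Integrating against $(1-s)^d/d!$ over $[0,1]$ produces \eqref{eq:caloric-Taylor-error}. The main technical obstacle is the inductive chain-rule identity for $\phi^{(k)}(s)$; given this, both the coefficient identification and the remainder estimate follow immediately from the parabolic Cauchy bounds.
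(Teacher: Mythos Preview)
Your proof is correct. The paper does not supply its own proof of this result; it is quoted verbatim from \cite{HL94}, so there is nothing to compare against. Your argument---reducing to the one-variable Taylor expansion of $\phi(s)=h(sx,s^2t)$ and controlling the remainder via the parabolic Cauchy estimates of Proposition~\ref{proposition:cauchy_estimates}---is the standard route and is essentially the proof in the cited reference. One minor remark: the sum in \eqref{eq:caloric-Taylor} is indexed from $j=1$, while your Taylor expansion of $\phi$ produces also the $j=0$ term $h(\bar 0)$; you flag this, and indeed in every application in the paper the function $h$ lies in $\Theta$ (so $h(\bar 0)=0$), but strictly speaking the formula as written in the statement needs either $j=0$ included or the standing assumption $h(\bar 0)=0$.
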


\vv

\begin{lemma}\label{lem:finite_upper_density}
Let $h\in \Theta$ and $h_j$ be as in \eqref{eq:caloric-Taylor}.  If $m \geq 1$ is the smallest integer such that $h_m \not\equiv 0$, then there exists $\rho_0>0$ and $c_0>0$ such that
\begin{equation}\label{eq:comparison_fh_fhm}
c_0^{-1}F_r(\omega_{h_m})\leq F_r(\omega_h)\leq c_0 F_r(\omega_{h_m}) \qquad\textup{ for all }r\leq \rho_0.
\end{equation}
\end{lemma}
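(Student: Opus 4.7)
The plan is to compare the two quantities by rescaling and exploiting continuity of the $F_r$ functional with respect to weak convergence of caloric measures. The key observation is that, at small scales, $h$ is well-approximated by its lowest-order nonzero homogeneous piece $h_m$, and this approximation survives at the level of the associated caloric measures.

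First I would apply Theorem \ref{theorem:han_lin} to expand $h$ around the origin. Fixing some radius $R>0$ (say $R=1$, up to rescaling) and an integer $d\geq m$, we can write, for every $\bar x\in C_r$ with $r\leq 1/2$,
\[
h(\bar x)=\sum_{j=m}^d h_j(\bar x)+R_d(\bar x),\qquad |R_d(\bar x)|\lesssim_{n,d} r^{d+1}\|h\|_{L^\infty(C_1)}.
\]
Since each $h_j\in F(j)$ is parabolically homogeneous, $|h_j(\bar x)|\leq r^j\|h_j\|_{L^\infty(C_1)}$ on $C_r$. Define the rescaled function
\[
\tilde h_r(\bar x)\coloneqq r^{-m}h\bigl(\delta_r(\bar x)\bigr).
\]
Then $\tilde h_r$ is caloric in $C_{1/r}$ (it is the composition of the caloric $h$ with a parabolic dilation and a scalar), and by the expansion above,
\[
\tilde h_r(\bar x)=h_m(\bar x)+\sum_{j=m+1}^{d}r^{j-m}h_j(\bar x)+r^{-m}R_d(\delta_r \bar x).
\]
For $\bar x\in C_1$ we have $\delta_r(\bar x)\in C_r$, so the remainder satisfies $|r^{-m}R_d(\delta_r \bar x)|\lesssim r^{d+1-m}$. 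Choosing $d\geq m$ and letting $r\to 0$, every term other than $h_m$ vanishes, giving $\tilde h_r\to h_m$ uniformly on compact subsets of $\Rn1$.

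Next I would translate this into weak convergence of the associated caloric measures. By Lemma \ref{lem:weak_conv_cal_meas}, $\omega_{\tilde h_r}\rightharpoonup \omega_{h_m}$ as $r\to 0$. On the other hand, Lemma \ref{lem:homogeneity_cal_pol_measures} applied to $h$ yields $T_{\bar 0,r}[\omega_h]=r^n\,\omega_{h\circ T_{\bar 0,r}^{-1}}$, and since $h\circ T_{\bar 0,r}^{-1}(\bar x)=h(\delta_r \bar x)=r^m\tilde h_r(\bar x)$, the linearity of $h\mapsto\omega_h$ in $h$ (evident from \eqref{eq:cal-measure-assoc-h}) gives
\[
T_{\bar 0,r}[\omega_h]=r^{n+m}\,\omega_{\tilde h_r}.
\]
Combining with Lemma \ref{lem:prop_Fr}(1), we obtain the identity
\[
F_r(\omega_h)=r\,F_1\bigl(T_{\bar 0,r}[\omega_h]\bigr)=r^{n+m+1}\,F_1(\omega_{\tilde h_r}).
\]

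Finally, I would pass to the limit. The function $\bar x\mapsto (1-\|\bar x\|)_+$ is continuous with compact support, hence $F_1(\omega_{\tilde h_r})\to F_1(\omega_{h_m})$ by the weak convergence above. By Lemma \ref{zero_in_support}, $\bar 0\in\supp\omega_{h_m}$, so $F_1(\omega_{h_m})>0$. Therefore there exists $\rho_0>0$ such that
\[
\tfrac{1}{2}F_1(\omega_{h_m})\leq F_1(\omega_{\tilde h_r})\leq 2F_1(\omega_{h_m}),\qquad 0<r\leq \rho_0.
\]
Multiplying by $r^{n+m+1}$ and recalling the scaling identity \eqref{eq:Fr_hom_pol} for the homogeneous polynomial $h_m\in F(m)$, namely $F_r(\omega_{h_m})=r^{n+m+1}F_1(\omega_{h_m})$, yields the desired comparison with $c_0=2$. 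The only mildly delicate point is the uniform-on-compacta convergence $\tilde h_r\to h_m$, which is why I would invoke the quantitative remainder estimate \eqref{eq:caloric-Taylor-error} rather than merely the pointwise Taylor expansion; everything else is a routine application of the scaling and weak-convergence machinery already established.
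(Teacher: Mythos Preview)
Your argument is correct and in fact cleaner than the paper's. The paper proceeds by a direct computation: it picks a specific cut-off $\psi_r\in C^{2,1}(C_r)$ with $\psi_r=1$ on $C_{r/2}$, sets $\varphi=(2C)^{-1}r\psi_r$, and uses \eqref{eq:cal-measure-assoc-h} together with the remainder bound \eqref{eq:caloric-Taylor-error} to estimate $\bigl|\int\varphi\,d\omega_h-\int\varphi\,d\omega_{h_m}\bigr|\lesssim r^{n+m+2}$, which for $r$ small is $\leq\varepsilon F_r(\omega_{h_m})$; the comparison then follows from elementary inequalities relating $\int\varphi\,d\mu$ to $F_r(\mu)$ and $F_{r/2}(\mu)$. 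Your route instead packages the remainder estimate into the uniform-on-compacta convergence $\tilde h_r\to h_m$, invokes Lemma~\ref{lem:weak_conv_cal_meas} and the scaling identity from Lemma~\ref{lem:homogeneity_cal_pol_measures} to reduce to $F_1(\omega_{\tilde h_r})\to F_1(\omega_{h_m})$, and finishes with Lemma~\ref{zero_in_support}. This is essentially the mechanism the paper uses later in Lemma~\ref{lem:tangent_caloric_function}, so you are anticipating that argument. The paper's approach yields a slightly more explicit $\rho_0$ (in terms of $\|h\|_{L^\infty(C_1)}$ and $F_1(\omega_{h_m})$), while yours is more conceptual and avoids building the cut-off by hand. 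One terminological quibble: the map $h\mapsto\omega_h$ is not linear but positively homogeneous of degree one (since $\omega_{ch}=|c|\,\omega_h$ from \eqref{eq:cal-measure-assoc-h}); as $r^m>0$ this does not affect the argument, but you should say ``positive homogeneity'' rather than ``linearity''.
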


\begin{proof}
Let $r<1/100$. By Theorem \ref{theorem:han_lin} and the fact that  $h_j=0$ for $j<m$, there exists a function $R_m\colon \Rn1\to\R$ such that
\[
h(\bar x)=h_m (\bar x)+ R_m(\bar x), \qquad \bar x\in C_{2r},
\]
and
\begin{equation}\label{eq:lem_R_d_unif_convergence}
|R_m(\bar x)|\lesssim_{n,m}\frac{r^{m+1}}{(1-2r)^{m+1}}\|h\|_{L^\infty(C_1)}, \qquad \bar x\in C_{2r}.
\end{equation}
Let $\psi_r \in C^{2,1}(C_r)$ be so that $\psi_r=1$ in $C_{r/2}$ and $0\leq \psi_r \leq 1$, satisfying
$$
|\nabla \psi_r| \leq C/r,\quad \textup{and}\quad  |\partial_t \psi_r|+|\partial^2_{x_ix_j} \psi_r| \leq C/r^2, \,\,\textup{for}\,\, 1\leq i,j \leq n.
$$
One way to construct such a cut-off is the following: let $\eta \in C^\infty([0,\infty))$ be the standard cut-off so that $\eta=1$ in $[0,1/2]$, $\eta=0$ in $(1,\infty)$, $0\leq \eta \leq 1$, $|\eta'| \leq C$ and $|\eta''| \leq C$.  If $\zeta_ r(x)= \eta(|x|/r)$ and $\xi_r(t)\coloneqq\eta(|t|/r^2)$, we define $\psi_r(x,t)= \zeta_ r(x)\xi_r(t)$.
It is easy to see that the function $\varphi=(2C)^{-1} r\, \psi_r$ is an admissible function for $F_r$.
With this choice of $\varphi$ we have
\[
\Bigl|\int \varphi\, d\omega_h - \int \varphi\, d\omega_{h_m}\Bigr|= \Bigl|\int_{\Rn1}(|h|-|h_m|)H^*\varphi\Bigr|\leq \frac{1}{r}\int_{C_{2r}}|R_m(x,t)|
\]
and by \eqref{eq:lem_R_d_unif_convergence},
\[
\frac{1}{r}\int_{C_{2r}}|R_m(x,t)|\lesssim_{m,n} \frac{r^{m+n+2}}{(1-2r)^{m+1}}\|h\|_{L^\infty(C_1)}.
\]
Hence, since  $r<1/100$, there is $c(n,m)>0$ such that
\begin{equation}\label{eq:lem_upper_bound_omega_h}
\Bigl|\int \varphi\, d\omega_h - \int \varphi\, d\omega_{h_m}\Bigr|\leq c(n,m)r^{n+m+2}\|h\|_{L^\infty(C_1)}
\end{equation}
and so, if for a fixed $\varepsilon>0$ we further assume 
\[r\leq \rho_0\coloneqq \varepsilon F_1(\omega_{h_m})c(n,m)^{-1}\|h\|^{-1}_{L^\infty(C_1)},
\]
the inequality \eqref{eq:lem_upper_bound_omega_h} reads
\begin{equation}\label{eq:lem_789}
\Bigl|\int \varphi\, d\omega_h - \int \varphi\, d\omega_{h_m}\Bigr|\leq \varepsilon r^{n+m+1}F_1(\omega_{h_m})=\varepsilon F_r(\omega_{h_m}),
\end{equation}
where in the last step we used \eqref{eq:Fr_hom_pol}. Let us observe that, by Lemma \ref{zero_in_support}, $\zz\in \supp\omega_{h_m}$, which, in turn, by \eqref{eq:F1_hom_pol},  implies that $0<F_r(\omega_{h_m})<\infty$ for all $r>0$.

 Therefore, since $\varphi$ is admissible for $F_r$, we obtain
\begin{equation}
\label{eq:lem_comp_h_hm2}
\frac{1}{C 2^{m+n+3}} F_r(\hm_{h_m}) \leq \frac{r}{4C} \hm_{h_m}(C_{r/2}) \leq \int \varphi \,d\hm_{h_m} \overset{\eqref{eq:lem_789}}{\leq} F_r(\hm_h)+\ve F_r(\hm_{h_m})  
\end{equation}
and
\begin{equation}\label{eq:lem_comp_h_hm3}
\begin{split}
(4C)^{-1} F_{r/2}(\hm_{h}) \leq \frac{r}{4C} \hm_h(C_{r/2}) \leq \int \varphi \,d\hm_{h} &\overset{\eqref{eq:lem_789}}{\leq} (1+ \ve) F_r(\hm_{h_m})\\
&=2^{m+n+1} (1+ \ve) F_{r/2}(\hm_{h_m}),
\end{split}
\end{equation}
which conclude \eqref{lem:finite_upper_density} by choosing $\ve = C^{-1} 2^{-n-m-4}$.
\end{proof}

\vv

We apply the previous lemma to show that the first non-zero term in the expansion $h_m$ of $h$ determines the density of $\omega_h$ at $\bar 0$.

\begin{lemma}\label{corollary:bounded_densities}
Let $h,h_m$ and $c_0$ be as in Lemma \ref{lem:finite_upper_density}. Then 
\[
\liminf_{r\to 0}\frac{\omega_h(C_r)}{r^{n+m}}\approx\limsup_{r\to 0}\frac{\omega_h(C_r)}{r^{n+m}}\approx \omega_{h_m}(C_1) \in (0, \infty),
\]
where the implicit constants depend on $n, m$ and $c_0$.
\end{lemma}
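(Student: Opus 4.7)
The plan is to convert the control on $F_r$ provided by Lemma \ref{lem:finite_upper_density} into control on $\omega_h(C_r)$ via the two-sided bound in Lemma \ref{lem:prop_Fr}(2), and then use the exact homogeneity of $\omega_{h_m}$ (since $h_m \in F(m)$) to reduce everything to the single quantity $\omega_{h_m}(C_1)$.

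First I would record the two elementary consequences of Lemma \ref{lem:prop_Fr}(2): for any Radon measure $\mu$ and $r>0$,
\[
\frac{F_r(\mu)}{r} \leq \mu(C_r) \leq \frac{F_{2r}(\mu)}{r}.
\]
Next, since $h_m \in F(m)$, the homogeneity identity \eqref{eq:hom_2} applied to $A = C_1$ yields $\omega_{h_m}(C_r) = r^{n+m}\omega_{h_m}(C_1)$ for every $r>0$, and \eqref{eq:Fr_hom_pol} gives $F_r(\omega_{h_m}) = r^{n+m+1} F_1(\omega_{h_m})$. Combining the two displays with Lemma \ref{lem:finite_upper_density}, for all $r \leq \rho_0$,
\[
\frac{\omega_h(C_r)}{r^{n+m}} \leq \frac{F_{2r}(\omega_h)}{r^{n+m+1}} \leq \frac{c_0\, F_{2r}(\omega_{h_m})}{r^{n+m+1}} = c_0\, 2^{n+m+1}\, F_1(\omega_{h_m}),
\]
and, symmetrically,
\[
\frac{\omega_h(C_r)}{r^{n+m}} \geq \frac{F_r(\omega_h)}{r^{n+m+1}} \geq \frac{F_r(\omega_{h_m})}{c_0\, r^{n+m+1}} = \frac{F_1(\omega_{h_m})}{c_0}.
\]
Since $F_1(\omega_{h_m}) \approx \omega_{h_m}(C_1)$ (apply Lemma \ref{lem:prop_Fr}(2) with $r=1$ together with the homogeneity $\omega_{h_m}(C_{1/2}) = 2^{-(n+m)}\omega_{h_m}(C_1)$), both the $\liminf$ and the $\limsup$ of $\omega_h(C_r)/r^{n+m}$ as $r\to 0$ are comparable to $\omega_{h_m}(C_1)$ with constants depending only on $n$, $m$ and $c_0$.

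It remains to argue $\omega_{h_m}(C_1) \in (0, \infty)$. Finiteness is automatic since $\omega_{h_m}$ is a Radon measure. Positivity follows from Lemma \ref{zero_in_support}: because $h_m \in \Theta$ (a non-zero caloric function vanishing at $\bar 0$), we have $\bar 0 \in \supp \omega_{h_m}$, whence $\omega_{h_m}(C_1) > 0$. There is no real obstacle here; the only point to handle carefully is to ensure that the constant $\rho_0$ from Lemma \ref{lem:finite_upper_density} is invoked on both sides so that the comparison \eqref{eq:comparison_fh_fhm} is in force for the entire range of $r$ appearing in the $\liminf$/$\limsup$.
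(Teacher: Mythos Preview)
Your proof is correct and follows essentially the same route as the paper: both convert the $F_r$ comparison from Lemma \ref{lem:finite_upper_density} into a density bound via Lemma \ref{lem:prop_Fr}(2), exploit the homogeneity $F_r(\omega_{h_m})=r^{n+m+1}F_1(\omega_{h_m})$, and invoke Lemma \ref{zero_in_support} for positivity. The only cosmetic slip is that your upper bound requires $2r\leq\rho_0$ rather than $r\leq\rho_0$, which you yourself flag at the end and which the paper handles by taking $r<\rho_0/2$.
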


\begin{proof}
Let $\rho_0$ and $c_0$ be as in Lemma \ref{lem:finite_upper_density} and let $r<\rho_0/2$. We apply Lemma \ref{lem:prop_Fr}-\eqref{eq:Fr-mu(B)}, \eqref{eq:comparison_fh_fhm}, and \eqref{eq:F1_hom_pol}, and  we have that 
\begin{equation*}
\begin{split}
\frac{\omega_h(C_r)}{r^{n+m}}&\overset{}{\leq} {r^{-n-m-1}}F_{2r}(\omega_h)\overset{}{\leq} {c_0}{r^{-n-m-1}}F_{2r}(\omega_{h_m})\overset{}{=} c_0 2^{n+m+1} F_{1}(\omega_{h_m}) \lesssim \omega_{h_m}(C_1).
\end{split}
\end{equation*}
Arguing similarly but using the converse inequalities of the ones we used  above, we infer that
\begin{equation*}
\begin{split}
\frac{\omega_h(C_r)}{r^{n+m}}&\geq {r^{-n-m-1}}F_r(\omega_h)\geq {c_0^{-1} r^{-n-m-1}}F_r(\omega_{h_m})\\
&= c_0^{-1} 2^{n+m+1} F_{1}(\omega_{h_m}) \gtrsim \omega_{h_m}(C_1)>0,
\end{split}
\end{equation*}
where the last  inequality holds because  $\zz\in\supp\omega_{h_m}$.
\end{proof}

\vv

\begin{lemma}\label{lem:tangent_caloric_function}
Let $h\in \Theta$ and $h_j$ be as in \eqref{eq:caloric-Taylor}.  If $m \geq 1$ is the smallest integer such that $h_m \not\equiv 0$,
then $\Tan (\omega_h, \zz)=\{c\omega_{h_m}:c>0\}$.
\end{lemma}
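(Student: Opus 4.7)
The plan is to pin down a single canonical blow-up of $\omega_h$ at $\bar 0$ and then argue that every element of $\Tan(\omega_h,\bar 0)$ is a positive multiple of it. First I would introduce the rescaled caloric function
\[
\tilde h_r(\bar x) \coloneqq r^{-m}\, h\bigl(\delta_r \bar x\bigr),\qquad \bar x\in\Rn1,\ r>0,
\]
and claim that $\tilde h_r \to h_m$ uniformly on every cylinder $C_M$ as $r\to 0$. This follows from Theorem \ref{theorem:han_lin} applied at order $d=m$: the vanishing of $h_1,\dots,h_{m-1}$ gives $h=h_m + R_m$ with $|R_m(\bar y)|\lesssim_{n,m} \|\bar y\|^{m+1}$ locally, so using the parabolic homogeneity $h_m(\delta_r \bar x)=r^m h_m(\bar x)$ one checks that
\[
\bigl|\tilde h_r(\bar x)-h_m(\bar x)\bigr|\leq r^{-m}\bigl|R_m(\delta_r\bar x)\bigr|\lesssim_{n,m,M}\, r\, \|h\|_{L^\infty(C_R)} \;\longrightarrow\; 0
\]
uniformly on $\bar x\in C_M$ for any fixed $R$ with $R>2rM$ small.

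Next I would combine Lemma \ref{lem:homogeneity_cal_pol_measures} with the positive homogeneity $\omega_{c g}=c\,\omega_g$ for $c>0$ (which is immediate from Lemma \ref{lem:cal-measure-assoc-h}) to obtain the exact scaling identity
\[
T_{\bar 0, r}[\omega_h] \;=\; r^n\, \omega_{h\circ T^{-1}_{\bar 0,r}} \;=\; r^{n+m}\, \omega_{\tilde h_r}.
\]
Lemma \ref{lem:weak_conv_cal_meas} then converts the uniform convergence $\tilde h_r \to h_m$ into the weak convergence $\omega_{\tilde h_r}\rightharpoonup \omega_{h_m}$, so that
\[
r^{-n-m}\, T_{\bar 0, r}[\omega_h]\ \rightharpoonup\ \omega_{h_m}\qquad \text{as } r\to 0.
\]
In particular $\omega_{h_m}\in \Tan(\omega_h,\bar 0)$, and taking the scaling sequence $c_i = c\, r_i^{-n-m}$ with any $r_i\searrow 0$ shows $c\,\omega_{h_m}\in \Tan(\omega_h,\bar 0)$ for every $c>0$.

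For the reverse inclusion, take $\nu\in \Tan(\omega_h,\bar 0)$ arising from $c_i>0$ and $r_i\searrow 0$ with $c_i T_{\bar 0, r_i}[\omega_h]\rightharpoonup \nu$, and set $\alpha_i\coloneqq c_i r_i^{n+m}$, so that
\[
c_i T_{\bar 0, r_i}[\omega_h]\;=\;\alpha_i\,\bigl(r_i^{-n-m}\, T_{\bar 0, r_i}[\omega_h]\bigr).
\]
By Lemma \ref{corollary:bounded_densities} the measure $\omega_h$ is asymptotically doubling at $\bar 0$, so Lemma \ref{lem:tangent_meas_properties}(2) yields $\bar 0\in\supp\nu$, while Lemma \ref{zero_in_support} gives $\bar 0\in\supp\omega_{h_m}$. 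Choose $R>0$ with $\nu(C_R)>0$ and fix $\varphi\in C_c(\Rn1)$ with $0\leq \varphi\leq 1$ and $\varphi\equiv 1$ on $C_R$; then both $\int \varphi\,d\nu$ and $\int \varphi\, d\omega_{h_m}$ are strictly positive, and testing the two weak limits against $\varphi$ forces
\[
\alpha_i\ \longrightarrow\ \frac{\int \varphi\,d\nu}{\int\varphi\, d\omega_{h_m}}\ \eqqcolon\ c\in(0,\infty),
\]
so that $\nu = c\,\omega_{h_m}$.

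The main technical point is the first step: controlling $\tilde h_r - h_m$ uniformly on compact sets from the Han--Lin Taylor expansion with a single application of Theorem \ref{theorem:han_lin}. Once that convergence is in place, the rest is a scaling bookkeeping, and the only subtle part of the converse direction is knowing that both limits assign positive mass to a common test function, which is exactly what the asymptotic doubling provided by Lemma \ref{corollary:bounded_densities} together with $\bar 0\in\supp\omega_{h_m}$ (Lemma \ref{zero_in_support}) guarantees.
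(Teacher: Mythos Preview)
Your proof is correct and follows essentially the same approach as the paper: both establish the uniform convergence $r^{-m}h(\delta_r\cdot)\to h_m$ via Theorem \ref{theorem:han_lin}, use Lemma \ref{lem:homogeneity_cal_pol_measures} to rewrite $r^{-n-m}T_{\bar 0,r}[\omega_h]=\omega_{\tilde h_r}$, and apply Lemma \ref{lem:weak_conv_cal_meas} to get the canonical blow-up $\omega_{h_m}$. The only cosmetic difference is in the reverse inclusion: the paper invokes Lemma \ref{lem:tangent_meas_properties}(3) directly (the positive lower and finite upper $(n+m)$-density from Lemma \ref{corollary:bounded_densities} forces any $\nu\in\Tan(\omega_h,\bar 0)$ to arise with normalizations $c_2 r_i^{-n-m}$, hence $\nu=c_2\omega_{h_m}$), whereas you unpack this step by hand, using asymptotic doubling and Lemma \ref{lem:tangent_meas_properties}(2) to place $\bar 0\in\supp\nu$ and then extracting the limiting constant via a test function.
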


\begin{proof}
Let $R>0$ and $r<1/2$. By Theorem \ref{theorem:han_lin} and $h_j=0$ for $j<m$, we have that
\[
h(\bx)=h_m (\bx)+ R_m(\bx), \qquad \bx\in C_{rR},
\]
where
\begin{equation}\label{eq:lem_R_d_unif_convergence}
|R_m(\bx)|\lesssim_{n,m}\frac{r^{m+1}}{ R^{m+1}(1-r)^{m+1}}\|h\|_{L^\infty(C_R)}, \qquad \bx\in C_{rR}.
\end{equation}
Given $\by\in C_R$, we have that $\delta_r(\by)\in C_{rR}$, so we combine \eqref{eq:lem_R_d_unif_convergence} with the homogeneity of $h_m$ in order to get
\begin{equation*}
\begin{split}
&|r^{-m}h \circ T_{\zz,r}^{-1}(\by) - h_m(\by)|\\
&\qquad=|r^{-m}h (ry,r^2s) - h_m(y,s)|= r^{-m}|h (ry,r^2s) - h_m(ry,r^2s)|\\
&\qquad\lesssim_{n,m}\frac{r}{R^{m+1}(1-r)^{m+1}}\|h\|_{L^\infty(C_R)}\lesssim_m \frac{r}{R^{m+1}} \|h\|_{L^\infty(C_R)},
\end{split}
\end{equation*}
where the last term converges to $0$ as $r\to 0$. In particular, this shows that $r^{-m}h \circ T_{\zz,r}^{-1}$ converges to $h_m$ uniformly on compact subsets of $\Rn1$.

The definition of caloric polynomial measure together with Lemma \eqref{lem:homogeneity_cal_pol_measures} gives us that 
\begin{equation}\label{eq:lem_tangent_meas_2}
\omega_{r^{-m}h \circ T_{\zz,r}^{-1}} = r^{-m}\omega_{h\circ T^{-1}_{\zz,r}}= r^{-n-m}T_{\zz,r}[\omega_h].
\end{equation}
In order to finish the proof of the lemma, it suffices to use \eqref{eq:lem_tangent_meas_2} and the fact that $\omega_{r^{-m}h \circ T_{\zz,r}^{-1}}$ converges weakly to $\omega_{h_m}$ by Lemma \ref{lem:weak_conv_cal_meas}. Indeed, $\omega_h$ has positive lower  and finite upper $(n+m)$-density at $\bar 0$ by Lemma \ref{corollary:bounded_densities}, so we can apply Lemma \ref{lem:tangent_meas_properties} to conclude that every measure in $\Tan(\omega_h, \zz)$ is of the form $c\omega_{h_m}$ for some $c>0$.
\end{proof}

\vv

The next result is an important application of the previous lemma.

\begin{lemma}\label{lem:tangent_measures_inside_Fk}
Let $\omega$ be a Radon measure on $\Rn1$, $\bx\in\supp\omega$ and let $d$ be the minimal integer such that $\Tan(\omega,\bx)\cap \mathscr{P}(d)\neq \varnothing$.	Then $\Tan(\omega,\bx)\cap \mathscr{P}(d)\subset \mathscr{F}(d).$
\end{lemma}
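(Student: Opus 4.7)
My plan is to reduce the claim to showing that $\omega_{h_m}\in \Tan(\omega,\bar x)$, where $h_m$ is the lowest-degree nonzero homogeneous piece of $h$, and then invoke the minimality of $d$. Concretely, fix $\omega_h \in \Tan(\omega,\bar x) \cap \mathscr{P}(d)$; since tangent measures are by definition nonzero, $h \not\equiv 0$. Decompose $h = h_1 + \cdots + h_d$ into its homogeneous caloric components (each $h_j \in F(j)$) and let $m \in \{1,\ldots,d\}$ be the smallest index with $h_m \not\equiv 0$. If I can show $\omega_{h_m} \in \Tan(\omega,\bar x)$, then since $\omega_{h_m} \neq 0$ (by Lemma \ref{zero_in_support}) and $\omega_{h_m} \in \mathscr{P}(m)$, the minimality of $d$ forces $m \geq d$; combined with $m \leq d$, this gives $m = d$, hence $h = h_d \in F(d)$ and $\omega_h \in \mathscr{F}(d)$.

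To realise $\omega_{h_m}$ as a tangent measure of $\omega$ at $\bar x$, I chain two layers of tangent information via a diagonal extraction. By definition there exist $c_i > 0$ and $r_i \searrow 0$ with $c_i\, T_{\bar x, r_i}[\omega] \rightharpoonup \omega_h$, and by Lemma \ref{lem:tangent_caloric_function} applied to $\omega_h$ at $\bar 0$ there exist $d_k > 0$ and $s_k \searrow 0$ with $d_k\, T_{\bar 0, s_k}[\omega_h] \rightharpoonup \omega_{h_m}$. A direct computation from $T_{\bar y, r}(\bar z) = \delta_{1/r}(\bar z - \bar y)$ gives the composition identity $T_{\bar 0, s} \circ T_{\bar x, r} = T_{\bar x, rs}$ at the level of measure dilations, and pushforward under the continuous map $T_{\bar 0, s_k}$ preserves weak convergence, so for each fixed $k$,
\begin{equation*}
c_i d_k\, T_{\bar x, r_i s_k}[\omega] = d_k\, T_{\bar 0, s_k}\bigl[c_i\, T_{\bar x, r_i}[\omega]\bigr] \rightharpoonup d_k\, T_{\bar 0, s_k}[\omega_h] \qquad \text{as } i \to \infty.
\end{equation*}
Using the equivalence of weak convergence with $F_R$-convergence for every $R > 0$ (Lemma \ref{lem:prop_Fr}(3)), a standard diagonal extraction produces indices $i_k$ such that $\rho_k := r_{i_k} s_k \searrow 0$ and $\lambda_k := c_{i_k} d_k$ satisfy $\lambda_k\, T_{\bar x, \rho_k}[\omega] \rightharpoonup \omega_{h_m}$. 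Since $\omega_{h_m} \neq 0$, this exhibits $\omega_{h_m}$ as an element of $\Tan(\omega,\bar x)$, completing the reduction.

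The only real bookkeeping point, which I regard as the main (mild) obstacle, is arranging the diagonal parameters so that both $r_{i_k} s_k \to 0$ and $F_k\bigl(c_{i_k} d_k\, T_{\bar x, r_{i_k} s_k}[\omega],\; d_k\, T_{\bar 0, s_k}[\omega_h]\bigr) < 1/k$ hold simultaneously; together with $F_k\bigl(d_k\, T_{\bar 0, s_k}[\omega_h],\, \omega_{h_m}\bigr) \to 0$, the triangle inequality for $F_R$ then yields $F_R$-convergence on every fixed ball and hence weak convergence to $\omega_{h_m}$. Apart from this routine extraction, the argument is purely a functorial manipulation of tangent measures under parabolic dilations and requires no PDE input beyond Lemmas \ref{zero_in_support} and \ref{lem:tangent_caloric_function}; in particular, we never need to invoke the a.e.\ statement of Lemma \ref{lem:second-tangents}, which would be insufficient since the conclusion must hold at the specific point $\bar x$.
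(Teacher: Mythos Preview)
Your proof is correct and follows essentially the same approach as the paper: the paper invokes \cite[Lemma 5.9]{AM19} together with Lemma \ref{lem:tangent_caloric_function}, and in a footnote remarks that the a.e.\ version of ``tangents of tangents are tangents'' (Lemma \ref{lem:second-tangents}) is insufficient and must be replaced by the pointwise statement \cite[Lemma 2.6]{Bad11}. Your diagonal extraction is precisely the content of that Badger lemma written out explicitly, and your closing remark about avoiding the a.e.\ statement exactly matches the paper's footnote.
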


\begin{proof}
It is enough to argue as in \cite[Lemma 5.9]{AM19}\footnote{The proof of the cited lemma refers to \cite[Theorem 14.16]{Mattila} in order to guarantee that tangent measures of tangent measures are tangent measures themselves. However, that result holds at almost every point. The proof can be fixed invoking \cite[Lemma 2.6]{Bad11}.} and  invoke Lemma \ref{lem:tangent_caloric_function}.
\end{proof}

\vv

\begin{lemma}\label{lem:compare_h_hd}
Let $h\in P(d)$ and assume that 
$$
h(x,t)=\sum_{j=1}^d h_j (x,t)\coloneqq \sum_{j=1}^d \sum_{|k| + 2 \ell = j} a_{k\ell} x^k t^\ell.
$$ 
Then, there exists $r_0>0$ and $C_0 \geq 1$ such that
\begin{equation}\label{eq:Frhm-Frhmd}
C_0^{-1} F_r(\omega_{h_d})   \leq  F_r(\omega_h) \leq C_0 F_r(\omega_{h_d}),  \quad \textup{for all}\,\,r \geq r_0,
\end{equation} 
where $r_0$ depends on $d$, $n$, $F_1(\hm_{h_d})$, $\max_{1\leq j \leq d-1} \sum_{|k|+2\ell=j} |a_{k\ell}|$, and $C_0$ depends on $d$ and $n$.
\end{lemma}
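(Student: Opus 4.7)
The plan is to mirror the structure of Lemma \ref{lem:finite_upper_density} but in the opposite regime: whereas there the estimate held for small $r$ with $h$ compared to its lowest-degree non-zero term $h_m$, here we are interested in large $r$ with $h$ compared to its top-degree term $h_d$. The governing heuristic is that at large scales the highest-order homogeneous component dominates, so writing $h = h_d + R$ with $R \coloneqq \sum_{j=1}^{d-1} h_j$, the remainder $R$ should be negligible against $h_d$ when tested against functions supported in $C_r$ with $r$ large.

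First I would fix $r \geq 1$ and take precisely the cut-off $\varphi = (2C)^{-1} r \,\psi_r$ from the proof of Lemma \ref{lem:finite_upper_density}, so that $\varphi$ is admissible for $F_r$, $\varphi \geq (2C)^{-1} r$ on $C_{r/2}$, and $|H^*\varphi| \lesssim 1/r$ on $C_{2r}$. Using $||h|-|h_d|| \leq |R|$ and the homogeneity of each $h_j$, on $C_{2r}$ one has the pointwise estimate
\[
|R(x,t)| \leq \sum_{j=1}^{d-1} \Bigl(\sum_{|k|+2\ell = j} |a_{k\ell}|\Bigr) (2r)^{j} \lesssim_{d,n} A_h \, r^{d-1},
\]
where $A_h \coloneqq \max_{1\leq j \leq d-1} \sum_{|k|+2\ell=j} |a_{k\ell}|$ and we are using $r \geq 1$. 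Thus
\[
\Bigl|\int \varphi\, d\omega_h - \int \varphi\, d\omega_{h_d}\Bigr| = \Bigl|\int (|h|-|h_d|) H^*\varphi\Bigr| \leq \frac{C}{r}\int_{C_{2r}} |R| \lesssim_{d,n} A_h \, r^{n+d}.
\]

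Next I would invoke the homogeneity identity \eqref{eq:Fr_hom_pol} applied to $h_d \in F(d)$, which gives $F_r(\omega_{h_d}) = r^{n+d+1} F_1(\omega_{h_d})$. Combined with Lemma \ref{zero_in_support}, which guarantees $F_1(\omega_{h_d}) > 0$, this means that for any prescribed $\varepsilon > 0$ the difference above is bounded by $\varepsilon F_r(\omega_{h_d})$ provided
\[
r \geq r_0 \coloneqq c(d,n)\, A_h \, \varepsilon^{-1}\, F_1(\omega_{h_d})^{-1}.
\]
This is exactly the range advertised in the statement and identifies the dependence of $r_0$ on the prescribed parameters.

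Finally, I would run the same two-sided chain as in \eqref{eq:lem_comp_h_hm2}--\eqref{eq:lem_comp_h_hm3}: since $\varphi$ is admissible for $F_r$ and bounded below by $(2C)^{-1} r$ on $C_{r/2}$, both $\int\varphi\,d\omega_h$ and $\int\varphi\,d\omega_{h_d}$ are squeezed between $c\, F_{r/2}$ and $F_r$ of the respective measures; feeding the difference estimate into this sandwich and choosing $\varepsilon$ to be a small dimensional constant (say $\varepsilon = 2^{-n-d-4}C^{-1}$) yields \eqref{eq:Frhm-Frhmd} with a constant $C_0$ depending only on $d$ and $n$. The only real subtlety — and the one place where one has to be a bit careful — is book-keeping the factor $r^{n+d}$ versus $F_r(\omega_{h_d}) \asymp r^{n+d+1} F_1(\omega_{h_d})$ to extract the correct threshold $r_0$; the rest is a direct transposition of Lemma \ref{lem:finite_upper_density} with the sign of the exponent reversed.
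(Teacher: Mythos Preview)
Your proposal is correct and follows essentially the same approach as the paper's own proof: both use the cut-off $\varphi=(2C)^{-1}r\,\psi_r$, bound $\bigl|\int\varphi\,d\omega_h-\int\varphi\,d\omega_{h_d}\bigr|$ via the pointwise estimate $|R|\lesssim A_h\,r^{d-1}$ on $C_{2r}$ (the paper sums the geometric series in $r^j$ rather than passing directly to $r^{d-1}$, but this is cosmetic), compare with $F_r(\omega_{h_d})=r^{n+d+1}F_1(\omega_{h_d})$ to extract $r_0$, and then conclude by the same sandwich argument as in \eqref{eq:lem_comp_h_hm2}--\eqref{eq:lem_comp_h_hm3}.
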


\begin{proof}
Let $\psi_r$ be a cut-off function as in Lemma \ref{lem:finite_upper_density}. In particular, $\psi_r\in C^{2,1}(C_r),$ $0\leq \psi_r\leq 1$, $\psi_r=1$ in $C_{r/2}$ and
$$
|\nabla \psi_r| \leq C/r,\quad \textup{and}\quad  |\partial_t \psi_r|+|\partial^2_{x_ix_j} \psi_r| \leq C/r^2, \,\,\textup{for}\,\, 1\leq i,j \leq n.
$$
Hence, the function $\varphi\coloneqq(2C)^{-1}r\psi_r$ is admissible for the functional $F_r$, $|H^*\varphi|\leq r^{-1}$, and we have
\begin{align*}
\left| \int \varphi \,d\hm_h -  \int \varphi \,d\hm_{h_d} \right| &= \left| \int_{\rrn} (|h| - |h_d|) H^* \varphi \right| \\
&\leq   \frac{1}{r} \int_{C_{2r}}  \bigl||h| - |h_d|\bigr| \leq   \frac{1}{r} \sum_{j=1}^{d-1}  \int_{C_{2r}}  |h_j |.
\end{align*}
For any $\bar x \in C_{2r}$, it holds that
$$
|h_j(\bar x)| \leq  (2r)^j \sum_{|k| + 2 \ell = j} |a_{k\ell}|\eqqcolon c_j r^j,
$$
which, in turn, implies that
\begin{align*}
 r^{-1} \sum_{j=1}^{d-1}  \int_{C_{2r}}  |h_j | &\leq   r^{-1} \,(2r)^{n+2} \, \Bigl(\max_{1 \leq j \leq d-1}c_j \Bigr)\, \sum_{j=1}^{d-1} r^j \leq 2^{n+2} \,  \frac{r^{n+d+1}}{r-1} \max_{1 \leq j \leq d-1}c_j.
\end{align*}
Therefore,  if we let 
\begin{align*}
r_0 &> 1 + (\ve F_1(\hm_{h_d}))^{-1} 2^{n+2}  \max_{1 \leq j \leq d-1}c_j,
\end{align*}
for some $\ve>0$ small enough, we infer that
$$
\left| \int \varphi \,d\hm_h -  \int \varphi \,d\hm_{h_d} \right|\leq \ve r^{d+n+1} F_1(\hm_d)= \ve F_r(\hm_d),
$$
where in the last equality we used \eqref{eq:Fr_hom_pol}. Note that $\bar 0 \in \supp(\hm_{h_d})$ and thus, by \eqref{eq:F1_hom_pol}, we have that $0<F_1(\hm_{h_d})<\infty$. 
We conclude the proof arguing as  in \eqref{eq:lem_comp_h_hm2} and \eqref{eq:lem_comp_h_hm3}. 
\end{proof}

\vv

\begin{lemma}\label{lem:connectivity_in_F_k}
Let $h$, $r_0$ and $C_0$ be as in Lemma \ref{lem:compare_h_hd}. There exists $\varepsilon_0>0$ and $r_1>0$ such that if
$d_r\bigl(\omega_h, \cF(k)\bigr)<\varepsilon_0$ for all $r\geq r_1$, then  $k=d$.
\end{lemma}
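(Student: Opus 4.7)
The plan is to perform a blowdown of $\omega_h$ at scales $r\to\infty$, exploiting that the leading homogeneous piece $h_d$ governs the asymptotic geometry. A single $1$-Lipschitz test function will encode the degree of parabolic homogeneity as a numerical ``signature'', and the separation of these signatures for distinct degrees will force $k=d$.

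Consider $f_r(\bx):=(r/2-\|\bx\|)_+$, which is $1$-Lipschitz and supported in $C_{r/2}\subset C_r$, so that $\int f_r\,d\mu=F_{r/2}(\mu)$ for any Radon measure $\mu$. For $\nu\in\mathscr{F}(k)$ normalized by $F_r(\nu)=1$, the scaling \eqref{eq:Fr_hom_pol} yields $F_{r/2}(\nu)=(1/2)^{n+k+1}$. Hence, if $d_r(\omega_h,\mathscr{F}(k))<\varepsilon_0$, the definition \eqref{eq:def_dist_dcones} produces such a $\nu$ with
\begin{equation}\label{eq:plan_signature}
\Bigl|\frac{F_{r/2}(\omega_h)}{F_r(\omega_h)}-(1/2)^{n+k+1}\Bigr|=\Bigl|\int f_r\,d\bigl(\omega_h/F_r(\omega_h)-\nu\bigr)\Bigr|<\varepsilon_0.
\end{equation}

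Next, compute the asymptotic value of $F_{r/2}(\omega_h)/F_r(\omega_h)$ as $r\to\infty$. Setting $\tilde h_r(\bx):=r^{-d}h(\delta_r\bx)=h_d(\bx)+\sum_{j=1}^{d-1}r^{j-d}h_j(\bx)$, we have $\tilde h_r\to h_d$ uniformly on compact sets, so by Lemma \ref{lem:weak_conv_cal_meas}, $\omega_{\tilde h_r}\rightharpoonup \omega_{h_d}$. Meanwhile Lemma \ref{lem:homogeneity_cal_pol_measures} gives $\omega_{\tilde h_r}=r^{-n-d}T_{\bar 0,r}[\omega_h]$, whence $F_r(\omega_h)=r^{n+d+1}F_1(\omega_{\tilde h_r})$, and analogously at scale $r/2$. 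Taking the ratio,
\[
\frac{F_{r/2}(\omega_h)}{F_r(\omega_h)}=\Bigl(\tfrac12\Bigr)^{n+d+1}\frac{F_1(\omega_{\tilde h_{r/2}})}{F_1(\omega_{\tilde h_r})}\longrightarrow \Bigl(\tfrac12\Bigr)^{n+d+1},\qquad r\to\infty,
\]
since $F_1(\omega_{\tilde h_r})\to F_1(\omega_{h_d})>0$ by Lemma \ref{zero_in_support} (which gives $\bar 0\in\supp\omega_{h_d}$, and therefore positivity of $F_1$).

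To conclude, fix $\varepsilon_0:=(1/2)^{n+d+3}$ and pick $r_1$ so large that $|F_{r/2}(\omega_h)/F_r(\omega_h)-(1/2)^{n+d+1}|<\varepsilon_0$ for every $r\geq r_1$. If the hypothesis of the lemma holds for some $k$, combining this with \eqref{eq:plan_signature} and the triangle inequality yields $|(1/2)^{n+d+1}-(1/2)^{n+k+1}|<2\varepsilon_0=(1/2)^{n+d+2}$. But over $k\neq d$ the minimum of $|(1/2)^{n+d+1}-(1/2)^{n+k+1}|$ equals $(1/2)^{n+d+2}$, attained at $k=d+1$, giving the required contradiction; hence $k=d$. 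The main technical point is upgrading the qualitative two-sided bound of Lemma \ref{lem:compare_h_hd} (which only controls $F_r(\omega_h)/F_r(\omega_{h_d})$ up to a multiplicative constant $C_0$) to the sharp asymptotic convergence of the ratio $F_{r/2}(\omega_h)/F_r(\omega_h)$ to its limiting value, which is exactly what the uniform convergence $\tilde h_r\to h_d$ together with Lemma \ref{lem:weak_conv_cal_meas} delivers; crucially, this gives a separation that is automatically uniform in $k$, bypassing any compactness-across-$k$ argument.
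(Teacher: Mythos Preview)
Your proof is correct and takes a genuinely different route from the paper's.

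The paper's argument uses only the coarse two--sided bound $C_0^{-1}F_r(\omega_{h_d})\le F_r(\omega_h)\le C_0 F_r(\omega_{h_d})$ from Lemma \ref{lem:compare_h_hd}, which gives $F_r(\omega_h)/F_{\tau r}(\omega_h)\le C_0^{2}\tau^{-n-d-1}$, and then compensates for the multiplicative loss $C_0^{2}$ by choosing a \emph{large} scale parameter $\tau=2C_0^{2}$ together with $\varepsilon_0=\tau^{-n-d-2}$. Your argument is sharper: by observing that $r^{-d}h\circ\delta_r\to h_d$ uniformly on compacta and invoking Lemma \ref{lem:weak_conv_cal_meas}, you upgrade the comparison to the exact asymptotic $F_{r/2}(\omega_h)/F_r(\omega_h)\to(1/2)^{n+d+1}$. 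This lets you work at the fixed scale ratio $2$, and the discrete ``signature'' $(1/2)^{n+k+1}$ separates different degrees automatically, with no need to tune $\tau$ to $C_0$. The trade--off is that the paper's proof is entirely quantitative---$r_1$ and $\varepsilon_0$ are explicit in terms of $r_0$, $C_0$, $n$, $d$---whereas your $r_1$ is produced by a qualitative limit and in fact you do not use $r_0$ or $C_0$ at all. For the purposes of Lemma \ref{lem:connectivity_lemma} downstream, either version suffices.
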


\begin{proof}
Fix $\tau \geq 2 $ to be chosen and pick $r\geq r_1$ such that $d_{\tau r}\bigl(\omega_h, \cF(k)\bigr)<\varepsilon_0$. In particular, there exists $\psi \in \cF(k)$ such that $F_{\tau r}(\psi)=1$ and 
\[
F_r\Bigl(\frac{\omega}{F_{\tau r}(\omega_h)}, \psi \Bigr)\leq F_{\tau r}\Bigl(\frac{\omega_h}{F_{\tau r}(\omega)}, \psi\Bigr)<\varepsilon_0.
\]
Hence
\begin{equation}\label{eq:pf_lem_dist_1}
F_r(\psi)-\varepsilon_0<\frac{F_r(\omega_h)}{F_{\tau r}(\omega_h)}<F_r(\psi)+\varepsilon_0.
\end{equation}
Also, since $\psi$ is homogeneous, \eqref{eq:Fr_hom_pol} gives
\begin{equation}\label{eq:pf_lem_dist_3}
F_r(\psi)=r^{n+k+1}F_1(\psi)=\tau^{-n-k-1}F_{\tau r}(\psi)=\tau^{-n-k-1}.
\end{equation}
Assuming that $r>r_2\coloneqq\max\{r_0, r_1\}$, by Lemma \ref{lem:compare_h_hd} and \eqref{eq:Fr_hom_pol} we have
\begin{equation}\label{eq:pf_lem_dis_2}
\frac{F_r(\omega_h)}{F_{\tau r}(\omega_h)}\leq C_0^{2} \frac{F_r(\omega_{h_d})}{F_{\tau r}(\omega_{h_d})}\leq C_0^{2}  \tau^{-n-d-1}.
\end{equation}
Therefore, \eqref{eq:pf_lem_dist_1}, \eqref{eq:pf_lem_dist_3}, and \eqref{eq:pf_lem_dis_2} infer that 
$$\tau^{-n-k-1}-\varepsilon_0< C_0^{2}  \tau^{-n-d-1},$$
or equivalently,
$$
\tau^{d-k} (1 -\varepsilon_0 \tau^{n+d+1})<C_0^{2} .
$$
If we pick $\tau/2=C_0^2$ and $\ve_0=\tau^{-n-d-2}$, the latter inequality implies that 
$$
\tau^{d-k} (\tau - 1)<\frac{\tau^{2}}{2},
$$ 
which can only  hold if $d=k$, since $\tau \geq 2$. 
\end{proof}

\vv

The next lemma is crucial to our purposes: it provides a connectivity result for parabolic cones of Radon measures. The proof  translates unchanged to the parabolic setting and  we skip it.

\begin{lemma}[see Lemma 3.10, \cite{AM19}]\label{lem:connectivity_lemma}
Let $\cF$ and $\cM$ be parabolic $d$-cones and assume that $\cF$ has compact basis. Moreover, suppose that there is $\ve_0>0$ such that the following property holds: if $\mu\in\cM$ and there exists $r_0>0$ such that $d_r(\mu,\cF)\leq \ve_0$ for all $r\geq r_0$, then $\mu\in\cF$.
If $\eta$ is a Radon measure and $\bx\in\supp\eta$ are such that $\Tan(\eta, \bx)\subset\cM$ and $\Tan(\eta, \bx)\cap \cF\neq\varnothing$, then $\Tan(\eta, \bx)\subset \cF.$
\end{lemma}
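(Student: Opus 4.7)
\medskip
\noindent\textbf{Proof plan for Lemma \ref{lem:connectivity_lemma}.} I argue by contradiction. Suppose that there exists $\nu\in\Tan(\eta,\bx)\setminus\cF$, and let $\nu_0\in\Tan(\eta,\bx)\cap\cF$ be the tangent measure supplied by hypothesis. Pick sequences $r_i,s_j\searrow 0$ and positive constants $c_i,c'_j$ with $c_iT_{\bx,r_i}[\eta]\rightharpoonup\nu_0$ and $c'_jT_{\bx,s_j}[\eta]\rightharpoonup\nu$. Because $\cF$ is a $d$-cone, the functional $d_r(\cdot,\cF)$ is invariant under positive rescaling of its first argument, and Lemma~\ref{lem:prop_Fr}(5) therefore yields, for every fixed $r>0$,
\[
d_r\bigl(T_{\bx,r_i}[\eta],\cF\bigr)\to d_r(\nu_0,\cF)=0,\qquad d_r\bigl(T_{\bx,s_j}[\eta],\cF\bigr)\to d_r(\nu,\cF).
\]
The contrapositive of the defining property of $\cF$ (inside $\cM$) applied to $\nu\in\cM\setminus\cF$ provides some $R\ge 1$ with $d_R(\nu,\cF)>\ve_0$, so $d_R(T_{\bx,s_j}[\eta],\cF)>\ve_0$ for all large $j$.

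Set $h(\rho)\coloneqq d_1(T_{\bx,\rho}[\eta],\cF)$, and note that the identity $T_{\bar 0,r}\circ T_{\bx,\rho}=T_{\bx,r\rho}$, combined with Lemma~\ref{lem:prop_Fr}(4), gives $d_r(T_{\bx,\rho}[\eta],\cF)=h(r\rho)$ whenever $F_r(T_{\bx,\rho}[\eta])>0$. For every $\varphi\in C_c(\rrn)$ the map $\rho\mapsto\int\varphi\,dT_{\bx,\rho}[\eta]=\int\varphi\circ T_{\bx,\rho}\,d\eta$ is continuous in $\rho$ by dominated convergence, so $\rho\mapsto T_{\bx,\rho}[\eta]$ is weakly continuous; together with Lemma~\ref{lem:prop_Fr}(5) and the compactness of the basis of $\cF$ (which forces attainment of the infimum defining $h$), this makes $h$ continuous on the open set $\{\rho>0:F_1(T_{\bx,\rho}[\eta])>0\}$. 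For every integer $k\ge 1$, I first pick $i=i(k)$ so large that $h(\sigma)<\ve_0/4$ uniformly on $\sigma\in[r_{i},2^kr_{i}]$ (uniformity here follows from the monotonicity $F_r(\mu_j,\mu)\le F_{2^k}(\mu_j,\mu)\to 0$ for $r\in[1,2^k]$), then $j=j(k)$ with $s_j<r_i$ and $h(Rs_j)>\ve_0$. A one-parameter intermediate-value/selection argument produces $\rho_k\in(s_j,r_i)$ with
\[
h(\sigma)\le\ve_0\quad\textup{for all }\sigma\in[\rho_k,2^k\rho_k],\qquad h(\rho_k)=\tfrac{\ve_0}{2},
\]
a concrete choice being
\[
\rho_k=\inf\bigl\{\rho\in[s_j,r_i]:\max_{\sigma\in[\rho,2^k\rho]}h(\sigma)\le\ve_0\bigr\},
\]
followed by a slight re-parametrization of the threshold (exploiting the continuity of $h$) that forces the value $\ve_0/2$ at $\rho=\rho_k$.

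Normalize $\tilde c_kT_{\bx,\rho_k}[\eta]$ so that $F_1(\tilde c_kT_{\bx,\rho_k}[\eta])=1$ and extract a weakly convergent subsequence with limit $\mu_0$. Since $F_1$ passes to the limit under weak convergence when its value is positive, $F_1(\mu_0)=1$, so $\mu_0\ne 0$ and consequently $\mu_0\in\Tan(\eta,\bx)\subset\cM$. Passing to the limit in the two bounds above through the identity $d_r(T_{\bx,\rho_k}[\eta],\cF)=h(r\rho_k)$ and using the cone-invariance of $d_r$ yields
\[
d_r(\mu_0,\cF)\le \ve_0\textup{ for every }r\ge 1\qquad\textup{and}\qquad d_1(\mu_0,\cF)=\tfrac{\ve_0}{2}>0.
\]
The first relation and the defining hypothesis on $\cF$ force $\mu_0\in\cF$; the second shows $\mu_0\notin\cF$, a contradiction. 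The hard part of this plan is the selection of $\rho_k$: one must simultaneously secure the control $h\le\ve_0$ on an expanding window $[\rho_k,2^k\rho_k]$ of forward scales together with a strictly positive value at the left endpoint, and this is precisely where the compact-basis assumption on $\cF$ is essential, as it is what makes $h$ continuous and makes the max of $h$ over compact scale windows behave well under variation of $\rho$.
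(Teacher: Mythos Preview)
Your argument is the standard connectivity argument from \cite{AM19}, which the paper cites without reproducing; the overall structure---interpolating between scales where the blow-ups are near $\cF$ and scales where they are far, extracting a limiting tangent $\mu_0\in\cM$ with $d_r(\mu_0,\cF)\le\ve_0$ for all $r\ge 1$ yet $d_1(\mu_0,\cF)>0$---is exactly the intended one.

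Two points to tighten. First, your infimum definition of $\rho_k$ only yields $\max_{[\rho_k,2^k\rho_k]}h=\ve_0$, and the maximum may well sit near the right end of the window, leaving $h(\rho_k)$ unconstrained from below; the vague ``re-parametrization of the threshold'' does not obviously repair this. The clean choice is the usual one: having arranged $h<\ve_0/4$ on $[r_{i(k)},2^kr_{i(k)}]$, set $\rho_k=\sup\{\rho\le r_{i(k)}:h(\rho)\ge\ve_0/2\}$; then $h(\rho_k)=\ve_0/2$ by continuity, and $h<\ve_0/2$ on $(\rho_k,2^kr_{i(k)}]\supset(\rho_k,2^k\rho_k]$. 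Second, make explicit that $i(k)\to\infty$ (hence $\rho_k\le r_{i(k)}\to 0$), so that $\mu_0$ is genuinely in $\Tan(\eta,\bx)$; and note that the extraction of a weakly convergent subsequence is where the compact basis of $\cF$ enters, since the bound $d_r(\mu_k,\cF)\le\ve_0$ on growing windows combined with uniform mass bounds on the basis is what gives $\sup_k\mu_k(C_R)<\infty$ for each $R$.
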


\vv

The following proposition gathers all the results of this section. After proving all the previous lemmas, the proof is analogous to that of \cite[Proposition II]{AM19}. We report it anyways, in order to give the reader the precise references inside this section. 
\begin{proposition}\label{prop:connect714}
Let $\omega$ be a Radon measure in $\Rn1$ and let $\bx\in\Rn1$ be such that $\Tan(\omega,\bx)\subset \cP(k)$ for some $k$. If $\Tan(\omega,\bx)\cap \cF(k)\neq\varnothing$ for some integer $k$, then $\Tan(\omega,\bx)\subset\cF(k).$
\end{proposition}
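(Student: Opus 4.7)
My plan is to apply the connectivity lemma for parabolic $d$-cones, Lemma \ref{lem:connectivity_lemma}, with $\mathcal{M} = \mathscr{P}(k)$ and $\mathcal{F} = \mathscr{F}(k)$. The two standing assumptions of the proposition, namely $\Tan(\omega, \bar x) \subset \mathscr{P}(k)$ and $\Tan(\omega, \bar x) \cap \mathscr{F}(k) \neq \varnothing$, match exactly the two structural hypotheses of Lemma \ref{lem:connectivity_lemma}. It is straightforward to check that both $\mathscr{P}(k)$ and $\mathscr{F}(k)$ are parabolic $d$-cones from Lemma \ref{lem:homogeneity_cal_pol_measures} together with the scaling $\omega_{\lambda h} = \lambda\, \omega_h$ for $\lambda > 0$, and Lemma \ref{lem:compact_basis} provides that $\mathscr{F}(k)$ has compact basis.

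The only substantial ingredient left to verify is the self-improvement property required by Lemma \ref{lem:connectivity_lemma}: there should exist $\varepsilon_0 > 0$ such that any $\mu \in \mathscr{P}(k)$ with $d_r(\mu, \mathscr{F}(k)) \leq \varepsilon_0$ for every $r \geq r_0$ lies in $\mathscr{F}(k)$. Writing $\mu = \omega_h$ with $h \in P(k)$ decomposed into its homogeneous components as $h = h_m + h_{m+1} + \cdots + h_d$ with $h_m, h_d \not\equiv 0$ and $1 \leq m \leq d \leq k$, my strategy is to argue as follows. Lemma \ref{lem:compare_h_hd} shows that at large scales $\omega_h$ behaves like $\omega_{h_d}$, and then Lemma \ref{lem:connectivity_in_F_k} immediately gives $d = k$, i.e.\ the top-degree homogeneous part of $h$ has degree exactly $k$. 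To close the gap between ``top degree equals $k$'' and the statement ``$h \in F(k)$'' required by $\mathscr{F}(k)$, I will combine this with Lemma \ref{lem:tangent_caloric_function}, which identifies $\Tan(\omega_h, \bar 0) = \{c\,\omega_{h_m} : c > 0\}$, together with the $d$-cone scaling in Lemma \ref{lem:homogeneity_cal_pol_measures}; this lets me translate closeness to $\mathscr{F}(k)$ at large scales into a rigidity statement at the origin, forcing $m = k$ and hence $h = h_k \in F(k)$.

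Once the self-improvement property is in hand, Lemma \ref{lem:connectivity_lemma} directly delivers $\Tan(\omega, \bar x) \subset \mathscr{F}(k)$ and the proposition is proved. As an alternative, if this bookkeeping proves too delicate, one can instead run the argument in two stages: first invoke Lemma \ref{lem:tangent_measures_inside_Fk} to let $d' = \min\{j : \Tan(\omega, \bar x) \cap \mathscr{P}(j) \neq \varnothing\}$ and conclude $\Tan(\omega, \bar x) \cap \mathscr{P}(d') \subset \mathscr{F}(d')$, and then show via a connectivity/homogeneity comparison that the existence of $\mu_0 \in \Tan(\omega, \bar x) \cap \mathscr{F}(k)$ forces $d' = k$, which would give the claim at once.

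The part I expect to be the most delicate is precisely the self-improvement verification: Lemma \ref{lem:connectivity_in_F_k} controls only the top degree of $h$, while $\mathscr{F}(k)$ requires full homogeneity. The asymmetry between large-scale and small-scale behaviour of $\omega_h$ (governed respectively by $h_d$ and $h_m$ via Lemmas \ref{lem:compare_h_hd} and \ref{lem:finite_upper_density}) is the source of the difficulty, and the proof must carefully leverage the $d$-cone structure of $\mathscr{P}(k)$ and the tangent-measure identification in Lemma \ref{lem:tangent_caloric_function} in order to translate the large-scale hypothesis into a constraint at every scale.
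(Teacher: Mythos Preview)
Your primary plan matches the paper's: apply Lemma~\ref{lem:connectivity_lemma} with $\mathcal M=\mathscr P(k)$ and $\mathcal F=\mathscr F(k)$, using Lemma~\ref{lem:connectivity_in_F_k} for the self-improvement. You are right to worry about this step, and your proposed fix does not work. Lemma~\ref{lem:connectivity_in_F_k} only pins down the \emph{top} degree of $h$, and that is genuinely insufficient for the self-improvement hypothesis of Lemma~\ref{lem:connectivity_lemma} with this choice of $(\mathcal M,\mathcal F)$: for any $h=h_1+h_k$ with $h_1\in F(1)$, $h_k\in F(k)$ both nonzero and $k\ge 2$, one has $r^{-k}\,h\circ\delta_r\to h_k$ uniformly on compacta, so by Lemma~\ref{lem:weak_conv_cal_meas} and Lemma~\ref{lem:prop_Fr}-(5), $d_r(\omega_h,\mathscr F(k))\to 0$ as $r\to\infty$; the large-scale hypothesis is therefore satisfied for every $\varepsilon_0>0$, yet $\omega_h\notin\mathscr F(k)$. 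Rescaling $\omega_h$ by $T_{\bar 0,s}$ only shifts the threshold $r_0\mapsto r_0/s$ and never introduces a small-scale constraint, so the combination with Lemma~\ref{lem:tangent_caloric_function} that you sketch cannot force the bottom degree to equal $k$.

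The route that works is your alternative, and it is what the paper's introduction of the minimal degree $m$ is for. With $m$ the least integer such that $\Tan(\omega,\bar x)\cap\mathscr P(m)\neq\varnothing$, Lemma~\ref{lem:tangent_measures_inside_Fk} gives $\Tan(\omega,\bar x)\cap\mathscr P(m)\subset\mathscr F(m)$, so $\Tan(\omega,\bar x)\cap\mathscr F(m)\neq\varnothing$. Apply Lemma~\ref{lem:connectivity_lemma} with $\mathcal F=\mathscr F(m)$ and, crucially, $\mathcal M=\Tan(\omega,\bar x)$ (which is a $d$-cone) rather than all of $\mathscr P(k)$. The self-improvement now goes through: if $\omega_h\in\Tan(\omega,\bar x)$ and $d_r(\omega_h,\mathscr F(m))<\varepsilon_0$ for all large $r$, Lemma~\ref{lem:connectivity_in_F_k} forces $\deg h=m$, whence $\omega_h\in\Tan(\omega,\bar x)\cap\mathscr P(m)\subset\mathscr F(m)$ by the minimality of $m$. (The $\varepsilon_0$ in Lemma~\ref{lem:connectivity_in_F_k} depends only on $n$ and $\deg h\le k$, so a uniform choice exists.) This yields $\Tan(\omega,\bar x)\subset\mathscr F(m)$. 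Since by hypothesis $\Tan(\omega,\bar x)\cap\mathscr F(k)\neq\varnothing$ and the cones $\mathscr F(j)$ are pairwise disjoint (from the scaling $T_{\bar 0,r}[\omega_g]=r^{n+j}\omega_g$ for $g\in F(j)$, Lemma~\ref{lem:homogeneity_cal_pol_measures}), one concludes $m=k$.
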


\begin{proof}
Let us assume that $\Tan(\omega,\bx)\subset \cP(k)$ and let $m\leq k$ be the smallest integer for which $\Tan(\omega,\bx)\cap \cP(m)\neq \varnothing.$ In particular $\Tan(\omega, \bx)\cap \cP(m)\subset \cF(m)$ by Lemma \ref{lem:tangent_measures_inside_Fk}, which gives that $\Tan(\omega, \bx)\cap \cF(m)\neq \varnothing.$ Furthermore, $\cF(k)$ has compact basis by Lemma \ref{lem:compact_basis} and, in light of Lemma \ref{lem:connectivity_in_F_k}, we can apply Lemma \ref{lem:connectivity_lemma} with $\cM=\cP(k)$, $\cF=\cF(k)$ and $\eta=\omega$. Thus, $\Tan(\omega, \bx)\subset \cF(k).$
\end{proof}

\vvv

\section{Elements of the theory of Optimal Transport}\label{sec:optimal_transport}

In this section, we record and develop  some basic theory of Optimal Transport. In particular,  we are interested in the following:
\begin{enumerate}
\item The relationship between the Kantorovich norm $\| \cdot\|_{KR}$ on the space of  signed measures $\mathcal{M}( \om)$  with not necessarily zero mass and the Wasserstein distance between finite non-negative measures on a bounded  open set $\om \subset \rn$ whose masses are not necessarily equal.
\item Kantorovich duality in the class of finite non-negative measures on $ \om$ whose masses are not necessarily equal.
\item Identifying the space of $1$-Lipschitz functions defined  on $\overline \om$ that vanish on $\d \om$ as the dual of $\left( \mathcal{M}( \om), \| \cdot\|_{KR}\right)$.
\item The identification of the completion of $\mathcal{M}( \om)$ or, equivalently, the closure of $\mathcal{M}( \om)$ as a subspace of $\left(  \Lip(\overline{\Omega} )^*, \| \cdot\|_{KR} \right)$.
\end{enumerate}

\vv

Let   $\om \subset \rn$ be a bounded  open set, $\mathcal M( \om)$  be the set of signed Borel measures on $ \om$ such that $|\mu|( \om)<\infty$, and let $\mathcal M^+( \om)$ be the set of non-negative measures in $\mathcal M( \om)$. Set
$$
\lipp(\oom):= \left\{\vp: \oom \to \R: \vp \in \Lip(\oom),  \|\vp\|_{\Lip}\leq 1 \text{ and }  \vp(x)=0 \text{ on } \d \om \right\}
$$
and note that  $\dist( \cdot, \rn \setminus \om) \in  \lipp(\overline \om)$ and $ \sup_{x \in \oom} |\vp(x)|\leq \diam(\om)$, for any $\vp \in \lipp(\oom)$. We define  the  Kantorovich-Rubinshtein norm on $\mathcal M(\om)$  by
\begin{equation}\label{eq:KRnorm}
\| \mu \|_{KR(\om)}:= \sup \left\{ \int \vp \,d\mu : \vp \in \lipp(\oom)\right\}
\end{equation}
and, for $\mu, \nu \in \mathcal M^+( \om)$ the modified $1$-Wasserstein distance
\begin{equation}\label{eq:wbidef}
Wb_1(\mu,\nu):=  \inf \left\{ \int_{\oom \times\oom} |x-y| \,d\gamma(x,y) : \gamma \in \Pi b(\mu,\nu) \right\},
\end{equation}
where
$$
\Pi b(\mu,\nu) := \left\{ \gamma \in \mathcal{M}^+( \oom \times \oom) : \pi^1_{\sharp}( \gamma)|_{\om} = \mu \text{ and } \pi^2_{\sharp}( \gamma)|_{\om} = \nu\right\}.
$$
Here we used the notation $\pi^i: \oom_1 \times \oom_2 \to \oom_i$ for the canonical projection from $\oom_1 \times \oom_2$ onto $\oom_i$, $i=1,2$. The distance $Wb_1$ differs from the classical Wasserstein distance because we allow mass to be ``transported'' also on $\partial \Omega$: in our case the \textit{transport plan} $\gamma$ takes values in $\mathcal{M}^+( \oom \times \oom)$ instead of $\mathcal{M}^+( \om \times \om)$. For $\sigma \in \mathcal{M}(\om)$, we  set
$$
\Gamma b(\sigma) := \left\{ \gamma \in \mathcal{M}^+( \oom \times \oom) : \pi^1_{\sharp}( \gamma)|_{\om}, \pi^2_{\sharp}( \gamma)|_{\om} \in \mathcal{M}^+( \om)  \text{ and }   \pi^1_{\sharp}( \gamma)|_{\om} -  \pi^2_{\sharp}( \gamma)|_{\om}= \sigma \right\}
$$
and define the Wasserstein functional 
\begin{equation}\label{eq:wass-funct-def}
\|\sigma\|_W:=  \inf \left\{ \int_{\oom \times\oom} |x-y| \,d\gamma(x,y) : \gamma \in \Gamma b(\sigma) \right\}.
\end{equation}
It was proved in \cite[Proposition 2.9]{FG10} that the space $\left( \mathcal{M}^+( \om ), Wb_1(\cdot,\cdot)\right)$ is a geodesic metric space. In fact, the result was stated for the $Wb_2$ distance but the same proof works for  $Wb_1$ as well. We also refer to \cite{BBS97} and \cite{BB01}.	 Moreover, as stated in \cite[p. 313]{San15}, the infimum in \eqref{eq:wbidef} is attained  and the following duality formula holds:
\begin{align}\label{eq:dualityWasser-Kanto}
Wb_1(\mu,\nu)= \min \left\{ \int_{\oom \times\oom} |x-y| \,d\gamma(x,y) : \gamma \in \Pi b(\mu,\nu) \right\}=\| \mu-\nu \|_{KR(\om)}.
\end{align}

Another important result is the Kantorovich duality, which holds in this setting as well. Let $C_0(\oom)$ be the set of all continuous functions on $\oom$ vanishing on $\d\om$ and  $\Phi_c(\oom)$ be the set of functions $(\vp,\psi) \in C_0(\oom) \times C_0(\oom)$ satisfying $\vp(x) + \psi(y) \leq |x-y|$ for $\mu$-a.e. $x \in \oom$ and $\nu$-a.e. $y \in \oom$. Then
\begin{equation}\label{duality-kantorov}
Wb_1(\mu,\nu) = \sup_{(\vp,\psi) \in \Phi_c(\oom)} \left\{ \int_{\om} \vp \,d\mu + \int_{\om} \psi \,d\nu \right\}.
\end{equation}
We refer to the proof of \cite[Theorem 1.3]{Vil03} and in particular, \cite[pp. 26-27, case 1]{Vil03}. The only two modifications that are required are the following: \begin{itemize}
\item One should use the class $C_0$ instead of  the one of bounded  continuous functions  denoted by $C_b$.\footnote{This comment should be used every time we modify a proof  and will not be repeated.}
\item  At the top of p. 27, where it is demonstrated that the functional on the right-hand side of \eqref{duality-kantorov} is well-defined, in order to deduce that $\vp= \tilde \vp$ and $\psi= \wt \psi$, one should utilize that the functions in $ \Phi_c(\oom)$  vanish on $\d \om$  instead of the fact that $\mu(\om)$ and $\nu(\om)$ are equal (which does not hold in our case).
\end{itemize}

Let us emphasize that it is not necessary to invoke \cite{FG10} in order to show that $Wb_1$ is a metric since the case $p=1$  is easier. Indeed,  we can follow the proof of  \cite[Theorem 4.1]{Ed11} (making some easy modifications) and use  \eqref{duality-kantorov} to  conclude \eqref{eq:dualityWasser-Kanto}.  Then, combining \cite[Lemma 4.3]{Ed11} and a standard  argument that can be found for instance in \cite[Lemma 1.13]{Sle16}, we can show directly that $\|\cdot\|_{KR}$ is a norm  using the fact that for a closed set $C \subset \om$, the function  $f_n(x)=n^{-1}(1-n \, \dist(x,C))^+ \in \Lip(\oom)$ and vanishes on $\d \om$ for $n \geq  [\dist(C,\d \om)]^{-1}$. Thus, $Wb_1$ becomes a metric and by (the proof of) \cite[Lemma 4.5]{Ed11}, it holds that $\|\cdot\|_W=\|\cdot\|_{KR(\om)}$ on $\mathcal{M}(\om)$.

\vv

For any  $ \vp \in \lipp(\oom)$, we  define
\begin{equation}\label{eq:def-distribution}
 \widehat \vp (\mu):= \int_\om \vp \,d\mu.
\end{equation}
Then  the following theorem holds:
\begin{theorem}\label{thm:Lip is dual of measure} 
The map $\vp \mapsto \widehat \vp$ is a linear bijection of  $  \lipp(\oom) $ onto the Banach dual of $(\mathcal{M}(\om), \|\cdot\|_{KR(\om)} )$ and $\|\widehat \vp\|=\|\vp\|_{\Lip(\oom)}$. Thus, the Banach dual of $(\mathcal{M}(\om), \|\cdot\|_{KR(\om)} )$  is indeed $\textup{Lip}_{01}(\oom) $. 
\end{theorem}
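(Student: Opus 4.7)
The plan is to verify linearity, injectivity, the isometry $\|\widehat\vp\|=\|\vp\|_{\Lip(\oom)}$, and surjectivity. Linearity is immediate from $\widehat\vp(\mu)=\int\vp\,d\mu$. For injectivity, every Dirac $\delta_x$ with $x\in \om$ lies in $\mathcal M(\om)$ and $\widehat\vp(\delta_x)=\vp(x)$; thus $\widehat\vp\equiv 0$ forces $\vp\equiv 0$ on $\om$, and continuity together with $\vp|_{\d\om}=0$ gives $\vp\equiv 0$ on $\oom$. The upper bound $\|\widehat\vp\|\leq L\coloneqq \|\vp\|_{\Lip(\oom)}$ is immediate from the definition of $\|\cdot\|_{KR(\om)}$, since $\vp/L\in \lipp(\oom)$. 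For the reverse bound $\|\widehat\vp\|\geq L$, fix $\ve>0$ and, by definition of the Lipschitz norm, find $x_0,y_0\in \oom$ with $|\vp(x_0)-\vp(y_0)|\geq (L-\ve)|x_0-y_0|$; by continuity of $\vp$, we may perturb to $x,y\in \om$ with ratio still $\geq L-2\ve$. For $\mu=\delta_x-\delta_y\in \mathcal M(\om)$, any $\psi\in \lipp(\oom)$ is $1$-Lipschitz, so $\int\psi\,d\mu=\psi(x)-\psi(y)\leq |x-y|$, whence $\|\mu\|_{KR}\leq |x-y|$ and
\begin{equation*}
\frac{|\widehat\vp(\mu)|}{\|\mu\|_{KR}}\geq \frac{|\vp(x)-\vp(y)|}{|x-y|}\geq L-2\ve;
\end{equation*}
letting $\ve\to 0$ finishes the isometry.

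For surjectivity, given a bounded linear functional $T$ on $(\mathcal M(\om),\|\cdot\|_{KR})$, define $\vp(x)\coloneqq T(\delta_x)$ for $x\in \om$. The estimate $|\vp(x)-\vp(y)|\leq \|T\|\,\|\delta_x-\delta_y\|_{KR}\leq \|T\|\,|x-y|$ shows that $\vp$ is $\|T\|$-Lipschitz on $\om$. Moreover, the Kantorovich duality \eqref{eq:dualityWasser-Kanto} identifies $\|\delta_x\|_{KR}=\sup_{\psi\in\lipp(\oom)}\psi(x)=\dist(x,\d\om)$ (the supremum is realized at $\psi(z)=\dist(z,\d\om)\in \lipp(\oom)$), so $|\vp(x)|\leq \|T\|\dist(x,\d\om)\to 0$ as $x\to \d\om$. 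Extending $\vp$ by zero on $\d\om$ then yields a $\|T\|$-Lipschitz function on $\oom$ vanishing on $\d\om$, i.e., an element of $\lipp(\oom)$ after rescaling. Linearity forces $T=\widehat\vp$ on every finite signed combination of Dirac masses in $\om$; continuity of $T$ and $\widehat\vp$ together with the density of such atomic measures in $(\mathcal M(\om),\|\cdot\|_{KR})$ then extend the identity to all $\mu\in \mathcal M(\om)$.

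The main technical step is this density of atomic measures in the $KR$-norm. By Jordan decomposition one reduces to $\mu\in \mathcal M^+(\om)$: partitioning $\om$ into Borel pieces $\{E_i\}$ of diameter $\leq\ve$, the atomic approximant $\mu_\ve\coloneqq \sum_i \mu(E_i)\delta_{x_i}$ for arbitrarily chosen $x_i\in E_i$ is paired with $\mu$ by the concentrated plan $\gamma\coloneqq \sum_i (\mu|_{E_i})\otimes \delta_{x_i}\in \Pi b(\mu,\mu_\ve)$, which has cost $\int|x-y|\,d\gamma\leq \ve\mu(\om)$; thus $\|\mu-\mu_\ve\|_{KR}=Wb_1(\mu,\mu_\ve)\to 0$ by \eqref{eq:dualityWasser-Kanto}. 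This is where the non-standard extension of the transport plans to $\oom\times\oom$ (whose marginals may charge $\d\om$, matched on the dual side with the boundary vanishing $\psi|_{\d\om}=0$) and the generalized Kantorovich duality \eqref{eq:dualityWasser-Kanto}--\eqref{duality-kantorov} are decisive, both in the computation of $\|\delta_x\|_{KR}$ and in controlling $\|\mu-\mu_\ve\|_{KR}$ for signed measures with arbitrary total variation.
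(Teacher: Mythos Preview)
Your proof is correct and follows essentially the same approach as the paper, which adapts Edwards' argument in \cite[Theorem 7.3]{Ed11}: both construct $\vp(x)=T(\delta_x)$, establish the Lipschitz bound via $\|\delta_x-\delta_y\|_{KR}\le |x-y|$, obtain the boundary vanishing from $\|\delta_x\|_{KR}=\dist(x,\d\om)$, and conclude by density of finitely supported measures. The main difference is that you supply the density argument and the full isometry computation explicitly, whereas the paper defers these to \cite{Ed11} and only details the modifications needed when testing against boundary points.
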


\begin{proof}
We follow the proof of \cite[Theorem 7.3]{Ed11}. One only has to check the results in sections 2, 3, 4, 6, and 7 up to Theorem 7.3. Since most of the arguments  require minor changes we will provide only the roadmap for the proof apart from Lemma 7.2 and Theorem 7.3,  that require some care.
\begin{itemize}
\item One can deal with Section 3 invoking the Kantorovich duality \eqref{duality-kantorov}.
\item In Section 4, one only has to check Theorem 4.5, since Theorems 4.1  and 4.4 are taken care of by \eqref{eq:dualityWasser-Kanto} and  \cite{FG10}. The proof is exactly the same. For future reference, we get that $\| \cdot \|_W = \| \cdot \|_{KR(\om)}$.
\item From Section 6, we only need Theorem 6.1, which is already stated in the desired generality. 
\item Lemma 7.1 should be stated just for points in $\om$ and thus, the second part of the proof of Lemma 7.2 works as it is for points in $\om$. To extend it to the boundary points it is enough to use that $\lipp(\oom)$ functions vanish on the boundary. Indeed, let $\ve_z$ stand for the Dirac mass at $z \in \om$, and let  $x\in \om$ and $y\in \d\om$. Then, for any $\vp \in \lipp(\oom)$ we have
$$
\frac{|\vp(x)-\vp(y)|}{|x-y|}=\frac{|\vp(x)|}{|x-y|}\leq \frac{|\vp(x)|}{\dist(x, \d\om)}= \frac{|\widehat{\vp}(\ve_x)|}{\dist(x, \d\om)}= |\widehat{\vp}(\delta_x)|,
$$
where $\delta_x:=\dist(x, \d\om)^{-1} \ve_x$. Note that if $d(x) \in \d\om$ is such that $|x-d(x)|=\dist(x,\d \om)$,
$$
\|\ve_{x}\|_{KR(\om)}=\sup_{\vp \in \lipp(\oom)} \vp(x)=\sup_{\vp \in \lipp(\oom)} (\vp(x)-\vp(d(x))) \leq  |x-d(x)|=\dist(x,\d \om),
$$
and thus, $\|\delta_{x}\|_{KR(\om)}\leq 1$. Hence,
$$
\sup_{x\in\om, y\in \d\om}\frac{|\vp(x)-\vp(y)|}{|x-y|} \leq \sup\bigl\{|\widehat{\vp}(\nu)|: \nu \in \mathcal{M}(\om), \|\nu\|_{KR(\om)}\leq 1\bigr\}.
$$
Since $\|\cdot\|_W=\|\cdot\|_{KR}$, the result follows by the latter estimate  and the proof of Lemma 7.2. 
\item  To show Lemma 7.3 we should also deal with the boundary points. For $f \in \mathcal{M}(\om)^*$, we define $u(x)=f(\ve_x)$, if $x \in\om$, and notice that $u(x)-u(y)=f(\ve_x)-f(\ve_y)=f(\ve_x -\ve_y)$, which is $1$-Lipschitz by Lemma 7.1. To extend the definition of $u$ to the boundary, for $x \in \d \om$, we define  $u(x)= \lim_{k \to \infty} u(x_k)$, where $x_k$ is a sequence of points in $\om$ so that $x_k \to x$. This is well-defined, since if $y_k \in \om $ is another sequence so that $y_k \to x$, then 
$$
 |u(x_k)-u(y_k)| \leq \|f\| |x_k -y_k| \leq  \|f\| (|x_k-x| + |y_k-x|)
 $$ and so  $\lim_{k \to \infty} u(x_k)= \lim_{k \to \infty}  u(y_k)$. It is easy to show that the extension of $u$, that we still denote by $u$, is  $1$-Lipschitz on $\oom$ and vanishes on $\d \om$, i.e.,  $u \in \lipp(\oom)$. We will only deal with the latter. Fix $x \in \d\om$ and pick $\om \ni x_k \to x$ as $k \to \infty$. Then, in light of the identity $\|\cdot\|_W=\|\cdot\|_{KR}$, we obtain $|u(x_k)|=| f(\ve_{x_k})| \leq \|f\| \|\ve_{x_k}\|_{KR(\om)}$ and since
$$
\|\ve_{x_k}\|_{KR(\om)}=\sup_{\vp \in \lipp(\oom)} \vp(x_k)=\sup_{\vp \in \lipp(\oom)} (\vp(x_k)-\vp(x)) \leq  |x_k-x|,
$$
by taking limits as $k \to \infty$, we infer that $u(x)=0$.   Remark that
$$\widehat u(\ve_x)= u(x)=f(\ve_x), \text{ for } x \in \om,
$$ and so $\widehat u$ and $f$  are bounded linear functionals that agree on a dense subspace of $\mathcal{M}(\om)$ (namely, the space of finite  Borel measures whose support is a finite set). The theorem readily follows.\qedhere
\end{itemize}
\end{proof}

\vv

We move to the last part of this section, which is the identification of the completion of $\mathcal{M}(\om)$ (or, equivalently, the closure of  $\mathcal{M}(\om)$)  as a subspace of $(\lipp(\oom)^*, \| \cdot \|_{KR(\om)} )$. 
Following \cite{BCJ05}, we  denote this completion  by $\widetilde{\mathcal{M}}(\om)$ and emphasize that it is a strict subspace of $(\lipp(\oom)^*, \| \cdot \|_{KR(\om)} )$ since otherwise $\lipp(\oom)$ would  be a reflexive Banach space. If $u \in \widetilde{\mathcal{M}}(\om) \subset \lipp(\oom)^*$, the pairing $\langle u, \vp \rangle$  is well-defined for any $\vp \in \lipp(\oom)$. We define the first order distribution
$$
T_u(\vp)=\langle u,  \vp \rangle, \, \text{ for } \vp \in C^\infty_0(\oom),
$$
where $ C^\infty_0(\oom)$ are the smooth functions in $\oom$ vanishing on $\d\om$.  Then the following theorem holds:

\begin{theorem}\label{thm:completion of M}
Let $\om \subset \rn$ be a bounded  open  set. Then
\begin{equation}\label{eq:Completion}
\Bigl\{ T_u : u \in  \widetilde{\mathcal{M}}(\om) \Bigr\} = \bigl\{ -\dv \sigma : \sigma \in  L^1(\om, \rn) \bigr\}
\end{equation}
as subsets of $\mathcal{D}'(\om)$. Moreover, if $V_0$ is the closed subspace $\{ \sigma \in L^1(\om, \rn): \dv \sigma =0\}$, the linear map $\sigma \in L^1(\om, \rn)/V_0   \mapsto -\dv \sigma \in \wt{\mathcal{M}}(\om)$ is an isometry, i.e.,
$$
\| \sigma\|_{ L^1(\om, \rn)/V_0} =\|- \dv \sigma\|_{KR(\om)}.
$$
\end{theorem}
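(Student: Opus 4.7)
The plan is to define the map $\Psi\colon L^1(\om,\R^n)/V_0 \to \lipp(\oom)^\ast$ by
\[
\langle \Psi([\sigma]), \vp\rangle \coloneqq \int_\om \sigma\cdot\nabla\vp\, dx, \qquad \vp\in\lipp(\oom),
\]
and then to show in sequence that (i) $\Psi$ takes values in $\widetilde{\mathcal M}(\om)$, (ii) $\Psi$ is isometric, and (iii) $\Psi$ is surjective onto $\widetilde{\mathcal M}(\om)$. Well-definedness on equivalence classes is built into the definition of $V_0$, and the trivial estimate $|\langle \Psi([\sigma]), \vp\rangle|\leq \|\sigma\|_{L^1}\|\nabla\vp\|_{L^\infty}\leq\|\sigma\|_{L^1}$, valid for every representative of $[\sigma]$, gives the contraction $\|\Psi([\sigma])\|_{KR(\om)}\leq \|[\sigma]\|_{L^1/V_0}$.

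For (i), if $\sigma\in C^\infty_c(\om,\R^n)$ then integration by parts turns $\Psi(\sigma)$ into the absolutely continuous measure $(-\dv\sigma)\,\mathcal L^n\in\mathcal M(\om)$; density of $C^\infty_c(\om,\R^n)$ in $L^1(\om,\R^n)$ together with the contraction property then displays $\Psi(\sigma)$, for arbitrary $\sigma\in L^1$, as a KR-limit of elements of $\mathcal M(\om)$, placing it in $\widetilde{\mathcal M}(\om)$. For (ii) and (iii), I would combine a Beckmann-type construction with a Cauchy-sequence completion argument. Given $\mu\in\mathcal M(\om)$, the Kantorovich duality \eqref{eq:dualityWasser-Kanto} provides an optimal plan $\gamma\in \Pi b(\mu^-,\mu^+)$ with $\int|x-y|\,d\gamma=\|\mu\|_{KR(\om)}$. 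I would then define the transport current $\sigma_\gamma$ on $\oom$ against a smooth vector field $\Phi$ by
\[
\int \Phi\cdot d\sigma_\gamma = \int_{\oom\times\oom}\int_0^1 (y-x)\cdot\Phi\bigl(x+s(y-x)\bigr)\,ds\,d\gamma(x,y), \qquad \Phi\in C^\infty_c(\om,\R^n).
\]
Fubini gives the total-variation bound $|\sigma_\gamma|(\oom)\leq \|\mu\|_{KR(\om)}$, and a Sudakov--Ambrosio ray disintegration of the optimal plan for the cost $|x-y|$ upgrades $\sigma_\gamma$ to an $L^1(\om,\R^n)$ vector field with the same norm bound. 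Testing against $\Phi=\nabla\vp$ for $\vp\in\lipp(\oom)$ and evaluating the inner integral yields $\int\sigma_\gamma\cdot\nabla\vp=\int(\vp(y)-\vp(x))\,d\gamma=\int\vp\,d\mu$, so $\Psi(\sigma_\gamma)=\mu$; combined with the contraction direction, this proves the isometry on $\mathcal M(\om)$.

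Surjectivity onto $\widetilde{\mathcal M}(\om)$ and the extension of the isometry to the whole completion then follow by a standard Cauchy argument: for $u\in\widetilde{\mathcal M}(\om)$ represented by a KR-Cauchy sequence $\mu_n\in\mathcal M(\om)$, the Beckmann construction produces $\sigma_n\in L^1$ with $\Psi(\sigma_n)=\mu_n$ and $\|\sigma_n\|_{L^1}\leq \|\mu_n\|_{KR(\om)}$; applying the isometry on $\mathcal M(\om)$ to the differences $\mu_n-\mu_m$ forces $\{[\sigma_n]\}$ to be Cauchy in the Banach space $L^1/V_0$, whose limit $[\sigma]$ satisfies $\Psi(\sigma)=u$ by continuity of $\Psi$. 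The main obstacle in this plan is producing an honestly $L^1$ Beckmann current with the sharp norm bound: the total-variation estimate is immediate from Fubini, but absolute continuity of $\sigma_\gamma$ with respect to Lebesgue measure requires the ray decomposition of the optimal plan (Sudakov--Ambrosio, or equivalently the Evans--Gangbo one-dimensional disintegration along transport rays), which is the technically delicate step. An alternative route avoiding transport altogether would be to observe that the gradient map $\vp\mapsto\nabla\vp$ embeds $\lipp(\oom)$ isometrically as a closed subspace $W\subset L^\infty(\om,\R^n)$ for which $V_0$ is by construction the pre-annihilator in $L^1$; standard Banach-space duality then identifies $L^1/V_0$ isometrically with a predual of $W$, and a Hahn--Banach/bipolar argument matches this with $\widetilde{\mathcal M}(\om)$, yielding both the isometry and surjectivity without an explicit transport construction.
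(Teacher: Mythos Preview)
Your primary transport-current construction has a genuine gap: the Beckmann current $\sigma_\gamma$ is in general only a vector \emph{measure}, not an $L^1$ field. If $\mu=\delta_{x_0}$ with $x_0\in\om$, the optimal plan in $\Pi b$ moves unit mass from $x_0$ to the nearest boundary point, and $\sigma_\gamma$ is $\mathcal H^1$-concentrated on a segment, hence singular with respect to $\mathcal L^n$ whenever $n\geq2$. The ray disintegration does not rescue this: the $L^1$ (or $L^p$) summability of the transport density (De Pascale--Pratelli, Santambrogio) requires at least one marginal to be absolutely continuous, which fails for generic $\mu\in\mathcal M(\om)$. Your Cauchy-sequence step then collapses, since it presupposes $\sigma_n\in L^1$ for every $\mu_n\in\mathcal M(\om)$. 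A repair is available---first run the Beckmann construction only for absolutely continuous $\mu$ (where the transport density \emph{is} $L^1$), obtain the isometry on that dense subclass, and then extend by completion---but the argument must be reorganized around this restriction.

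Your alternative Banach-duality route, by contrast, is essentially correct and is genuinely different from what the paper does. The paper follows \cite{BCJ05}: one solves the Dirichlet problem $\Delta_p u_p=\mu$ in $W^{1,p}_0(\om)$, sets $\sigma_p=|\nabla u_p|^{p-2}\nabla u_p$, and sends $p\to\infty$; the sharp $L^1$ bound comes from the variational characterization of the $p$-energy, and the smooth approximation of $\lipp$ functions from \cite{BBDP03} handles the boundary. Your duality argument is shorter: once you verify that $W=\nabla(\lipp(\oom))$ is weak-$*$ closed in $L^\infty$ (Krein--\v{S}mulian plus Arzel\`a--Ascoli on the equi-Lipschitz, boundary-vanishing potentials), the identity $(L^1/V_0)^*\cong W\cong\lipp(\oom)$ gives the isometry at once; surjectivity follows because $\Psi(L^1/V_0)$ is then closed and contains the KR-dense class $C^\infty_c(\om)$ (each such $f$ equals $-\dv(\nabla u)$ with $-\Delta u=f$, $u\in W^{1,2}_0$). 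The $p$-Laplacian route has the merit of producing a distinguished minimal-flow representative; your duality route trades that constructive content for brevity.
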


\begin{proof}
The proof is almost the same as the one of \cite[Theorem 2.4]{BCJ05} and so we shall only make a few comments:
\begin{itemize}
\item In the statement of Lemma 2.1, $\vp_n \in C^\infty_0(\oom)$ and in display (2.1), $\vp_n \to 0$ (i.e., $c=0$).
\item In the proof of Lemma 2.1, one should define $S_\delta\colon \lipp(\oom) \to C^\infty_0(\oom)$ to be the approximation $v_\delta$ given in the proof of \cite[Theorem 3.1, Steps 1 and 2]{BBDP03} setting $\Sigma=\d\om$. It is clear from the proof of \cite[Theorem 3.1]{BBDP03}  that if $u \in \lipp(\oom)$,  then $v_\delta \in C^\infty_0(\oom)$ satisfying the conditions (1)-(3) in the proof of  Lemma 2.1. The rest of the argument is identical.  Let us point out that this approximation works in arbitrary open sets.
\item The proof of Lemma 2.3 is the same. In particular, it is even simpler as we do not need to rule  out the constants.
\item In Lemma 2.5, for $p>n$, one should define $a_p$ over functions $u \in W^{1,p}_0(\om)$ (see  \cite[p. 15, display (21)]{BBDP03}).  
Then the  infimum is attained at a unique point $u_p \in C_0(\oom)$ without the  condition $\int_{\om} u_p =0$ since this is just the unique solution of the Dirichlet problem $\Delta_p u=\mu$  in  $\om$ for $u \in \bigcap_{q < \frac{n(p-1)}{n-1}} W^{1,q}_0(\om)$. For $p$ large enough, since the boundary values are zero in the Sobolev sense, we may extend $u_p$ by zero and use  Morrey's inequality to obtain a H\"older continuous representative on $\oom$ that vanishes on $\d \om$. The rest of the proof is almost the same and is  based on  the one of \cite[Theorem 4.1]{BBDP03}. 
\item Using Theorem \ref{thm:Lip is dual of measure} and following the proof of Theorem 2.4 {\it mutatis mutandis}, we  arrive to the desired conclusion. We record that the  reason why we can extend the arguments to arbitrary domains is the fact that we require for our functions to vanish on the boundary. Thus we can extend them by zero to the exterior without the requirement that the domain is such that we can construct continuous or Sobolev  extensions to its complement. \qedhere
\end{itemize}
\end{proof}

\vv

For the applications in the next section we need the following two lemmas.

\begin{lemma}\label{lem:L2embeddingM}
If $\om$ is a bounded open set, then $L^2(\om)^* \subset \wt{ \mathcal{M}}(\om)$ and the inclusion map  $ i\colon 	L^2(\om)^* \to \wt{ \mathcal{M}}(\om)$ is continuous with constant depending only on the Lebesgue measure of  $\om$. 
\end{lemma}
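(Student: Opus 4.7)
The plan is to use Riesz representation to identify $L^2(\om)^*$ with $L^2(\om)$, and then observe that every $f \in L^2(\om)$ canonically gives rise to an element of $\mathcal{M}(\om)$, which in turn sits inside its completion $\wt{\mathcal{M}}(\om)$. Since $\om$ is bounded, Hölder's inequality gives $L^2(\om) \subset L^1(\om)$, so $d\mu_f \coloneqq f\,dx$ defines a finite signed Borel measure on $\om$, i.e., $\mu_f \in \mathcal{M}(\om)$. The map $i\colon f \mapsto \mu_f$ is clearly linear and injective, and since $\mathcal{M}(\om)$ sits densely inside $\wt{\mathcal{M}}(\om)$ by definition of the completion, we obtain a well-defined map $i\colon L^2(\om)^* \to \wt{\mathcal{M}}(\om)$. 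On the distributional side (consistent with Theorem \ref{thm:completion of M}), $\mu_f$ corresponds to the distribution $\vp \mapsto \int_\om f \vp\, dx$, which extends continuously to $\lipp(\oom)$ since any $\vp \in \lipp(\oom)$ lies in $L^\infty(\om) \subset L^2(\om)$.

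The main step is the estimate $\|\mu_f\|_{KR(\om)} \leq C(|\om|)\,\|f\|_{L^2}$. The key observation is that any $\vp \in \lipp(\oom)$ satisfies
\[
|\vp(x)| = |\vp(x) - \vp(y)| \leq |x-y| \quad \text{for every } y \in \d\om,
\]
so $|\vp(x)| \leq \dist(x,\d\om) \leq r_{\textup{in}}(\om)$, where $r_{\textup{in}}(\om) = \sup_{x \in \om}\dist(x,\d\om)$ is the inradius of $\om$. Since any open ball contained in $\om$ has Lebesgue measure at most $|\om|$, we have the elementary bound $\omega_n r_{\textup{in}}(\om)^n \leq |\om|$ (where $\omega_n$ is the volume of the unit ball in $\rn$), hence $r_{\textup{in}}(\om) \leq C_n\,|\om|^{1/n}$. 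Therefore $\|\vp\|_{L^\infty(\om)} \leq C_n\,|\om|^{1/n}$ for every $\vp \in \lipp(\oom)$, which combined with Cauchy--Schwarz yields
\[
\left|\int_\om f\,\vp\, dx\right| \leq \|f\|_{L^2(\om)}\,\|\vp\|_{L^2(\om)} \leq \|f\|_{L^2(\om)}\,|\om|^{1/2}\,\|\vp\|_{L^\infty(\om)} \leq C_n\,|\om|^{\frac{1}{n}+\frac{1}{2}}\,\|f\|_{L^2(\om)}.
\]
Taking the supremum over $\vp \in \lipp(\oom)$ gives $\|\mu_f\|_{KR(\om)} \leq C_n\,|\om|^{1/n + 1/2}\,\|f\|_{L^2(\om)}$, proving that $i$ is continuous with norm depending only on $|\om|$.

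There is no substantive obstacle here: the argument is a combination of the Riesz representation theorem, the canonical inclusion of $\mathcal{M}(\om)$ into its completion, and the inradius bound on $\lipp$-functions. The only point that requires care is verifying that the inclusion is well-defined on the level of distributions/elements of $\wt{\mathcal{M}}(\om)$, which follows from the identification provided by Theorem \ref{thm:completion of M} (one can equivalently write $f = -\dv \sigma$ by solving $-\Delta u = f$ in $H^1_0(\om)$ and taking $\sigma = \nabla u \in L^2(\om,\rn) \subset L^1(\om,\rn)$, which gives an alternative but equivalent route to the embedding).
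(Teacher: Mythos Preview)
Your proof is correct and takes a more elementary route than the paper. The paper uses precisely the alternative you sketch in your final paragraph: it solves $-\Delta u_f = f$ in $W^{1,2}_0(\om)$, sets $\sigma_f = \nabla u_f$, and invokes the isometry of Theorem~\ref{thm:completion of M} together with Cauchy--Schwarz to obtain $\|f\|_{KR(\om)} \leq \|\sigma_f\|_{L^1(\om)} \leq |\om|^{1/2}\|\nabla u_f\|_{L^2(\om)} \lesssim |\om|^{1/2}\|f\|_{L^2(\om)}$. Your main argument bypasses Theorem~\ref{thm:completion of M} altogether, working directly from the definition of the $KR$ norm via the pointwise bound $|\vp(x)| \leq \dist(x,\d\om)$ for $\vp\in\lipp(\oom)$ and the inradius estimate $r_{\textup{in}}(\om)\leq C_n|\om|^{1/n}$. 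This is arguably cleaner for the purpose at hand, since it does not rely on the rather delicate identification of the completion. The paper's approach, on the other hand, makes transparent the remark following the lemma (that in fact $W^{-1,2}(\om)$ embeds into $\wt{\mathcal{M}}(\om)$), and yields the slightly better constant $|\om|^{1/2}$ (up to the Poincar\'e constant) versus your $C_n|\om|^{1/n+1/2}$; either suffices for the application in Lemma~\ref{lemma:compact-embed}.
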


\begin{proof}
Let  $f \in L^2(\om)$ and let us define 
$$
u_f:=\Delta_\om^{-1} f(x)= \int_\om G_\om (x,y) f(y) \,dy,
$$
and recall that $-\Delta u_f=f$ in the weak sense in $\om$ and   $u_f \in W^{1,2}_0(\om)$ satisfying $\|u_f\|_{W^{1,2}(\om)} \leq  \|f\|_{L^2(\om)}$.
 Define $\sigma_f= \nabla u_f$ and note that $-\div \sigma_f =f$  weakly in $\om$. By Cauchy-Schwarz,
 $$
 \int_\om |\sigma_f| \leq |\om|^{1/2} \|\sigma_f\|_{L^2(\om)} =   |\om|^{1/2} \|\nabla u_f\|_{L^{2}(\om)} \leq |\om|^{1/2} \|f\|_{L^2(\om)}.
 $$
 We conclude  by Theorem \ref{thm:completion of M}. 
\end{proof}

\begin{remark}
It is clear  that $i :  W^{-1,2}(\om) \to  \wt{ \mathcal{M}}(\om)$ is an embedding.
\end{remark}

\vv

\begin{definition}
Let $\om \subset \rn$ be a bounded open set and  $I \subset \R$ be an open interval.  If $W^{1,2}(\om)$ stands for the inhomogeneous Sobolev space with norm $\|u\|_{L^2(\om)} +\|\nabla u\|_{L^2(\om)}$, we define the parabolic Sobolev space 
$$
\mathcal{W}(\om \times I):= \left\{ u \in L^2(I ; W^{1,2}(\om) ): \d_t u \in L^1(I; \widetilde{\mathcal{M}}(\om) ) \right\}
$$
and the semi-norm
$$
\| u \|_{\dot{\mathcal{W}}(\om \times I )}:=  \| \nabla u \|_{L^2(\om \times I)} + \int_{I} \|\d_t u(\cdot, t) \|_{KR(\om)}\,dt.
$$
If we equip $\mathcal{W}(\om \times I)$ with the norm 
$$
\| u \|_{ \mathcal{W}(\om \times I )}:=\|  u \|_{L^2(\om \times I)}+\| u \|_{\dot{\mathcal{W}}(\om \times I )}
$$
it becomes a Banach space.
\end{definition}

\vv 

\begin{lemma}\label{lemma:compact-embed}
Let $\om \subset \rn$ be a bounded  domain so that $W^{1,2}(\om) \hookrightarrow L^2(\om)$ compactly and  let $I \subset \R$ be an open interval. Then the parabolic Sobolev space $\mathcal{W}(\om \times I)$ embeds compactly in $L^2(\om \times I)$.
\end{lemma}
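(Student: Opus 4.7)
The plan is to recognize the statement as a parabolic Aubin--Lions--Simon compactness result in the Banach triple $W^{1,2}(\om) \hookrightarrow L^2(\om) \hookrightarrow \widetilde{\mathcal M}(\om)$: the first embedding is compact by the standing hypothesis on $\om$, and the second is continuous by Lemma \ref{lem:L2embeddingM} (identifying $L^2(\om)$ with its dual). In this setting, Simon's refinement of the Aubin--Lions lemma gives that every family bounded in $L^2(I;W^{1,2}(\om))$ whose time derivatives are bounded in $L^1(I;\widetilde{\mathcal M}(\om))$ is relatively compact in $L^2(I;L^2(\om))=L^2(\om\times I)$, which is exactly the conclusion we need.

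To execute the argument by hand, I would first record the Lions--Peetre (Ehrling) interpolation inequality
\[
\|v\|_{L^2(\om)} \leq \varepsilon\,\|v\|_{W^{1,2}(\om)} + C_\varepsilon\,\|v\|_{KR(\om)}, \qquad v\in W^{1,2}(\om),
\]
valid for every $\varepsilon>0$ and some $C_\varepsilon>0$. This follows by the standard contradiction argument using compactness of $W^{1,2}\hookrightarrow L^2$ together with continuity of $L^2\hookrightarrow \widetilde{\mathcal M}$: negating the inequality produces a sequence $v_k$ with $\|v_k\|_{L^2}=1$, $\|v_k\|_{W^{1,2}}\leq \varepsilon^{-1}$, and $\|v_k\|_{KR}< k^{-1}$, from which compactness extracts an $L^2$-convergent subsequence whose limit must vanish in $\widetilde{\mathcal M}$ and therefore in $L^2$, contradicting $\|v_k\|_{L^2}=1$. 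Then, for a sequence $\{u_n\}$ with $\|u_n\|_{\mathcal W(\om\times I)}\le M$, I would verify the Fr\'echet--Kolmogorov criterion on $L^2(\om\times I)$: boundedness is immediate, and space translations are handled by the uniform $L^2(I;W^{1,2}(\om))$ bound via the standard difference-quotient estimate (after a routine truncation near $\partial\om$).

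The key step, and the main obstacle, is the time-translation estimate with $\partial_t u_n$ only bounded in $L^1(I;\widetilde{\mathcal M}(\om))$, rather than in a space with better time integrability as in the classical Aubin--Lions setting. The idea is to write
\[
\|u_n(\cdot,t+h)-u_n(\cdot,t)\|_{KR(\om)}\leq \int_t^{t+h}\|\partial_s u_n(\cdot,s)\|_{KR(\om)}\,ds,
\]
apply the Ehrling inequality to $u_n(\cdot,t+h)-u_n(\cdot,t)$, square, integrate over $t\in I-h$, and exploit the Fubini identity
\[
\int_{I-h}\Bigl(\int_t^{t+h}\|\partial_s u_n\|_{KR}\,ds\Bigr)^{2}\! dt \leq \|\partial_t u_n\|_{L^1(I;KR)}\int_{I-h}\!\!\int_t^{t+h}\|\partial_s u_n\|_{KR}\,ds\,dt \leq h\,M^{2}.
\]
This yields $\|u_n(\cdot,\cdot+h)-u_n(\cdot,\cdot)\|_{L^2(\om\times(I-h))}^{2} \leq C\varepsilon^{2} M^{2} + C_\varepsilon^{2}\, h\, M^{2}$, which can be made arbitrarily small by choosing $\varepsilon$ small first and then $h$ small; Fr\'echet--Kolmogorov then delivers the compact embedding.
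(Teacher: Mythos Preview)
Your proposal is correct and matches the paper's approach exactly: both identify the Banach triple $W^{1,2}(\om)\hookrightarrow L^2(\om)\hookrightarrow\widetilde{\mathcal M}(\om)$ (with the first embedding compact by hypothesis and the second continuous by Lemma~\ref{lem:L2embeddingM}) and then invoke Simon's compactness result \cite[Corollary 4]{Sim87}. The paper's proof stops at your first paragraph; your additional by-hand sketch via the Ehrling inequality and the time-translation estimate is essentially the content of Simon's argument and is not needed once the citation is in place.
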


\begin{proof}
Set  $\textbf{X}=W^{1,2}(\om )$, $\textbf{B}=L^2(\om )$, $\textbf{Y}=\wt{\mathcal{M}}(\om)$. By hypothesis, the inclusion map $i:\textbf{X} \to  \textbf{B}$ is compact  and, by Lemma \ref{lem:L2embeddingM}, the inclusion map $i:\textbf{B}^* \to  \textbf{Y}$ is continuous.  Consequently, if the sequence $\{u_j\}_{j \geq 1} \subset \mathcal{W}(\om \times I)$ is uniformly bounded in the $\mathcal{W}(\om \times I )$-norm,   we can apply  \cite[Corollary 4]{Sim87} to deduce that $\{u_j\}_{j \geq 1}$ has a subsequence that converges in $L^2(\om \times I)$-norm.
\end{proof}

\vvv

\section{Blow-ups in time varying  domains } \label{sec8}

\begin{lemma}\label{lem:Sobolev-extension}
Let $\om \subset \Rn1$ be an open set and $C_r:=B_r \times I_r$ be a cylinder of radius $r$  centered on $\d\om$. Set $\om_r:= \om \cap C_r$ and let  $u$ be a caloric function in $\om_{4r}$ that vanishes continuously on $\d \om \cap C_{4r}$. If  $\bar u$ is its extension by zero in $C_{4r} \setminus \om$, then the following hold:
\begin{enumerate}
\item $\bar u \in W^{1,2}(C_{2r})$.
\item $\partial _t \bar u \,\chi_{\om_{2r}}= \partial_t u  \,\chi_{\om_{2r}}$ and $\nabla \bar u \,\chi_{\om_{2r}}= \nabla u \,\chi_{\om_{2r}}$.
\item For  all $t \in I_{r}$, it holds that $\vp u(\cdot,t) \in W_0^{1,2}( (\om_{r})_t)$, for any $\vp \in \lipp(\overline{B_{r}})$, and $\bar u(\cdot,t) \in W^{1,2}(B_{r})$.
\end{enumerate}
\end{lemma}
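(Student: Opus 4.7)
The plan is to establish the three parts of the lemma through a truncation argument combined with the Caccioppoli inequality for caloric functions. The key observation is that, because $u$ vanishes continuously on $\partial\Omega \cap C_{4r}$, the truncations $T_\varepsilon(u) := \sign(u)(|u|-\varepsilon)_+$ are supported in a relatively closed set of $\Omega_{4r}$ that is positively separated from $\partial\Omega \cap C_{4r}$, so their extensions by zero are Lipschitz across the lateral boundary. First I would record that $u \in C^\infty(\Omega_{4r})$ by Lemma \ref{lem:weak_caloric_functions}, that $\bar u$ is continuous on $C_{4r}$, and therefore that $u$ is locally bounded on $\overline{\Omega_{3r}}$.

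Next I would derive the interior Caccioppoli estimate $\|\nabla u\|_{L^2(\Omega_{3r})}^2 \lesssim r^{-2}\|u\|_{L^2(\Omega_{4r})}^2$ by testing the weak form $\int (\partial_t u)\psi + \nabla u \cdot \nabla\psi = 0$ against $\psi = \eta^2 T_\varepsilon(u)$ with a cutoff $\eta \in C^\infty_c(C_{7r/2})$ satisfying $\eta \equiv 1$ on $C_{3r}$. Writing the time term via an antiderivative $F_\varepsilon$ of $T_\varepsilon$ (so that $F_\varepsilon(u) \to u^2/2$ monotonically), Young's inequality isolates $\int \eta^2 \chi_{\{|u|>\varepsilon\}}|\nabla u|^2$ on the left and bounds it by $\int(|\nabla\eta|^2 + \eta|\partial_t\eta|)u^2$ on the right. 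Passing $\varepsilon \to 0$ by monotone convergence, and invoking unique continuation for the heat equation \cite[Theorem 1.2]{Po96} (so that $\{u = 0\} \cap \Omega$ has zero Lebesgue measure on components where $u \not\equiv 0$), produces the desired $L^2$ bound. A bootstrap of the same estimate applied to the caloric function $\partial_t u$ on slightly smaller cylinders (together with the PDE $\partial_t u = \Delta u$) further provides $\partial_t u \in L^2_{\loc}(\Omega_{3r})$, which will be needed for identifying the distributional time derivative.

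For (1) and (2), fix $\phi \in C^\infty_c(C_{2r})$ and a spatial index $i$. Since $\supp T_\varepsilon(\bar u) \subset \{|\bar u|\ge\varepsilon\} \subset \Omega$ is closed in $C_{2r}$ (by continuity of $\bar u$), $T_\varepsilon(\bar u)$ is Lipschitz on $C_{2r}$ with compactly supported gradient inside $\Omega_{2r}$, and classical integration by parts gives $\int_{C_{2r}} T_\varepsilon(\bar u)\partial_i\phi = -\int_{\Omega_{2r}} \chi_{\{|u|>\varepsilon\}}(\partial_i u)\phi$. Taking $\varepsilon \to 0$, the left side converges to $\int_{C_{2r}}\bar u\,\partial_i\phi$ (dominated convergence, $|T_\varepsilon(\bar u)| \le \|\bar u\|_\infty$), while the right side converges to $-\int_{\Omega_{2r}}(\partial_i u)\phi$ by the Caccioppoli bound and dominated convergence, the exceptional set $\{u=0\}\cap\Omega_{2r}$ being Lebesgue-negligible. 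This identifies $\partial_i\bar u = (\partial_i u)\chi_{\Omega_{2r}}$ as an element of $L^2(C_{2r})$; the identical argument with $\partial_t$ replacing $\partial_i$ gives the second half of (2), now using the bootstrap bound $\partial_t u \in L^2_{\loc}(\Omega_{3r})$. For (3), fix $t \in I_r$ and $\varphi \in \lipp(\overline{B_r})$: then $\varphi\, u(\cdot,t)$ vanishes on $\partial B_r$ (from $\varphi$) and on $\partial\Omega \cap B_r$ (from $u$), so the truncations $\varphi\, T_\varepsilon(u(\cdot,t))$ are Lipschitz with compact support in $(\Omega_r)_t$. A spatial Caccioppoli estimate at the fixed time (derived exactly as in Step~2 but with spatial cutoffs only, and using that $u(\cdot,t) \in C^\infty(\Omega_t)$ with controlled spatial gradient via the smoothing properties of the heat equation) gives a uniform $W^{1,2}((\Omega_r)_t)$ bound on these approximants, placing $\varphi\, u(\cdot,t) \in W^{1,2}_0((\Omega_r)_t)$. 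The same truncation without the factor $\varphi$, combined with Step~3 applied slicewise, yields $\bar u(\cdot, t) \in W^{1,2}(B_r)$.

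The main obstacle will be the bootstrap step producing $\partial_t u \in L^2_{\loc}(\Omega_{3r})$: $\partial_t u = \Delta u$ need not vanish on $\partial\Omega$ and one cannot directly apply the boundary-adapted Caccioppoli to it. The workaround is to apply a standard interior Caccioppoli to $\partial_t u$ on cylinders that avoid $\partial\Omega \cap C_{4r}$ (feasible because $\nabla u \in L^2(\Omega_{3r})$ already controls the behaviour near the lateral boundary from Step~2), and combine these interior estimates with the Step~2 bound via a telescoping cover of $\Omega_{5r/2}$ by cylinders whose distance to $\partial\Omega$ is adapted to the modulus of continuity of $u$. This technical bookkeeping is the delicate part; everything else is a routine application of truncation and monotone/dominated convergence.
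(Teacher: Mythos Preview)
Your truncation approach is the standard one and is essentially what the paper has in mind: the references it gives (\cite[Lemma~4.12]{AMT16} and \cite[Theorem~9.17, Proposition~9.18]{Bre11}) are precisely truncation arguments in the elliptic setting, and the paper simply asserts that the parabolic case follows similarly without writing any details. Your treatment of $\nabla\bar u\in L^2(C_{2r})$ via the subcaloric Caccioppoli inequality and the identification $\nabla\bar u=\chi_{\Omega_{2r}}\nabla u$ is correct. Two points need attention.

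A minor one: invoking unique continuation to dispose of $\{u=0\}\cap\Omega$ is unnecessary. For any $f\in W^{1,1}_{\loc}$ one has $\nabla f=0$ a.e.\ on $\{f=0\}$ (this is classical; see e.g.\ \cite{EG92}), so $\chi_{\{|u|>\varepsilon\}}\,\partial_i u\to\partial_i u$ a.e.\ and in $L^2$ by dominated convergence regardless of whether $\{u=0\}\cap\Omega$ is null.

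The substantive gap is the step $\partial_t\bar u\in L^2(C_{2r})$. Your ``bootstrap'' yields only $\partial_t u\in L^2_{\loc}(\Omega_{3r})$, which is automatic from smoothness of $u$ in $\Omega$; what is needed is $\partial_t u=\Delta u\in L^2(\Omega_{2r})$ \emph{up to} $\partial\Omega$. The Whitney-cover workaround you sketch cannot close: interior Caccioppoli on a Whitney cube $Q$ gives $\int_Q|\partial_t u|^2\lesssim\ell(Q)^{-2}\int_{2Q}|\nabla u|^2$ (or $\ell(Q)^{-4}\int_{2Q}u^2$), and summing with these negative powers of $\ell(Q)$ diverges near $\partial\Omega$ unless one has quantitative decay of $u$ or $|\nabla u|$ there---e.g.\ boundary H\"older continuity coming from a capacity density condition, which the lemma as stated does not assume. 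Adapting the cover to ``the modulus of continuity of $u$'' does not help without a rate. The same difficulty recurs in your argument for~(3): there is no ``spatial Caccioppoli at fixed time'' because $u(\cdot,t)$ satisfies no elliptic equation, and the parabolic energy inequality controls $\iint_{C_r}|\nabla\bar u|^2$ but not $\int_{B_r}|\nabla\bar u(\cdot,t)|^2$ for \emph{every} $t$ (only for a.e.\ $t$, via Fubini). Since the paper omits its own proof, it offers no guidance on how (or whether) the $\partial_t$ bound is obtained in the stated generality; you have correctly located the genuine difficulty, but the resolution you propose does not work as written.
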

\begin{proof}
The proof is standard and follows from arguments similar to  those in the proof of  \cite[Lemma 4.12]{AMT16}. See also Theorem 9.17 and Proposition 9.18 in \cite{Bre11}. We omit the details.
\end{proof}

\vv

The following lemma is a boundary Caccioppoli-type inequality for the time derivative of the square of a non-negative solution that is crucial for the blow-up lemmas.


\begin{lemma}\label{lem:bdry-Caccioppoli-time}
Under the same assumptions as in Lemma \ref{lem:Sobolev-extension}, if we set $v:=\bar u^2/2$,    it holds that 
\begin{equation}\label{eq:bdry-Caccioppoli-time}
\int_{ I_r} \| \partial_t v(\cdot,t) \|_{KR(B_r)} \,dt \lesssim_{n} \frac{1}{r}\| v \|_{L^1( C_{2r})}.
\end{equation}
\end{lemma}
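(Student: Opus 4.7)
The strategy is to test $\partial_t v(\cdot,t)$ against an arbitrary $\vp \in \lipp(\overline{B_r})$, use the heat equation to trade the time derivative for a Laplacian, and integrate by parts in space; the resulting energy term is then absorbed by a standard boundary Caccioppoli inequality. By Theorem \ref{thm:Lip is dual of measure}, for a.e. $t \in I_r$ we have
\begin{equation*}
\|\partial_t v(\cdot, t)\|_{KR(B_r)} = \sup_{\vp \in \lipp(\overline{B_r})} \int_{B_r} \vp(x)\, \partial_t v(x,t)\, dx.
\end{equation*}
Using Lemma \ref{lem:Sobolev-extension}(2) together with $\bar u \equiv 0$ off $\Omega$ and the identity $\partial_t u = \Delta u$ coming from $Hu = 0$, I would reduce the integrand to $\vp\, u\, \Delta u$ on $B_r \cap \Omega_t$ and $0$ elsewhere.

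Next, Lemma \ref{lem:Sobolev-extension}(3) legitimizes $\vp u(\cdot,t)$ as an admissible $W_0^{1,2}(B_r \cap \Omega_t)$ test function ($\vp = 0$ on $\partial B_r$, while $u$ vanishes continuously on $B_r \cap \partial \Omega_t$), so a slicewise spatial integration by parts yields
\begin{equation*}
\int_{B_r} \vp\, \partial_t v(\cdot,t)\, dx = -\int_{B_r} u\, \nabla \vp \cdot \nabla \bar u\, dx - \int_{B_r} \vp\, |\nabla \bar u|^2\, dx.
\end{equation*}
Combining $|\nabla \vp|\leq 1$ with $|\vp(x)|\leq \dist(x,\partial B_r)\leq r$ (since $\vp$ is $1$-Lipschitz and vanishes on $\partial B_r$) and Young's inequality gives, after taking the supremum over $\vp$ and integrating in $t\in I_r$,
\begin{equation*}
\int_{I_r}\|\partial_t v(\cdot,t)\|_{KR(B_r)}\, dt \leq \frac{1}{2r}\int_{C_r} \bar u^2\, dx\, dt + \frac{3r}{2}\int_{C_r} |\nabla \bar u|^2\, dx\, dt.
\end{equation*}

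The first term is exactly $r^{-1}\|v\|_{L^1(C_r)} \leq r^{-1}\|v\|_{L^1(C_{2r})}$. For the second, I would invoke the standard boundary Caccioppoli estimate: choose $\eta(x,t)=\eta_1(x)\eta_2(t) \in C_c^\infty(C_{2r})$ with $\eta \equiv 1$ on $C_r$, $|\nabla \eta|\lesssim r^{-1}$, $|\partial_t \eta|\lesssim r^{-2}$. Testing $Hu=0$ against $\eta^2 u$ (again an admissible slicewise test function by Lemma \ref{lem:Sobolev-extension}(3), and supported away from $\partial C_{2r}$ in both space and time) and using $\eta^2 u\, \partial_t u = \tfrac{1}{2}\eta^2 \partial_t(u^2)$ followed by an integration by parts in $t$ gives the routine bound
\begin{equation*}
\int_{C_r} |\nabla \bar u|^2\, dx\, dt \lesssim r^{-2}\int_{C_{2r}} \bar u^2\, dx\, dt = 2 r^{-2}\|v\|_{L^1(C_{2r})},
\end{equation*}
from which \eqref{eq:bdry-Caccioppoli-time} follows. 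The main obstacle is the rigorous justification of the two integrations by parts: the spatial one across the possibly irregular slice $B_r \cap \partial \Omega_t$ and the Caccioppoli one through the lateral boundary of $\Omega$. Both rely essentially on the $W_0^{1,2}$ membership supplied by Lemma \ref{lem:Sobolev-extension}(3), which is precisely what makes the argument go through in the generality considered here.
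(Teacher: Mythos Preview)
Your proposal is correct and follows essentially the same route as the paper: test $\partial_t v$ against $\vp\in\lipp(\overline{B_r})$, use $\partial_t u=\Delta u$ in $D=(\Omega_r)_t$, integrate by parts in space via the $W_0^{1,2}(D)$ membership of $\vp u(\cdot,t)$ from Lemma~\ref{lem:Sobolev-extension}(3), and close with the spatial Caccioppoli inequality. The only cosmetic differences are that the paper makes the density argument explicit (approximating $\vp u$ by $\psi\in C_c^\infty(D)$ and letting the error go to zero, which is exactly how your ``slicewise spatial integration by parts'' is justified) and invokes the standard parabolic Caccioppoli inequality for the nonnegative subtemperature $\bar u$ directly rather than rederiving it by testing against $\eta^2 u$.
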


\begin{proof}


In view of Lemma \ref{lem:Sobolev-extension}, $\bar u \in W^{1,2}(C_{2r})$, and, by \cite[Theorem 7.20]{Watson}, $\bar u$ is a subtemperature of $H$ and so weakly  subcaloric in $C_{4 r}(\bar \xi)$. Applying the standard parabolic Caccioppoli's inequality for non-negative subcaloric functions  we get
\begin{equation}\label{eq:space-Caccioppoli}
\begin{split}
\int_{C_{r}} |\nabla \bar u (x,t) |^2 \,dxdt &\lesssim \frac{1}{r^2} \int_{C_{2r}}  \bar u (x,t)^2 \,dxdt.
\end{split}
\end{equation}

 Since $\partial_t u \in L^2(C_{2r})$, then for a.e. $t \in I_{2r}$, it holds that $\partial_t u(\cdot,t) \in L^2(B_{2r})$. Fix  such $t \in  I_{r}$  and set  $D:=(\om_{r})_t$, which is an open subset of $\rn \times\{t\}$. If  $\vp \in \lipp(\overline{B_{r}})$,  since  $\vp u (\cdot,t) \in W^{1,2}_0(D)$, for fixed $\ve>0$, there exists $\psi \in C^\infty_c(D)$ such that
$$
\| w(\cdot,t)\|_{W^{1,2}(D)}:=\| \vp u(\cdot,t) - \psi\|_{W^{1,2}(D)} <\ve.
$$
Thus,  
\begin{align*}
\int  \partial_t  v(z,t) \vp(z)& \,dz = \int \partial_t \bar u(z,t)  \bar u(z,t) \vp(z) \,dz\\
&= \int \partial_t \bar u(z,t)  w(z,t) \,dz + \int \partial_t \bar u(z,t)  \psi(z) \,dz \\
&<  \ve \|\partial_t \bar u(z,t)\|_{L^2(B_{r})}  + \int_D \partial_t u(z,t)  \psi(z) \,dz\\
& =  \ve \|\partial_t \bar u(z,t)\|_{L^2(B_{r})} +  \int_{D} \Delta u(z,t)  \psi(z) \,dz=:II_1(\ve) +II_2,
\end{align*}
where we used that $\partial_t u=\Delta u$ in $D$ (pointwisely). In light of the fact that $\Delta u(\cdot,t) \in L^1_{\loc}(D)$ (as $\Delta u$ is continuous in $\om_{4r}$), it is clear  that 
\begin{align*}
II_2 &=-  \int_{D} \nabla_z  u(z,t) \nabla_z \psi(z)\,dz\\
&=\int_{D} \nabla_z  u(z,t) \nabla_z w(\cdot,t)\,dz -\int_{D} \nabla_z u(z,t) \nabla_z (\vp  u(\cdot,t))(z)\,dz\\
& <  \ve \|\nabla \bar  u(\cdot,t)\|_{L^2(B_{a/16})} -\int_D \nabla_z u(z,t) ( \nabla_z \vp(z) u(z,t) +  \vp(z) \nabla_z u(z,t) )\,dz\\
&=:II_{21}(\ve) + II_{22}.
\end{align*}
 Recalling that $|\vp| \leq r$ and $| \nabla \vp | \leq 1$ in $B_{r}$, we obtain
$$
II_{22} \leq  r \int_{B_{r}} |\nabla \bar u(z,t)|^2\,dz + \int_{B_{r}} \bar u(z,t) |\nabla \bar u(z,t)|\,dz.
$$
Therefore, taking limits as $\ve \to 0$, we infer that
$$
\int  \partial_t v(z,t) \vp(z) \,dz \leq r \int_{B_{r}} |\nabla \bar u(z,t)|^2\,dz + \int_{B_{r}} \bar u(z,t) |\nabla \bar u(z,t)|\,dz,
$$
which, by Cauchy-Schwarz's inequality and \eqref{eq:space-Caccioppoli},  implies that 
\begin{align*}
\int_{ I_r} \|  \partial_t v(\cdot,t) \|_{KR(B_r)} \,dt &\leq  r^{-1}\| \bar u\|^2_{L^2(C_{2r})},
\end{align*}
concluding the proof of \eqref{eq:bdry-Caccioppoli-time}.
\end{proof}

\begin{remark}
 It is important to work with $v$ instead of $\bar u$ since we want to exploit the caloricity of $u$ in $D$ as subcaloricity in the whole cylinder is not enough. Thus, taking the $t$-derivative of $v$ gives us $\bar u \d_t \bar u$,  and as $\bar u$  vanishes in  $C_r \setminus D$, we have that  $\bar u \vp$  is  supported in $\overline D$ and vanishes continuously on $\d D$. This way the domain of integration is restricted to the set $D$ where $u$ is caloric, which wouldn't have been achieved if we had just used $\d_t \bar u$. Of course, the same proof works for $u$ in $D$ and $\vp_t \in \lipp(\overline D)$ ($\vp$ depends on $t$ now), but it is not helpful for the blow-up lemmas. Indeed, in that case, we would have a uniformly bounded sequence of functions in a sequence of Sobolev spaces and thus, the compact embedding in $L^2$ would not be  applicable.
\end{remark}

\begin{remark} The need for the introduction of the Sobolev space $\mathcal{W}(B_r \times I_r)$ comes from the term $\int_{B_r}|\nabla \bar u|^2 \vp$ we obtain at the end. If we did not have an $L^\infty$ bound for $\vp$ but rather an $L^p$ bound  for large $p$, one would hope to apply H\"older's inequality and then use higher integrability estimates for  the gradient of $\bar u$ and Caccioppoli. Although, as $\bar u$ is not caloric but merely subcaloric in the whole cylinder, such estimates are out of the question.
\end{remark}


\vv

Now, we are ready to proceed to the first blow-up lemma.

\begin{lemma}\label{lem:blowuplemmaCPC1}
Let $\Omega\subset \Rn1$ be an open set which is quasi-regular for $H$ and regular for $H^*$. Let also  $\omega=\omega^{\bar p}_\Omega$ be the associated caloric measure with pole at $\bar p\in \Omega$. Assume that $F \subset \mathcal P^*\Omega\cap \mathcal S'\Omega$ is compact, $\bar \xi_j$ is a sequence in $F$, and there exists $r_j\to 0$ and $c_j>0$ such that  $C(\bxi_j,ar_j/4) \cap \d\om=C(\bxi_j,ar_j/4) \cap \mathcal{P}^*\om$, where $a \in (0,1/2)$ is as in Lemma \ref{lem:tbcdc}, and 
\[
\omega_j\coloneqq c_j T_{\bar \xi_j, r_j}[\omega]\rightharpoonup \omega_\infty
\]
for some non-zero Radon measure $\omega_\infty$. Let us also assume that there exists a subsequence of $r_j$ and a constant $c>0$ so that
\begin{equation}\label{eq:CPCblowuplemma}
\Cap\bigl(\overline{E(\bar\xi_j; r^2_j)}\setminus \Omega\bigr)\geq c\, r_j^n.
\end{equation}
If   $u=G_\om(\bar p, \cdot)$  in $\om$ and $u=0$ in $\Rn1 \setminus \overline{\om}$, let us denote  
\[
u_j(\bar x)\coloneqq c_j u\bigl(\delta_{r_j}\bar x+ \bar \xi_j\bigr)r^n_j.
\]
 and
\[
C_r\coloneqq C_r(\bar 0) = B_r(0) \times (-r^2, r^2) \eqqcolon B_r\times \mathfrak I_r, \qquad r>0.
\]
Then, if   $\alpha=a/16 \in (0,1/32)$, there exists $R>0$, a subsequence of $\{r_j\}_{j}$ and a non-negative function $u_\infty \in L^2(C_{\alpha R})$ such that
$u_j \to u_\infty$  in $L^2(C_{\alpha R})$-norm. Moreover $u_\infty$ is adjoint caloric in $C_{ \alpha R}\cap \{u_\infty>0\}$,
\begin{equation}\label{eq:bounduinfty1}
\|u_\infty\|_{L^2(C_{ \alpha R})}\lesssim_{ a, R}\omega_\infty\bigl(\overline{C_{MR}}\bigr)
\end{equation}
and 
\[
\int \varphi\, d\omega_\infty=\int_{\Rn1}u_\infty\, H\varphi, \qquad \text{ for all }\varphi\in C^\infty_c(C_{ \alpha R}).
\]
\end{lemma}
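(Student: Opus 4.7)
The plan is first to transfer \eqref{eq:Green-caloric-Riez}, that is $\int G_\Omega(\bar p, \bar y) H\varphi(\bar y)\, d\bar y = \int \varphi\, d\omega^{\bar p}$, into the rescaled variables. A change of variables $\bar x = T_{\bar\xi_j, r_j}\bar y$ together with the identity $H\bigl[\varphi\circ T_{\bar\xi_j, r_j}\bigr](\bar y) = r_j^{-2}(H\varphi)(T_{\bar\xi_j, r_j}\bar y)$ gives
\[
\int u_j\, H\varphi\, d\bar x = \int \varphi\, d\omega_j
\]
for every $\varphi \in C^\infty_c(T_{\bar\xi_j, r_j}\Omega)$. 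Since $\bar p \in \Omega$ is fixed and $r_j \to 0$, for $j$ large the pole lies outside every fixed cylinder $C_R$ in the rescaled picture, so $u_j$ is adjoint caloric in $\Omega_j \cap C_R$, where $\Omega_j := T_{\bar\xi_j, r_j}\Omega$. Moreover, regularity of $\Omega$ for $H^*$ combined with the hypothesis $C(\bar\xi_j, ar_j/4) \cap \partial\Omega = C(\bar\xi_j, ar_j/4)\cap \mathcal P^*\Omega$ implies that $u_j$ vanishes continuously on $\partial\Omega_j \cap C_{a/4}$.

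\textbf{Uniform pointwise bound.} To obtain uniform $L^\infty$ control on $u_j$, I would combine the TBCPC \eqref{eq:CPCblowuplemma} with Lemmas \ref{lem:tbcdc}, \ref{lem:bourgain} and \ref{lem:estim_green_caloric_measure}. Lemma \ref{lem:tbcdc} upgrades \eqref{eq:CPCblowuplemma} to a capacity lower bound on the truncated heat ball $\overline{E(\bar\xi_j; r_j^2)} \cap \Omega^c \cap \{t < t_j - (ar_j)^2\}$. For suitably chosen $R>0$ and $a_1 \in (0,1/2)$ (depending on $n$ and $a$), this truncated set sits inside $R_{a_1}^-(\bar\xi_j; Rr_j)$, so Bourgain's estimate (Lemma \ref{lem:bourgain}) at scale $Rr_j$ yields $\omega^{\bar z}(C_{M_0Rr_j}(\bar\xi_j)) \gtrsim_R 1$ for $\bar z$ in a suitable forward cylinder; inserting this lower bound into \eqref{eq:estim_green_function} of Lemma \ref{lem:estim_green_caloric_measure} (applied with $\bar x = \bar p$ and $\bar y \in \Omega \cap C_{Rr_j/4}(\bar\xi_j)$) and rescaling produces
\[
u_j(\bar x) \lesssim_R \omega_j(\overline{C_{MR}}), \qquad \bar x \in C_{R/4}\cap \Omega_j,\ M := M_0,
\]
together with $u_j \equiv 0$ outside $\Omega_j$. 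Choosing $R \leq 1$ ensures $C_{4\alpha R} \subset C_{a/4}$, and Portmanteau gives $\limsup_j \omega_j(\overline{C_{MR}}) \leq \omega_\infty(\overline{C_{MR}})$.

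\textbf{Compactness via the parabolic Sobolev space $\mathcal W$.} Set $v_j := u_j^2/2$. Caccioppoli's inequality for the non-negative $H^*$-subtemperature $u_j$ (extended by zero via the adjoint form of Lemma \ref{lem:Sobolev-extension}) yields $\|\nabla u_j\|_{L^2(C_{\alpha R})} \lesssim_{a,R} \omega_\infty(\overline{C_{MR}})$, so $\{v_j\}$ is uniformly bounded in $L^2(\mathfrak I_{\alpha R}; W^{1,2}(B_{\alpha R}))$ (using $\nabla v_j = u_j\nabla u_j$ and the $L^\infty$ bound). The $H^*$-analogue of Lemma \ref{lem:bdry-Caccioppoli-time}, applicable in $C_{4\alpha R}\subset C_{a/4}$ since all boundary points there are regular for $H^*$ and $u_j$ vanishes there continuously, provides
\[
\int_{\mathfrak I_{\alpha R}} \|\partial_t v_j(\cdot, t)\|_{KR(B_{\alpha R})}\, dt \lesssim_{\alpha R} \|v_j\|_{L^1(C_{2\alpha R})}.
\]
Thus $\{v_j\}$ is bounded in $\mathcal W(B_{\alpha R}\times\mathfrak I_{\alpha R})$, and Lemma \ref{lemma:compact-embed} yields a subsequence with $v_j \to v_\infty$ in $L^2(C_{\alpha R})$. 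After extracting a further a.e.-convergent subsequence and setting $u_\infty := \sqrt{2v_\infty}$, the uniform $L^\infty$ bound together with dominated convergence gives $u_j \to u_\infty$ in $L^2(C_{\alpha R})$; \eqref{eq:bounduinfty1} follows from the pointwise estimate, Fatou, and Portmanteau.

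\textbf{Passing to the limit and adjoint caloricity on $\{u_\infty > 0\}$.} For any $\varphi \in C^\infty_c(C_{\alpha R})$, $L^2$-convergence and boundedness of $H\varphi$ give $\int u_j H\varphi \to \int u_\infty H\varphi$, while weak convergence yields $\int\varphi\, d\omega_j \to \int\varphi\, d\omega_\infty$, proving the displayed identity of the lemma. The remaining step, showing that $u_\infty$ is adjoint caloric in $\{u_\infty > 0\}\cap C_{\alpha R}$, reduces via the identity to proving $\supp(\omega_\infty) \cap \{u_\infty > 0\} = \varnothing$. This support localization is what I expect to be the main obstacle: I would address it by using interior parabolic regularity for the $H^*$-caloric functions $u_j$ (upgrading $L^2$-convergence to locally uniform convergence on compact subsets of the interior of the sets $\Omega_j$) together with the fact that $u_j \equiv 0$ on $\supp(\omega_j) \subset \partial\Omega_j$; this propagates through the weak limit to force $\omega_\infty$ to be supported in $\{u_\infty = 0\}$.
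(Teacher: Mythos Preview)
Your proposal is correct and follows essentially the same route as the paper: the uniform $L^\infty$ bound via Lemmas~\ref{lem:tbcdc}, \ref{lem:bourgain}, and \ref{lem:estim_green_caloric_measure}, then compactness of $v_j=u_j^2/2$ in $\mathcal W(B_{\alpha R}\times\mathfrak I_{\alpha R})$ via Lemmas~\ref{lem:Sobolev-extension}, \ref{lem:bdry-Caccioppoli-time}, and \ref{lemma:compact-embed}, and finally passage to the limit in the rescaled identity. Your dominated-convergence argument for $u_j\to u_\infty$ in $L^2$ is in fact a bit cleaner than the paper's, which instead shows $u_j\rightharpoonup u_\infty$ weakly in $L^2$ together with $\|u_j\|_{L^2}\to\|u_\infty\|_{L^2}$ and invokes the Radon--Riesz theorem. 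As for your final paragraph: the paper's own proof does not verify the adjoint-caloricity of $u_\infty$ on $\{u_\infty>0\}$ at all, and this claim is never used downstream---in the two-phase Lemma~\ref{theorem:propertiesblowup} one works directly with the \emph{global} adjoint caloricity of $u_\infty^+-c^{-1}u_\infty^-$, obtained from the difference of the two integral identities---so you should not regard it as the main obstacle.
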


\begin{proof}
For simplicity, we will only prove the lemma assuming $\bar\xi_j\equiv\bar \xi$ for some $\bxi \in \mathcal{P}^*\Omega  \cap \mathcal{S}\Omega$, since the proof of the general case is analogous. Let us recall that for $\bar \xi_0\in\Rn1$, $r>0$, and $a>0$ we denote
\[
\widehat R^+_a(\bar \xi_0;r)=B_r(\xi_0)\times(t_0-(ar)^2/2,t_0+r^2).
\]
and it holds that  $C_{ar/2}(\bar \xi_0)\subset \widehat R^+_a(\bar \xi_0; r)$. 

As   \eqref{eq:CPCblowuplemma} is satisfied, by Lemma \ref{lem:tbcdc} we have that there exists $a\in (0,1/2]$ such that 
\[
\Cap\bigl(\overline{R^-_a(\bar \xi;r_j)}\setminus \Omega\bigr)\gtrsim c r_j^2,
\]
and so, by \eqref{eq:Bourgain}, it holds that
\[
\omega^{\bar z}\bigl(C_{Mr_j}(\bar \xi)\bigr)\geq c>0,
\]
for all $\bar z\in C_{ar_j}(\bar \xi)\subset \hat R^+_{a}(\bar \xi; r_j)$.{ Since $\omega_\infty\neq 0$, there exists $R>0$ such that $\omega_\infty(C_R)>0$. Without loss of generality we may assume that $R=a/8$.} By passing to a subsequence, if necessary, we may assume that $\bar p\in \Omega\setminus C_{ar_j}(\bar \xi)$ and combining the latter bound with Lemma \ref{lem:estim_green_caloric_measure} for $ar_j/2$ and $M'=2M/a$, we obtain
\begin{equation}\label{eq:blowuplem1wu1}
\omega^{\bar p}\bigl(C_{M r_j}(\bar \xi)\bigr)=\omega^{\bar p}\bigl(C_{M' ar_j}(\bar \xi)\bigr)\gtrsim (ar_j)^n\, u(\bar y),  \quad \text{ for }\bar y\in C_{ar_j/8}(\bar \xi).
\end{equation}
In particular, 
\begin{equation}\label{eq:Linftybounduj}
\limsup_{j \to \infty}\sup_{C_{a/8}(\bar 0)} u_j(\bar y) \lesssim a^{-n}\limsup_{j \to \infty} \omega_j\bigl(C_{M}(\bar 0)\bigr) \lesssim \omega_\infty\bigl(\overline{C_{M}(\bar 0)}\bigr).
\end{equation}

Recall that $G(\bar p, \cdot)$ is adjoint caloric in $\om \setminus \{\bar p\}$ and $\om$ is regular for $H^*$, and so  $G(\bar p, \cdot)$ is in $C^{2,1}(\om \setminus \{\bar p\}) \cap C(\om \cup \d_e^* \om \setminus \{\bar p\})$ vanishing on $ \d_e^* \om$. Thus, for $j$ large, it holds that $G(\bar p, \cdot)$ is adjoint caloric in $ \om \cap  C_{ar_j/4}(\bar \xi)$ and, by assumption,  $\d \om \cap  C_{ar_j/4}(\bar \xi) =  \mathcal{P}^*\om \cap  C_{ar_j/4}(\bar \xi)$.
By  Lemmas \ref{lem:Sobolev-extension} and \ref{lem:bdry-Caccioppoli-time}, and the standard parabolic Caccioppoli's inequality for non-negative subcaloric functions, along with a simple rescaling argument and   \eqref{eq:Linftybounduj}, we infer that there exists $j_0 > 1$ large such that for  $j \geq j_0$, if $v_j=u_j^2/2$,
\begin{equation}\label{eq:Sobolev-bound}
\begin{split}
\int_{C_{a/16}} |\nabla v_j(x,t) |^2 \,dxdt &+ \int_{ \mathfrak I_{a/16}} \|  \partial_t v_j(\cdot,t) \|_{KR({B_{a/16} } )} \,dt \\
 &\lesssim_{a,n} \left(\omega_\infty\bigl(C_{M}(\bar 0)\bigr)^2+1 \right) \int_{C_{a/8}} u_j (x,t)^2 \,dxdt\\
 & \lesssim_{a,n}  \omega_\infty\bigl(\overline{C_{M}(\bar 0)}\bigr)^2\left(1+\omega_\infty\bigl(C_{M}(\bar 0)\bigr)^2 \right).
\end{split}
\end{equation}

We  apply Lemma \ref{lemma:compact-embed} to find a subsequence of $\{v_j\}_{j \geq j_0}$ and a non-negative  function $v_\infty\in L^2(C_{a/16}) $ such that 
\[
v_j \to v_\infty  \text{ strongly in } L^2(C_{a/16}) \text{ and a.e. in } C_{a/16}.
\]
Note that if $\bz\in C_{a/16}$ is so that $v_\infty(\bz)=0$, then trivially, $u_j(\bz) \to 0$. If $\bz\in C_{a/16}$ is so that $v_\infty(\bz)>0$, then, if we set $ u_\infty=\sqrt{2 v_\infty}$, we have that
$$
|u_j(\bz)-u_\infty(\bz)|= \frac{|u_j^2(\bz)- 2v_\infty(\bz)|}{u_j(\bz)+ u_\infty(\bz)}\leq \frac{2|v_j(\bz)- v_\infty(\bz)|}{ u_\infty(\bz)} \to 0, \text{ as } j \to \infty.
$$
Thus, $u_j \to  u_\infty$ a.e. in $C_{a/16}$ and using \eqref{eq:Linftybounduj} to get a uniform $L^2$ bound on $C_{a/16}$, we deduce that $u_j \to   u_\infty$ weakly in $L^2(C_{a/16})$ and $ u_\infty$ satisfies \eqref{eq:bounduinfty1}. Moreover, by Cauchy–Schwarz, it holds that
$$
\left| \int_{C_{a/16}}u_j^2 -  \int_{C_{a/16}} u_\infty^2  \right| \leq \int_{C_{a/16}} |u_j^2 -  u_\infty^2| \lesssim_{a,n} \|v_j - v_\infty\|_{L^2(C_{a/16})}  \to 0, \text{ as } j \to \infty.
$$
Hence,  $\|u_j\|_{L^2(C_{a/16})} \to \|u_\infty\|_{L^2(C_{a/16})}$ and since $u_j \to   u_\infty$ weakly in $L^2(C_{a/16})$, by the Radon-Riesz Theorem, we conclude that $u_j \to u_\infty$ in  $L^2(C_{a/16})$-norm. 

Now,  a change of variables gives that 
\[
\int \varphi\, d\omega_j=\int u_j\, H\varphi, \qquad \varphi \in C^\infty_c(C_{a/16})
\]
which, if we take  limits as $j\to \infty$, entails
\[
\int \varphi\, d\omega_\infty=\int u_\infty\, H\varphi, \qquad \varphi \in C^\infty_c(C_{a/16}).
\]
In order to complete the proof of the lemma, it suffices to choose $ \alpha =a/16$.
\end{proof}

\vv

The previous lemma applies to the following ``two-phase" blow-up lemma.

\begin{lemma}\label{theorem:propertiesblowup}
Let $\Omega^\pm$ be two disjoint open sets in $\Rn1$ which are quasi-regular for $H$ and regular for $H^*$ and let $\omega^\pm$ be the respective caloric measures with poles $\bar p_\pm\in\Omega^\pm$.  Assume that $F \subset \mathcal P^*\Omega^+\cap \mathcal P^* \Omega^-\cap \mathcal S'\Omega^+\cap \mathcal S'\Omega^-$ is compact, $\bar \xi_j$ is a sequence in $F$ and that $\Omega^+$ and $\Omega^-$  satisfy the joint TBCPC. Let us also assume  that there exists  $r_j\to 0$, $c_j>0$, and a constant   $c>0$, such that  $C(\bxi_j,ar_j/4) \cap \d\om^\pm=C(\bxi_j,ar_j/4) \cap \mathcal{P}^*\om^\pm$, where $a \in (0,1/2)$ is as in Lemma \ref{lem:tbcdc}, and 
\[
\omega^+_j\coloneqq c_j T_{\bar\xi_j, r_j}[\omega^+]\rightharpoonup\omega^+_\infty
\]
and 
\begin{equation}\label{eq:convblowuplemmaCPC2}
\omega^-_j\coloneqq c_j T_{\bar\xi_j, r_j}[\omega^-]\rightharpoonup c\, \omega^+_\infty\eqqcolon \omega^-_\infty.
\end{equation}
Let {$u^\pm\coloneqq G_{\Omega^\pm}( \bar p_\pm ,\cdot)$} on $\Omega^\pm$, $u^\pm \equiv 0$ on $\Rn1\setminus \Omega$ and denote
\[
u_j^\pm(\bar x)\coloneqq c_j u^\pm\bigl(\delta_{r_j}\bar x+ \bar \xi_j\bigr)r^n_j.
\]
The following properties hold:
\begin{enumerate}[(a)]
\item
There exists $\alpha \in (0,1/16)$, $R>0$, a subsequence of $r_j$, $u^\pm_\infty\in L^2(C_{\alpha R})$, and  $u_\infty\in L^2(C_{\alpha R})$, a non-zero adjoint caloric function in $C_{\alpha R}$,  such that $u^\pm_j$ converge in $L^2(C_{\alpha R})$-norm to  $u^\pm_\infty$ and  $u_j\coloneqq u^+_j-c^{-1}u_j^-$ converges in $L^2(C_{\alpha R})$-norm to  $u_\infty=u^+_\infty-c^{-1}u_\infty^-$.

\item
The function  $u_\infty$ extends to an adjoint caloric function in $\Rn1$ and it holds that
\begin{equation}\label{eq:blow-up-bound-global}
\|u_\infty\|_{L^2(C_r)}\lesssim_{a} r^{-n}\omega^+_\infty\bigl(\overline{C_{Mr/a}}\bigr) \qquad  \qquad\text{for all } r>0, 
\end{equation}
and for any $\varphi \in C^\infty_c(\Rn1)$,
\begin{equation}\label{eq:blow-up-caloric measure-global}
\int\varphi\,d\omega^+_\infty= \int u^+_\infty H\varphi = \frac{1}{2}\int_{\Rn1} |u_\infty|\,H \varphi.
\end{equation}
\item We have that
\[
\supp\omega^+_\infty= \partial \om^\pm_\infty:=\partial \{u^\pm_\infty>0\} = \{u_\infty=0\}=:\Sigma_\infty,
\]
and $d\omega^+_\infty=-{\d_{\nu_t^+}  u_\infty}\, d\sigma_{\Sigma_\infty}$, where $\sigma_{\Sigma_{\infty}}$ stands for the parabolic surface measure on ${\Sigma_{\infty}}$ and $\nu_t^+$ is the measure-theoretic outer unit normal on the time-slice $(\om^+_\infty)_t$.
\end{enumerate}
\end{lemma}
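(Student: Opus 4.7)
\textbf{Proof sketch for Lemma \ref{theorem:propertiesblowup}.}

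\emph{Part (a).} I would apply Lemma \ref{lem:blowuplemmaCPC1} separately to each of $\Omega^+$ and $\Omega^-$. The joint TBCPC hypothesis and the compactness of $F$ guarantee that, along a common subsequence of $r_j$, the capacity lower bound \eqref{eq:CPCblowuplemma} holds simultaneously for both domains. Two applications of the lemma, followed by a further subsequence, produce $R>0$, $\alpha\in(0,1/16)$, and non-negative $u^\pm_\infty\in L^2(C_{\alpha R})$ with $u^\pm_j\to u^\pm_\infty$ strongly in $L^2(C_{\alpha R})$ and the local representation $\int\varphi\,d\omega^\pm_\infty=\int u^\pm_\infty H\varphi$ for $\varphi\in C^\infty_c(C_{\alpha R})$. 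Then $u_j\to u_\infty:=u^+_\infty-c^{-1}u^-_\infty$ in $L^2(C_{\alpha R})$, and \eqref{eq:convblowuplemmaCPC2} yields $\int u_\infty\,H\varphi=0$ for all such $\varphi$. The hypoellipticity of $H^*$ (dual to Lemma \ref{lem:weak_caloric_functions}) then upgrades $u_\infty$ to a classical adjoint caloric function on $C_{\alpha R}$; it is non-trivial because $R$ was chosen so that $\omega^+_\infty(C_R)>0$.

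\emph{Part (b).} To extend $u_\infty$ to all of $\Rn1$, I would repeat the scheme of Part (a) at every scale $k\alpha R$, $k\in\mathbb N$. For each $k$, the Bourgain-type estimate \eqref{eq:blowuplem1wu1} combined with $\omega^\pm_j\rightharpoonup\omega^\pm_\infty$ gives a uniform $L^\infty$-bound on $u^\pm_j$ over $C_{k\alpha R}$, the boundary Caccioppoli inequality of Lemma \ref{lem:bdry-Caccioppoli-time} controls $\partial_t(u^\pm_j)^2$ in the Kantorovich--Rubinshtein norm, and the compact embedding of Lemma \ref{lemma:compact-embed} produces a subsequence converging in $L^2(C_{k\alpha R})$. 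A Cantor diagonal extraction delivers a single subsequence $u_j\to u_\infty$ locally in $L^2(\Rn1)$ and the global bound \eqref{eq:blow-up-bound-global}. This globalization is the main obstacle: controlling the $L^\infty$-growth of $u^\pm_j$ at arbitrarily large scales uses in an essential way that the joint TBCPC is available \emph{only} along the given sequence $r_j$, and the absence of time-analyticity together with the nontrivial $t$-derivative of $u_\infty$ forces us to work in the mixed-norm Sobolev space $\mathcal W$ of Section \ref{sec:optimal_transport} rather than in a standard elliptic-type framework. Once this is in place, the disjointness $\Omega^+\cap\Omega^-=\varnothing$ forces $u^+_j u^-_j\equiv 0$ for every $j$, so almost everywhere convergence along a further subsequence gives $u^+_\infty u^-_\infty=0$ a.e.; hence $u^+_\infty=(u_\infty)^+$, $c^{-1}u^-_\infty=(u_\infty)^-$, and $|u_\infty|=u^+_\infty+c^{-1}u^-_\infty$. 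Combining this identity with the local Green-function representations from Part (a) and $\omega^-_\infty=c\,\omega^+_\infty$ gives \eqref{eq:blow-up-caloric measure-global}.

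\emph{Part (c).} The identity \eqref{eq:blow-up-caloric measure-global} identifies $\omega^+_\infty$ as the adjoint caloric measure associated to $u_\infty$ in the sense of the adjoint version of Lemma \ref{lem:cal-measure-assoc-h}. The adjoint version of Lemma \ref{lem:Poisson-kernel} then yields $d\omega^+_\infty=-\partial_{\nu_t^+}u_\infty\,d\sigma_{\Sigma_\infty}$, so in particular $\supp\omega^+_\infty\subset\Sigma_\infty$. For the reverse inclusion, applying the adjoint analogue of Lemma \ref{zero_in_support} at every $\bar\zeta\in\Sigma_\infty$, after translating, gives $\Sigma_\infty\subset\supp\omega^+_\infty$. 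Finally, the nodal set structure from Section \ref{sec:nodal_sets} (Lemmas \ref{lem:space_regular_set} and \ref{lem:sigma_F}) together with the unique continuation of \cite{Po96}, which forbids infinite-order zeros of $u_\infty$, shows that each point of $\Sigma_\infty$ lies in the common topological boundary of $\{u_\infty>0\}=\{u^+_\infty>0\}$ and $\{u_\infty<0\}=\{u^-_\infty>0\}$; here non-triviality of both $u^+_\infty$ and $u^-_\infty$, which is guaranteed by the joint TBCPC applied to both phases in Part (a), is essential. This yields the equalities $\partial\Omega^+_\infty=\partial\Omega^-_\infty=\Sigma_\infty=\supp\omega^+_\infty$ and completes the sketch.
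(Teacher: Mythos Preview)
Your sketch is essentially correct and follows the paper's strategy closely, but two points deserve attention.

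\textbf{Part (a), capacity bound.} You write that the joint TBCPC ``guarantees that, along a common subsequence, the capacity lower bound holds simultaneously for both domains.'' This is not quite right: the joint TBCPC is a \emph{conditional} statement---if the bound holds for one domain along a sequence, it holds for the other along a subsequence. It does not manufacture the first bound. The paper obtains the first bound by subadditivity of thermal capacity: since $\Omega^+\cap\Omega^-=\varnothing$, one has $\Cap(\overline{E(\bar\xi;r_j^2)}\setminus\Omega^+)+\Cap(\overline{E(\bar\xi;r_j^2)}\setminus\Omega^-)\ge\Cap(\overline{E(\bar\xi;r_j^2)})\approx r_j^n$, so a pigeonhole gives the bound for one domain along a subsequence, and only then does the joint TBCPC supply the other. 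Without this step your invocation of Lemma~\ref{lem:blowuplemmaCPC1} is not justified.

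\textbf{Part (a), non-triviality of $u_\infty$.} Saying ``$u_\infty$ is non-trivial because $\omega^+_\infty(C_R)>0$'' is incomplete: this only gives $u^+_\infty\not\equiv 0$, not $u_\infty=u^+_\infty-c^{-1}u^-_\infty\not\equiv 0$. You need the disjoint-support argument ($\int u^+_\infty u^-_\infty=\lim_j\int u^+_j u^-_j=0$) \emph{before} concluding non-triviality; you have it, but placed in (b). The paper puts it in (a) precisely for this reason.

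\textbf{Part (c).} Here your packaging differs from the paper's. The paper proves $\supp\omega^+_\infty=\partial\{u^\pm_\infty>0\}=\{u_\infty=0\}$ by direct contradiction arguments using the strong minimum principle for adjoint caloric functions and unique continuation. Your route---identify $\omega^+_\infty$ as the adjoint caloric measure of $u_\infty$ via Lemma~\ref{lem:cal-measure-assoc-h}, then invoke (translated) Lemma~\ref{zero_in_support} for $\Sigma_\infty\subset\supp\omega^+_\infty$---is a legitimate and slightly more modular alternative, since that lemma already encapsulates the needed nodal-set density. Either way the Poisson-kernel formula comes from Lemma~\ref{lem:Poisson-kernel}.
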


\begin{proof}
As in  the proof of the previous lemma, we assume for simplicity that $\bar \xi_j\equiv\bar \xi\in \mathcal P^*\Omega^+\cap \mathcal P^*\Omega^-$ for all $j$ and  $\omega^+_\infty(C_{a/16})>0$.  Since the domains $\Omega^+$ and $\Omega^-$ are disjoint,  the subadditivity property for the thermal capacity \cite[Theorem 7.45-(b)]{Watson} entails
\[
\Cap \bigl(\overline{E(\bar \xi;r_j^2)}\setminus \Omega^+\bigr)+ \Cap \bigl(\overline{E(\bar \xi;r_j^2)}\setminus \Omega^-\bigr)\geq \Cap \bigl(\overline{E(\bar \xi;r_j^2)}\bigr), \qquad j\geq 0,
\]
and so, possibly after passing to a subsequence, without loss of generality, we can assume   that
\[
\mathcal \Cap\bigl(\overline{E(\bar\xi; r_j^2)}\setminus \Omega^+\bigr)\geq \frac{\Cap\bigl(\overline{E(\bar\xi;r_j^2)}\bigr)}{2}\approx r_j^{n},\qquad j\geq 0.
\]
Moreover, as $\Omega^+$ and $\Omega^-$  satisfy the joint TBCPC condition, there exists q subsequence such that
\[
\Cap\bigl(\overline{E(\bar \xi; r_j^2)}\setminus \Omega^-\bigr)\gtrsim r_j^n\, \qquad j\geq 0.
\]
providing us with a common sequence $r_j$ such that Lemma \ref{lem:blowuplemmaCPC1} applies to both $\omega^+$ and $\hm^-$. Thus, for $\alpha=a/16 \in (0,1/32)$ there exist two adjoint subcaloric functions $u^\pm_\infty$ such that $u^\pm_j\to u^\pm_\infty$ in $L^2(C_{\alpha})$ and
\begin{equation}\label{eq:blowuplemmaCPC21212}
\int \varphi \, d\omega^\pm_\infty = \int u^\pm_\infty \, H\varphi, \qquad \varphi \in C^\infty_c(C_{\alpha}).
\end{equation}
In particular, $u_j$ converges in $L^2(C_{\alpha})$-norm to the function $u_\infty\coloneqq u^+_\infty - c^{-1} u^-_\infty$   and, for $\varphi \in C^\infty_c(C_{\alpha})$,
\begin{equation}\label{eq:blowuplemm2adjcal}
\begin{split}
\int u_\infty H\varphi &=\int u^+_\infty H\varphi - c^{-1}\int u^-_\infty H\varphi \overset{\eqref{eq:blowuplemmaCPC21212}}{=}\int \varphi\, d\omega^+_\infty -  c^{-1}\int \varphi\, d\omega^-_\infty\overset{\eqref{eq:convblowuplemmaCPC2}}{=} 0.
\end{split}
\end{equation}
which proves that $u_\infty$ is adjoint caloric in $C_{\alpha}$.

Furthermore, as $\supp u^+_j\cap \supp u^-_j=\varnothing$  for every $j$, it holds that
\[
\int_{C_{\alpha}}u^+_\infty u^-_\infty = \lim_{j\to \infty}\int_{C_{\alpha}} u^+_j u^-_j = 0,
\]
which implies 
\begin{equation}\label{eq:disjoint_support}
\mathcal L^{n+1}(\supp u^+_\infty \cap \supp u^-_\infty\cap C_{\alpha})=0.
\end{equation}
As a result of  \eqref{eq:blowuplemmaCPC21212},  $\{u^+_\infty> 0\}\cap C_\alpha$ is a set of positive $\mathcal{L}^{n+1}$-measure and, thus, $u_\infty \not \equiv 0$. Another consequence is that 
\begin{equation}\label{eq:pos_neg_part_u_infty}
u^+_\infty=u_\infty\chi_{\{u_\infty>0\}}\qquad \text{ and }\qquad u^-_\infty= - c^{-1}\,u_\infty\chi_{\{u_\infty < 0\}}.
\end{equation}
In particular,  $u^\pm_\infty$ are continuous in $C_{\alpha}$ because $u_\infty$ is an adjoint caloric function.

Let us prove (b).
The same argument as the one in the proof Lemma \ref{lem:blowuplemmaCPC1} shows that, given $K \geq 1$,
\[
u^\pm_j(\bar y)\lesssim_a \omega^\pm_j\bigl(C_{KM}(\bar 0)\bigr),\, \qquad \bar y\in C_{Ka/8}(\bar 0)
\]
with the implicit constant independent of $j$.
Moreover
\[
\limsup_{j}\omega^\pm_{j}\bigl(C_{KM}(\bar 0)\bigr)\leq \omega^\pm_\infty\bigl(\overline{C_{KM}(\bar 0)}\bigr), \, \qquad \text{ for all }K\geq 1.
\]
So arguing again as we did in the proof of  Lemma \ref{lem:blowuplemmaCPC1}, we get that for any  fixed $K=1, 2, \ldots$, the sequence $u_{j}$ admits a converging subsequence in $L^2(C_{aK/4})$. By a  standard diagonalization argument we can show that $u_\infty$ extends to a caloric function in $\rrn$  satisfying \eqref{eq:blow-up-bound-global}. As it is easy to see that 
\eqref{eq:blow-up-caloric measure-global} holds,  the proof of (b) is concluded.

Let us turn our attention to the proof of (c). Let $\bar x\in \supp\omega^+_\infty$ and assume that there exists a cylinder $C_r(\bar y)$ such that $\bar x \in C_r(\bar y)\subset \{u^+_\infty>0\}$. In particular, \eqref{eq:pos_neg_part_u_infty} implies that $C_r(\bar y)\subset\{u^-_\infty=0\}$. Then, if $\varphi \in C^\infty_c(C_r(\bar y))$ such that $\varphi(\bar x)>0$, since $u_\infty$ is globally adjoint caloric, we have
\begin{equation}\label{eq:bl12345}
0<\int \varphi\, d\omega^+_\infty=\int u^+_\infty H\varphi=\int u_\infty H\varphi=0,
\end{equation}
so we get a contradiction.
A similar argument shows that $C_r(\bar y)\not\subset\{u^+_\infty=0\}$ for any cylinder such that $\bar x\in C_r(\bar y)$.
So $C_r(\bar y)\cap \{u^+_\infty>0\}\neq \varnothing$ and $C_r(\bar y)\cap \{u^+_\infty=0\}\neq \varnothing$, which shows that
\begin{equation}\label{eq:blup2sub}
\supp\omega_\infty^+\subset\partial \{u^\pm_\infty>0\}.
\end{equation}

We now claim that
\begin{equation}\label{eq:claimcpc}
\partial \{u^\pm_\infty >0\} \subset \supp \omega^+_\infty.
\end{equation}
To prove this, we take $\bar x=(x,t)\in \partial \{u^+_\infty >0\}$ and   assume, by contradiction,  that   $\bar x \in C_r(\bar y)\subset (\Rn1 \setminus \supp \omega^+_\infty)$ for some $\bar y\in (\supp\omega^+_\infty)^c$ and  $r>0$. In particular,
\[
\int u^+_\infty H\varphi \overset{\eqref{eq:blow-up-caloric measure-global}}{=}\int \varphi\, d\omega^+_\infty=0 \qquad \text{ for all }\varphi \in C^\infty_c\bigl(C_r(\bar y)\bigr),
\]
which gives that $u^+_\infty$ is adjoint caloric in $C_r(\bar y)$. 
Since $\bar x \in \partial \{u^+ >0\}\cap C_r(\bar y)$, we have $u^+_\infty(\bar x)=0$.
Hence, the strong minimum principle for adjoint caloric functions (which readily follows from \cite[Theorem 3.11]{Watson}) gives that $u^+_\infty\equiv 0$ on $\{(\zeta, \theta )\in C_r(\bar y):\theta< t\}$.{ In particular, $u^+_\infty=u_\infty=0$ in an open cylinder $C_\theta(\bar \zeta)$, where the first equality follows from \eqref{eq:blow-up-caloric measure-global}. So the fact that $u_\infty$ extends to a globally adjoint caloric function and the unique continuation principle \cite[Theorem 1.2]{Po96} entail $u_\infty\equiv 0$ on the whole $\Rn1$.} This contradicts (a) and proves the claim \eqref{eq:claimcpc}.

An immediate consequence of \eqref{eq:blup2sub} is that
\[
\supp\omega_\infty^+ \subset \{u^+_\infty=0\}\cap \{u^-_\infty=0\}  \subset \{u_\infty=0\} .
\]
In order to prove the converse inclusion, we observe that for $\bar x\in \Rn1$ such that $u_\infty(\bar x)=0$, \eqref{eq:pos_neg_part_u_infty} implies that $u^+_\infty(\bar x)=u^-_\infty(\bar x)=0$. Moreover, by the unique continuation,  $u_\infty$ cannot vanish in any open cylinder containing $\bar x$, so either $u^+_\infty$ or $u^-_\infty$ must be positive in that cylinder. This in turn implies
\[
\{u_\infty=0\}\subset \partial\{u^+_\infty>0\}\cup \partial \{u^-_\infty>0\}\overset{\eqref{eq:claimcpc}}{\subset}\supp\omega^+_\infty.
\]
 Lemma \ref{lem:Poisson-kernel}  concludes the proof.
\end{proof}

\vvv

Before presenting the blow-up result  that we need in order to prove the second part of Theorem \ref{theorem:blowup1}, we recall that the TFCDC condition is invariant under parabolic scaling (see Section \ref{sec:potential theory}).
{Moreover, if $\Omega$ is a domain that satisfies the TFCDC as in \ref{eq:CDC}, $\bar \xi\in \mathcal S\Omega$,  and $\rho>0$, then, if we denote $\tilde{\Omega}\coloneqq T_{\xi,\rho}[\Omega]$,  we have that $\mathcal S\tilde\Omega\subset \partial^*_e\tilde{\Omega}$.}

\vv

The next lemma lists the main properties of the blow-ups of  caloric measure and the adjoint Green's function in a domain satisfying the assumptions of Lemma \ref{lem:blowuplemmaCPC1} that has the TFCDC instead of just being regular for $H^*$. Our proof is inspired  from the one  of the corresponding result for harmonic measure in \cite{AMT16} and although 
it is the essentially the same for items (a)-(c), the proof of (d) is completely different.

\begin{lemma}\label{lemma:main_blowup_lemma}
Let $\Omega\subset \Rn1$ be an open set which is quasi-regular for $H$ and satisfies the TFCDC, and let $F \subset \mathcal P^*\Omega\cap \mathcal S\Omega$ be compact and $\{\bar \xi_j\}_j\subset F$. We denote by $\omega$  the caloric measure for $\Omega$ with pole at $\bar p \in\Omega$,  and assume that there exists $c_{j}> 0$ and $r_{j}\to 0$  such that $\omega_{j}=c_{j}T_{\bar \xi_j,r_{j}}[\omega]\warrow \omega_{\infty}$  for some non-zero Radon measure $\omega_{\infty}$. If we denote $\Omega_{j}\coloneqq T_{\bar \xi_j,r_{j}}(\Omega)$ and assume that there is $c>0$ so that
\begin{equation}\label{eq:CDCblowuplemma_CPC}
\Cap\bigl(\overline{E(\bar\xi_j; r^2_j)}\setminus \Omega\bigr)\geq c\, r_j^n, \qquad j=1,2,\ldots
\end{equation}
then there is a subsequence of $\{r_j\}_{j \geq 1}$ and  a closed set $\Sigma\subset \R^{n+1}$ such that
	\begin{enumerate}[(a)]
		\item For all $R>0$ sufficiently large, $\d\Omega_{j}\cap\cnj{ C_R(\bar 0)} \rightarrow \Sigma\cap \cnj{C_R(\bar 0)}$ in the Hausdorff metric.
		\item $\Rn1\setminus\Sigma=\Omega_{\infty}\cup \widehat\Omega_{\infty}$ where $\Omega_{\infty}$ is a nonempty open set and $\widehat\Omega_{\infty}$ is also open but possibly empty. Moreover, there exists $\lambda>1$ so that, if $C_r(\bar x)$ is a cylinder such that $\cnj{C_r(\bar x)}\subset \Omega_{\infty}$,  then $ C_{\lambda r}(\bar x)$ is contained in $\Omega_{j}$ for all $j$ large enough.
		\item $\supp \omega_{\infty}\subset \Sigma$. 
		\item If we set $u(\bar x)=G_{\Omega}(\bar p_+, \bar x)$ on $\Omega$ and $u\equiv 0$ on $(\Omega)^{c}$, and define
		\[u_{j}(\bar x)=c_{j}\,r_{j}^{n}\,u\bigl(\delta_{r_{j}}(\bar x)+\bar \xi_j\bigr),\]
		then the  sequence $u_{j}$ converges uniformly on compact subsets of $\bR^{n+1}$
to a non-zero function $u_{\infty}$ which is continuous in $\R^{n+1}$, adjoint caloric in $\Omega_\infty$, and vanishes in $(\om_\infty)^c$. Moreover if $a\in (0,1/2)$ and $M>1$ are the constants obtained in Lemmas  \ref{lem:tbcdc} and  \ref{lem:bourgain} respectively,   then for  $\bar x\in \Sigma$ and $r>0$, it holds that
{
		\begin{equation}\label{eq:u<wr}
		{u_{\infty}(\bar y)\lesssim_a r^{-n}\,  \omega_{\infty}\bigl({\overline{ C_{4a^{-1}Mr}(\bar x)}}\bigr),\qquad \bar y\in C_r(\bar x)\cap \Omega^+_{\infty}},
		\end{equation}}
		and
			\begin{align}\label{eq:ibp}
			\int \vphi \,d\omega_{\infty} = \int_{\R^{n+1}} u_{\infty}\,H \varphi  , \quad \textup{for any}\,\, \vphi \in C^\infty_c( \R^{n+1}).
			\end{align}
	\end{enumerate}
\end{lemma}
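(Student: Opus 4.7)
The strategy parallels that of Lemma \ref{theorem:propertiesblowup} but exploits the quantitative regularity furnished by the TFCDC to upgrade the $L^2$-convergence of the rescaled Green functions to uniform convergence on compact sets, and to enforce Hausdorff convergence of the rescaled boundaries.

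\emph{Parts (a) and (c).} For each fixed $R>0$ the sets $\partial\Omega_j\cap \overline{C_R(\bar 0)}$ are nonempty (they contain $\bar 0$) closed subsets of the compact set $\overline{C_R(\bar 0)}$. By the Blaschke selection theorem they admit a subsequence converging in the Hausdorff metric to some closed set $\Sigma_R$, and a diagonal extraction over $R=1,2,\dots$ produces a single subsequence of $\{r_j\}$ together with a closed set $\Sigma\subset \Rn1$ such that $\partial\Omega_j\cap \overline{C_R(\bar 0)}\to \Sigma\cap \overline{C_R(\bar 0)}$ for every $R$, proving (a). Property (c) is immediate from $\omega_j\warrow \omega_\infty$ together with $\supp\omega_j\subset \partial\Omega_j$: if $C_r(\bar x)$ is disjoint from $\Sigma$ then it is disjoint from $\partial\Omega_j$ for every $j$ sufficiently large, so $\omega_j(C_r(\bar x))=0$ eventually.

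\emph{Part (b).} Define
\[
\Omega_\infty\coloneqq \bigl\{\bar x\in \Rn1\setminus\Sigma: \text{there exist }r>0,\,j_0\geq 1\text{ with }C_r(\bar x)\subset \Omega_j\text{ for all }j\geq j_0\bigr\},
\]
and $\widehat\Omega_\infty$ analogously for $\Omega_j^c$. If $\bar x\notin \Sigma$, then $\delta\coloneqq \dist_p(\bar x,\Sigma)>0$ and the Hausdorff convergence from (a) yields $\dist_p(\bar x,\partial\Omega_j)>\delta/2$ for all $j$ large, so the cylinder $C_{\delta/4}(\bar x)$ avoids $\partial\Omega_j$ eventually. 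Being connected it must lie entirely in $\Omega_j$ or entirely in $\Omega_j^c$, and passing to a further subsequence we may assume the dichotomy is stable, giving $\Rn1\setminus\Sigma=\Omega_\infty\cup\widehat\Omega_\infty$ and the nondegeneracy claim with $\lambda$ depending on $\delta/4$ and $r$. Nonemptiness of $\Omega_\infty$ will be deduced from the non-triviality of the limit $u_\infty$ established in (d), since $u_j\equiv 0$ on $\Omega_j^c$ will force $u_\infty\equiv 0$ on $\widehat\Omega_\infty\cup \Sigma$.

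\emph{Part (d): the main step.} The hypothesis \eqref{eq:CDCblowuplemma_CPC}, combined with Lemma \ref{lem:tbcdc} and Lemma \ref{lem:bourgain}, provides the uniform lower bound $\omega^{\bar z}(C_{Mr_j}(\bar\xi_j))\gtrsim 1$ for $\bar z\in \widehat R^+_{a}(\bar\xi_j;r_j)\cap\Omega$. Combining this with Lemma \ref{lem:estim_green_caloric_measure} at scale $r_j$ and the rescaling $u_j(\bar x)=c_j\, r_j^n\, u(\delta_{r_j}(\bar x)+\bar\xi_j)$, we obtain a uniform $L^\infty$ bound on $u_j$ on every fixed cylinder, with constant controlled in the limit by $\omega_\infty(\overline{C_R})$. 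The TFCDC is preserved by parabolic scalings and translations (Lemma \ref{lem:geom_boundary_CDC}), and the adjoint analogues of Lemma \ref{lem:bdry Holder} and Lemma \ref{lem:Modudlus-continuity} supply a uniform H\"older modulus of continuity for the $u_j$'s up to $\partial\Omega_j$; the interior parabolic Cauchy estimates (Proposition \ref{proposition:cauchy_estimates} applied to $H^*$) supply the complementary interior equicontinuity. Arzel\`a–Ascoli then extracts a locally uniformly convergent subsequence $u_j\to u_\infty\in C(\Rn1)$. The limit $u_\infty$ is adjoint caloric in $\Omega_\infty$ by local uniform convergence of adjoint caloric functions, vanishes on $\widehat\Omega_\infty$ since $u_j\equiv 0$ there for $j$ large, and vanishes on $\Sigma$ by the uniform boundary H\"older decay. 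Passing to the limit in the identity $\int\varphi\,d\omega_j=\int u_j\,H\varphi$, valid for every $\varphi\in C^\infty_c(\Rn1)$, yields \eqref{eq:ibp}. Since $\omega_\infty\neq 0$, this forces $u_\infty\not\equiv 0$, whence $\Omega_\infty\neq\varnothing$, completing (b). The estimate \eqref{eq:u<wr} at an arbitrary $\bar x\in \Sigma$ and scale $r>0$ is obtained by repeating the Bourgain-type argument at the scale $r$ around $\bar x$: the Hausdorff convergence $\partial\Omega_j\cap \overline{C_R}\to \Sigma\cap \overline{C_R}$ and scale-invariance of the TFCDC allow us to apply Lemma \ref{lem:estim_green_caloric_measure} to $u_j$ at points $\bar x_j\to \bar x$ at scale $r$, and the bound passes to the limit using the local uniform convergence of $u_j$ and the weak convergence of $\omega_j$.

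\emph{Main obstacle.} The principal difficulty is to promote the interior $L^2$-convergence of $u_j$ already obtained in Lemma \ref{lem:blowuplemmaCPC1} (under the milder hypotheses of quasi-regularity and regularity for $H^*$) to locally uniform convergence on all of $\Rn1$, together with continuous boundary vanishing of $u_\infty$ on $\Sigma$. This requires quantitative boundary regularity at \emph{every} scale for the rescaled domains $\Omega_j$, not merely at the specific sequence $r_j$ isolated by \eqref{eq:CDCblowuplemma_CPC}. The TFCDC hypothesis is precisely what provides this, through the capacity-driven modulus of continuity in Lemma \ref{lem:Modudlus-continuity}, and it is also the ingredient that guarantees that $\{u_\infty=0\}\cap (\Rn1\setminus\widehat\Omega_\infty)$ coincides with $\Sigma$ rather than being strictly larger, thereby closing the loop between the geometric object $\Sigma$, the measure-theoretic object $\supp\omega_\infty$, and the analytic object $\partial\Omega_\infty$.
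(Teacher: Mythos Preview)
Your plan is correct and follows essentially the same route as the paper: Hausdorff compactness and diagonalization for (a); the support argument for (c); and, for (d), the combination of the Bourgain-type bound (via \eqref{eq:CDCblowuplemma_CPC} and Lemma~\ref{lem:estim_green_caloric_measure}) with the scale-invariant TFCDC boundary H\"older decay and interior regularity to obtain equicontinuity, then Arzel\`a--Ascoli and passage to the limit in $\int\varphi\,d\omega_j=\int u_j\,H\varphi$.

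Two minor points. First, in (b) your ``passing to a further subsequence we may assume the dichotomy is stable'' needs the standard countability trick: the in/out dichotomy is pointwise in $\bar x\notin\Sigma$ and one cannot diagonalize over uncountably many points; the paper (following \cite{AMT16}) restricts to cylinders with rational centers and radii. Second, the paper proves $\Omega_\infty\neq\varnothing$ directly in (b) by a contradiction argument (if $\rho_j\coloneqq\sup_{C_{2K}}\dist_p(\cdot,\Omega_j^c)\to 0$, the boundary H\"older estimate forces $\int\varphi\,d\omega_j\lesssim\rho_j^\gamma\to 0$), whereas you defer it to (d) via $u_\infty\not\equiv 0$; your reordering is valid and arguably cleaner. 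The paper's equicontinuity argument is a four-case split using a parabola $\gamma(\bar x,\bar y)$ adapted to the parabolic metric to chain interior cylinders---your sketch identifies the right ingredients but omits this bridging step between the interior and boundary regimes.
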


\begin{proof}
For simplicity, we assume $K=\{\bar \xi\}$, as the general case can be proved similarly.

\vv
\textbf{Proof of (a):} Let $R>0$ and  observe that $\bar 0\in \partial \Omega_j$ for all $j \geq 1$, which entails $\partial \Omega_j\cap \overline{C_R(\bar 0)}\neq \varnothing$.
The Hausdorff distance is a metric on the collection $\mathfrak C\bigl(\overline{C_R(\bar 0)}\bigr)$ of all closed subsets of $\overline{C_R(\bar 0)}$. Since $\overline{C_R(\bar 0)}$ is compact, $\bigl(\mathfrak C\bigl(\overline{C_R(\bar 0)}\bigr), d_H\bigr)$ is compact too. Thus, after passing to a subsequence, $\overline{C_R(\bar 0)}\cap \partial \Omega_j\to \overline{C_R(\bar 0)}\cap \Sigma$ in the Hausdorff distance sense for some $\Sigma \in \mathfrak C\bigl(\overline{C_R(\bar 0)}\bigr)$. A standard diagonalization argument concludes the proof of (a).

\vv

\textbf{Proof of (b):} We will first  prove that there exists a parabolic cylinder $C_\rho (\bar x')$ which is contained in all $\Omega_j$, for $j$ large enough. Arguing by contradiction we assume not. Let $\varphi\in C^\infty_c(\Rn1)$ be a non-negative function such that $\int \varphi\,d\omega^+_\infty\neq 0$ and $\supp \varphi\subset C_K(\bar 0)$ for some $K>0$.
Hence, there exists $\bar x_0$ belonging to $C_K(\bar 0)\cap \supp\omega_\infty$, and our counterassumption implies that 
\begin{equation}\label{eq:lemblowup2}
\rho_j\coloneqq \sup\bigl\{{\dist}_p(\bar x,(\Omega_j)^c): \bar x\in C_{2K}(\bar 0)\bigr\} \to 0, \qquad \text{ as }\, j\to \infty.
\end{equation}
We denote by $\bar \zeta_j(\bar x) \in(\Omega_j)^c$ a point which realizes ${\dist}_p(\bar x,(\Omega_j)^c)$.  In particular, since $\rho_j\to 0$ and $0\in \partial \Omega_j$ for all $j$,  we have that $\|\bar x-\bar \zeta_j(\bar x)\|\leq \rho_j<2K$, for $j$ large enough and $\bx \in C_{2K}(\bar 0)$. Moreover, $\|\bar x-\bar x_0\|	\leq \|\bar x\| + \|\bar x_0\|<3K$.

 Observe that $\Omega_j$ satisfies the TFCDC with the same parameters as $\Omega$ because of Lemma \ref{lem:geom_boundary_CDC}. Moreover, $\bar \zeta_j(\bar x)\in \mathcal S\Omega_j$ for $j$ big enough because $T_{\min}(\Omega_j)=(T_{\min}(\Omega)-\tau)/r_j^2\to -\infty$ and $T_{\max}(\Omega_j)=(T_{\max}(\Omega)-\tau)/r_j^2\to +\infty$ as $j\to \infty$. In fact, $\bar \zeta_j(\bar x)\in \partial^*_e\Omega_j$ (see the discussion before the statement of this lemma).

Note that for $\bx \in C_{2K}(\bar 0) $, we have $C_{2K}\bigl(\bar \zeta_j(\bar x)\bigr) \subset C_{4K}(\bar 0) \cap C_{5K}(\bx_0)$. If we consider $j$ large enough so that $ \bar p \not \in C_{16 K}(\bar 0)$,  then if we combine  \eqref{eq:estim_green_function}, \eqref{eq:Bourgain}, and \eqref{eq:CDCblowuplemma_CPC},  we have that for $\bar y\in C_{4K}(\bar 0) $,
\begin{equation}\label{eq:blowuplem1}
\begin{split}
u_j(\bar y )&= c_j r_j^n\, u\bigl(\delta_{r_j} \bar y + \bar \xi\bigr) \\
&\lesssim c_j r_j^n (4K r_j )^{-n} \omega\Bigl(\delta_{r_j}  C_{4KM}(\bar 0) +\bar \xi\Bigr) \approx_K \omega_j\bigl(C_{4KM}(\bar 0) \bigr).
\end{split}
\end{equation}
Remark that $M$ depends on $a$ and  is chosen so that \eqref{eq:bourgain_estimate}   holds.
Hence, using the $\gamma$-H\"older continuity at the boundary for $u_j$  in $\Omega_j$ (that holds with constants not depending on $j$ because of Lemma \ref{lem:geom_boundary_CDC}), we infer that
\begin{equation*}
\begin{split}
0&<\int \varphi\, d\omega_j = \int_{\Omega_j} u_j\,H\varphi \overset{\eqref{eq:boundaryHolder}}{\lesssim} \int_{\Omega_j}|H \varphi|\biggl(\sup_{C_{3K}(\bar \zeta_j(\bar x)) \cap \Omega_j}u^+_j\biggr)\biggl(\frac{\|\bar x-\bar \zeta_j(\bar x)\|}{2K}\biggr)^\gamma\, d\bar x\\
&\overset{\eqref{eq:blowuplem1}}{\lesssim_K}  \omega_j\bigl(C_{4KM}(\bar 0)\bigr) \Bigl(\frac{\rho_j}{2K}\Bigr)^\gamma  \int |H\varphi|\lesssim_{K, \varphi} \omega_j\bigl(C_{4KM}(\bar 0)\bigr) {\rho_j}^\gamma.
\end{split}
\end{equation*}
So taking the limit in the previous inequalities as $j\to \infty$ we obtain
\begin{equation*}
\begin{split}
0&<\int \varphi\, d\omega_\infty \lesssim_{a,K,\varphi} \limsup_{j\to \infty}\rho_j^\gamma\,\omega_j\bigl(C_{4KM}(\bar 0)\bigr)\leq \omega_\infty\bigl(\overline{C_{4KM}(\bar 0)}\bigr)\lim_{j\to 0}\rho^\gamma \overset{\eqref{eq:lemblowup2}}{=} 0,
\end{split}
\end{equation*}
which is a contradiction. 
Thus, after passing to a subsequence, if necessary, there exists a cylinder $C_\rho(\bar x')\subset \Omega_j$ for $j$ big enough.

For the the construction of $\Omega_\infty$ and, once we define $\widehat \Omega_\infty:=\Rn1\setminus\overline{\Omega_\infty}$, for the  remaining part of the proof of (b), we refer to \cite[p. 2143]{AMT16}.
For future reference, we remark that $\Omega_\infty$ is built as
\begin{equation}\label{eq:defomegainfty}
\Omega_\infty=\bigcup_{C_r(\bar x)\in \mathcal D}C_r(\bar x),
\end{equation}
where $\mathcal D$ is the collection of all open cylinders $C_r(\bar x)$ such that $r$ is rational, $\bar x$ has rational coordinates, $\overline{C_r(\bar x)}\subset \Rn1\setminus \Sigma$ and $C_r(\bar x)$ is contained in all but finitely many $\Omega_j$, for a proper subsequence.

\vv

\textbf{Proof of (c):}
If $C_r(\bar x)\subset \overline{C_r(\bar x)}\subset \Rn1 \setminus \Sigma$, we have that
\[
\omega_\infty\bigl(C_r(\bar x)\bigr)\leq \liminf_{j\to \infty}\omega_j\bigl(C_r(\bar x)\bigr)\leq \liminf_{j\to \infty}\omega_j\bigl(\Rn1 \setminus \partial\Omega_j\bigr)=0,
\]
which implies $\bar x\not \in \supp \omega_\infty$.

\vv
\textbf{Proof of (d):} 
Let us show that $u_j$ converges uniformly on compact subsets of  $\Rn1$ to a continuous function $u_\infty$. Recall that $\bar 0\in \partial \Omega_j$ for $j \geq 1$, and let  $j$ large enough so that $\bar p\in \Omega\setminus C_{400 K r_j}(\bar \xi).$ We can argue as in \eqref{eq:blowuplem1} and, for $K>0$ and $4\leq  \lambda<100$, we get that  for $j$ large, 
\begin{equation}\label{eq:unifboundjd}
u_j(\bar y)\lesssim {(\lambda K)}^{-n}\, \omega_j\bigl(C_{\lambda{K} M}(\bar 0)\bigr)\lesssim (\lambda K)^{-n}\omega_\infty\bigl(\overline{C_{\lambda K M }(\bar 0)}\bigr), \,\, \text{ if }\,\bar y\in C_{\lambda K}(\bar 0)\cap \Omega_j,
\end{equation}
where the last inequality holds because $\limsup_j \omega_j({C_{ \lambda K M}(\bar 0)})\leq \omega_\infty\bigl(\overline{C_{\lambda KM}(\bar 0)}\bigr)$. In particular, $\{u_j\}_j$ is uniformly bounded on $C_{\lambda K}(\bar 0)$ for $j$ large. 

We will show that it  is equicontinuous on $C_K(\bar 0)$ for $j$ large. To do so, we split into cases depending on the  positions of $\bar x, \bar y\in C_K(\bar 0)$.

\noindent{\bf Case 1}. If $\bar x, \bar y\in C_K(\bar 0)\setminus \Omega_j$, then trivially $|u_j(\bar x)-u_j(\bar y)|=0.$

\noindent{\bf Case 2}. Let us study the case $\bar x=(x,t)\in \Omega_j$ and $\bar y=(y,s)\in C_K(\bar 0)\setminus \Omega_j$.
If $t\leq s$, we denote by $\gamma(\bar x, \bar y)$ the (possibly degenerate) parabola with vertex at $\bar x$, whose axis is parallel to the time-axis so that $\bar y \in \gamma(\bar x, \bar y)$. If $\bar z=(z,u)\in \gamma(\bar x, \bar y)$, then, by construction, $|z-x|\leq |y-x|$ and $|u-t|\leq |s-t|$. Hence
\begin{equation}\label{eq:ineqparabd}
\|\bar z-\bar x\|=\max\{|z-x|, |u-t|^{1/2}\}\leq  \max\{|y-x|, |s-t|^{1/2}\}=\|\bar y-\bar x\|.
\end{equation}
Similarly, one can show  that $\|\bar y-\bar z\| \leq \|\bar y-\bar x\|$.

If $s< t$, we set $\tilde \gamma(\bar x, \bar y)\equiv \gamma (\bar y, \bar x)$, where $\gamma(\cdot, \cdot)$ is defined  above.
The parabola $\tilde \gamma(\bar x, \bar y)$ meets $\partial \Omega_j$ at a point, say $\bar z$. Since we choose $j$ big enough, \eqref{eq:boundaryHolder} applies (with constant independent on $j$ because the TFCDC condition holds with uniform constants for $\Omega_j$) and we have
\begin{equation*}
\begin{split}
|u_j(\bar x)-u_j(\bar y)|&=u_j(x)\lesssim_K \|\bar x-\bar z\|^\alpha\,\sup_{C_{2K}(\bar z)}u_j\leq \|\bar x-\bar z\|^\alpha \,\sup_{C_{3K}(\bar 0)}u_j \\
&\overset{\eqref{eq:unifboundjd}}{\lesssim_{K}} \|\bar x-\bar z\|^\alpha\,\omega_\infty\bigl(\overline{C_{3 K M}(\bar 0)}\bigr)\overset{\eqref{eq:ineqparabd}}{\lesssim_{\widetilde K}} \|\bar x-\bar y\|^\alpha.
\end{split}
\end{equation*}

\noindent{\bf Case 3}. Let us assume that $\bar x, \bar y\in \Omega_j\cap C_K(\bar 0)$ and that there exists a point $\bar z\in \Rn1\setminus \Omega_j$ such that $\dist\bigl(\bar z, \gamma(\bar x, \bar y)\bigr)\leq \|\bar x-\bar y\|/3$.
Without loss of generality, we assume $u_j(\bar x)\leq u_j(\bar y)$.
If $\bar \zeta\in \gamma(\bar x, \bar y)$ denotes a point such that $\|\bar z- \bar \zeta\|=\dist\bigl(\bar z, \gamma(\bar x, \bar y)\bigr)$, we have
\begin{equation}\label{eq:triangblowupCDC}
\|\bar y- \bar z\|\leq \|\bar y-\bar \zeta\|+\|\bar \zeta-\bar z\|\leq \|\bar x-\bar y\|+ \frac{1}{3}\|\bar x-y\|=\frac{4}{3}\|\bar x- \bar y\|\leq \frac{8}{3}K.
\end{equation}
Observe that $\bar y\in C_{3K}(\bar z)\subset C_{5K}(\bar 0)$. Since we have $\bar p \in \Omega\setminus C_{400 Kr_j}(\bar \xi),$ we can apply  \eqref{eq:boundaryHolder}  and  get
\begin{equation*}
\begin{split}
|u_j(\bar x)-u_j(\bar y)|&\leq 2 u_j(\bar y)\lesssim \biggl(\sup_{C_{3K}(\bar z)}u_j\biggr)\|\bar y-\bar  z\|^\alpha\leq \biggl(\sup_{C_{5K}(\bar 0)}u_j\biggr)\|\bar y-\bar z\|^\alpha\\
& \overset{\eqref{eq:unifboundjd}}{\lesssim_{K}} \omega_\infty\bigl(\overline{C_{5KM}}\bigr) \|\bar y-\bar z\|^\alpha\overset{\eqref{eq:triangblowupCDC}}{\lesssim_{ K}}\|\bar x-\bar y\|^\alpha.
\end{split}
\end{equation*}

\noindent{\bf Case 4}. Suppose that $\bar x, \bar y\in \Omega_j\cap C_K$ and that $\dist\bigl(\bar z, \gamma(\bar x, \bar y)\bigr)> \|\bar x-\bar y\|/3$ for all $\bar z\in \Rn1\setminus \Omega_j$.
We denote $\delta\coloneqq \|\bar x-\bar y\|/100$ and we cover $\gamma(\bar x, \bar y)$ with $N$ cylinders of the form $C_{\delta}(\bar y_k),$ for $\bar y_k\in \gamma(\bar x, \bar y)$, $k=1,\ldots, N$. The cylinders are chosen so that $C_{\delta}(\bar y_{k+1})\cap C_{\delta}(\bar y_{k})\neq \varnothing$, $\bar x\in C_{\delta}(\bar y_1)$ and $\bar y\in C_{\delta}(\bar y_N)$. For $i=1, \ldots, N-1$ we choose $\bar x_i\in C_{\delta}(\bar y_{i+1}) \cap C_\delta(\bar y_{i})$ and  define $\bar x_0=\bar x$ and $\bar x_N=\bar y$. Remark that  $N$ is independent of $\delta$ and $j$. Since $u_j$ is caloric in $\Omega_j\cap C_K(\bar 0)$, we can use the interior H\"older continuity (see e.g. \cite[Theorem 6.29]{Lieb96}) and write
\begin{equation*}
\begin{split}
|u_j(\bar x)-u_j(\bar y)|&\leq \sum_{i=0}^{N} |u_j(\bar x_{i+1})-u_j(\bar x_i)|\lesssim  \sum_{k=0}^{N} \|u_j\|_{L^\infty(C_{4\delta}(\bar y_i))}\|\bar x_{k+1}-\bar x_k\|^\beta\\
&\lesssim \|u_j\|_{L^\infty(C_{2K}(\bar 0))}  \|\bar x-\bar y\|^\beta  \overset{\eqref{eq:unifboundjd}}{\lesssim_{K}} \omega_\infty\bigl(\overline{C_{2 K M}(\bar 0)}\bigr) \|\bar x-\bar y\|^\beta,
\end{split}
\end{equation*}
where $\beta\in(0,1)$ and the implicit constants are independent  of $j$.

Combining the four cases above,  after passing to a subsequence, we obtain that $u_j$ is equicontinuous in $C_K$ for each $K \geq 1$. Hence, we can apply Ascoli-Arzel\`a's theorem and a standard diagonalization argument to get that $u_j$ has a subsequence that converges uniformly on compact subsets of $\Rn1$  to a continuous function $u_\infty$. Moreover, for  $\bar x\in \Sigma,$ $r>0$, and $ \bar y\in C_r(\bar x)\cap \Omega_\infty$, we have
\[
u_\infty (\bar y) = \lim_{j\to \infty} u_j (\bar y)\lesssim_a \limsup_j r^{-n}\hm_j(C_{4Ma^{-1}}(\bar x)) \leq r^{-n}\hm_\infty(\overline{C_{4Ma^{-1}}(\bar x)})<\infty,
\]
which gives \eqref{eq:u<wr}.
It remains to prove \eqref{eq:ibp}. To this end, for $\varphi\in C^\infty_c(\Rn1)$ it holds 
\begin{align}\label{eq:blowuplemma23}
\int\varphi\, d\omega_j&= c_j\int \varphi \circ T_{\bar\xi,r_j}\,d\omega = c_j\int H\bigl(\varphi\circ T_{\bar\xi,r_j}\bigr)\, u = c_j r_j^{-2}\int \bigr((H\varphi)\circ T_{\bar\xi,r_j}\bigl)\,u \notag\\
&=c_jr_j^{n}\, \int H\varphi(\bar\xi)\,u\bigl(\delta_{r_j}(\bar y)+\bar \xi\bigr)\,d\bar y = \int u_j\, H\varphi.
\end{align}
Taking limits as $j \to \infty$ in  \eqref{eq:blowuplemma23}, in light of the fact  that $\supp\omega_\infty\cap \Omega_\infty\subset \Sigma\cap \Omega_\infty$ by (c) and that $u_j\to u_\infty$ locally uniformly in $\Rn1$,  we have that \eqref{eq:ibp} holds. It is straightforward to see that $u_\infty$ is adjoint caloric in $\Omega_\infty$.

The function $u_\infty$ is not identically zero. Indeed, if $\varphi\in C^\infty_c(\Rn1)$ is a non-negative function  so that $\int\varphi\,d\omega^+_\infty>0$ and plug it in  \eqref{eq:ibp}, we obtain
\[
0<\int \varphi\,d\omega_\infty = \int u_\infty\,  H\varphi.
\]
This readily implies that $u_\infty \not \equiv 0$ in $\rrn$.

In order to finish the proof of (d), we claim that $u_\infty\equiv 0$ on $\Rn1\setminus \Omega_\infty$. Assume that there is $\bx \in \Rn1\setminus \Omega_\infty$ so that $u_\infty(\bar x)>0$. By the continuity of $u_\infty$, there exists $\delta>0$ such that $u_\infty(\bar y)>u_\infty(\bar x)/2$ for all $\bar y\in \overline{C_\delta(\bar x)}$. 
The local uniform convergence of $u_j$ to $u$ implies that $u_j>u_\infty(\bar x)/4$ on $\overline{C_\delta(\bar x)}$ for all $j$ big enough.
Thus, since $u_j$ vanishes outside of $\Omega_j$, we have that $\overline{C_\delta(\bar x)}\subset \Omega_j$ for $j$ large. In particular, it holds that $\overline{C_{\delta/2}(\bar x)}\subset \Rn1 \setminus \Sigma$, so $C_{\delta/2}(\bar x)\subset \Omega_\infty$ by the construction \eqref{eq:defomegainfty} and we can conclude that $\bar x\in \Omega_\infty$, which is a contradiction. This proves our claim and finishes the proof of the lemma.
\end{proof}

\vv

The next lemma studies the two-phase version of the previous result for domains that satisfy the joint TBCPC. Although this proof is  inspired by the one of \cite[Lemma 5.9]{AMT16}, there are several differences as well. Let us remark that an important feature of our proof is that  we do not assume the domains to be complementary (unlike the corresponding lemma in \cite{AMT16}), which gives us significant freedom in the applications in Section \ref{sec9} but also  requires additional care in the proof.

\begin{lemma}\label{lem:blow_up_CDC_two_phase}
Let $\Omega^\pm\subset \Rn1$ be two disjoint open sets that are quasi-regular for $H$ and  satisfy the  joint TBCPC and the TFCDC. Let $F \subset \mathcal P^*\Omega^+\cap \mathcal P^*\Omega^-\cap \mathcal S\Omega^+\cap \mathcal S\Omega^-$ be compact and $\{\bar \xi_j\}_j\subset F$. We denote by $\omega^{\pm}$  the caloric measure for $\Omega^\pm$ with pole at $\bar p_\pm \in\Omega^\pm$,  and   assume that there exists $c_{j}> 0$, $c>0$ and $r_{j}\to 0$  such that
\[
\omega_{j}^+\coloneqq c_{j}T_{\bar \xi_j,r_{j}}[\omega^+]\warrow \omega_{\infty}^+
\]
and
\[
\omega^-_j\coloneqq c_{j}T_{\bar \xi_j,r_{j}}[\omega^+]\warrow c\omega_{\infty}^+\eqqcolon \omega^-_\infty.
\]
We set $u^\pm(\bar x)=G_{\Omega^\pm}(\bar p_\pm, \bar x)$ on $\Omega^\pm$ and $u^\pm\equiv 0$ on $(\Omega^\pm)^{c}$, and  define
\[
u_{j}^\pm(\bar x)=c_{j}\,r_{j}^{n}\,u^\pm\bigl(\delta_{r_{j}}(\bar x)+\bar \xi_j\bigr).
\]
Then there exists a subsequence of $\{r_j\}_{j \geq 1}$,  $u^\pm_\infty$, $\Omega^\pm_\infty$, $\widehat \Omega^\pm_\infty$, and $\Sigma^\pm$ such that  the properties (a)-(d) in Lemma \ref{lemma:main_blowup_lemma} hold; moreover, if $u_{j}\coloneqq u_{j}^+ - c^{-1} u_{j}^-$, then $u_j \to u_{\infty}\coloneqq u_{\infty}^+ - c^{-1} u_{\infty}^-$ uniformly on compact subsets of  $\rrn$ and the following properties hold:

	\begin{enumerate}[(a)]
		\item $u_{\infty}$ is  an adjoint caloric function on $\R^{n+1}$.
		\item  $\widehat\Omega^\pm_\infty=\Omega_{\infty}^\mp$ and $\Sigma^+=\Sigma^-\eqqcolon\Sigma$. Moreover $\Sigma=\{u_{\infty}=0\}$, with $u_\infty>0$ on $\Omega_\infty^+$ and $u_\infty<0$ on $\Omega_\infty^-$. 
		\item $\Sigma$ is a set of Euclidean Hausdorff dimension $n$ and 
\begin{equation}\label{eq:poissonkernel_blowup_lemma}
d\omega_{\infty}^+=-{\partial_{\nu_t} u_\infty}\,d\sigma_{\d\Omega_{\infty}^+},
\end{equation}
where $\sigma_{\d\Omega_{\infty}^+}$ stands for the parabolic surface measure on ${\d\Omega_{\infty}^+}$ and $\nu_t$ is the measure-theoretic outer unit normal on the time-slice $(\om_\infty^+)_t$.
\end{enumerate}
	\label{l:blowup1}
\end{lemma}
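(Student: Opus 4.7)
The plan is to reduce everything to two applications of the one-phase blow-up Lemma \ref{lemma:main_blowup_lemma} along a common diagonal subsequence, and then exploit the hypothesis $\omega_\infty^-=c\,\omega_\infty^+$ together with the disjointness of $\Omega^\pm$ to upgrade to the rigidity claimed in (a)--(c).

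First, by subadditivity of thermal capacity, disjointness of $\Omega^\pm$, and the bound $\Cap(\overline{E(\bar\xi_j;r_j^2)})\approx r_j^n$ (a consequence of Corollary \ref{cor:capacity_rectangle} and Lemma \ref{lem:rect_cont_heat_ball}), after passing to a subsequence we may assume \eqref{eq:CDCblowuplemma_CPC} holds for $\Omega^+$; the joint TBCPC then extracts a further subsequence along which it also holds for $\Omega^-$. Along this common subsequence Lemma \ref{lemma:main_blowup_lemma} applied to each phase produces $u_\infty^\pm$, $\Omega_\infty^\pm$, $\widehat\Omega_\infty^\pm$, $\Sigma^\pm$ satisfying its properties (a)--(d), with $u_j^\pm \to u_\infty^\pm$ uniformly on compact subsets of $\Rn1$; hence $u_j\to u_\infty:=u_\infty^+-c^{-1}u_\infty^-$ uniformly on compacts. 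Since the defining cylinders of $\Omega_\infty^\pm$ in \eqref{eq:defomegainfty} must be eventually contained in $\Omega_j^\pm$ along the same subsequence, the disjointness $\Omega_j^+\cap\Omega_j^-=\varnothing$ forces $\Omega_\infty^+\cap\Omega_\infty^-=\varnothing$. In particular $u_\infty^-\equiv 0$ on $\Omega_\infty^+$, so $u_\infty=u_\infty^+$ there, and symmetrically $u_\infty=-c^{-1}u_\infty^-$ on $\Omega_\infty^-$.

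To prove (a), for $\varphi\in C^\infty_c(\Rn1)$, property (d) of Lemma \ref{lemma:main_blowup_lemma} applied to each phase together with $\omega_\infty^-=c\,\omega_\infty^+$ gives
\[
\int u_\infty\,H\varphi=\int \varphi\,d\omega_\infty^+-c^{-1}\!\int \varphi\,d\omega_\infty^-=0,
\]
so $u_\infty$ is weakly adjoint caloric on $\Rn1$ and hence classically adjoint caloric by hypoellipticity (Lemma \ref{lem:weak_caloric_functions}); moreover $u_\infty\not\equiv 0$ since $u_\infty^+\not\equiv 0$ and has disjoint support from $u_\infty^-$. For the strict sign in (b), suppose $u_\infty(\bar x_0)=0$ for some $\bar x_0\in\Omega_\infty^+$; then $u_\infty^+(\bar x_0)=0$, and the strong minimum principle for the non-negative adjoint caloric function $u_\infty^+$ (obtained from the standard caloric minimum principle via the time-reversal $v(x,t)\mapsto v(x,-t)$) forces $u_\infty^+$, and therefore $u_\infty$, to vanish on the open subset $\Lambda(\bar x_0,\Omega_\infty^+)$; unique continuation \cite[Theorem 1.2]{Po96} applied to the globally adjoint caloric $u_\infty$ then yields $u_\infty\equiv 0$ on $\Rn1$, a contradiction. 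Thus $u_\infty>0$ on $\Omega_\infty^+$ and symmetrically $u_\infty<0$ on $\Omega_\infty^-$, giving the partition $\Rn1=\Omega_\infty^+\sqcup\Omega_\infty^-\sqcup\{u_\infty=0\}$. Comparing with $\Rn1=\Omega_\infty^\pm\sqcup\widehat\Omega_\infty^\pm\sqcup\Sigma^\pm$ and using that $\{u_\infty=0\}$ has empty interior (unique continuation again), one concludes $\widehat\Omega_\infty^\pm=\Omega_\infty^\mp$ and $\Sigma^+=\Sigma^-=\{u_\infty=0\}$.

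Finally, part (c) follows from the analysis in Section \ref{sec:nodal_sets}: $\Sigma=\{u_\infty=0\}$ is the nodal set of a non-zero adjoint caloric function, so it has locally finite Euclidean $\mathcal H^n$-measure and contains the $n$-dimensional $C^1$-submanifold $\mathcal R_x$, whence its Euclidean Hausdorff dimension equals $n$. The sign analysis above gives $u_\infty^++c^{-1}u_\infty^-=|u_\infty|$ pointwise, hence
\[
\int \varphi\,d\omega_\infty^+=\int u_\infty^+\,H\varphi=\tfrac12\int |u_\infty|\,H\varphi,
\]
identifying $\omega_\infty^+$ as the adjoint caloric measure associated with $u_\infty$ via the adjoint version of Lemma \ref{lem:cal-measure-assoc-h}; then \eqref{eq:poissonkernel_blowup_lemma} follows from the adjoint analogue of Lemma \ref{lem:Poisson-kernel}. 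The main obstacle will be the strict sign statement in (b): this is the one step where the two-phase hypothesis $\omega_\infty^-=c\,\omega_\infty^+$ enters decisively through the global adjoint caloricity of $u_\infty$, since without it the zeros of $u_\infty^+$ inside $\Omega_\infty^+$ cannot be ruled out by the one-phase minimum principle alone.
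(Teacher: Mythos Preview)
Your proof is correct and follows essentially the same strategy as the paper's: pigeonhole on the capacity to get a common subsequence, apply Lemma~\ref{lemma:main_blowup_lemma} to each phase, deduce global adjoint caloricity of $u_\infty$ from $\omega_\infty^-=c\,\omega_\infty^+$, and then use a minimum-principle/unique-continuation contradiction to establish the strict signs and identify the zero set.

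Two small points where you are more compressed than the paper. First, in your ``comparing partitions'' step you need not only that $\{u_\infty=0\}$ has empty interior but also that $\{u_\infty=0\}=\partial\{u_\infty>0\}$; the missing inclusion $\{u_\infty=0\}\subset\partial\{u_\infty>0\}$ requires one more use of the strong minimum principle (if $u_\infty\le 0$ on a whole neighborhood of a zero, you again get $u_\infty\equiv 0$ on an open set). The paper achieves the same conclusion via an explicit case analysis showing $\widehat\Omega_\infty^-\cap\widehat\Omega_\infty^+=\varnothing$ and $\widehat\Omega_\infty^-\cap\Sigma^+=\varnothing$. Second, for $\dim_H\Sigma=n$ you invoke $\mathcal R_x$; this works because $\mathcal R_x$ is dense in $\Sigma$ (cf.\ the proof of Lemma~\ref{zero_in_support}), but the paper instead argues directly that any segment joining a point of $\Omega_\infty^+$ to one of $\Omega_\infty^-$ must cross $\Sigma$. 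Also note the minor slip $\Lambda(\bar x_0,\Omega_\infty^+)$ should be $\Lambda^*(\bar x_0,\Omega_\infty^+)$ for adjoint caloric functions, though this does not affect the argument.
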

\begin{proof}
Let us assume for brevity that $\bar \xi_j\equiv \bar \xi$.
Given $\Omega^\pm$ as in the statement, by the same pigeonholing argument as in Lemma \ref{theorem:propertiesblowup} and in light of the joint TBCPC property, we can find a subsequence of $r_j$ such that
\begin{equation}\label{eq:wlog_joint_TBCPC}
\mathcal \Cap\bigl(\overline{E(\bar\xi; r_j^2)}\setminus \Omega^\pm\bigr)\gtrsim r_j^{n},\qquad j\geq 0.
\end{equation}
So \eqref{eq:CDCblowuplemma_CPC} holds for both $\Omega^+$ and $\Omega^-$ and we can apply Lemma \ref{lemma:main_blowup_lemma} to find $u^\pm_\infty$, $\Omega^\pm_\infty$, $\widehat \Omega^\pm_\infty$, and $\Sigma^\pm$ satisfying the properties (a)-(d) of Lemma \ref{lemma:main_blowup_lemma}.

\textbf{Proof of (a):} Let $\varphi\in C^\infty_c(\Rn1)$. By hypothesis we have that $\omega_j^+\rightharpoonup \omega^+_\infty$ and $\omega^-_j \rightharpoonup c\omega^+_\infty$, and thus, by the local uniform convergence of $u^\pm_j$ on $\Rn1$, we can find a subsequence of $\{r_j\}_{j \geq 1}$ such that
\begin{equation*}
\begin{split}
\int u_\infty\, H\varphi  &= \lim_{j\to \infty}\int(u^+_j- c^{-1}u^-_j)\, H\varphi =\lim_{j\to \infty}\biggl(\int \varphi\, d\omega^+_j- c^{-1}\int\varphi\, d\omega^-_j\biggr)\\
&= \int\varphi\, d\omega^+_\infty - c^{-1}\int\varphi\, d\omega^-_\infty=\int \varphi\, d\omega^+_\infty-\int\varphi\, d\omega^+_\infty=0.
\end{split}
\end{equation*}

So $u_\infty$ is a weakly adjoint caloric function in $\Rn1$ and Lemma 	\ref{lem:weak_caloric_functions} applies. 

\vv

\textbf{Proof of (b):} Let us first prove that $\Rn1\setminus (\Omega^+_\infty\cup\Omega^-_\infty)=\{u_\infty=0\}$. By Lemma \ref{lemma:main_blowup_lemma}-(d) we have that the functions $u^\pm_\infty$ simultaneously vanish on $(\Omega^+_\infty)^c\cap (\Omega^-_\infty)^c=\Rn1\setminus (\Omega^+_\infty\cup\Omega^-_\infty)$. Thus, $\Rn1\setminus (\Omega^+_\infty\cup \Omega^-_\infty)\subset\{u^+_\infty=0\}\cap\{u^-_\infty=0\} \subset \{u_\infty=0\}$.

In order to prove the converse inclusion, we first observe that $u_\infty$ is non-zero since by Lemma \ref{lemma:main_blowup_lemma}-(d) it holds that $u^\pm_\infty\not\equiv 0$ and vanish outside $\om_\infty^\pm$.  By construction, we have that $u^\pm_\infty \geq 0$ on $\Omega^\pm_\infty$. Let us assume by contradiction that $u^+(\bar x)=0$ for some $\bar x\in\Omega^+_\infty$ and let 
\[
E^*(\bar x;r)\coloneqq\{(y,s)\in \Rn1: (y,t-s)\in E(\bar x, r)\}
\]
be a reflected heat ball so  that $\overline{E^*(\bar x;r)}\subset\Omega^+_\infty$. The mean value theorem for adjoint caloric functions (that readily follows from the mean value theorem for caloric functions \cite[Theorem 1.16]{Watson}) gives that
\begin{equation*}
\begin{split}
0=u_\infty(\bar x)&=(4\pi r)^{-n/2}\int_{E^*(\bar x;r)}\frac{|x-y|^2}{4(t-s)^2}u_\infty(y,s)\, dy\,ds\\
&=(4\pi r)^{-n/2}\int_{E^*(\bar x;r)}\frac{|x-y|^2}{4(t-s)^2}u_\infty^+(y,s)\, dy\,ds,
\end{split}
\end{equation*}
which implies that $u_\infty=u^+_\infty\equiv 0$ on $E^*(\bar x;r)$. In particular, $u_\infty$ has infinite order of vanishing at all the points of $E^*(\bar x;r)$ in the sense of \cite[p. 522]{Po96}. Hence, by the unique continuation principle for globally adjoint caloric functions \cite[Theorem 1.2]{Po96}, we deduce that $u_\infty \equiv 0$ on $\Rn1$, which is a contradiction.  The same argument can be applied to show that $u^-_\infty>0$ on $\Omega^-_\infty$.
So since $u^\pm_\infty = 0$ in $\rrn \setminus \om_\infty^\pm$, we get $u_\infty \neq 0$ in $\om_\infty^+ \cup\om_\infty^-$ or equivalently, 
\[\{u_\infty=0\}\subset (\Rn1\setminus \Omega^+_\infty)\cap (\Rn1\setminus \Omega^-_\infty)=\Rn1\setminus (\Omega^+_\infty\cup \Omega^-_\infty).
\]

Let us  remark that, by construction, $\Rn1=\Omega^\pm_\infty\cup \widehat \Omega^\pm_\infty\cup\Sigma^\pm$, where the unions are disjoint.
Hence
\begin{equation}\label{eq:dijunion}
\widehat\Omega^-_\infty=\bigl(\widehat\Omega^-_\infty\cap \Omega^+_\infty\bigr)\cup \bigl(\widehat\Omega^-_\infty\cap \widehat\Omega^+_\infty \bigr)\cup\bigl(\widehat\Omega^-_\infty\cap \Sigma^+\bigr).
\end{equation}

We claim that 
$$ 
\bigl(\widehat\Omega^-_\infty\cap \widehat\Omega^+_\infty \bigr)\cup\bigl(\widehat\Omega^-_\infty\cap \Sigma^+\bigr)= \emptyset.
$$

Firstly, let us show that $\widehat \Omega^+_\infty\cap \widehat \Omega^-_\infty=\varnothing$.
Indeed, since both sets are open, if $\bar x\in \widehat \Omega^+_\infty\cap \widehat \Omega^-_\infty$, there exists $\rho>0$ such that $C_\rho(\bar x)\subset \widehat \Omega^+_\infty\cap \widehat \Omega^-_\infty\subset (\Rn1\setminus\Omega^+_\infty)\cap (\Rn1\setminus \Omega^-_\infty) \subset \Rn1\setminus (\Omega^+_\infty\cup \Omega^-_\infty) \subset \{u_\infty=0\}$.
 Hence, by \cite[Theorem 1.2]{Po96} we have that $u_\infty\equiv 0$, which is a contradiction.

Secondly, we prove that $\widehat \Omega^-_\infty\cap \Sigma^+=\varnothing$. If $\bar x\in \widehat\Omega^-_\infty \subset \Rn1\setminus \Omega^-_\infty$, Lemma \ref{lemma:main_blowup_lemma}-(d) gives that $u^-_\infty(\bar x)=0$. Analogously, if $\bar x\in \Sigma^+\subset \Rn1\setminus \Omega^+_\infty$, we have $u^+_\infty(\bar x)=0$. Hence, if $\bar x\in \widehat \Omega^-_\infty\cap \Sigma^+$ we get $u_\infty(\bar x)=0$.
Moreover, $\widehat \Omega^-_\infty$ is an open set, hence $C_\varepsilon(\bar x)\subset \widehat\Omega^-_\infty$ for $\varepsilon$ small enough. In particular, $u^-_\infty=0$ on $C_\varepsilon(\bar x)$, which entails that $u_\infty\geq 0$ in this cylinder. So since $u_\infty$ is globally adjoint caloric and $u_\infty(\bar x)=0$, the strong minimum principle \cite[Theorem 3.11]{Watson} implies that  $u_\infty=0$ on $C^+_\varepsilon(\bar x)$. Thus, by the unique continuation we get $u_\infty\equiv 0$ on $\Rn1$, which is again a contradiction. 

 Therefore, by  \eqref{eq:dijunion}, we obtain $\widehat \Omega^-_\infty=\widehat \Omega^-_\infty\cap \Omega^+_\infty\subset \Omega^+_\infty$. Since $\Omega^+$ and $\Omega^-$ are  disjoint, we also have $\Omega^+_\infty\subset \Rn1 \setminus \overline{\Omega^-_\infty}=\widehat \Omega^-_\infty$, concluding that $\widehat\Omega^-_\infty=\Omega^+_\infty$ as desired. Symmetrically, we can show that $\Omega^-_\infty=\widehat \Omega^+_\infty$.

In order to finish the proof of (b), it is enough to observe that the construction of $\Sigma^\pm$ and the equality $\widehat\Omega^\pm=\Omega^\mp$ infer
\[
\Sigma^+=\Rn1\setminus \bigl(\Omega^+_\infty\cup\widehat \Omega^+_\infty)=\Rn1\setminus \bigl(\Omega^+_\infty \cup\Omega^-_\infty)
\]
and 
\[
\Sigma^- = \Rn1\setminus \bigl(\Omega^-_\infty\cup\widehat \Omega^-_\infty)=  \Rn1\setminus \bigl(\Omega^+_\infty \cup\Omega^-_\infty).
\]
Hence, the set $\Sigma\coloneqq \Sigma^+=\Sigma^-$ is well-defined and it coincides with $\Rn1\setminus (\Omega^+_\infty\cup\Omega^-_\infty).$ Since we have already proved  that $\Rn1\setminus (\Omega^+_\infty\cup\Omega^-_\infty)=\{u_\infty=0\}$, we obtain that  $\Sigma=\{u_\infty=0\}$ as wished.

\textbf{Proof of (c):} The fact that $\dim_H \Sigma \leq n$ is given by \cite[Theorem 1.1]{HL94}. To show that $\dim_H \Sigma =n$, consider two cylinders $C_+$ and $C_-$ contained in $\Omega^+_\infty$ and $\Omega^-_\infty$ respectively. By (b), $\pm u_\infty > 0$ on $C_\pm$ and, by continuity of $u_\infty$, any line connecting a point in $C_+$ to a point in $C_-$ has to cross $\Sigma$. Finally, the formula \eqref{eq:poissonkernel_blowup_lemma} holds because of Lemma \ref{lem:Poisson-kernel}.
\end{proof}

\vvv

\section{Proofs of the main theorems}\label{sec9}

\subsection{Proofs of Theorems \ref{theorem:blowup1} and \ref{theorem:tsirelson}}
%
%
Let us recall that we use the notation
\begin{equation*}
\begin{split}
\mathscr F&\coloneqq\bigl\{c\mathcal H^{n+1}_{p}|_{\Sigma_h}:c>0,\,h\in F^*(1)\bigr\}\\
&= \bigl\{  c\mathcal H^{n+1}_p|_V: c>0,\,V\,\textup{admissible}\, n \textup{-plane passing through the origin} \bigr\},
\end{split}
\end{equation*}
which  is a $d$-cone of Radon measures.

Given $\omega^+, \omega^-$, and a set $E$ as in the statement of Theorem \ref{theorem:blowup1}, we denote
\[
E^*\coloneqq\Bigl\{\bar \xi\in E:\lim_{r\to 0}\frac{\omega^+(C_r(\bar \xi)\cap E)}{\omega^+(C_r(\bar\xi))}=\lim_{r\to 0}\frac{\omega^-(C_r(\bar\xi)\cap E)}{\omega^-(C_r(\bar\xi))}=1\Bigr\},
\]
which, by Lemma \ref{lem:Besicovitch} and the mutual absolute continuity assumption for $\omega^\pm$, satisfies $\omega^\pm(E\setminus E^*)=0$.

For $\bar\xi\in E^*$, we define the Radon-Nikodym derivative
\[
h(\bar\xi)=\frac{d\omega^+}{d\omega^-}(\bar \xi)=\lim_{r\to 0}\frac{\omega^+(C_r(\bar \xi))}{\omega^-(C_r(\bar \xi))}=\lim_{r\to 0}\frac{\omega^+(C_r(\bar \xi)\cap E)}{\omega^-(C_r(\bar \xi)\cap E)},
\]
the set
\[
\Lambda\coloneqq \{\bar \xi\in E^*: 0<h(\bar\xi)<\infty\}
\]
and 
\[
\Gamma\coloneqq\{\bar\xi\in\Lambda: \bar\xi\text{ is a Lebesgue point for }h \text{ with respect to }\omega^+\}.
\]
Observe that $\omega^\pm(E\setminus \Gamma)=0$ (see Lemma \ref{lem:Besicovitch} and \cite[Remark 2.15 (3)]{Mattila}).

\begin{lemma}\label{lem:blowupabscont}
Let $\bar\xi\in\Gamma$, $c_j> 0$, and $r_j\to 0$ be such that $\omega^+_j=c_j T_{\bar\xi,r_j}[\omega^+]\rightharpoonup \omega^+_\infty$. Then $\omega^-_j=c_j T_{\bar\xi,r_j}[\omega^-]\rightharpoonup h(\bar\xi)\omega^+_\infty$.
\end{lemma}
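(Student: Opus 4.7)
The plan is to verify weak convergence by testing against an arbitrary $\varphi \in C_c(\Rn1)$ with $\supp \varphi \subset C_R$ for some $R>0$. A change of variables gives
\[
\int \varphi \, d\omega^\pm_j = c_j \int_{C_{Rr_j}(\bar\xi)} (\varphi \circ T_{\bar\xi, r_j})(\bar y)\, d\omega^\pm(\bar y).
\]
Note that the prefactor $c_j\, \omega^+(C_{Rr_j}(\bar\xi)) = \omega^+_j(C_R)$ stays uniformly bounded in $j$, since $\omega^+_j \rightharpoonup \omega^+_\infty$ yields $\limsup_j \omega^+_j(C_R) \le \omega^+_\infty(\overline{C_R}) < \infty$; the analogous bound for $c_j\,\omega^-(C_{Rr_j}(\bar\xi))$ will follow a posteriori from the Radon--Nikodym comparison below.

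The next step is to peel off the contribution from outside $E$. Since $\bar\xi\in E^*$,
\[
c_j \int_{C_{Rr_j}(\bar\xi)\setminus E} |\varphi \circ T_{\bar\xi,r_j}|\, d\omega^\pm \le \|\varphi\|_\infty\, c_j\, \omega^\pm(C_{Rr_j}(\bar\xi))\cdot \frac{\omega^\pm(C_{Rr_j}(\bar\xi)\setminus E)}{\omega^\pm(C_{Rr_j}(\bar\xi))} \longrightarrow 0,
\]
so only the piece over $C_{Rr_j}(\bar\xi)\cap E$ contributes in the limit. On $E$, the mutual absolute continuity $\omega^+|_E\ll\omega^-|_E\ll\omega^+|_E$ gives on $\Lambda$ (which has full $\omega^\pm$-measure in $E^*$) the Radon--Nikodym identity relating $d\omega^-|_E$ and $d\omega^+|_E$ through the density $h$. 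Substituting this density and pulling out the constant $h(\bar\xi)$,
\[
c_j \int_{C_{Rr_j}(\bar\xi)\cap E} (\varphi \circ T_{\bar\xi,r_j})\, d\omega^- = h(\bar\xi)\, c_j \int_{C_{Rr_j}(\bar\xi)\cap E} (\varphi \circ T_{\bar\xi,r_j})\, d\omega^+ + \mathrm{Err}_j,
\]
with
\[
|\mathrm{Err}_j| \le \|\varphi\|_\infty \cdot c_j\, \omega^+(C_{Rr_j}(\bar\xi)) \cdot \avint_{C_{Rr_j}(\bar\xi)} |h(\bar y)-h(\bar\xi)|\, d\omega^+(\bar y).
\]

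Since $\bar\xi\in \Gamma$ is a Lebesgue point of $h$ with respect to $\omega^+$ (by the definition of $\Gamma$ and the parabolic Lebesgue differentiation theorem from Lemma \ref{lem:Besicovitch}), the averaged integral tends to $0$ as $j\to\infty$; combined with the boundedness of the prefactor this gives $\mathrm{Err}_j \to 0$. The remaining main term converges to $h(\bar\xi) \int \varphi\, d\omega^+_\infty$ by the hypothesis $\omega^+_j \rightharpoonup \omega^+_\infty$ (after re-inserting the harmless $E^c$-piece of $\omega^+_j$, controlled exactly as above). Assembling these steps and passing to the limit yields the desired weak convergence $\omega^-_j \rightharpoonup h(\bar\xi)\omega^+_\infty$.

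The main obstacle is the simultaneous handling of the $E^c$ remainder and the Lebesgue-point replacement while tracking the normalization $c_j$. Each of these error pieces has the same structure (bounded prefactor)$\times$(vanishing relative density or vanishing average oscillation), so the proof is essentially a careful bookkeeping exercise resting on the two ingredients built into the definition of $\Gamma$: the pointwise value of the Radon--Nikodym density at $\bar\xi$ and the parabolic Lebesgue differentiation theorem (Lemma \ref{lem:Besicovitch}).
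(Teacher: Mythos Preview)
Your argument is correct and follows the same standard route as the proof in \cite[Lemma 5.8]{AMT16} that the paper cites (the paper itself omits the proof): reduce to integrals over $C_{Rr_j}(\bar\xi)\cap E$ using the density-one property from $E^*$, then replace the Radon--Nikodym density by its value $h(\bar\xi)$ using that $\bar\xi\in\Gamma$ is an $\omega^+$-Lebesgue point. One small comment: the boundedness of $c_j\,\omega^-(C_{Rr_j}(\bar\xi))$ need not be deferred ``a posteriori''---since $\bar\xi\in\Lambda$, the ratio $\omega^-(C_{Rr_j}(\bar\xi))/\omega^+(C_{Rr_j}(\bar\xi))$ converges to a finite positive limit, so the bound follows immediately from the one for $c_j\,\omega^+(C_{Rr_j}(\bar\xi))$.
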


\begin{proof}
The proof is identical to the one of \cite[Lemma 5.8]{AMT16} and we omit it.
\end{proof}

\vv

\begin{lemma}\label{lem:tan_cap_F_nonempty}
Let $\omega^+$ be as in Theorem \ref{theorem:blowup1}. For $\omega^+$-a.e. $\bar\xi\in\Gamma$ we have that
\[
\Tan(\omega^+,\bar\xi)\cap \mathscr F \neq \varnothing.
\]
\end{lemma}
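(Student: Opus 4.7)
The plan is to obtain an element of $\Tan(\omega^+,\bar\xi)\cap\mathscr F$ by performing two successive blow-ups: a first blow-up produces a tangent measure which is the caloric measure of a globally adjoint caloric function, and a second blow-up, at a carefully chosen point of its support, yields a measure associated with a homogeneous degree-one polynomial. Lemma \ref{lem:second-tangents} will then identify this ``tangent of a tangent'' with a tangent of $\omega^+$ at $\bar\xi$.

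First, I would restrict to the full $\omega^+$-measure subset $\Gamma'\subset\Gamma$ on which Lemma \ref{lem:second-tangents} applies. Fix $\bar\xi\in\Gamma'$ and choose any normalizing sequence $c_j>0,\,r_j\to 0$ with $\omega_j^+:=c_j T_{\bar\xi,r_j}[\omega^+]\rightharpoonup\omega^+_\infty$ for some nontrivial $\omega^+_\infty\in\Tan(\omega^+,\bar\xi)$. By Lemma \ref{lem:blowupabscont}, $\omega_j^-:=c_jT_{\bar\xi,r_j}[\omega^-]\rightharpoonup h(\bar\xi)\omega^+_\infty$. Because $\bar\xi\in(\mathcal P^*\Omega^\pm)^\circ$, the boundary $\partial\Omega^\pm$ coincides with $\mathcal P^*\Omega^\pm$ in a small cylinder around $\bar\xi$, so for $j$ large the localization hypothesis and the joint TBCPC assumption allow me to apply Lemma \ref{theorem:propertiesblowup}. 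This produces a globally adjoint caloric function $u_\infty$ with $\omega^+_\infty=\omega_{u_\infty}$, $\Sigma_\infty:=\{u_\infty=0\}=\supp\omega^+_\infty$, and, via Lemma \ref{lem:Poisson-kernel}, $d\omega^+_\infty=-\partial_{\nu^+_t}u_\infty\,d\sigma_{\Sigma_\infty}$. Note in particular that $\bar 0\in\Sigma_\infty=\supp\omega^+_\infty$ since $u_j(\bar 0)=0$ for all $j$.

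Next, I would locate a point $\bar y\in\supp\omega^+_\infty$ where the Taylor expansion of $u_\infty$ has a nonvanishing degree-one term. Using the decomposition $\Sigma_\infty=\mathcal R_x\cup\mathcal R_t\cup\mathcal S$ from Section \ref{sec:nodal_sets}, on $\mathcal R_t\cup\mathcal S$ we have $\nabla u_\infty\equiv 0$, so the Poisson kernel $-\partial_{\nu^+_t}u_\infty$ vanishes there and the representation above forces $\omega^+_\infty(\mathcal R_t\cup\mathcal S)=0$. Combined with $\omega^+_\infty(\Sigma_\infty)>0$ (which follows from $\bar 0\in\supp\omega^+_\infty$), this yields $\omega^+_\infty(\mathcal R_x)>0$, so any $\bar y\in\mathcal R_x\cap\supp\omega^+_\infty$ is admissible. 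At such $\bar y$ we have $u_\infty(\bar y)=0$ and $\nabla u_\infty(\bar y)\neq 0$, so by Theorem \ref{theorem:han_lin} the leading term of the Taylor expansion of $u_\infty$ centered at $\bar y$ is the degree-one homogeneous adjoint caloric polynomial $h_1(z,s)=\nabla u_\infty(\bar y)\cdot z$.

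Finally, applying the adjoint version of Lemma \ref{lem:tangent_caloric_function} to $u_\infty$ at $\bar y$ gives $\Tan(\omega^+_\infty,\bar y)=\{c\,\omega_{h_1}:c>0\}\subset\mathscr F^*(1)=\mathscr F$. Since $\bar\xi\in\Gamma'$, Lemma \ref{lem:second-tangents} places this whole set inside $\Tan(\omega^+,\bar\xi)$, yielding the desired nonempty intersection $\Tan(\omega^+,\bar\xi)\cap\mathscr F\neq\varnothing$. The main obstacle I anticipate is verifying that the hypotheses of the two-phase blow-up Lemma \ref{theorem:propertiesblowup} are indeed available at a generic $\bar\xi\in\Gamma$: this hinges on the relative openness $\bar\xi\in(\mathcal P^*\Omega^\pm)^\circ$ (to reduce $\partial\Omega^\pm$ to $\mathcal P^*\Omega^\pm$ on small cylinders) and on the joint TBCPC assumption that propagates to the blow-up scales $r_j$ after the pigeonholing argument used in the proof of that lemma.
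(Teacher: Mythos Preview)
Your proof is correct and follows essentially the same two-blow-up architecture as the paper: first apply Lemma~\ref{theorem:propertiesblowup} to realize a tangent $\omega^+_\infty$ as $\omega_{u_\infty}$ for a global adjoint caloric $u_\infty$, then blow up again at a well-chosen point of $\supp\omega^+_\infty$ and use Lemma~\ref{lem:second-tangents} to pull the resulting flat measure back into $\Tan(\omega^+,\bar\xi)$.

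The only place where you deviate from the paper is the mechanism for the second blow-up. The paper works on the \emph{geometric} side: it uses that $\Sigma_\infty$ is $\sigma$-a.e.\ an admissible smooth graph (Lemmas~\ref{lem:space_regular_set} and~\ref{lem:sigma_F}), invokes Corollary~\ref{cor:tangent-rectifiable} to get $\Tan(\sigma_{\Sigma_\infty},\bar y)\subset\mathscr F$, and then transfers this to $\omega^+_\infty$ via the Radon--Nikodym localization Lemma~\ref{lem:local-tengents}. You instead work on the \emph{analytic} side: pick $\bar y\in\mathcal R_x$ so that $\nabla u_\infty(\bar y)\neq 0$, translate, and apply (the adjoint of) Lemma~\ref{lem:tangent_caloric_function} to read off $\Tan(\omega^+_\infty,\bar y)=\{c\,\omega_{h_1}:c>0\}$ with $h_1$ linear. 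Both are legitimate here; your route is arguably cleaner since Lemma~\ref{lem:tangent_caloric_function} is already available and encodes exactly the needed statement, whereas the paper's route recycles the GMT machinery (parabolic Rademacher, area formula) developed in Section~\ref{sec:GMT}.
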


\begin{proof}
Under the hypotheses of Theorem \ref{theorem:blowup1}, we have that $E\subset\supp \omega^+\cap \supp\omega^-\subset \partial_e\Omega^+\cap \partial_e\Omega^-$ and that $\Omega^+$ is disjoint from $\Omega^-$. By the classification of boundary points in Section \ref{sec:preliminaries},  this implies that $E\cap (\mathcal B\Omega^\pm\cup \partial_s\Omega^\pm)=\varnothing$ and thus, $E\subset \mathcal S'\Omega^+\cap \mathcal S'\Omega^-$.
Furthermore, since $E\subset \bigl(\mathcal P^*\Omega^\pm\bigr)^\circ$ by assumption, $C(\bxi,r) \cap \d\om=C(\bxi,r) \cap \mathcal{P}^*\om$ for all $r$ sufficiently small.

It is not difficult to see that \cite[Theorem 2.5]{Pr87} translates to our  context. In particular, the set $\Tan(\omega^+, \bar \xi)$ is nonempty for $\omega^+$-a.e. $\bar \xi\in \Gamma$. Let $\bar \xi\in \Gamma$ be such that $\Tan(\omega^+, \bar \xi)\neq \varnothing$ and consider $c_j>0$ and $r_j\to 0$ such that $c_jT_{\bar \xi, r_j}[\omega^+]\rightharpoonup \omega^+_\infty$ for some non-zero Radon measure $\omega^+_\infty \in \Tan(\omega^+, \bar \xi)$. Lemma \ref{lem:blowupabscont} implies that $c_jT_{\bar \xi, r_j}[\omega^-]\rightharpoonup h(\bar \xi)\omega^+_\infty$ and so we may apply  Lemma \ref{theorem:propertiesblowup} with $c=h(\bar \xi)$. Thus, we obtain  an adjoint caloric function $u_\infty$ in $\Rn1$ such  that
\[
\int \varphi\, d\omega^+_\infty=\int u^+_\infty H\varphi=\frac{1}{2}\int |u_\infty|\, H\varphi, \qquad \text{ for all }\varphi \in C^\infty_c(\Rn1),
\]
and
\(
\supp \omega^+_\infty=\{u_\infty=0\}\eqqcolon \Sigma.
\)
By Lemma \ref{lem:Poisson-kernel} we get that $\omega^+_\infty$ is  absolutely continuous with respect to the surface measure of $\Sigma=\partial \Omega^+_\infty$ and
\[
d\omega^+_\infty=-\partial_{\nu^+_t}u_\infty\, d\sigma|_{\Sigma},
\]
where $\partial_{\nu^+_t}$ is the derivative along the measure theoretic outer unit normal to the time-slice $\Omega^+_t$.
The set $\Sigma$ is the nodal set of an adjoint caloric function so, by Lemmas \ref{lem:space_regular_set} and \ref{lem:sigma_F},  for $\sigma|_E$-a.e. $\bar \zeta\in\Sigma$, there exists $\varepsilon>0$ such that $\Sigma$ agrees with a smooth admissible graph in $C_\varepsilon(\bar \zeta)$.
Hence, if we combine Lemmas \ref{lem:local-tengents} and  \ref{lem:second-tangents} with Corollary \ref{cor:tangent-rectifiable}, we can find a non-zero Radon measure $\mu\in \mathscr F^*(1)=\mathscr F$ such that $\mu\in \Tan(\omega^+, \bar \xi)$.
\end{proof}

\vv

With the results of Section \ref{sec:caloric_polynomials} at our disposal, the following lemma can be proved following the strategy of \cite[Lemma 6.1]{AM19}. Its proof relies on a corollary of a ``connectivity" lemma (see \cite[Corollary 3.12]{AM19}), which also holds in the parabolic setting as it does not use the Euclidean structure. 

\begin{lemma}\label{lem:dim_caloric_homog}
Let $\Omega$ be an open set in $\Rn1$ and, given $\bar p\in\Omega$, let $\omega\coloneqq\omega_\Omega^{\bar p}$ be its associated caloric measure. Let $\bar \xi\in \supp \omega$, suppose that $\Tan(\omega,\bar\xi)\subset \mathscr H_{\Theta^*}$ and that there exists $\lambda>0$ such that for all $\omega_h\in \Tan(\omega,\bar\xi)$ we have
\[
\|h\|_{L^\infty(C_r(\bar 0))}\lesssim_\lambda r^{-n}\omega_h\bigl({C_{\lambda r}(\bar 0)}\bigr), \qquad \text{ for }r>0.
\]
If $k$ is the smallest integer for which $\Tan(\omega,\bar\xi)\cap \mathscr{F}^*(k)\neq\varnothing$, then $\Tan(\omega, \bar\xi)\subset \mathscr{F}^*(k)$. Moreover,
\begin{equation}\label{eq:dim_caloric_homog}
\lim_{r\to 0}\frac{\log\omega(C_r(\bar \xi))}{\log r}={n+k}.
\end{equation}
\end{lemma}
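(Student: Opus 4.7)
The proof will proceed in two stages: (i) establishing $\Tan(\omega,\bar\xi)\subset\mathscr F^*(k)$, and (ii) deducing the pointwise dimension \eqref{eq:dim_caloric_homog}. For stage (i), the plan is to reduce matters to the adjoint analogue of Proposition \ref{prop:connect714}: once $\Tan(\omega,\bar\xi)\subset \mathscr P^*(k)$ is known, the connectivity result combined with the hypothesis $\Tan(\omega,\bar\xi)\cap \mathscr F^*(k)\neq \varnothing$ will yield $\Tan(\omega,\bar\xi)\subset \mathscr F^*(k)$. So the main task is to show that every $\omega_h\in\Tan(\omega,\bar\xi)$ has $h$ a caloric polynomial of degree at most $k$.

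Fix such an $\omega_h$ with $h\in\Theta^*$. By Lemma \ref{lem:second-tangents} together with Lemma \ref{lem:tangent_caloric_function}, $\Tan(\omega_h,\bar 0)=\{c\,\omega_{h_m}:c>0\}\subset \Tan(\omega,\bar\xi)$, where $h_m$ is the lowest nonzero term of the Taylor expansion of $h$ at $\bar 0$. Since $\omega_{h_m}\in \Tan(\omega,\bar\xi)\cap \mathscr F^*(m)$ and $k$ is the smallest such integer, we immediately get $m\geq k$. To upgrade this to the statement that $h$ itself has polynomial degree at most $k$, the plan is to combine the growth hypothesis with a Liouville-type theorem: if I can show that $\omega_h(C_R(\bar 0))\lesssim R^{n+k}$ for every $R>0$, then the assumption yields $\|h\|_{L^\infty(C_R)}\lesssim R^k$, and by the Liouville-type theorem for ancient caloric functions of polynomial growth referenced in \cite{LZ19}, $h$ must be a caloric polynomial of degree $\leq k$, so that $\omega_h\in\mathscr P^*(k)$.

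The two-sided bound $\omega_h(C_R(\bar 0))\lesssim R^{n+k}$ will be established in two ranges. For small $R$, Corollary \ref{corollary:bounded_densities} gives $\omega_h(C_R)\approx R^{n+m}$ with $m\geq k$, so the bound is immediate. For large $R$, we exploit the fact that $\omega_h$ arises as a weak limit $c_i\,T_{\bar\xi,r_i}[\omega]\to \omega_h$; then the sequences $c_i\,T_{\bar\xi,R r_i}[\omega]=T_{\bar 0,R}[c_i\,T_{\bar\xi,r_i}[\omega]]\to T_{\bar 0,R}[\omega_h]$ produce, for every fixed $R$, another tangent measure at $\bar\xi$. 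Comparing with the tangent $\nu\in \Tan(\omega,\bar\xi)\cap \mathscr F^*(k)$ (for which $\nu(C_R)=R^{n+k}\nu(C_1)$) via the functional $d_R$ of \eqref{eq:def_dist_dcones} and using the compactness of the basis of $\mathscr F^*(k)$ (Lemma \ref{lem:compact_basis}), one matches normalizations across the two sequences and transfers the $R^{n+k}$-growth of $\nu$ to $\omega_h$.

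For stage (ii), once $\Tan(\omega,\bar\xi)\subset \mathscr F^*(k)$ is in hand, every tangent measure is $(n+k)$-homogeneous, i.e.\ $\nu(C_{rR})=r^{n+k}\nu(C_R)$ by \eqref{eq:hom_2}. For any $r_i\to 0$, a subsequential weak limit $c_i\,T_{\bar\xi,r_i}[\omega]\to \nu\in \mathscr F^*(k)$ together with this homogeneity yields $\log\omega(C_{r_i R}(\bar\xi))-\log\omega(C_{r_i}(\bar\xi))\to (n+k)\log R$, and the standard Preiss-type argument leveraging the compactness of the basis of $\mathscr F^*(k)$ upgrades this subsequential ratio statement into the full limit $\lim_{r\to 0}\log\omega(C_r(\bar\xi))/\log r=n+k$. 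The principal obstacle is the large-scale bound $\omega_h(C_R)\lesssim R^{n+k}$ of the third paragraph: it forces global information from the behavior of $\omega$ near $\bar\xi$ at infinitesimal scales onto the behavior of the tangent measure $\omega_h$ at infinity, and it is the step that genuinely uses both the compact basis of $\mathscr F^*(k)$ and the uniformity (in $\lambda$) of the growth hypothesis across all elements of $\Tan(\omega,\bar\xi)$.
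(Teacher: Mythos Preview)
Your Stage (i) has a genuine gap at the large-$R$ bound on $\omega_h(C_R)$. You claim one can ``transfer the $R^{n+k}$-growth of $\nu$ to $\omega_h$'' by ``matching normalizations across the two sequences,'' but $\omega_h$ and $\nu$ arise from a priori unrelated sequences of scales, and membership in the common set $\Tan(\omega,\bar\xi)$ provides no quantitative comparison between two distinct elements. The observation that $T_{\bar 0,R}[\omega_h]$ is again a tangent measure is correct but yields no growth control by itself. Hence the unconditional inclusion $\Tan(\omega,\bar\xi)\subset\mathscr P^*(k)$ cannot be reached this way, and your planned reduction to Proposition \ref{prop:connect714} does not go through.

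The paper instead applies the connectivity Lemma \ref{lem:connectivity_lemma} (through \cite[Corollary 3.12]{AM19}) \emph{directly} with $\mathscr M=\Tan(\omega,\bar\xi)$ and $\mathscr F=\mathscr F^*(k)$. The key difference is that one only needs the \emph{conditional} separation property: if $\omega_h\in\Tan(\omega,\bar\xi)$ satisfies $d_r(\omega_h,\mathscr F^*(k))<\varepsilon_0$ for all $r\geq r_0$, then $\omega_h\in\mathscr F^*(k)$. Under this extra hypothesis the computation in the proof of Lemma \ref{lem:connectivity_in_F_k} gives $F_r(\omega_h)/F_{\tau r}(\omega_h)\approx\tau^{-(n+k+1)}$ for $r\geq r_0$; iterating yields $\omega_h(C_R)\lesssim R^{n+k}$, and then the growth hypothesis together with the Liouville theorem of \cite{LZ19} force $h\in P^*(k)$. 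Combined with your (correct) observation that the lowest nonzero term $h_m$ has $m\geq k$, this gives $h=h_k\in F^*(k)$, verifying the separation property. The point is that the growth bound is required only under the closeness assumption, which is precisely what makes the iteration available; trying to establish it unconditionally is both harder and unnecessary. Your Stage (ii) outline is essentially correct.
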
 

\vv

We are now ready to gather all the results we have proved so far in order to prove the first main theorem of the paper.

\begin{proof}[Proof of Theorem \ref{theorem:blowup1}] 
We recall that, arguing as in Lemma \ref{lem:tan_cap_F_nonempty}, we can assume  that $E\subset \mathcal S'\Omega^+\cap \mathcal S'\Omega^-$.
Moreover, Lemma \ref{lem:tan_cap_F_nonempty} infers that $ \Tan(\omega^+, \bar \xi)\cap \mathscr F \neq \varnothing$ for $\omega^+$-a.e. $\bar \xi\in \Gamma$, which, by  Lemma \ref{theorem:propertiesblowup}, gives that $\Tan(\omega^+, \bar \xi)\subset \mathscr H_{\Theta^*}$. Therefore, we can apply Lemma \ref{lem:dim_caloric_homog} with $k=1$ and  obtain
\[
\lim_{r\to 0}\frac{\log \omega(C_r(\bar\xi))}{\log r}=n+1,\qquad \text{ for }\omega^+\text{-a.e. }\bar \xi\in\Gamma.
\]
Since $\omega^+(E\setminus \Gamma)=0$,  the previous formula implies  that $\dim_{\mathcal H_p} \omega^+|_E=\dim_{\mathcal H_p} \omega^-|_E=n+1$.
If, in addition, $\Omega^\pm$ satisfy the TFCDC assumption, we can apply Lemma  \ref{lem:blow_up_CDC_two_phase} and use Lemma \ref{lemma:main_blowup_lemma}-(b) in order to conclude that \(\lim_{r\to 0}\Theta^{\mathscr F_\Sigma}_{\partial\Omega^\pm}(\bar\xi, r)=0\) for $\omega^\pm$-a.e. $\bar \xi\in E$.
\end{proof}

We will now show Theorem \ref{theorem:tsirelson} following the approach in \cite{TV18}.

\begin{proof}[Proof of Theorem \ref{theorem:tsirelson}] 
The proof  follows from  the ones of Lemmas \ref{lem:blowuplemmaCPC1} and \ref{theorem:propertiesblowup},  and so we will only sketch it. Let us assume that $\hm^1(E)>0$, which by mutual absolute continuity implies that $\hm^i(E)>0$ for $i=2,3$ as well. By Lemma \ref{lem:quasireg-subdomain} (or rather its proof), we can find   open sets $\wt \om^i \subset  \om_i$, $i=1,2,3$, which are regular for $H$ and $H^*$, and a set 
$$
\wt E \subset E \cap  \big( \cap_{i=1}^3( \mathcal{P}^* \wt \om_i)^0\big) \cap \supp \hm_{\wt \om^1}, 
$$
 so that $\hm^i(\wt E)>0$ for $i=1,2,3$, and $\hm_{\wt \om^1}, \hm_{\wt \om^2},$ and $\hm_{\wt \om^3}$ are mutually absolutely continuous on $\wt E$.  So it is enough to prove the result  assuming that  $\om_i$   is regular  for $H$ and $H^*$ for $i=1,2,3$.

To this end, by \cite[Theorem 2.5]{Pr87}, for $\hm^1$-a.e. $\bxi \in E$, we have that there exists  $c_j>0$ and $r_j \to 0$ such that $\hm^1_j \rightharpoonup \hm^1_\infty$, for some non-zero Radon measure $\hm^1_\infty$. Let $\Gamma_2$ and $\Gamma_3$ be defined as $\Gamma$ with $\hm^+=\hm^1$ and $\hm^-=\hm^i$, for $i=2,3$.  By Lemma \ref{lem:blowupabscont} and mutual absolute continuity, we obtain that $\hm^2_j \rightharpoonup h^2(\bar \xi) \hm^1_\infty$ and $\hm^3_j \rightharpoonup h^3(\bar \xi)  \hm^1_\infty$ for $\bxi \in \wt \Gamma :=\Gamma_2 \cap \Gamma_3$. Note that $\hm^1( \wt \Gamma)=0$ and so $\hm^i( \wt \Gamma)=0$, for $i=2,3$. By a similar pigeonholing argument as in the proof of Lemma \ref{theorem:propertiesblowup}, after passing to a subsequence, we may assume without loss of generality that 
\[
r_j^2 \Cap\bigl(\overline{E(\bar \xi; r_j^2)}\setminus \Omega^i\bigr)\overset{\eqref{eq:cdc-content}}{\gtrsim} \HH^{n+1}\bigl(\overline{E(\bar \xi; r_j^2)} \setminus \Omega^i\bigr) \gtrsim r_j^{n+2},  \quad j\geq 0 \,\,\textup{and}\,\, i=1,2.
\]
In particular, there exists $F_j \subset \overline{E(\bar \xi; r_j^2)} \setminus (\Omega^1\cap \Omega^2)$ such that $ \HH^{n+1}\bigl(F_j \bigr) \gtrsim r_j^{n+2}$. If we set $G_j=T_{\bxi,r_j}(F_j)\subset \overline{E(\bar 0; 1)} \setminus (\Omega^1_j\cap \Omega^2_j)$, then it is clear that $ \HH^{n+1}\bigl(G_j \bigr) \gtrsim 1$ and we have that 
\begin{equation}\label{eq:tsirelson-Gj}
 \int_{G_j} |u_j| =0, \quad \text{for all}\,\, j\geq 1.
\end{equation}
 Since $G_j$ is compact and  $ \HH^{n+1}\bigl(G_j \bigr) \gtrsim 1$ for all $j \geq 1$, there is a compact set $G$ such that, after passing to a subsequence, $G_j \to G$ in the Hausdorff metric and $ \HH^{n+1}\bigl(G \bigr) \gtrsim 1$. It is easy to see that $\chi_{G_j} \to g$ weakly in $L^2\left(\overline{E(\bar 0; 1)} \right)$ for some non-negative function $g \in L^2\left(\overline{E(\bar 0; 1)} \right)$, and thus,  we must have that $g= \chi_G$. Repeating the proof of  Lemma \ref{theorem:propertiesblowup} for $+=1$ and $-=2$, we  obtain that $u_j \to u_\infty$  in $L^2_{\loc}(\rrn)$ for some globally adjoint caloric function $u_\infty \not \equiv 0$ which vanishes only on a set of zero $\mathcal{H}^{n+1}$-measure. Taking limits as $j \to \infty$ in \eqref{eq:tsirelson-Gj}, we obtain
$$
\int_G |u_\infty| = \lim_{j \to \infty}  \int_{G_j} |u_j| =0,
$$
which, in turn, implies that $u_\infty=0$ on a set of positive Lebesgue measure reaching a contradiction.
\end{proof}

\vv

\subsection{Proofs of Theorems \ref{theorem:main_theorem2} and  \ref{theorem:main3} }\label{sec:thm2_3}

\begin{lemma}\label{lem:weak_conv_omega-}
Let $\mu^\pm$ be two halving Radon measures such that $\mu^- \ll \mu^+$ and $\supp\mu^+=\supp\mu^-\subset \partial_e\Omega^+$. Let us define $f\coloneqq \log\tfrac{d\mu^-}{d\mu^+}$ and assume that there is $r_j\to 0$  and {$\bar \xi_j\in \partial_e\Omega^+$} so that $\mu^+_j\coloneqq\mu^+\bigl(C_{r_j}(\bar \xi_j)\bigr)^{-1}\, T_{\bar \xi_j, r_j}[\mu^+]\rightharpoonup \mu$ for some Radon measure $\mu$ such that $\mu(C_1(\bar 0))>0$. Moreover, we assume that $\mu^+$ is so that for all $M>0$
\begin{equation}\label{eq:cond_lemma_102}
\lim_j\Bigg(\avint_{C_{Mr_j}(\bar \xi_j)}f\, d\mu^+\Bigg)\, \exp\Bigg(-\avint_{C_{Mr_j}(\bar \xi_j)}\log f\, d\mu^+\Bigg)=1.
\end{equation}
Then $\mu^-_j\coloneqq\mu^-\bigl(C_{r_j}(\bar \xi_j)\bigr)^{-1}\, T_{\bar \xi_j, r_j}[\mu^-]\rightharpoonup \mu$.
\end{lemma}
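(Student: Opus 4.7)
The statement $f = \log(d\mu^-/d\mu^+)$ appears to be a typo: for \eqref{eq:cond_lemma_102} to be meaningful one must read $f = d\mu^-/d\mu^+$, consistently with \eqref{eq:main2_vmo}, and I proceed under this interpretation. A change of variables rewrites
\[
\int\varphi\,d\mu^-_j = a_j^{-1}\int \varphi\, f_j\,d\mu^+_j, \qquad f_j := f\circ T_{\bar\xi_j,r_j}^{-1},\quad a_j := \avint_{C_{r_j}(\bar\xi_j)} f\,d\mu^+ = \frac{\mu^-(C_{r_j}(\bar\xi_j))}{\mu^+(C_{r_j}(\bar\xi_j))}.
\]
The strategy is to show that $f_j/a_j$ concentrates at $1$ in $L^1(\mu^+_j)$ on each $C_M(\bar 0)$; together with $\mu^+_j\rightharpoonup\mu$, this will force $\int\varphi\,d\mu^-_j\to\int\varphi\,d\mu$ for every $\varphi\in C_c(\Rn1)$.

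\textbf{Vanishing mean oscillation of $\log f$.} Set $a_j^M := \avint_{C_{Mr_j}(\bar\xi_j)} f\,d\mu^+$ and $b_j^M := \exp\bigl(\avint_{C_{Mr_j}(\bar\xi_j)} \log f\,d\mu^+\bigr)$. By Jensen's inequality $a_j^M \geq b_j^M$, and \eqref{eq:cond_lemma_102} reads $a_j^M/b_j^M\to 1$. Setting $\tilde h := \log f - \log b_j^M$ on $C_{Mr_j}(\bar\xi_j)$, one has $\avint \tilde h\,d\mu^+ = 0$ and hence $\avint(e^{\tilde h} - 1 - \tilde h)\,d\mu^+\to 0$. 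The elementary lower bounds $e^x - 1 - x \geq x^2/4$ for $|x|\leq 1$ and $e^x - 1 - x \geq c|x|$ for $|x|\geq 1$ (for some universal $c>0$), combined with a Cauchy--Schwarz estimate on $\{|\tilde h|\leq 1\}$, give $\avint|\tilde h|\,d\mu^+\to 0$. The inequality $(1-e^x)\mathbf{1}_{x<0}\leq|x|$ (an immediate consequence of $e^x\geq 1+x$) then produces $\avint|e^{\tilde h}-1|\,d\mu^+\to 0$, which amounts to
\[
\avint_{C_{Mr_j}(\bar\xi_j)} \Bigl|\frac{f}{a_j^M}-1\Bigr|\,d\mu^+ \to 0\qquad \text{for every } M>0.
\]

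\textbf{Scale comparison and passage to the limit.} Since $\mu^+_j\rightharpoonup\mu$ with $\mu$ a Radon measure, $\mu^+_j(C_M(\bar 0)) = \mu^+(C_{Mr_j}(\bar\xi_j))/\mu^+(C_{r_j}(\bar\xi_j))$ is bounded uniformly in $j$ for each $M$. Applying this to
\[
|\log b_j^M - \log b_j^1| \leq \mu^+_j(C_M(\bar 0))\cdot \avint_{C_{Mr_j}(\bar\xi_j)}|\log f - \log b_j^M|\,d\mu^+ \to 0,
\]
combined with $a_j^M/b_j^M\to 1$, yields $a_j^M/a_j\to 1$ for every $M>0$. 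Transporting the previous conclusion through $T_{\bar\xi_j,r_j}$ gives $\int_{C_M(\bar 0)}|f_j/a_j^M - 1|\,d\mu^+_j\to 0$. Now for $\varphi\in C_c(\Rn1)$ with $\supp\varphi\subset C_M(\bar 0)$, decompose
\[
\int\varphi\,d\mu^-_j - \int\varphi\,d\mu^+_j = \frac{a_j^M}{a_j}\int_{C_M(\bar 0)}\varphi\Bigl(\frac{f_j}{a_j^M}-1\Bigr) d\mu^+_j + \Bigl(\frac{a_j^M}{a_j}-1\Bigr)\int\varphi\,d\mu^+_j.
\]
Both summands tend to $0$, and since $\int\varphi\,d\mu^+_j\to\int\varphi\,d\mu$ by hypothesis, we obtain $\mu^-_j\rightharpoonup\mu$.

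\textbf{Main obstacle.} The critical step is extracting from the asymptotic Jensen identity \eqref{eq:cond_lemma_102} a quantitative vanishing $L^1(\mu^+)$-oscillation of $\log f$ at every scale $C_{Mr_j}(\bar\xi_j)$; the dichotomy between the quadratic and the linear behaviour of $e^x - 1 - x$ near and far from the origin is precisely what makes this extraction possible. The halving hypothesis on $\mu^\pm$ does not enter the proof directly, but ensures that the Radon--Nikodym derivative is strictly positive $\mu^+$-a.e., so that $\log f$ and thus \eqref{eq:cond_lemma_102} are meaningful.
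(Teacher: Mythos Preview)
Your proof is correct and complete. The paper itself does not give an argument here: it simply says that the proof of \cite[Lemma 8.1]{AM19} carries over verbatim to the parabolic setting, so there is no independent proof in the paper to compare against. Your approach---extracting from the asymptotic Jensen identity \eqref{eq:cond_lemma_102} the vanishing of $\avint|\log f - \log b_j^M|\,d\mu^+$ via the two-regime lower bound on $e^x-1-x$, then upgrading to $\avint|f/a_j^M-1|\to 0$, and finally comparing scales through the uniform boundedness of $\mu^+_j(C_M)$---is exactly the standard mechanism behind this type of lemma and is, in all likelihood, precisely what \cite[Lemma 8.1]{AM19} does.

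Two minor remarks. First, your identification of the typo is correct: the statement must be read with $f=d\mu^-/d\mu^+$ rather than $f=\log(d\mu^-/d\mu^+)$, exactly as you note and as is consistent with \eqref{eq:main2_vmo} and with how the lemma is invoked in the proofs of Theorems \ref{theorem:main_theorem2} and \ref{theorem:main3}. Second, your closing comment on the halving hypothesis is not quite on target: halving is a non-atomicity condition and does not by itself force $f>0$ $\mu^+$-a.e.; that positivity is rather what one needs to assume (or derive from $\supp\mu^+=\supp\mu^-$ together with suitable density considerations) for $\log f$ to be defined. This does not affect the validity of your argument, which nowhere uses halving.
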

\begin{proof}
The proof of \cite[Lemma 8.1]{AM19} does not use the Euclidean structure  and  can be repeated in the parabolic setting.
\end{proof}

\vv

\begin{proof}[Proof of Theorem \ref{theorem:main_theorem2}]

Since $E$ is nonempty and relatively open in $\supp\omega^+$, we have that $\omega^\pm(E)>0$ and
\[
\lim_{r\to 0}\frac{\omega^+(C_r(\bar \zeta)\setminus E)}{\omega^+(C_r(\bar \zeta))}=0 \qquad \text{ for all }\bar \zeta\in E,
\]
so \cite[Lemma 14.5]{Mattila} gives $\Tan(\omega^+,\bar \xi)=\Tan(\omega^+|_E, \bar \xi)$.
If $\omega\in \Tan (\omega^+|_E,\bar \xi)=\Tan(\omega^+,\bar \xi)$, then by Lemma \ref{lem:tangent_meas_properties}-(1), $\omega=c\,T_{\bar 0, r}[\mu]$, for some constants $c>0$ and $r>0$, and some mesaure $\mu$ of the form
\[
\mu=\lim_{j\to \infty} \frac{1}{\omega^+(C_{r_j}(\bar \xi))}T_{\bar \xi,r_j}\bigl[\omega^+\bigr]=\lim_{j\to \infty} \frac{1}{\omega^+|_E(C_{r_j}(\bar \xi))}T_{\bar \xi,r_j}\bigl[\omega^+|_E\bigr],
\]
for some $r_j\to 0$, is such that $\mu(C_1(\bar 0))>0$ and the second equality holds because $E$ is relatively open in $\supp\omega^+$.
Let us observe that the hypothesis of Lemma \ref{lem:weak_conv_omega-} is satisfied for $\mu^\pm=\omega^\pm(E)^{-1}\omega^\pm|_E$ (the normalization is chosen so that $\mu^\pm$ are probability measures) and for $\bar \xi_j=\bar \xi$ for all $j$. Hence, we can conclude that
\[
\mu=\lim_{j\to \infty} \frac{1}{\omega^-|_E(C_{r_j}(\bar \xi))}\, {T_{\bar \xi,r_j}\bigl[\omega^-|_E\bigr]}=\lim_{j\to \infty}\frac{1}{\omega^-(C_{r_j}(\bar \xi))}T_{\bar \xi,r_j}\bigl[\omega^-\bigr].
\]

By Lemma \ref{theorem:propertiesblowup} there exists $g\in \Theta^*$ such that $\mu=\omega_g$ and  since the caloric measures associated with adjoint caloric functions form a $d$-cone and $\omega=c T_{\bar 0, r}[\mu]$, we have that  $\omega\in \mathscr H_{\Theta^*}$ too. More specifically there exists $h\in\Theta^*$ such that $\omega=\omega_h$. Let us define
\[
h_j(\bar x)\coloneqq \sum_{|\alpha|+2\ell=k}\frac{D^{\alpha,\ell}h(\bar 0)}{\alpha!\ell!}x^\alpha t^\ell,\qquad\qquad j>0,
\]
and let $k$ be the smallest number for which we have $h_k\not \equiv 0$. 
By Lemmas \ref{lem:second-tangents} and \ref{lem:tangent_caloric_function}, it follows that $\Tan(\omega_{h},\bar 0)\subset \Tan(\omega^+,\bar \xi)$ and 
\[
\Tan(\omega_h,\bar 0)=\{c\omega_{h_k}:c>0\}\subset \mathcal F^*(k).
\]
This shows that $\Tan(\omega^+,\bar \xi)\cap \mathcal F^*(k)\neq\varnothing$, which, in turn, by Proposition \ref{prop:connect714} implies $\Tan(\omega^+,\bar \xi)\subset \mathcal F^*(k)$. If we additionally assume that $\om^\pm$ have TFCDC,  then we can argue as in the proof of Theorem \ref{theorem:blowup1} to show \eqref{eq:theta-main2}.
\end{proof}

\vv

\begin{proof}[Proof of Theorem \ref{theorem:main3}]
Since  $F$ is compact, there exists $\rho>0$ such that $C_\rho(\bar \xi)\cap \supp\omega^+\subset E$ for all $\bar \xi \in F$
Arguing by contradiction, we assume that for every $d \in \NN$ we can find  sequences $\{\bar \xi_j\}_{j \geq 1} \subset  F$ and $r_j\to 0$ such that
\begin{equation}\label{eq:lem_dist_d}
d_1\bigl(T_{\bar{\xi}_j,r_j}[\omega^+], \mathcal P^*(d)\bigr)\geq \varepsilon >0.
\end{equation}
By the doubling assumption on $\omega^+$, after passing to a subsequence,  we have that
\[
\lim_{j\to \infty}\frac{1}{\omega^+(C_{r_j}(\bar \xi_j))}T_{\bar \xi_j,r_j}[\omega^+]=\lim_{j\to \infty}\frac{1}{\omega^+|_E(C_{r_j}(\bar \xi_j))}T_{\bar \xi_j,r_j}[\omega^+|_E]= \omega
\]
for some measure $\omega$, where the equality holds because $E$ is relatively open in $\supp\omega^+$. We claim that $\omega^-(C_{r_j}(\bar \xi_j))^{-1}T_{\bar \xi_j, r_j}[\omega^-]\rightharpoonup \omega$ as well. Indeed, since $\log f\in \VMO(\omega^+|_E)$ and $\omega^+|_E$ is doubling by assumption, $f$ is an $A_p$-weight on small enough cylinders by the John-Nirenberg theorem (see \cite[Chapter 6.2]{BAF} for the proof of this implication in the Euclidean setting), which still holds for spaces of homogeneous type. Hence, $\omega^-|_E\in VA(\omega^+|_E)$ by \cite[Corollary 7.8]{AM19} and the hypothesis of Lemma \ref{lem:weak_conv_omega-} is satisfied, so
\[
\omega=\lim_{j\to \infty}\frac{1}{\omega^-|_E(C_{r_j}(\bar \xi_j))}T_{\bar \xi_j,r_j}[\omega^-|_E]=\lim_{j\to \infty}\frac{1}{\omega^-(C_{r_j}(\bar \xi_j))}T_{\bar \xi_j,r_j}[\omega^-].
\]

Lemma \ref{theorem:propertiesblowup} gives that $\omega=\omega_h$ for some $h\in \Theta^*$, i.e.
\[
\int \varphi\, d\omega=\int h\, H\varphi, \qquad \varphi\in C^\infty_c(\Rn1).
\]
The measure $\omega_h$ is doubling because $\omega^+|_E$ is. Thus, there exists $\lambda\in \mathbb N$ such that for every  $k \in \NN$ and $N \in \NN$,
\begin{equation}\label{eq:doub_appl}
\omega_h\bigl(C_{2^{k+N}}(\bar 0)\bigr)\lesssim 2^{k\lambda} \omega_h\bigl(C_{2^N}(\bar 0)\bigr).
\end{equation}
{
Let $a$ and $M$ be as in Lemma \ref{theorem:propertiesblowup} and denote by $N$ a positive integer  such that $4a^{-1}M\leq 2^N$.
Inequality \eqref{eq:doub_appl} together with the Cauchy estimates \eqref{eq:cauchy_estimates}, gives that, for $\alpha\in\mathbb N^n$, $k\in \mathbb N$ and $\ell>0$ such that $|\alpha|+2\ell=m$,
\begin{equation*}
\begin{split}
|D^{\alpha,\ell}h(\bar 0)|&\lesssim 2^{-mk}\|h\|_{L^\infty\bigl(C_{2^k}(\bar 0)\bigr)}\\
&{\lesssim}_{a,M}2^{-k(m+n)}\omega_h\bigl(C_{2^{k+N}}(\bar 0)\bigr)\overset{\eqref{eq:doub_appl}}{\lesssim}2^{-k(m+n-\lambda)}\omega_h\bigl(C_{2^N}(\bar 0)\bigr),
\end{split}
\end{equation*}
}
where the second inequality follows from the proof of Lemma \ref{theorem:propertiesblowup}.
Thus, for $m+n-\lambda>0$ we have that the right hand side in the previous expression converges to $0$ as $k\to \infty$, which implies that $D^{\alpha,\ell}h(\bar 0)=0$. In particular, $h$ is a polynomial of degree at most $d=m+n-\lambda$, which contradicts \eqref{eq:lem_dist_d}. In order to prove \eqref{eq:beta_tilde_d}, it suffices to use  Lemma \ref{lemma:main_blowup_lemma} and argue as in the proof of Theorem \ref{theorem:blowup1}.
\end{proof}

\vv

\begin{proof}[Proof of Corollary \ref{cor:reifenberg}]
One can modify the proof above according to the original proof in \cite[pp.34-35]{KT06} to conclude the corollary bearing in mind that $\hm^- \ll \hm ^+$ implies that $t_-  \leq t_+$. We leave the details to the interested reader.
\end{proof}

\frenchspacing
\bibliographystyle{alpha}

\newcommand{\etalchar}[1]{$^{#1}$}
\def\cprime{$'$}

\end{document}